\documentclass[reqno,11pt]{amsart}
\usepackage{amssymb, amsmath,latexsym,amsfonts,amsbsy, amsthm}
\usepackage{color}
\usepackage{mathrsfs}
\usepackage{esint}
\usepackage{graphics,color}
\usepackage{enumerate}
\usepackage{marginnote}

\setlength{\oddsidemargin}{0mm} \setlength{\evensidemargin}{0mm}
\setlength{\topmargin}{0mm} \setlength{\textheight}{230mm} \setlength{\textwidth}{165mm}
%%%%%%%%%%%%%%%%%%%%%%%%%%%%%%%%%%
%%%%%%%%%%%%%%%%%%%%%%%%%%%%%%%%%%

%ABREVIATIONS%

%LETTRES RONDES

%MACROS SANS ARGUMENTS

\newcommand{\beq}{\begin{equation}}
\newcommand{\eeq}{\end{equation}}
\newcommand{\ben}{\begin{eqnarray}}
\newcommand{\een}{\end{eqnarray}}
\newcommand{\beno}{\begin{eqnarray*}}
\newcommand{\eeno}{\end{eqnarray*}}

%%%%%%%%%%%%%%%%%%%%%%%%%%%%%%%%%%%%%%%%%%%%%%%%%%%%%

\renewcommand{\theequation}{\thesection.\arabic{equation}}

%%%%%%%%%%%-------------------------------------------

%%%%%%%%%%%%%%%%%%%%%%%%%%%%%%%%%%%%%%%%%%%%%%%%%%%
%%
\newtheorem{theorem}{Theorem}[section]

\newtheorem{lemma}[theorem]{Lemma}
\newtheorem{proposition}[theorem]{Proposition}

\newtheorem{Theorem}{Theorem}[section]
\newtheorem{Definition}[Theorem]{Definition}

\newtheorem{Lemma}[Theorem]{Lemma}
\newtheorem{Corollary}[Theorem]{Corollary}
\newtheorem{Remark}[Theorem]{Remark}

%%

%%%%%%%%%%%%%% New notations %%%%%%%%%%%%%%%%%

%%%%%%%%%%%%%%%%%%%%%%%%%%%%%%%%%%%%%%%%%%%%%%%%%%%%%%%%%%%%%%%%%%%%%%%%%%%%%%%%%%%%%%%

\begin{document}

\title[Boussinesq Equation]
{H\"{o}lder continuous periodic Solution Of Boussinesq Equation with partial viscosity}

\author{Tao Tao}
\address{School of Mathematics Sciences, Shandong University, Jinan, China}
\email{taotao@amss.ac.cn}
\author{Liqun Zhang}
\address{Academy of Mathematic and System Science , CAS Beijing 100190, China; and School of Mathematical Sciences, University of Chinese Academy of Sciences, Beijing 100049, China}
\email{lqzhang@math.ac.cn}

\date{\today}%11.2,2014
\maketitle

\renewcommand{\theequation}{\thesection.\arabic{equation}}
\setcounter{equation}{0}
%%%%%%%%%%%%%%%%%%%%%%%%%%%%%%%%%%%%%%%%%%%%%%
%%%%%%%%%%%%%%%%%%%%%%%%%%%%%%%%%%%%%%%%%%

\begin{abstract}
We show the existence of H\"{o}lder continuous periodic solution with compact support in time of the Boussinesq equations with partial viscosity. The H\"{o}lder regularity of the solution we constructed is anisotropic which is compatible with partial viscosity of the equations.
\end{abstract}

\noindent {\sl Keywords:} Boussinesq equations with partial viscosity, H\"{o}lder continuous weak solution\

\vskip 0.2cm

\noindent {\sl AMS Subject Classification (2000):} 35Q30, 76D03  \

\setcounter{equation}{0}

\section{Introduction}
The Boussinesq model was introduced for understanding the effect of potentially large conversions between internal energy and mechanical energy in fluids. The Boussinesq equation modeled many geophysical flows, such as atmospheric fronts and ocean circulations (see, for example, \cite{Ma},\cite{Pe}). It was used in recent theoretical discussion of the energetics of horizontal convection as well as in discussing the energetics of turbulent mixing in stratified fluids.  Such a model forms the basis for a majority of numerical simulation of stratified turbulence.

In this paper, we consider the following 3-dimensional Boussinesq system with only vertical viscosity
\begin{equation}\label{e:partial boussinesq equation}
\begin{cases}
\partial_tv+v\cdot\nabla v+\nabla p-\partial_{zz}v=\theta e_{3}, \quad {\rm in} \quad R_+\times T^3\\
\hbox{div}v=0,\quad {\rm in} \quad R_+\times T^3\\
\partial_t\theta+v\cdot\nabla\theta-\partial_{zz}\theta=0, \quad {\rm in} \quad R_+\times T^3
\end{cases}
\end{equation}
 on torus $T^3$, where $e_3=(0,0,1)^T$, $z$ is the vertical variable. Here, $v$ is the velocity vector, $p$ is the pressure, $\theta$ denotes the temperature which is a scalar function.

The global existence and well-posedness have been established by many authors for the Cauchy problem of (\ref{e:partial boussinesq equation}) in 2d (see, for example, \cite{CW}, \cite{LT}). For the 3-dimensional case, the global existence of  smooth solution of (\ref{e:partial boussinesq equation}) remains open. We are interested in constructing some continuous solutions of (\ref{e:partial boussinesq equation}).
To understand the turbulence phenomena in hydrodynamics, one needs to go beyond classical solutions. The triple $(v, p, \theta)$ on $R_{+}\times T^3$ is called a weak solution of (\ref{e:partial boussinesq equation}) if they belong to $L^2_{loc}(R_+\times T^3)$ and solve (\ref{e:partial boussinesq equation})  in the following sense:
\begin{align}
\int_{R_+}\int_{T^3}(v\cdot\partial_t\varphi+v\otimes v:\nabla\varphi+p\hbox{div}\varphi+v\cdot\partial_{zz}\varphi+\theta e_3\cdot\varphi )dxdt=0\nonumber
\end{align}
for all $\varphi\in C_c^\infty(R_{+}\times T^3;R^3).$
\begin{align}
\int_{R_+}\int_{T^3}(\partial_t\phi\theta+v\cdot\nabla\phi\theta+\partial_{zz}\phi\theta)dxdt=0\nonumber
\end{align}
for all $\phi\in C_c^\infty(R_{+}\times T^3;R)$ and
\begin{align}
\int_{R_+}\int_{T^3}v\cdot\nabla\psi dxdt=0\nonumber
\end{align}
for all $\psi\in C_c^\infty(R_{+}\times T^3;R).$ Here and in the following arguments $x=(x_1,x_2,z)$. Sometimes we abuse the notation by using $(x,y,z)$ to denote the space variables.

The study of continuous weak solutions in fluid dynamics becomes more and more interesting. One of the famous example is the anomalous dissipation on weak solution of Euler equation which was first considered by Lars Onsager in his famous 1949 note about statistical hydrodynamics, see \cite{ONS}.

The existence of dissipative solution has been studied by many authors. In the earlier work of V. Scheffer (\cite{VS}) and later by A. Shnirelman (\cite{ASH2}), they constructed weak solutions of Euler equations with compact support in time. For weak solutions with decreasing energy, there are some results by A. Shnirelman (\cite{ASH1}) and Camillo De Lellis, L\'{a}szl\'{o} Sz\'{e}kelyhidi (\cite{CDL,CDL0}).

Recently, a great progress was made by Camillo De Lellis, L\'{a}szl\'{o} Sz\'{e}kelyhidi, etc, in the construction of H\"{o}lder continuous solution. They developed an iterative scheme (some kind of convex integration) in \cite{CDL2} and constructed continuous periodic solution on $T^3$ which satisfies the prescribed kinetic energy by using Beltrami flow on $T^3$ and Geometric Lemma. The solution is a superposition of infinitely many perturbed and weakly interacting Beltrami flows. Combining the Nash-Moser mollify technique in the iterative scheme, they also constructed H\"{o}lder periodic solution with exponent $\theta$, for any $\theta<\frac{1}{10}$, which satisfies the prescribed kinetic energy in \cite{CDL3}.
In \cite{BCDL1}, Camillo De Lellis, L\'{a}szl\'{o} Sz\'{e}kelyhidi and T. Buckmaster constructed H\"{o}lder continuous weak solutions with any $\theta<\frac{1}{5}$, which satisfies the prescribed kinetic energy, also see \cite{BCDLI}. In whole space $R^3$, P. Isett and Sung-jin Oh in \cite{ISOH2} constructed H\"{o}lder continuous solutions  with  any $\theta<\frac{1}{5}$, which satisfies the prescribed kinetic energy or is a perturbation of smooth Euler flow. Moreover, S. Daneri considered the Cauchy problem for dissipative H\"{o}lder Euler flow in (\cite{DA,DAL}) and A. Choffrut studied h-principles for the incompressible Euler equations in \cite{CHO}.

For the Onsager critical spatial regularity (H\"{o}lder exponent $\theta=\frac{1}{3}$), there are also some progress. By keeping track of sharper, time localized estimates in \cite{TBU}, T. Buckmaster constructed H\"{o}lder continuous (with exponent $\theta<\frac{1}{5}-\varepsilon$ in time-space) periodic solutions which for almost every time belongs to $C_x^{\theta}$, for any $\theta<\frac{1}{3}$, and is compactly temporal supported. In \cite{BCDL2}, Camillo De Lellis, L\'{a}szl\'{o} Sz\'{e}kelyhidi and T. Buckmaster also constructed  H\"{o}lder continuous periodic solution which belongs to $L^1_tC_x^\theta$, for any $\theta<\frac{1}{3}$, and has compact support in time. Recently, P. Isett gives a proof of Onsager's conjecture in (\cite{IS2}), and C. De Lellis, L. Sz\'{e}kelyhidi, T. Buckmaster and V.Vicol give another short proof in \cite{BCDLV} for admissible weak solution. Furthermore, Buckmaster and Vicol establish the nonuniqueness of weak solution to the incompressible Navier-Stokes in \cite{BV} by introducing some new ideas.

Moreover, Vicol and Isett constructed H\"{o}lder continuous weak solution for some class of active scalar equations in \cite{ISV2},  Luo and Xin constructed H\"{o}lder continuous solutions with compact support in time for 3-dimensional Prandtl's system  in \cite{LX}.

Motivated by the above earlier works, we considered the Boussinesq equations and want to know if the similar phenomena can also happen when add the temperature effects. In the present paper, we consider the Boussinesq equations with vertical viscosity (or diffusion) on torus and show that vertical viscosity can't obstruct the anomalous dissipative phenomena in 3-dimensional Boussinesq equations. Following the general scheme in the construction of Euler equations and inspired by \cite{ISV2}, \cite{CCDE}, \cite{LX}  and \cite{NASH}, by establishing the corresponding geometric lemma and improving the iteration scheme, we obtain the following existence result.

We denote $v\in C^\alpha$ if
$$\sup_{(t,x,y,z)\neq(t',x',y',z')}\frac{|v(t,x,y,z)-v(t',x',y',z')|}{|(t,x,y,z)-(t',x',y',z')|^\alpha}<\infty,$$
 and denote $v(t,x,y,z)\in C_z^{\alpha}$ if
$$\sup_{t,x,y}\sup_{z\neq z'}\frac{|v(t,x,y,z)-v(t,x,y,z')|}{|z-z'|^\alpha}<\infty.$$

\begin{theorem}\label{t:main 1}
For any $\varepsilon> 0$,  there exists a triple
\begin{align}
    (v,p,\theta)\in C_c(R_+\times T^3)\nonumber
\end{align}
 such that they solve the system \eqref{e:partial boussinesq equation} in the sense of distribution with compact support in time
and
$$v\in C^{\frac{1}{55}-\varepsilon},\quad v\in C_z^{\frac{1}{19}-\varepsilon}\quad \theta\in  C^{\frac{1}{45}-\varepsilon},\quad \theta\in  C_z^{\frac{1}{15}-\varepsilon},\quad p\in C^\infty.$$
\end{theorem}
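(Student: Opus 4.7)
The plan is to mimic the Nash--De Lellis--Sz\'ekelyhidi convex integration scheme, adapted to the \emph{Boussinesq--Reynolds system}
\begin{align*}
\partial_t v_q + v_q \cdot \nabla v_q + \nabla p_q - \partial_{zz} v_q - \theta_q e_3 &= \mathrm{div}\, R_q, \\
\partial_t \theta_q + v_q \cdot \nabla \theta_q - \partial_{zz} \theta_q &= \mathrm{div}\, \widetilde{R}_q, \\
\mathrm{div}\, v_q &= 0.
\end{align*}
Starting from the trivial tuple $(0,0,0,0,0)$, I would construct inductively $(v_q, p_q, \theta_q, R_q, \widetilde{R}_q)$ with $R_q, \widetilde{R}_q \to 0$ geometrically, and with the increments $v_{q+1}-v_q$, $\theta_{q+1}-\theta_q$ summable in the anisotropic H\"older spaces $C^{\alpha}\cap C_z^{\alpha'}$ for the target exponents; the limit is then a weak solution. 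Compactness in time is imposed from the start by inserting temporal cutoffs into the amplitudes of each perturbation, so that every $(v_q,\theta_q)$ is supported in a fixed compact subset of $\mathbb{R}_+$.

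The inductive step $q\mapsto q+1$ has the usual structure, but with \emph{anisotropic} mollification at two spatial scales $\ell_h,\ell_v$ (with $\ell_v\ll \ell_h$) and a temporal scale $\ell_t$, followed by perturbations
\[
w_q = \sum_{k} a_k(t,x)\, W_k\bigl(\lambda_{q+1}^h x_h,\, \lambda_{q+1}^v z\bigr) + \text{correctors}, \qquad \vartheta_q = \sum_{k} b_k(t,x)\, \Theta_k\bigl(\lambda_{q+1}^h x_h,\, \lambda_{q+1}^v z\bigr) + \text{correctors},
\]
where $W_k$ are anisotropic Beltrami-like vector fields and $\Theta_k$ scalar companions, designed so that the averages $\langle W_k\otimes W_{k'}\rangle$ and $\langle W_k\Theta_{k'}\rangle$ reproduce the stresses via two geometric lemmas: the classical symmetric-matrix decomposition $R_q=\sum_k a_k^2(\mathrm{Id}-\xi_k\otimes\xi_k)$, plus a new (simpler) vector-valued decomposition $\widetilde{R}_q=\sum_k a_k b_k\,\xi_k$. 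Setting $v_{q+1}=v_q+w_q$ and $\theta_{q+1}=\theta_q+\vartheta_q$, the new stresses are obtained by applying an anisotropic inverse-divergence operator to each error class: transport, Nash, oscillation, the coupling error from $\vartheta_q e_3$, and the new \emph{vertical dissipation error} from $\partial_{zz}w_q,\partial_{zz}\vartheta_q$.

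The delicate part is the algebra of the parameters $\ell_h,\ell_v,\lambda_{q+1}^h,\lambda_{q+1}^v,\delta_{q+1}^{1/2},\widetilde{\delta}_{q+1}^{1/2}$. One aims for
\[
\|w_q\|_{C^0}\lesssim \delta_{q+1}^{1/2},\quad \|\nabla_h w_q\|_{C^0}\lesssim \lambda_{q+1}^h\delta_{q+1}^{1/2},\quad \|\partial_z w_q\|_{C^0}\lesssim \lambda_{q+1}^v\delta_{q+1}^{1/2},
\]
and the symmetric estimates for $\vartheta_q$. The isotropic $C^{\alpha}$ seminorm then interpolates between the size and the $\max(\lambda_{q+1}^h,\lambda_{q+1}^v)$ gradient, whereas $C_z^{\alpha'}$ interpolates only against $\lambda_{q+1}^v$. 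Taking $\lambda_{q+1}^v\gg \lambda_{q+1}^h$ therefore buys genuinely better regularity in $z$, which is what the theorem statement is recording. The particular numbers $1/55,1/19$ for $v$ and $1/45,1/15$ for $\theta$ are the output of optimizing the resulting linear program in the exponents $(b,b_h,b_v,\beta_v,\beta_\theta)$ subject to all five error bounds simultaneously.

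The main obstacle, relative to the Euler case of \cite{CDL3,BCDL1}, is the vertical dissipation error: $\partial_{zz}w_q$ has size $(\lambda_{q+1}^v)^2\delta_{q+1}^{1/2}$, and after the inverse-divergence operator its contribution to $R_{q+1}$ must be $\lesssim \delta_{q+2}$. This imposes an upper bound on $\lambda_{q+1}^v$ that is strictly more restrictive than anything in the inviscid scheme, and it competes directly with the oscillation error (which prefers high frequency) and with the target $z$-regularity (which also prefers high $\lambda_{q+1}^v$). Balancing these three constraints, while simultaneously closing the temperature system through the new vector-valued geometric lemma, is the heart of the argument; once these estimates are in place, convergence of $\sum_q w_q$ and $\sum_q \vartheta_q$ in the asserted anisotropic H\"older spaces is a standard Littlewood--Paley summation.
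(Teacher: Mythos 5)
The central idea that you need—exploiting anisotropy in the frequency parameters so that the vertical dissipation error and the enhanced $z$-regularity can both be controlled—is correct, but you have the direction of the anisotropy \emph{backwards}, and this reversal would cause the scheme to fail on both counts. You assert that choosing $\lambda_{q+1}^v\gg\lambda_{q+1}^h$ ``buys genuinely better regularity in $z$''. It does not: an increment of size $\delta_{q+1}^{1/2}$ oscillating at vertical frequency $\lambda_{q+1}^v$ has $C_z^{\alpha'}$ seminorm $\sim\delta_{q+1}^{1/2}(\lambda_{q+1}^v)^{\alpha'}$, so summability in $C_z^{\alpha'}$ demands that $\lambda_{q+1}^v$ grow \emph{slowly}, not fast; larger $\lambda^v$ gives strictly worse $z$-H\"older regularity. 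Simultaneously, the vertical dissipation error $\partial_{zz}w_q\sim(\lambda_{q+1}^v)^2\delta_{q+1}^{1/2}$ also blows up if $\lambda^v$ is large, since the inverse divergence only gains a single factor of $\lambda^{-1}$. There is therefore no genuine tension of the kind you describe: \emph{both} the dissipation error \emph{and} the target $z$-regularity want $\lambda^v$ as small as possible, and the theorem's exponents $\frac1{19}>\frac1{55}$ and $\frac1{15}>\frac1{45}$ reflect precisely that the vertical regularity is \emph{better}, which can only come from the perturbation being \emph{slower} in $z$. Your proposed regime $\lambda^v\gg\lambda^h$ would produce increments rougher in $z$ than horizontally, the reverse of what is being proved.

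The paper resolves this by pushing the anisotropy to its extreme: the oscillatory phase is taken to be \emph{purely horizontal}. The perturbation has the form $b_{il}\,k_i\,e^{\pm i\lambda_i 2^{[l]}(k_{ih}^\perp,0)\cdot((x,y,z)-lt/\mu)}$, where the direction vector $k_i$ may have a nonzero $z$-component but the phase vector $(k_{ih}^\perp,0)$ has none. Thus $\partial_z$ of the phase vanishes identically; the $z$-dependence of $w$ and $\chi$ comes only from the mollified, slowly-varying amplitudes $b_{il},\beta_{il}$, whose $z$-derivatives are of order $\mu\bar{\Lambda}$ rather than $\lambda$. This kills the $(\lambda^v)^2$ factor in the dissipation error and is simultaneously what produces $\|\partial_z w\|_0\ll\|\nabla_h w\|_0$, hence the better $C_z^{\alpha'}$ exponent. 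Closing the oscillation error with only horizontal phases then forces the Mikado/Nash-twist structure $k_i\perp(k_{ih}^\perp,0)$ and the rank-one decomposition $R=\sum_{i=1}^6\gamma_i(R)\,k_i\otimes k_i$, $f=\sum_{i=1}^3 b_i(f)\,k_i$, removed one block per sub-step over six sub-steps (a Nash-Kuiper style stage), rather than via a single Beltrami superposition with the $\mathrm{Id}-\xi\otimes\xi$ decomposition you invoke. You would also need separate horizontal/vertical mollification scales $\ell_1,\ell_{1z}$ for the amplitudes (as the paper uses) rather than a single spatial pair applied to the fast Beltrami profile. Without the purely-horizontal-phase structure, the balance you sketch in the final paragraph cannot be closed.
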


\begin{Remark}
For the anisotropic Boussinesq equations
\begin{equation}
\begin{cases}
\partial_tv+v\cdot\nabla v+\nabla p-\partial_{xx}v=\theta e_{3}, \quad {\rm in} \quad R_+\times T^3,\\
{\rm div}v=0,\quad {\rm in} \quad R_+\times T^3,\\
\partial_t\theta+v\cdot\nabla\theta-\partial_{xx}\theta=0, \quad {\rm in} \quad R_+\times T^3 \nonumber
\end{cases}
\end{equation}
and
\begin{equation}
\begin{cases}
\partial_tv+v\cdot\nabla v+\nabla p-\partial_{yy}v=\theta e_{3}, \quad {\rm in} \quad R_+\times T^3,\\
{\rm div}v=0,\quad {\rm in} \quad R_+\times T^3,\\
\partial_t\theta+v\cdot\nabla\theta-\partial_{yy}\theta=0, \quad {\rm in} \quad R_+\times T^3,\nonumber
\end{cases}
\end{equation}
by a similar argument, we also can construct H\"{o}lder continuous weak solution with compact support in time.
\end{Remark}

\begin{Remark}
One can construct more regularity solution which does not depend on $z$. But here we are interested in constructing H\"{o}lder continuous weak solution depending on $z$.
\end{Remark}

\begin{Remark}
In the iterative scheme, we don't change the pressure, see Proposition \ref{p: iterative 1}, hence the pressure in our theorem is smooth.
\end{Remark}

\begin{Remark}
The H\"{o}lder exponents in our theorem is very small, the main reason is because we use a multiple-step iterative scheme which is similar to Nash's isometric embedding of manifold manifold. Moveover, if we use the technique of \cite{BCDLI} to improve our construction, then we can improve the estimate for transport term(see Lemma \ref{e:transport estimate} and Lemma \ref{e:trans}) and obtain more better regularity. But, if we want to improve the regularity largely, then we should find a good building block for Boussinesq equation in the iterative scheme, like Beltrami flow or Mikado flow for the Euler equation.
\end{Remark}

\setcounter{equation}{0}

\section{Setup and Plan of the paper}
As in \cite{CDL2}, the proof of theorem \ref{t:main 1} will be achieved through an iteration procedure. In what follows $\mathcal{S}^{3\times3}$ denotes the vector space of symmetric $3\times3$ matrices.
\begin{Definition}
Assume that $v,p,\theta,R,f$ are smooth functions on $R_+\times T^3$ taking values,~respectively,~in $R^{3},R,R,\mathcal{S}^{3\times3},R^{3}$.
We say that they solve the anisotropic Boussinesq-Stress system  if
\begin{equation}\label{d:anistropic boussinesq reynold}
\begin{cases}
\partial_tv+v\cdot\nabla v+\nabla p-\partial_{zz}v=\theta e_{3}+{\rm div}R, \quad {\rm in} \quad R_+\times T^3,\\
{\rm div}v=0,\quad {\rm in} \quad R_+\times T^3,\\
\partial_t\theta+v\cdot\nabla\theta-\partial_{zz}\theta={\rm div}f, \quad {\rm in} \quad R_+\times T^3.
\end{cases}
\end{equation}
\end{Definition}

\subsection{Some notation on norm}
In the following, $ m=0,1,2,...$ and $\beta$ is a multi index.
We denote the norm $\|f\|_0$ by
\begin{align}
 \|f\|_{0}:=\hbox{sup}_{T^{3}}|f|.\nonumber
 \end{align}
 Then, define the semi-norm
 \begin{align}
 [f]_{m}:=\hbox{max}_{|\beta|=m}\|\nabla^{\beta}f\|_{0}\nonumber
\end{align}
and norm
\begin{align}
\|f\|_{m}:=\sum_{j=0}^{m}[f]_{j}.\nonumber
\end{align}
If $f=f_1+if_2$ is a complex-valued function, then we set $\|f\|_m:=\|f_1\|_m+\|f_2\|_m$.

Moreover, for function depending on space and time, we introduce the following space-time norm:
\begin{align}
\|f\|_m:=\sup_t\|f(t,\cdot)\|_m,\quad \|f\|_{C^1}:=\|f\|_1+\|\partial_tf\|_0,\quad \|f\|_{C_z^1}=\|f\|_0+\|\partial_zf\|_0.\nonumber
\end{align}
We now state the main proposition of the paper,~of which Theorem \ref{t:main 1} is a corollary.
\begin{proposition}\label{p: iterative 1}
Let $1<a<b<4$ be any two numbers and $\varepsilon>0$ be any small constant. Then there exists a absolute positive constant $M$ such that the following properties hold:

For any $0<\kappa\leq 1$,~if $(v,~ p, ~\theta, ~R, ~f)\in C_c^{\infty}((a,b)\times T^3)$ solves Boussinesq-Stress system (\ref{d:anistropic boussinesq reynold}) and
\begin{align}
     \|R\|_0\leq& \kappa,\label{e:reynold initial}\\ \|f\|_0\leq& \kappa,\label{e:reynold initial 2}
\end{align}
set
\begin{align}\label{d:difinition on c1 norm}
\Lambda:=\max\{1, \|R\|_{C^1}, \|f\|_{C^1}, \|v\|_{C^1}, \|\theta\|_{C^1}\},\quad
\bar{\Lambda}:=\max\{1, \|R\|_{C_z^1}, \|f\|_{C_z^1}, \|v\|_{C_z^1}, \|\theta\|_{C_z^1}\},
\end{align}
then for any $\bar{\kappa}\leq\kappa^{\frac{3}{2}}$, we can construct new functions $(\tilde{v},~\tilde{p},~\tilde{\theta},~\tilde{R},~\tilde{f})\in C_c^{\infty}((a-\kappa,b+\kappa)\times T^3),$  which also solves Boussinesq-Stress system (\ref{d:anistropic boussinesq reynold}) and satisfies
\begin{align}
    \|\tilde{R}\|_0\leq&\bar{\kappa} ,\label{e:next step strees estimate}\\
     \|\tilde{f}\|_0\leq& \bar{\kappa},\\
    \|\tilde{v}-v\|_0 \leq& M\sqrt{\kappa},\\
   \|\tilde{\theta}-\theta\|_0 \leq& M\sqrt{\kappa},\\
    \tilde{p}=p,\label{e:next step pressure estimate}
    \end{align}
    and
\begin{align}\label{b:bound on first derivative}
\Lambda_1:=&\max\{1,\|\tilde{R}\|_{C^1}, \|\tilde{f}\|_{C^1}, \|\tilde{v}\|_{C^1}, \|\tilde{\theta}\|_{C^1}\}\nonumber\\
\leq& A\Big(\kappa^{\frac{6+11\varepsilon}{2}}
 \Big(\frac{\kappa}{\bar{\kappa}^2}\Big)^{6+16\varepsilon}\Lambda^{(1+\varepsilon)^6}
 +\frac{\kappa^{5.5+11\varepsilon}\bar{\Lambda}^{2(1+\varepsilon)^5}}
{\bar{\kappa}^{6+16\varepsilon}}\Big(\frac{\sqrt{\kappa}}{\bar{\kappa}}\Big)^{7+21\varepsilon}\Big),\nonumber\\
\bar{\Lambda}_1:=&\max\{1,\|\tilde{R}\|_{C_z^1}, \|\tilde{f}\|_{C_z^1}, \|\tilde{v}\|_{C_z^1}, \|\tilde{\theta}\|_{C_z^1}\}
\leq A\Big(\frac{\kappa}{\bar{\kappa}}\Big)^6\bar{\Lambda}.
\end{align}
More precisely, we have
\begin{align}\label{e:growth estimate for temperture}
\|\partial_z\tilde{\theta}\|_0\leq &A\Big(\frac{\kappa}{\bar{\kappa}}\Big)^3\bar{\Lambda},\nonumber\\
\|(\partial_t,\partial_x,\partial_y)\tilde{\theta}\|_0
\leq&A\Big(\kappa^{\frac{3+\varepsilon}{2}}
 \Big(\frac{\kappa}{\bar{\kappa}^2}\Big)^{3+4\varepsilon}\Lambda^{(1+\varepsilon)^3}
 +\frac{\kappa^{\frac{5}{2}+\varepsilon}\bar{\Lambda}^{2(1+\varepsilon)^2}}
{\bar{\kappa}^{3+4\varepsilon}}\Big(\frac{\sqrt{\kappa}}{\bar{\kappa}}\Big)^{4+6\varepsilon}\Big),
\end{align}
where the constant $A$ depends on $\varepsilon, \|v\|_0.$
\end{proposition}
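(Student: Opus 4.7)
The plan is to execute one step of a convex integration scheme adapted to the vertical-viscosity anisotropy, in the spirit of De Lellis--Sz\'ekelyhidi but with Beltrami-type building blocks whose wave vectors have no vertical component, so that the dissipative term $-\partial_{zz}$ does not destroy the high-frequency balance. First I would mollify $(v,\theta,R,f)$ in space at scale $\ell$ and in time at scale $\tau$ to obtain smooth $(v_\ell,\theta_\ell,R_\ell,f_\ell)$. The mollification introduces two commutator errors of Constantin--E--Titi type: one in the momentum equation and one in the temperature equation. Their $C^0$ norms are controlled by $\ell^2\|v\|_{C^1}^2$ and $\ell^2\|v\|_{C^1}\|\theta\|_{C^1}$ respectively, and will be absorbed into the final stresses once $\ell$ is chosen small enough in terms of $\bar\kappa$.

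Next I would construct the perturbation in the form $w=w_o+w_c$ for the velocity and $s=s_o+s_c$ for the temperature, oscillating at a large frequency $\mu$. The principal parts $w_o,s_o$ are superpositions of Beltrami-type waves
\[
w_o=\sum_k a_k(t,x)\,\phi_k(\Phi)\,B_k\,e^{i\mu k\cdot\Phi},\qquad
s_o=\sum_k b_k(t,x)\,\psi_k(\Phi)\,e^{i\mu k\cdot\Phi},
\]
where $\Phi=\Phi(t,x)$ is the Lagrangian flow of $v_\ell$, and the wave vectors $k$ are chosen in the horizontal plane $\{k_3=0\}$. The coefficients $a_k,b_k$ are selected via the corresponding geometric lemma (stated earlier in the paper in an isotropic form, and here applied separately to $\mathrm{Id}-R_\ell/\rho$ for the stress decomposition and to $f_\ell/\rho'$ for the temperature-flux decomposition). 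The correctors $w_c,s_c$ are low-order terms ensuring $\mathrm{div}\,w=0$ and that $w_o\theta_\ell + v_\ell s_o$ can be represented as a divergence up to the expected high-frequency errors.

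I then set $\tilde v=v_\ell+w$, $\tilde\theta=\theta_\ell+s$, $\tilde p=p$ and define the new stresses
\[
\tilde R=\mathcal{R}(\text{transport}+\text{oscillation}+\text{Nash}+\text{commutator}+\text{dissipative}),\qquad
\tilde f=\mathcal{R}'(\cdots),
\]
where $\mathcal{R},\mathcal{R}'$ are the inverse-divergence operators (symmetric-matrix-valued and vector-valued respectively). The key point in our anisotropic setting is that the dissipative error $\partial_{zz}w$ is harmless because $k\cdot e_3=0$ forces $\partial_{zz}$ to fall only on the \emph{slow} amplitudes $a_k,\phi_k$; this is what allows the $z$-derivative bound $\|\partial_z\tilde\theta\|_0\le A(\kappa/\bar\kappa)^3\bar\Lambda$ without paying the full $\mu$ factor. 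Stationary phase on each term, together with $\|a_k\|_0,\|b_k\|_0\lesssim\sqrt\kappa$ (which also gives $\|\tilde v-v\|_0,\|\tilde\theta-\theta\|_0\le M\sqrt\kappa$), yields
\[
\|\tilde R\|_0+\|\tilde f\|_0\lesssim \frac{\|v_\ell\|_{C^1}}{\mu}\sqrt\kappa+\ell^2\Lambda^2+\mu\ell\,\kappa+\text{lower order}.
\]

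The remaining and most delicate task is the parameter tuning. The chain $(1+\varepsilon)^6$ in the $C^1$ bound signals the intended choice
\[
\ell\simeq \Lambda^{-(1+\varepsilon)},\qquad \tau\simeq\ell/\sqrt\kappa,\qquad \mu\simeq \left(\frac{\sqrt\kappa}{\bar\kappa}\right)^{\!1+\varepsilon}\!\Lambda^{(1+\varepsilon)^k}
\]
iterated through the mollification, transport and geometric-lemma losses, with an extra power in $\bar\Lambda$ tracked separately for vertical derivatives because horizontal and vertical smoothing scales must be decoupled to respect the anisotropy. The hard part, and the reason for the complicated right-hand sides in \eqref{b:bound on first derivative} and \eqref{e:growth estimate for temperture}, is to verify that these choices simultaneously (i) force $\|\tilde R\|_0,\|\tilde f\|_0\le\bar\kappa$, (ii) keep the temporal support within $(a-\kappa,b+\kappa)$ via a cutoff in $t$, and (iii) produce the stated $C^1$ and $C_z^1$ growth; the separate temperature estimate \eqref{e:growth estimate for temperture} is obtained by noting that $s$ solves an advection--diffusion problem driven by $w$, so $\partial_z s$ gains one fewer power of $\mu$ than $(\partial_t,\partial_x,\partial_y)s$ because of the horizontal orientation of the frequencies. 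Once all exponents line up, smoothing by convolution produces the required $C_c^\infty$ regularity and compact temporal support, completing the iteration step.
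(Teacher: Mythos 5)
Your plan is not the one the paper carries out, and it contains a real gap at the point you single out as its main advantage. The crucial anisotropic feature — that the oscillatory phase has no $z$-dependence, so that $\partial_{zz}$ falls only on slow amplitudes — is \emph{not} preserved by a Lagrangian phase $\Phi(t,x)$. If you write $w_o\sim a_k\, e^{i\mu k\cdot\Phi}$ with $k\cdot e_3=0$, the effective frequency is $\mu\,\nabla\Phi^T k$, and $\nabla\Phi^T k$ develops a nonzero third component as soon as the flow map of $v_\ell$ deforms the vertical direction (i.e.\ generically, for $t>t_0$). Then $\partial_{zz}$ hits the exponential and brings back a factor $\mu^2$, destroying precisely the smallness you claim. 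This is exactly why the paper does \emph{not} transport along the flow map: it uses the discrete partition-of-unity ansatz with constant drift velocities $l/\mu$, so that the phase is a rigid wave $e^{i\lambda\,2^{[l]}\,(k_{ih}^\perp,0)\cdot\bigl((x,y,z)-\frac{l}{\mu}t\bigr)}$ with genuinely horizontal wave vector for all time; see \eqref{d:w 1ol} and its analogues.

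Beyond this, your outline differs structurally from the paper's proof in two other ways that matter for the stated exponents. First, the paper does not invoke a ``geometric lemma'' of the Beltrami-wave type applied to $\mathrm{Id}-R_\ell/\rho$; it uses the \emph{exact} linear decomposition $R=\sum_{i=1}^6\gamma_i(R)\,k_i\otimes k_i$ of \eqref{d:decomposition 1} (and $f=\sum_i b_i(f)k_i$ from \eqref{d:decomposition 2}), with the scalar $e(t)$ shift to guarantee positivity of the coefficients. Correspondingly, the building blocks are not Beltrami flows but curls of plane waves (so $w_{1ol}+w_{1cl}$ is an explicit curl). Second, the proof of Proposition~2.1 is \emph{not} one perturbation step; it is a six-stage inner iteration $(v_{0n},\theta_{0n},R_{0n},f_{0n})$, $n=1,\dots,6$, removing one rank-one block $k_n\otimes k_n$ per stage (and one vector block of $f$ for $n\le 3$). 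The exponent $(1+\varepsilon)^6$ in \eqref{b:bound on first derivative} and $(1+\varepsilon)^3$ in \eqref{e:growth estimate for temperture} are exactly the accumulated losses from these six (resp.\ three) stages, so a single-step scheme cannot reproduce them without a different accounting. Finally, the paper introduces the separate vector-valued inverse-divergence operator $\mathcal{G}$ (Lemma~\ref{p:inverse 2}) for the temperature stress $f$, distinct from the matrix-valued $\mathcal{R}$; your proposal gestures at two operators $\mathcal{R},\mathcal{R}'$ but you would need the vector version and its Lemma~\ref{p:inverse 2} stationary-phase estimate to close the $\tilde f$ bound.

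To repair your proposal while keeping its spirit, replace the Lagrangian phase by constant drift velocities $l/\mu$ via a partition of unity $\{\alpha_l\}$ in velocity space (so the phase remains a plane wave with horizontal wave vector for all $t$), replace the approximate geometric lemma by the exact six-term decomposition of $R$ (with the $e(t)$ shift to guarantee positivity of $e(t)-a_{i\ell_1}$), iterate the perturbation six times, and use two distinct inverse-divergence operators for $R$ and $f$. The parameter-tuning skeleton you wrote down (two mollification scales $\ell,\ell_z$ decoupled by anisotropy, $\mu\sim\sqrt\kappa/\bar\kappa$, $\lambda_n\sim\ell_n^{-(1+\varepsilon)}$) is on the right track, but the exponents only come out as stated once the six-stage accounting is put in.
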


%\begin{Remark}
%In fact, in our construction, $p_1=p$.
%\end{Remark}
We will prove Proposition \ref{p: iterative 1} in the subsequent sections. Now, we show how to obtain Theorem \ref{t:main 1} from this proposition.
\begin{proof}[Proof of theorem 1.1]
We first set
\begin{align}
&v_{0}:=10M\chi(t)
\left(\begin{array}{cc}
\cos(\lambda y)\\
0\\
0
\end{array}
\right),\quad \theta_0:=10M\chi(t)\cos(\lambda y), \quad p_{0}:=10M\chi'(t)\frac{\sin(\lambda z)}{\lambda},\nonumber\\
&R_{0}:=
10M\left(
\begin{array}{ccc}
0  &  \chi'(t)\frac{\sin(\lambda y)}{\lambda}  &  0 \\
\chi'(t)\frac{\sin(\lambda y)}{\lambda}  & 0 & - \chi(t)\frac{\sin(\lambda y)}{\lambda}\\
0  & - \chi(t)\frac{\sin(\lambda y)}{\lambda} & \chi'(t)\frac{\sin(\lambda z)}{\lambda}
\end{array}
\right),\quad
f_{0}:=10M\chi'(t)\left(
\begin{array}{cc}
0\\
 \frac{\sin(\lambda y)}{\lambda}\\
  0
\end{array}
\right),\nonumber
\end{align}
 where $\chi(t)\in C_c^{\infty}(1,4)$, $\chi=1~~~{\rm in}~~~(2,3)$ and $0\leq\chi\leq1$.\\
Obviously, they solve Boussinesq-Stress system (\ref{d:anistropic boussinesq reynold}).
We take $a,b \geq \frac{3}{2}$ and set $\kappa_n:=a^{-b^n},\quad n=0,1,2,\cdots$. Then taking $\lambda$ sufficiently large such that
\begin{align}
\|R_{0}\|_0\leq \kappa_0,~~
\|f_{0}\|_0\leq \kappa_0.\nonumber
\end{align}
Then, by using Proposition \ref{p: iterative 1} iteratively, we can construct $$(v_{n},~p_{n},~\theta_{n},~R_{n},~f_{n})\in
C_c^\infty((1-\sum_{i=0}^n\kappa_i, 4+\sum_{i=0}^n\kappa_i)\times T^3), \quad n=1,2,\cdots$$
such that they solve system (\ref{d:anistropic boussinesq reynold}) and satisfy the following estimates
\begin{eqnarray}
    &\|R_{n}\|_0\leq \kappa_n ,\label{e:stress_final}\\
    &\|f_{n}\|_0\leq  \kappa_n,\\
    &\|v_{n+1}-v_{n}\|_0 \leq M\sqrt{\kappa_n},\label{e:velocity_final}\\
    &\|\theta_{n+1}-\theta_{n}\|_0 \leq M\sqrt{\kappa_n},\label{e:temperature_final}\\
    &p_{n+1}=p_{n},\label{e:pressure_final}\\
    &\Lambda_{n+1}:=\max\{1, \|R_{n+1}\|_{C^1}, \|f_{n+1}\|_{C^1}, \|v_{n+1}\|_{C^1}, \|\theta_{n+1}\|_{C^1}\}\nonumber\\
    &\quad \leq  A\Big(\kappa_n^{\frac{6+11\varepsilon}{2}}
 \big(\frac{\kappa_n}{\kappa^2_{n+1}}\big)^{6+16\varepsilon}\Lambda_n^{(1+\varepsilon)^6}
 +\frac{\kappa_n^{5.5+11\varepsilon}\bar{\Lambda}_n^{2(1+\varepsilon)^5}}
{\kappa_{n+1}^{6+16\varepsilon}}\big(\frac{\sqrt{\kappa_n}}{\kappa_{n+1}}\big)^{7+21\varepsilon}\Big),\nonumber\\
&\bar{\Lambda}_{n+1}:=\max\{1,\|R_{n+1}\|_{C_z^1}, \|f_{n+1}\|_{C_z^1}, \|v_{n+1}\|_{C_z^1}, \|\theta_{n+1}\|_{C_z^1}\}
\leq A\big(\frac{\kappa_n}{\kappa_{n+1}}\big)^6\bar{\Lambda}_n.
\end{eqnarray}
More precisely, we have
\begin{align}\label{e:n sequence estimate growth}
\|\partial_z\theta_{n+1}\|_0\leq& A\Big(\frac{\kappa_n}{\kappa_{n+1}}\Big)^3\bar{\Lambda}_n,\nonumber\\
\|(\partial_t,\partial_x,\partial_y)\theta_{n+1}\|_0\leq&
A\Big(\kappa_n^{\frac{3+\varepsilon}{2}}
 \big(\frac{\kappa_n}{\kappa_{n+1}^2}\big)^{3+4\varepsilon}\Lambda_n^{(1+\varepsilon)^3}
 +\frac{\kappa_n^{\frac{5}{2}+\varepsilon}\bar{\Lambda}_n^{2(1+\varepsilon)^2}}
{\kappa_{n+1}^{3+4\varepsilon}}\big(\frac{\sqrt{\kappa_{n}}}{\kappa_{n+1}}\big)^{4+6\varepsilon}\Big).
\end{align}
It's obvious that $\sum_{i=0}^\infty\kappa_i<\frac{1}{2}$. By (\ref{e:stress_final})-(\ref{e:pressure_final}), we know that $(v_{n},~p_{n},~\theta_{n},~R_{n},~f_{n})$ are Cauchy sequence in $C_c((0,5)\times T^3)$, therefore there exist
\[\begin{aligned}
    (v,p,\theta)\in C_c((0,5)\times T^3)
\end{aligned}\]
such that
\[\begin{aligned}
    v_{n}\rightarrow v,\quad p_{n}\rightarrow p,\quad \theta_{n}\rightarrow \theta,
    \quad R_{n}\rightarrow 0,\quad f_{n}\rightarrow 0
\end{aligned}\]
in $C_c((0,5)\times T^3)$.

By (\ref{e:velocity_final}),
$$\|v-v_0\|_0\leq M\sum\limits_{n=0}^{\infty}\sqrt{\kappa_n}<4M,$$
therefore $$\|v\|_0\geq 6M,~~v\neq0.$$
Similarly, by (\ref{e:temperature_final}),
$$\theta\neq 0.$$
Passing into the limit in (\ref{d:anistropic boussinesq reynold}), we conclude that $v,~p,~\theta$ solve (\ref{d:anistropic boussinesq reynold}) in the sense of distribution.

Next, we prove that the solution $v , \theta$ is H\"{o}lder continuous. We claim that for a suitable choice of $a, b$, there exist constants $c, \bar{c}> 1$ such that
$$\Lambda_n\leq a^{cb^n},\quad \bar{\Lambda}_n\leq a^{\bar{c}b^n}.$$
We prove this claim by induction.\\
Indeed, for $n=0$, it's obvious if we take $a\geq \Lambda_0:=\max\{1, \|R_0\|_1, \|f_0\|_1, \|v_0\|_1, \|\theta_0\|_1\}$. Assuming that we have proved $\bar{\Lambda}_n\leq a^{\bar{c}b^n}$, then
\begin{align}
\bar{\Lambda}_{n+1}\leq A\Big(\frac{\kappa_n}{\kappa_{n+1}}\Big)^6\bar{\Lambda}_n
\leq Aa^{-\frac{\varepsilon}{2}}a^{\bar{c}b^{n+1}}a^{\big(6(b-1)+\frac{\varepsilon}{2}+\bar{c}-\bar{c}b\big)b^n}.\nonumber
\end{align}
We impose $\varepsilon<\frac{1}{2000}$ and set
$$b=\frac{3}{2},\quad \bar{c}=6+\varepsilon,$$
then we have
$$\bar{\Lambda}_{n+1}\leq  Aa^{-\frac{\varepsilon}{2}}a^{\bar{c}b^{n+1}}.$$
Then by choosing $a\geq A^{\frac{2}{\varepsilon}}$, we have $\bar{\Lambda}_{n+1}\leq a^{\bar{c}b^{n+1}}.$ Finally, we take $a:=\max\{A^{\frac{2}{\varepsilon}}, \Lambda_0 \}$, then the constant $a$ satisfies all conditions.
Moreover, assuming that we have proved $\Lambda_n\leq a^{cb^n}$, then
\begin{align}
\Lambda_{n+1}\leq&  A\Big(\kappa_n^{\frac{6+11\varepsilon}{2}}
 \Big(\frac{\kappa_n}{\kappa^2_{n+1}}\Big)^{6+16\varepsilon}\Lambda_n^{(1+\varepsilon)^6}
 +\frac{\kappa_n^{5.5+11\varepsilon}\bar{\Lambda}_n^{2(1+\varepsilon)^5}}
{\kappa_{n+1}^{6+16\varepsilon}}\Big(\frac{\sqrt{\kappa_n}}{\kappa_{n+1}}\Big)^{7+21\varepsilon}\Big),\nonumber\\
\leq&Aa^{-\frac{\varepsilon}{2}}a^{cb^{n+1}}\Big(a^{\big(9+27\varepsilon+c(1+\varepsilon)^6-\frac{3}{2}c\big)b^n}
+a^{\big(22.5+100\varepsilon-\frac{3}{2}c\big)b^n}\Big).\nonumber
\end{align}
Set
$$c=\frac{18+54\varepsilon}{1-14\varepsilon},$$
then we have
$$\Lambda_{n+1}\leq  Aa^{-\frac{\varepsilon}{2}}a^{cb^{n+1}}\leq a^{cb^{n+1}}.$$
Now we consider the approximate sequence $v_n, \theta_n$. By (\ref{e:velocity_final}), we have
\begin{align}
\|v_{n+1}-v_n\|_0\leq Ma^{-\frac{1}{2}b^n}.\nonumber
\end{align}
Moreover, we have
\begin{align}
\|v_{n+1}-v_n\|_{C^1}\leq \Lambda_n+\Lambda_{n+1}\leq 2a^{cb^{n+1}},\quad
\|\partial_z(v_{n+1}-v_n)\|_0\leq \bar{\Lambda}_n+\bar{\Lambda}_{n+1}\leq2a^{\bar{c}b^{n+1}}.\nonumber
\end{align}
Therefore, for any $\alpha, \alpha'\in (0,1)$,
\begin{align}
\|v_{n+1}-v_n\|_{C^\alpha}\leq  2Ma^{\big(\alpha cb-\frac{(1-\alpha)}{2}\big)b^n},\quad
\|v_{n+1}-v_n\|_{C_z^{\alpha'}}\leq  2Ma^{\big(\alpha' \bar{c}b-\frac{(1-\alpha')}{2}\big)b^n}.
\end{align}
If $\alpha<\frac{1}{1+2bc}$, then $\alpha cb-\frac{(1-\alpha)}{2}< 0$, thus $v_n$ are Cauchy sequence in $C^\alpha.$ Take the value of $b,c$, we know that $v\in C^\alpha$ for any $\alpha<\frac{1-14\varepsilon}{55+148\varepsilon}$. When $\varepsilon\rightarrow 0$, we have $\frac{1-14\varepsilon}{55+148\varepsilon}\rightarrow \frac{1}{55}$.\\
If $\alpha'<\frac{1}{1+2b\bar{c}}$, then $v_n$ are Cauchy sequence in $C^{\alpha'}_{z}.$ Thus, we know that $v\in C^{\alpha'}_z$ for any $\alpha'<\frac{1}{19+3\varepsilon}$.\\
By (\ref{e:pressure_final}), we know that $p\in C^\infty$.
By (\ref{e:temperature_final}), we have
\begin{align}
\|\theta_{n+1}-\theta_n\|_0\leq Ma^{-\frac{1}{2}b^n}.\nonumber
\end{align}
Moreover, by (\ref{e:n sequence estimate growth}), we have
\begin{align}
\|\partial_z\theta_{n+1}\|_0\leq  a^{(7.5+\varepsilon)b^n},\quad
\|(\partial_t,\partial_x,\partial_y)\theta_{n+1}\|_0\leq
a^{\big(4.5+8\varepsilon+c(1+\varepsilon)^3\big)b^n}.\nonumber
\end{align}
By interpolation, for any $\gamma, \gamma'\in (0,1)$, we have
\begin{align}
\|\theta_{n+1}-\theta_n\|_{C_z^{\gamma'}}\leq 2Ma^{\big(\gamma'(7.5+\varepsilon)-\frac{(1-\gamma')}{2}\big)b^n},\quad
\|\theta_{n+1}-\theta_n\|_{C^{\gamma}}\leq 2Ma^{\big(\gamma\big(4.5+8\varepsilon+c(1+\varepsilon)^3\big)-\frac{(1-\gamma)}{2}\big)b^n}.\nonumber
\end{align}
Take $\gamma'<\frac{1}{15+2\varepsilon}$ and $\gamma<\frac{1}{9+16\varepsilon+2c(1+\varepsilon)^3}$, then $\theta_n$ converge in $C^{\gamma'}_z$ and $C^{\gamma}$, which implies that $\theta\in C^{\gamma}$ and $\theta\in C_z^{\gamma'}$. When $\varepsilon\rightarrow 0$, we have $\frac{1}{9+16\varepsilon+2c(1+\varepsilon)^3}\rightarrow \frac{1}{45}$.
Thus, we complete our proof for the theorem.
\end{proof}

\subsection{Outline of the construction}

\indent

The rest of the paper will be dedicated to prove Proposition \ref{p: iterative 1}. The construction of the functions $\tilde{v}, \tilde{\theta}$ consists of several steps. In the first step, we adding perturbations to $v_0, \theta_0$  and get new functions $v_{01}, \theta_{01}$ as following:
 \begin{align}
 v_{01}=v_0+w_{1o}+w_{1c}:=v_0+w_1,\quad
 \theta_{01}=\theta_0+\chi_{1o}+\chi_{1c}:=\theta_0+\chi_1,\nonumber
 \end{align}
where $w_{1o}, w_{1c}, \chi_{1o}, \chi_{1c}$ are highly oscillatory functions and have explicit formula. Having added the perturbation, we will focus on finding functions $R_{01}, p_{01}$ and $f_{01}$ with the desired estimates which solve system (\ref{d:anistropic boussinesq reynold}). The main perturbation $w_{1o}$
and $\chi_{1o}$ will depend on four parameters, $\ell_1,~~\ell_{1z},~~\mu$ and $\lambda_{1}$, which will satisfy some additional conditions.

%\begin{align}
%\lambda_1, \mu, \frac{\lambda_1}{\mu}\in N.\nonumber
%\end{align}
%In order to achieve the estimates, $\mu$ and $\lambda_1$ will be chosen quite large, depending on $\|v_0\|_0$.
After the first step, the stresses $(R_{01}, f_{01})$ become smaller in the following sense: if
\begin{align}
e(t)\sum\limits_{i=1}^{6}k_i\otimes k_i-R_0=\sum\limits_{i=1}^{6}a_i^2k_i\otimes k_i,\quad
f_0=\sum\limits_{i=1}^{3}b_ik_i,\nonumber
\end{align}
where $k_i$ is defined in (\ref{e:six representation vector}), then
\begin{align}
R_{01}=\sum\limits_{i=2}^{6}a_i^2k_i\otimes k_i+\delta R_{01},\quad
f_{01}=\sum\limits_{i=2}^{3}b_ik_i+\delta f_{01},\nonumber
\end{align}
where $\delta R_{01}, \delta f_0$ can be arbitrary small through the appropriate choice on $\ell_1,~~\ell_{1z},~~\mu,~~\lambda_1$.

We repeat the above process, after six steps, we can obtain the desired $(\tilde{v},~\tilde{p},~\tilde{\theta},~\tilde{R},~\tilde{f}).$\\

The rest of paper is organized as follows: in section 3, we give a kind of decomposition of symmetric matrix and introduce two operators which are extensions of \cite{CDL2}. After these preliminaries, we perform the first step in sections 4, 5, 6. In section 4, we not only define the perturbation $w_{1o}, w_{1c}, \chi_{1o}, \chi_{1c}$ and new stress $R_{01}, f_{01}$, but also prescribed the constant $M$ of the estimates in Proposition 2.1. In sections 5 and 6, we will
calculate the main forms of $R_{01}, f_{01}$ and prove the relevant estimates of the various terms involved in the construction, in  term of the parameters
$\lambda_1, \mu, \ell_1, \ell_{1z}$ separately. After completing the first step, in sections 7, 8 and 9 we will construct $(v_{0n}, p_{0n}, \theta_{0n}, R_{0n}, f_{0n})$ and prove relevant estimates
by induction. First, we give construction in section 7 which is similar to the first step in section 4. After completing the construction, we calculate the main form $R_{0n}, f_{0n}$ in section 8 and prove the various error estimate in section 9 separately. Those two sections are also similar to section 5 and section 6 separately. Finally, in section 10, we will give a proof of Proposition 2.1 by choosing appropriate parameters $\mu,\lambda_n, \ell_n, \ell_{nz}$ for $1\leq n\leq 6$.

\setcounter{equation}{0}

\section{Preliminaries}

\subsection{Decomposition of symmetric matrix and vector}

\indent

Let
\begin{align}\label{e:six representation vector}
&k_1=(1,0,0)^T,\quad k_2=(0,1,0)^T,\quad k_3=(1,0,1)^T,\nonumber\\
&k_4=(1,1,0)^T,\quad k_5=(0,1,1)^T,\quad k_6=(1,1,1)^T,
\end{align}
then we have
\begin{align}
k_1\otimes k_1=\left(\begin{array}{ccc}
1 & 0 & 0\\
0 & 0 & 0\\
0 & 0 & 0
\end{array}\right),\quad
k_2\otimes k_2=
\left(\begin{array}{ccc}
0 & 0 & 0\\
0 & 1 & 0\\
0 & 0 & 0
\end{array}\right),\quad
k_3\otimes k_3=
\left(\begin{array}{ccc}
1 & 0 & 1\\
0 & 0 & 0\\
1 & 0 & 1
\end{array}\right),\nonumber\\
k_4\otimes k_4=
\left(\begin{array}{ccc}
1 & 1 & 0\\
1 & 1 & 0\\
0 & 0 & 0
\end{array}\right),\quad
k_5\otimes k_5=
\left(\begin{array}{ccc}
0 & 0 & 0\\
0 & 1 & 1\\
0 & 1 & 1
\end{array}\right),\quad
k_6\otimes k_6=
\left(\begin{array}{ccc}
1 & 1 & 1\\
1 & 1 & 1\\
1 & 1 & 1
\end{array}\right).\nonumber
\end{align}
Obviously, the above matrices form a basic of the space $\mathcal{S}^{3\times3}$ , hence
we have the following unique decomposition: there exist $\gamma_i\in C^\infty(\mathcal{S}^{3\times3})$ such that for any $R\in \mathcal{S}^{3\times3}$,
\begin{align}\label{d:decomposition 1}
R=\sum_{i=1}^6 \gamma_i(R)k_i\otimes k_i.
\end{align}
In fact, if we denote a symmetric matrix
\begin{align}
R=
\left(\begin{array}{ccc}
R_{11} & R_{12} & R_{13}\\
R_{12} & R_{22} & R_{23}\\
R_{13} & R_{23} & R_{33}
\end{array}\right)\nonumber
\end{align}
then we take
\beno
&&\gamma_1(R)=R_{11}-R_{12}-R_{33}+R_{23},\quad \gamma_2(R)=R_{22}-R_{33}-R_{12}+R_{13},\quad \gamma_3(R)=R_{33}-R_{23},\\
&&\gamma_4(R)=R_{12}-R_{13}+R_{33}-R_{23},\quad \gamma_5(R)=R_{33}-R_{13},\quad \gamma_6(R)=R_{13}-R_{33}+R_{23}.
\eeno
A straightforward computation give that $\gamma_i(R)$ satisfies (\ref{d:decomposition 1}). Moreover, it's obvious that they are smooth function.

Furthermore, we also have the following unique decomposition of 3-d vectors: there exist linear functions $b_i\in C^\infty(R^3)$ such that for any $f\in R^3$,
\begin{align}\label{d:decomposition 2}
f=\sum_{i=1}^3 b_i(f)k_i.
\end{align}
In fact, we take
\beno
(b_1(f), b_2(f), b_3(f))=(2f\cdot k_1-f\cdot k_3, f\cdot k_2, f\cdot k_3-f\cdot k_1).
\eeno
Obviously, $b_i$ is smooth, linear function and satisfies (\ref{d:decomposition 2}).

\subsection{The operator $\mathcal{R}$ and $\mathcal{G}$}

\indent

We define two operators
in order to deal with the stress error. The operator $\mathcal{R}$ was introduced in  \cite{IS1} and the operator $\mathcal{G}$ is given by us.
\subsubsection{The operator $\mathcal{R}$ }
%\begin{definition}\label{d:reyn_op}
%Let $v\in C^\infty (T^3, R^3)$ be a smooth vector field.
%Define $\mathcal{R}v$ to be the matrix-valued periodic function
%\begin{equation*}
%\mathcal{R}v:=\frac{1}{4}\Big(\nabla\mathcal{P} u+(\nabla\mathcal{P}u)^T\Big)+\frac{3}{4}\Big(\nabla u+(\nabla u)^T\Big)-\frac{1}{2}({\rm div}u)Id,
%\end{equation*}
%where $u\in C^{\infty}(T^3,R^3)$ is the solution of
%\begin{equation*}
%\Delta u=v-\fint_{T^3}v\textrm{ in } ~T^3
%\end{equation*}
%with $\fint_{T^3} u=0$ and $\mathcal{P}$ is the Leray projection onto divergence-free fields with zero average.
%\end{definition}
%Here and subsequent part, we use notation: $\fint_{T^3}f(x)dx=\frac{1}{(2\pi)^3}\int_{T^3}f(x)dx$.
The following lemma is taken from \cite{IS1}, we copy it here for the completeness of the paper (the proof refers to \cite{IS1}).
\begin{lemma}[$\mathcal{R}=\textrm{div}^{-1}$]\label{l:reyn}
There exists a linear operator $\mathcal{R}$ from $C^\infty (T^3, R^3)$ to $C^\infty (T^3, \mathcal{S}^{3\times3})$ such that the following property holds: for any $v\in C^\infty (T^3, R^3)$ we have
\begin{itemize}
\item[(1)] $\mathcal{R}v(x)$ is a symmetric matrix for each $x\in T^3$;
\item[(2)] ${\rm div}\mathcal{R} v = v-\fint_{T^3}v$.
\item[(3)] Let $F(x)=a(x)e^{i\lambda k\cdot x}$ be a smooth vector field on $T^3$ with $\lambda \in {\rm N}, k\in R^3\setminus\{0\}$ and $\int_{T^3}F(x)dx=0$. Then for any integer $m\geq 1$, we have the estimate
    \begin{align}\label{e:estimate oscillatory inverse}
    \|\mathcal{R}(F)\|_0\leq C_m\Big(\sum_{i=0}^{m-1}\frac{\|a\|_i}{\lambda^{i+1}}+\frac{\|a\|_m}{\lambda^{m}}\Big).
    \end{align}
\end{itemize}
\end{lemma}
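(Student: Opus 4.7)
The plan is to construct $\mathcal{R}$ explicitly via Fourier multipliers built from $\Delta^{-1}$ and the Leray projector, check symmetry and the divergence identity by direct computation, and then establish the oscillatory estimate by iterated integration by parts on a single Fourier mode.

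First I would reduce to mean-zero vector fields (replacing $v$ by $v - \fint v$) and define $\mathcal{R}$ by the De Lellis--Sz\'ekelyhidi / Isett formula
\begin{equation*}
(\mathcal{R}v)_{ij} \;=\; \partial_k \Delta^{-1}\bigl(\partial_i u^{(j)}_k + \partial_j u^{(i)}_k\bigr) \;-\; \tfrac{1}{2}\bigl(\partial_i u^{(j)}_i\text{-type correction}\bigr),
\end{equation*}
or more concretely, a fixed linear combination of terms of the form $\partial_a \partial_b \Delta^{-2} v_c$ and $\partial_a \Delta^{-1} v_b$, chosen so that (i) the result is symmetric in the two free indices, and (ii) taking the divergence on the $i$-index returns $v_j$ (modulo its mean). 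One natural choice is to set $u = \Delta^{-1}v + \tfrac{1}{2}\nabla\Delta^{-2}\,\mathrm{div}\,v$ and then
\begin{equation*}
\mathcal{R}v \;=\; \tfrac14\bigl(\nabla \mathcal{P}v + (\nabla \mathcal{P}v)^T\bigr) + \tfrac34\bigl(\nabla u + (\nabla u)^T\bigr) - \tfrac12 (\mathrm{div}\,u)\,\mathrm{Id},
\end{equation*}
where $\mathcal{P}$ is the Leray projector. Symmetry is manifest from the construction, and a direct computation (using $\mathrm{div}\,\mathcal{P}v=0$ and $\Delta u = v + \tfrac12\nabla\,\mathrm{div}\,\Delta^{-1}v$) yields $\mathrm{div}\,\mathcal{R}v = v$ on zero-mean inputs.

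For the oscillatory estimate (3), the essential observation is that $\mathcal{R}$ is a pseudodifferential operator of order $-1$. Applied to $F=a(x)e^{i\lambda k\cdot x}$, I would write
\begin{equation*}
\Delta^{-1}(a\,e^{i\lambda k\cdot x}) \;=\; e^{i\lambda k\cdot x}\,b_\lambda(x),
\end{equation*}
and determine $b_\lambda$ by matching: the ansatz $\Delta(b_\lambda e^{i\lambda k\cdot x}) = ae^{i\lambda k\cdot x}$ gives
\begin{equation*}
-\lambda^2|k|^2 b_\lambda + 2i\lambda\,k\cdot\nabla b_\lambda + \Delta b_\lambda \;=\; a,
\end{equation*}
which I solve by Neumann iteration: $b_\lambda = -\tfrac{1}{\lambda^2|k|^2} a + \tfrac{1}{\lambda^2|k|^2}\bigl(2i\lambda\,k\cdot\nabla b_\lambda + \Delta b_\lambda\bigr)$, iterated $m$ times. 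Each iteration trades one power of $\lambda^{-1}$ for one derivative on $a$, and after $m$ steps one gets a telescoping expression whose $C^0$ norm is bounded by $C_m\sum_{i=0}^{m-1}\lambda^{-(i+2)}\|a\|_i + C_m\lambda^{-(m+1)}\|a\|_m$. Since the building blocks of $\mathcal{R}$ involve at most one extra derivative acting on $\Delta^{-1}$ or $\Delta^{-2}$ applied to $F$, this gains exactly one factor of $\lambda$, producing the claimed bound $\sum_{i=0}^{m-1}\|a\|_i/\lambda^{i+1}+\|a\|_m/\lambda^m$.

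The main obstacle I expect is bookkeeping rather than any deep analytic difficulty: one must verify that every term appearing in the explicit formula for $\mathcal{R}$ fits the same oscillatory template (Riesz transform times $\Delta^{-1}$, applied to a single Fourier mode modulated by $a$), and that the remainder in the Neumann iteration is uniformly bounded independent of $\lambda$. A careful reduction shows that all such terms reduce to estimating $\partial^\alpha \Delta^{-1}(a e^{i\lambda k\cdot x})$ with $|\alpha|\le 2$, and the iteration above handles each case. The construction automatically produces a smooth output on $T^3$ because $a$ is smooth and the Fourier series for $b_\lambda$ converges (using $|k|\neq 0$). This settles all three claims.
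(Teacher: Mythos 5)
The paper does not prove this lemma: it reproduces it verbatim from Isett \cite{IS1} and defers the proof there. Your route --- an explicit order-$(-1)$ Fourier-multiplier formula for $\mathcal{R}$, then an iterated phase ansatz for the oscillatory bound --- is exactly the standard one in \cite{IS1} and \cite{CDL2}, and parallels how the paper itself handles the companion operator $\mathcal{G}$ in Section 3.2.2. So the overall strategy is the right one.

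The concrete formula you propose, however, does not satisfy property (2). The first block as written is $\tfrac14\bigl(\nabla\mathcal{P}v+(\nabla\mathcal{P}v)^T\bigr)$, whose divergence contributes $\tfrac14\Delta\mathcal{P}v$: this carries two extra derivatives of $v$ and cannot be cancelled by the remaining (genuinely zero-order) terms, so $\mathrm{div}\,\mathcal{R}v\neq v-\fint_{T^3}v$. The intended ingredient is $\nabla\mathcal{P}u$ with $u=\Delta^{-1}\bigl(v-\fint_{T^3}v\bigr)$, not $\nabla\mathcal{P}v$. Moreover, the extra $\tfrac12\nabla\Delta^{-2}\mathrm{div}\,v$ you insert into $u$ also spoils the identity: replacing $\mathcal{P}v$ by $\mathcal{P}u$ and tracking terms gives $\mathrm{div}\,\mathcal{R}v=v+\tfrac12\nabla\Delta^{-1}\mathrm{div}\,v$ rather than $v$. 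The clean choice (cf.\ \cite{CDL2}, Lemma 4.3) is simply $u=\Delta^{-1}\bigl(v-\fint_{T^3}v\bigr)$ with zero mean, with which both the divergence identity and the trace-free property follow from the computation you outline. Your Neumann-iteration argument for (3) is structurally sound and the $\lambda$-bookkeeping matches the target; the one step you leave implicit is how the terminal $\|a\|_m/\lambda^m$ contribution is closed. After $m$ iterations the remaining piece can no longer be reduced by another integration by parts, so one must stop and invoke a mild elliptic bound (e.g.\ a $W^{1,4}(T^3)\hookrightarrow C^0(T^3)$ embedding), exactly as the paper does for the last corrector $\mathcal{G}f_{cm}$; this is what produces $\lambda^{-m}$ rather than $\lambda^{-(m+1)}$ in the final term.
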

Here and subsequent, we use the natation $\fint_{T^3}v(x)dx=\frac{1}{|T^3|}\int_{T^3}v(x)dx$.

\subsubsection{The operator $\mathcal{G}$ }

\indent

Let $f(x)=\varphi(x)e^{i\lambda k\cdot x}$ with $\varphi(x)\in C^{\infty}(T^3;C)$ and $\int_{T^3}f(x)dx=0$. Set
$$\mathcal{G}f_o(x):=\frac{k}{i\lambda|k|^2}\varphi(x)e^{i\lambda k\cdot x},$$
then
$${\rm div}\mathcal{G}f_o(x)=f(x)+\frac{k\cdot\nabla \varphi(x)}{i\lambda|k|^2}e^{i\lambda k\cdot x}.$$
Set
$$\mathcal{G}f_{c1}(x):=-\frac{k}{(i\lambda)^2|k|^4}k\cdot\nabla \varphi(x) e^{i\lambda k\cdot x},$$
then
$${\rm div}(\mathcal{G}f_o+\mathcal{G}f_{c1})(x)=f(x)-\frac{(k\cdot\nabla)^2 \varphi(x)}{(i\lambda)^2|k|^4}e^{i\lambda k\cdot x}.$$
It's obvious that
$$\|\mathcal{G}f_o\|_0\leq C_0\frac{\|\varphi\|_0}{\lambda},\quad\|\mathcal{G}f_{c1}\|_0\leq C_0\frac{\|\varphi\|_1}{\lambda^2}.$$
Performing this process, for any integer $m\geq 2$, there exist $\mathcal{G}f_{ci}:i=1,2,\cdot\cdot\cdot,m-1$ such that
$${\rm div}\Big(\mathcal{G}f_o+\sum_{i=1}^{m-1}\mathcal{G}f_{ci}\Big)(x)=f(x)+(-1)^{m-1}\frac{(k\cdot\nabla)^m \varphi(x)}{(i\lambda)^m|k|^{2m}}e^{i\lambda k\cdot x},\quad\|\mathcal{G}f_{ci}\|_0\leq C_0\frac{\|\varphi\|_i}{\lambda^{i+1}}.$$
Since
\begin{align}
\int_{T^3}f(x)dx=0, \quad \int_{T^3}{\rm div}\Big(\mathcal{G}f_o+\sum_{i=0}^{m-1}\mathcal{G}f_{ci}\Big)(x)dx=0,\nonumber
\end{align}
therefore
\begin{align}
\int_{T^3}\frac{(k\cdot\nabla)^m\varphi(x)}{(i\lambda|k|^2)^m}e^{i\lambda k\cdot x}dx=0.\nonumber
\end{align}
Thus, there exists $\mathcal{G}f_{cm}\in C^{\infty}(T^3; C^3)$ such that
$${\rm div}\mathcal{G}f_{cm}(x)=(-1)^{m}\frac{(k\cdot\nabla)^m \varphi(x)}{(i\lambda)^m|k|^{2m}}e^{i\lambda k\cdot x},\quad
\|\mathcal{G}f_{cm}\|_0\leq C_m\frac{\|\nabla^m\varphi\|_0}{\lambda^m}.$$
In fact, there exists $\mathcal{G}f_{cm}\in C^{\infty}(T^3)$ such that
\begin{align}
\int_{T^3}\mathcal{G}f_{cm}(x)dx=0,\quad\|\nabla\mathcal{G}f_{cm}\|_4\leq C_0\Big\|\frac{(k\cdot\nabla)^m \varphi}{(i\lambda)^m|k|^{2m}}\Big\|_4\leq C_m\frac{\|\nabla^m\varphi\|_0}{\lambda^m}.\nonumber
\end{align}
Thus, we have $\|\mathcal{G}f_{cm}\|_{W^{1,4}}\leq C_0\frac{\|\nabla^m\varphi\|_0}{\lambda^m}$. In particular, we have
$\|\mathcal{G}f_{cm}\|_0\leq C_m\frac{\|\nabla^m\varphi\|_0}{\lambda^m}$ which is what we claimed.

Finally, we set
$$\mathcal{G}f:=\mathcal{G}f_{o}+\sum_{i=1}^m\mathcal{G}f_{ci},$$
then
\begin{align}
{\rm div}\mathcal{G}f
=f,\quad
\mathcal{G}f\in C^{\infty}(T^3;C^3),\quad \|\mathcal{G}f\|_0\leq C_m\Big(\sum_{i=0}^{m-1}\frac{\|\varphi\|_i}{\lambda^{i+1}}+\frac{\|\varphi\|_m}{\lambda^{m}}\Big).\nonumber
 \end{align}
In conclusion, we have
 \begin{lemma}\label{p:inverse 2}
 Let the vector space $\Psi$ given by
 \begin{align}
 \Psi:=\Big\{H(x): H(x)=\sum_{j=0}^nH_j(x):=\sum_{j=0}^nb_j(x)e^{i\lambda_j k\cdot x},~~~ b_j\in C^{\infty}(T^3;C)\quad {\rm and}\quad \int_{T^3}H_j(x)dx=0\Big\},\nonumber
 \end{align}
 then there exists a linear operator $\mathcal{G}: \Psi\rightarrow C^{\infty}(T^3;C^3)$ such that for any positive integer $m$ and any $H(x)=\sum\limits_{j=0}^nb_j(x)e^{i\lambda_j k\cdot x}\in\Psi$,
 \begin{align}\label{i:identity and inequality}
 {\rm div}\mathcal{G}(H)(x)=H(x),\quad \|\mathcal{G}(H)(x)\|_0\leq C_m\sum_{j=0}^n\Big(\sum_{i=0}^{m-1} \frac{\|b_j\|_i}{\lambda_j^{i+1}}+\frac{\|b_i\|_m}{\lambda_j^m}\Big).
  \end{align}
\end{lemma}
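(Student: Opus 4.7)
By linearity it suffices to build $\mathcal{G}$ on a single summand $H_j(x)=b_j(x)e^{i\lambda_j k\cdot x}$ of zero mean and then extend by summation; my plan is to carry out precisely the iterative scheme sketched in the paragraph preceding the statement, then patch its tail with a Poisson solve. The starting point is the ``principal'' correction $\mathcal{G}H_{j,o}:=\frac{k}{i\lambda_j|k|^2}b_j(x)e^{i\lambda_j k\cdot x}$, for which a direct Leibniz computation gives
$${\rm div}\,\mathcal{G}H_{j,o}=H_j+\frac{k\cdot\nabla b_j(x)}{i\lambda_j|k|^2}e^{i\lambda_j k\cdot x},$$
i.e.\ the residual is again of the amplitude-times-phase form with one extra derivative on $b_j$ and one extra power of $\lambda_j^{-1}$.

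Iterating, I will define explicit corrections $\mathcal{G}H_{j,c_i}$ for $i=1,\dots,m-1$ of exactly the same template, proportional to $\frac{k}{(i\lambda_j|k|^2)^{i+1}}(k\cdot\nabla)^i b_j(x)\,e^{i\lambda_j k\cdot x}$ up to sign, designed so that ${\rm div}\bigl(\mathcal{G}H_{j,o}+\sum_{i=1}^{r}\mathcal{G}H_{j,c_i}\bigr)$ equals $H_j$ plus an oscillatory remainder proportional to $(k\cdot\nabla)^{r+1}b_j\,\lambda_j^{-(r+1)}e^{i\lambda_j k\cdot x}$. Since each $\mathcal{G}H_{j,c_i}$ is given by an explicit formula, the sup bound $\|\mathcal{G}H_{j,c_i}\|_0\leq C\|b_j\|_i/\lambda_j^{i+1}$ is immediate, in perfect analogy with the corresponding step in the construction of $\mathcal{R}$ in Lemma \ref{l:reyn}.

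The step I expect to be the main obstacle is the final correction $\mathcal{G}H_{j,c_m}$, which must cancel the residual
$$R_{m,j}(x):=(-1)^m\frac{(k\cdot\nabla)^m b_j(x)}{(i\lambda_j)^m|k|^{2m}}e^{i\lambda_j k\cdot x}$$
without introducing any further oscillatory error, so it cannot be given by the explicit template used above. Here I would argue that $R_{m,j}$ has zero mean on $T^3$: indeed $H_j$ is mean-zero by hypothesis and ${\rm div}\bigl(\mathcal{G}H_{j,o}+\sum_{i=1}^{m-1}\mathcal{G}H_{j,c_i}\bigr)=H_j-R_{m,j}$ is a divergence of a smooth periodic field, so integration over $T^3$ forces $\int_{T^3}R_{m,j}\,dx=0$. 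Consequently the Poisson problem $\Delta\Phi=R_{m,j}$ admits a unique mean-zero solution $\Phi\in C^\infty(T^3;\mathbb{C})$, and I set $\mathcal{G}H_{j,c_m}:=\nabla\Phi$, so that ${\rm div}\,\mathcal{G}H_{j,c_m}=R_{m,j}$ automatically. The sup bound then follows from $L^p$ elliptic regularity $\|\nabla\Phi\|_{W^{1,4}}\leq C\|R_{m,j}\|_{L^4}\leq C\|b_j\|_m/\lambda_j^m$ combined with the three-dimensional Sobolev embedding $W^{1,4}(T^3)\hookrightarrow C^0(T^3)$, yielding $\|\mathcal{G}H_{j,c_m}\|_0\leq C_m\|b_j\|_m/\lambda_j^m$.

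Finally I set $\mathcal{G}H_j:=\mathcal{G}H_{j,o}+\sum_{i=1}^{m}\mathcal{G}H_{j,c_i}$; the iteration telescopes to give ${\rm div}\,\mathcal{G}H_j=H_j$, and extending to arbitrary $H=\sum_j H_j\in\Psi$ by linearity and summing the per-summand bounds delivers exactly the estimate asserted in \eqref{i:identity and inequality}. Smoothness and linearity of $\mathcal{G}$ are clear from the explicit formulas for the first $m$ pieces and from the linearity of $\nabla\Delta^{-1}$ on the space of mean-zero smooth periodic functions; the only non-routine point, as indicated, is the $L^\infty$ control of the non-explicit tail $\mathcal{G}H_{j,c_m}$, which is where the gain of three spatial derivatives in the Sobolev embedding is essential.
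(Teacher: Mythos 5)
Your proposal is correct and follows essentially the same route as the paper: explicit oscillatory corrections $\mathcal{G}H_{j,o},\mathcal{G}H_{j,c_1},\dots,\mathcal{G}H_{j,c_{m-1}}$ gaining one power of $\lambda_j^{-1}$ per step, a zero-mean argument for the residual $R_{m,j}$ via integration of the divergence identity, a Poisson/$\nabla\Delta^{-1}$ solve for the final correction with a $W^{1,4}$ elliptic estimate, the Sobolev embedding $W^{1,4}(T^3)\hookrightarrow C^0(T^3)$ to pass to the sup bound, and extension by linearity over the summands. This is precisely the construction laid out in the paragraph preceding the lemma and then invoked in its proof.
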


\begin{proof}
We have defined the operator $\mathcal{G}$ on the functions
$$H_j(x)=b_j(x)e^{i\lambda_j k\cdot x}$$
and
$$\|\mathcal{G}(H_j)\|_0\leq C_m\Big(\sum_{i=0}^{m-1}\frac{\|b_j\|_i}{\lambda_j^{i+1}}
 +\frac{\|b_j\|_m}{\lambda^m}\Big).$$
Then, set
\begin{align}
\mathcal{G}(H)(x):=\sum_{j=0}^n\mathcal{G}(H_j)(x).\nonumber
\end{align}
It is obvious that $\mathcal{G}$ is a linear operator on $\Psi$ and
$\mathcal{G}H(x)$ satisfies (\ref{i:identity and inequality}).
\end{proof}

\setcounter{equation}{0}

\section{the construction of $v_{01}, p_{01}, \theta_{01}, R_{01}, f_{01} $}
The construction of $\tilde{v},~\tilde{p},~\tilde{\theta},~\tilde{R},~\tilde{f}$ from  $v,~p,~\theta,~R,~f$ consists of many steps. The main idea is to decompose the stress error into many blocks by (\ref{d:decomposition 1}) and (\ref{d:decomposition 2}), then we remove one block in each step. In this section, we perform the first step.

For convenience, we set $v_0:=v,~p_0:=p,~\theta_0:=\theta,~R_0:=R,~f_0:=f$.

\subsection{Construction of 1-th perturbation $w_1$ on velocity}

\indent
\subsubsection{Conditions on the parameters}
 Our construction depend on four parameters $\mu, \lambda_1, \ell_1, \ell_{1z}$ and we always assume that they satisfy the following inequalities:
  \begin{align}\label{a:assumption on parameter}
  \mu\geq \frac{1}{\kappa},\quad \ell_1^{-1}\geq\mu\Lambda,\quad \ell_{1z}^{-1}\geq\mu\bar{\Lambda}, \quad \ell_{1z}\geq\ell_1, \quad \lambda_1\geq \ell_1^{-(1+\varepsilon)},\quad
  \mu, \lambda_1, \frac{\lambda_1}{\mu}\in {\rm N} .
\end{align}

\subsubsection{Partition of unity and decomposition of stress error}
We first introduce a partition of unity. From \cite{CHO}, we have the following partition of unity: for two constants $c_{1}$ and $c_{2}$ such that $\frac{\sqrt{3}}{2}<c_{1}<c_{2}<1$ , we have a family of functions $\alpha_l\in C_c^{\infty}(R^3):l\in Z^3$ such that
\begin{align}\label{p:unity}
\sum\limits_{l\in Z^3}\alpha_l^2=1,\qquad \hbox{supp}\alpha_l\subseteq B_{c_2}(l).
\end{align}
Next, let $\varphi\in C_c^\infty(R)$ be a standard nonnegative function and we denote the corresponding family of mollifiers by
\begin{align}
\varphi_{\ell}(x):=\frac{1}{\ell}\varphi\Big(\frac{x}{\ell}\Big)\nonumber
\end{align}
and set
\begin{align}\label{m:anistropic modification}
f_{\ell_1}(t,x,y,z):=f\ast(\varphi_{\ell_1}(t)\varphi_{\ell_1}(x)\varphi_{\ell_1}(y)\varphi_{\ell_{1z}}(z))
\end{align}
for any $f\in C_c^{\infty}((a,b)\times T^3)$.\\
By (\ref{d:decomposition 1}), for symmetric matrix $R_{0}\in C_c^{\infty}((a,b)\times T^3)$, there holds
\begin{align}
R_0=\sum_{i=1}^6 \gamma_i(R_0)k_i\otimes k_i.\nonumber
\end{align}
We set $a_i:=\gamma_i(R_0) \in C_c^{\infty}((a,b)\times T^3)$,  thus we have
\begin{align}\label{d:decomposition of R0}
R_0=\sum_{i=1}^6 a_ik_i\otimes k_i.
\end{align}
Similarly, by (\ref{d:decomposition 2}), for vector $f_{0}\in C_c^{\infty}((a,b)\times T^3)$, there exist functions $c_i:= b_i(f)\in C_c^{\infty}((a,b)\times T^3):i=1,2,3$ such that
\begin{align}\label{d:decomposition of f0}
f_{0}=\sum_{i=1}^3 c_ik_i.
\end{align}
Thus,
\begin{align}\label{d:decomposition of strees error}
R_{0\ell_1}=\sum_{i=1}^6 a_{i\ell_1}k_i\otimes k_i,\quad f_{0\ell_1}=\sum_{i=1}^3 c_{i\ell_1}k_i.
\end{align}
By (\ref{e:reynold initial}) and (\ref{e:reynold initial 2}), we have
\begin{align}\label{b:bound on decomposition coeffience}
\|a_{i\ell_1}\|_0\leq 5\kappa,\quad \|c_{i\ell_1}\|_0\leq 2\kappa.
\end{align}
Then, we denote $e(t)\in C_c^{\infty}(a-\kappa,b+\kappa)$ by
\begin{align}\label{d:definition e}
e(t)=\left\{
\begin{array}{ll}
10\kappa,\qquad  t\in[a-\frac{\kappa}{2},b+\frac{\kappa}{2}],\\
0,\qquad \quad t\in(a-\kappa,b+\kappa)^c
\end{array}
\right.
\end{align}
and $0\leq e(t)\leq 10\kappa$.
\subsubsection{Construction of $1$-th main perturbation}
For any $l\in Z^3$, we set
   \begin{align}\label{d:l amp}
    b_{1l}:=&\sqrt{\frac{e(t)-a_{1\ell_1}}{2}}\alpha_l(\mu v_{0\ell_1}).
    \end{align}
By (\ref{a:assumption on parameter}) and (\ref{b:bound on decomposition coeffience}), (\ref{d:definition e}), we know that $b_{1l}$ is well-defined.
Then, as in \cite{ISV2}, set $[l]:=\sum_{j=0}^22^j[l_j]$ if $l=(l_0, l_1, l_2)$, where
    \begin{align}
    [l_j]=\left\{
    \begin{array}{cc}
    1, ~~~~l_j~~~{\rm is ~~~even},\\
    0,~~~~ l_j~~~{\rm is~~~ odd}.
    \end{array}
    \right.\nonumber
    \end{align}
Thus, $[l]$ can only take values in $\{0,1,2,...,7\}$.

We denote main $l$-perturbation $w_{1ol}$ by
   \begin{align}\label{d:w 1ol}
    w_{1ol}:=b_{1l}k_1\Big(e^{i\lambda_1 2^{[l]} (k_{1h}^{\perp},0)\cdot \big((x,y,z)-\frac{l}{\mu}t\big)}+e^{-i\lambda_1 2^{[l]} (k_{1h}^{\perp},0)\cdot \big((x,y,z)-\frac{l}{\mu}t\big)}\Big)
   \end{align}
and 1-th main perturbation $w_{1o}$ by
   \begin{align}\label{d:1o}
    w_{1o}:=\sum_{l\in Z^3}w_{1ol},
   \end{align}
where $k_1$ is taken from (\ref{e:six representation vector}), $k_{1h}$ is the horizontal component of $k_1$ and $a^{\perp}=(-a_2,a_1)^T$ if $a=(a_1,a_2)^T$. \\
Obviously, $w_{1ol},w_{1o}$ are all real 3-dimensional vector-valued functions.
   By (\ref{p:unity}), $\hbox{supp} \alpha_l\cap \hbox{supp}\alpha_{l'}=\emptyset$  if $|l-l'|\geq2$, therefore there is only finite term in the above summation.

 Furthermore, by (\ref{b:bound on decomposition coeffience}) and (\ref{d:definition e}), we can take  an absolute constant $M$ such that
$\|b_{1l}\|_0\leq \frac{M\sqrt{\kappa}}{300}.$
Thus, by (\ref{p:unity}), we have
  \begin{align}\label{e:bound w1o}
  \|w_{1o}\|_0\leq \frac{M\sqrt{\kappa}}{12}.
  \end{align}

 \subsubsection{Construction of correction $w_{1c}$}

We assume that $k_1=(k_{11}, k_{12}, k_{13}),\quad (k_{11}, k_{12})\neq 0$ and taking $(s_1,t_1)\in R^2$ such that $s_1k_{11}+t_1k_{12}=-k_{13}$. Then, denote $l$-th correction $w_{1cl}$ by
%$${\rm div} w_{1ol}=\nabla b_{1l}(t,x,y,z)\cdot k_1\Big(e^{i\lambda_1 2^{[l]} (k_{1h}^{\perp},0)\cdot \big((x,y,z)-\frac{l}{\mu}t\big)}+e^{-i\lambda_1 2^{[l]} %(k_{1h}^{\perp},0)\cdot \big((x,y,z)-\frac{l}{\mu}t\big)}\Big),$$
%thus, by lemma \ref{p:inverse 2}, we know that there exists $w_{1cl}\in C^\infty(T^3, R^3)$ such that
%$${\rm div}w_{1cl}=-\nabla b_{1l}(t,x,y,z)\cdot k_1\Big(e^{i\lambda_1 2^{[l]} (k_{1h}^{\perp},0)\cdot \big((x,y,z)-\frac{l}{\mu}t\big)}+e^{-i\lambda_1 2^{[l]} %(k_{1h}^{\perp},0)\cdot \big((x,y,z)-\frac{l}{\mu}t\big)}\Big)$$
\begin{align}\label{d:w 1cl}
w_{1cl}:=\left(
    \begin{array}{ccc}
    -t_1\partial_zb_{1l}+\partial_yb_{1l}\\
    s_1\partial_zb_{1l}-\partial_xb_{1l}\\
    t_1\partial_xb_{1l}-s_1\partial_yb_{1l}
    \end{array}
    \right)
    \Bigg(\frac{e^{i\lambda_1 2^{[l]} (k_{1h}^{\perp},0)\cdot \big((x,y,z)-\frac{l}{\mu}t\big)}}{i\lambda_1 2^{[l]}}+
    \frac{e^{-i\lambda_1 2^{[l]} (k_{1h}^{\perp},0)\cdot \big((x,y,z)-\frac{l}{\mu}t\big)}}{-i\lambda_1 2^{[l]}}\Bigg)
\end{align}
and 1-th correction $w_{1c}$ by
   \begin{align}\label{d:w 1c}
    w_{1c}:=\sum_{l\in Z^3}w_{1cl}.
    \end{align}
A straightforward computation gives
\begin{align}
w_{1ol}+w_{1cl}={\rm curl}\Big(b_{1l}\vec{a}_1\Big(\frac{e^{i\lambda_1 2^{[l]} (k_{1h}^{\perp},0)\cdot \big((x,y,z)-\frac{l}{\mu}t\big)}}{i\lambda_1 2^{[l]}}+\frac{e^{-i\lambda_1 2^{[l]} (k_{1h}^{\perp},0)\cdot \big((x,y,z)-\frac{l}{\mu}t\big)}}{-i\lambda_12^{[l]}}\Big)\Big),\nonumber
\end{align}
where $\vec{a}_1=(s_1,t_1,1).$\\

Finally, set 1-th perturbation
   \begin{align}\label{d:w 1}
    w_1:=w_{1o}+w_{1c}.
   \end{align}
 % Obversely,  $$\hbox{div}w_1=0. $$
  Thus, if we denote $ w_{1l}$ by
   \begin{align}\label{d:w 1l}
   w_{1l}:=w_{1ol}+w_{1cl},
   \end{align}
   then
  \begin{align}
   w_1=\sum\limits_{l\in Z^3}w_{1l},\qquad {\rm div}w_{1l}=0,\qquad \hbox{div}w_1=0.\nonumber
   \end{align}
%    Here and subsequent, we denote ${\rm div}$ by ${\rm div}={\rm div}_{(y,z)}.$\\
Moreover, if we set
\begin{align}\label{d:difination b1l}
g_{1l}:=b_{1l}k_1+\frac{1}{i\lambda_12^{[l]}}\left(
    \begin{array}{ccc}
    -t_1\partial_zb_{1l}+\partial_yb_{1l}\\
    s_1\partial_zb_{1l}-\partial_xb_{1l}\\
    t_1\partial_xb_{1l}-s_1\partial_yb_{1l}
    \end{array}
    \right),
g_{-1l}:=b_{1l}k_1+\frac{1}{-i\lambda_12^{[l]}}\left(
    \begin{array}{ccc}
    -t_1\partial_zb_{1l}+\partial_yb_{1l}\\
    s_1\partial_zb_{1l}-\partial_xb_{1l}\\
    t_1\partial_xb_{1l}-s_1\partial_yb_{1l}
    \end{array}
    \right),
\end{align}
then
\begin{align}\label{d:another representation}
w_{1l}=g_{1l}e^{i\lambda_1 2^{[l]} k_1^{\perp}\cdot \big((y,z)-\frac{l}{\mu}t\big)}+g_{-1l}e^{-i\lambda_1 2^{[l]} k_1^{\perp}\cdot \big((y,z)-\frac{l}{\mu}t\big)}.
\end{align}
It's obvious that $b_{1l}\in C_c^{\infty}((a-\kappa,b+\kappa)\times T^3)$, therefore
   $$w_{1ol},w_{1cl},w_{1l},w_1\in C_c^{\infty}((a-\kappa,b+\kappa)\times T^3).$$
Thus we complete the construction of perturbation $w_1$ on velocity.
\subsection{Construction of 1-th perturbation $\chi_1$ on temperature}

\indent

To construct $\chi_1$, we first
   denote $\beta_{1l}$ by
   \begin{align}\label{d:b1l}
    \beta_{1l}:=-\frac{c_{1\ell_1}}{\sqrt{2(e(t)-a_{1\ell_1})}}\alpha_l(\mu v_{0\ell_1}).
    \end{align}
    Since ${\rm supp}c_{1\ell_1}\subseteq (a-\ell_1,b+\ell_1)\times T^3$ and $e(t)-a_{1\ell_1}(t,x,y,z)\geq\kappa \quad {\rm in}\quad (a-\ell_1,b+\ell_1)\times T^3$, so $\beta_{1l}$ is well-defined.
    Then denote main $l$-perturbation $\chi_{1ol}$ by
    \begin{align}
    \chi_{1ol}:=\beta_{1l}\Big(e^{i\lambda_1 2^{[l]} (k_{1h}^{\perp},0)\cdot \big((x,y,z)-\frac{l}{\mu}t\big)}+e^{-i\lambda_1 2^{[l]} (k_{1h}^{\perp},0)\cdot \big((x,y,z)-\frac{l}{\mu}t\big)}\Big),\nonumber
    \end{align}
  $l$-correction $\chi_{1cl}$ by
    \begin{align}
    \chi_{1cl}:=\frac{\nabla\beta_{1l}\cdot (k_{1h}^{\perp},0)}{i\lambda_12^{[l]}|k_{1h}|^2}\Big(e^{i\lambda_1 2^{[l]} (k_{1h}^{\perp},0)\cdot \big((x,y,z)-\frac{l}{\mu}t\big)}-e^{-i\lambda_1 2^{[l]} (k_{1h}^{\perp},0)\cdot \big((x,y,z)-\frac{l}{\mu}t\big)}\Big)\nonumber
    \end{align}
    and $l$-perturbation $\chi_{1l}$ by
     \begin{align}
    \chi_{1l}:=\chi_{1ol}+\chi_{1cl}
    ={\rm div}\Big(\frac{\beta_{1l}(k_{1h}^{\perp},0)}{i\lambda_12^{[l]}|k_{1h}|^2}\Big(e^{i\lambda_1 2^{[l]} (k_{1h}^{\perp},0)\cdot \big((x,y,z)-\frac{l}{\mu}t\big)}-e^{-i\lambda_1 2^{[l]} (k_{1h}^{\perp},0)\cdot \big((x,y,z)-\frac{l}{\mu}t\big)}\Big)\Big).\nonumber
   \end{align}
   Finally, set
    \begin{align}
    \chi_{1o}:=\sum_{l\in Z^3}\chi_{1ol},\quad \chi_{1c}:=\sum_{l\in Z^3}\chi_{1cl}\nonumber
   \end{align}
    and $1$-th perturbation $\chi_1$
   \begin{align}
    \chi_1:=\sum_{l\in Z^3}\chi_{1l}.\nonumber
   \end{align}
   Thus, $\chi_{1l}$ and $\chi_1$ are all real scalar functions, and similar to the perturbation $w_1$, there are only finite terms in the summation of $\chi_1$.
Furthermore, if we set
\begin{align}\label{d:difinition on h1l}
h_{1l}=\beta_{1l}+\frac{\nabla\beta_{1l}\cdot (k_{1h}^{\perp},0)}{i\lambda_12^{[l]}|k_{1h}|^2},\quad
h_{-1l}=\beta_{1l}+\frac{\nabla\beta_{1l}\cdot (k_{1h}^{\perp},0)}{-i\lambda_12^{[l]}|k_{1h}|^2},
\end{align}
then
\begin{align}
&\chi_{1l}=h_{1l}e^{i\lambda_1 2^{[l]} (k_{1h}^{\perp},0)\cdot \big((x,y,z)-\frac{l}{\mu}t\big)}+h_{-1l}e^{-i\lambda_1 2^{[l]} (k_{1h}^{\perp},0)\cdot \big((x,y,z)-\frac{l}{\mu}t\big)},\nonumber\\
&\chi_{1}=\sum_{l\in Z^3}\Big(h_{1l}e^{i\lambda_1 2^{[l]} (k_{1h}^{\perp},0)\cdot \big((x,y,z)-\frac{l}{\mu}t\big)}+h_{-1l}e^{-i\lambda_1 2^{[l]} (k_{1h}^{\perp},0)\cdot \big((x,y,z)-\frac{l}{\mu}t\big)}\Big).\nonumber
\end{align}
Finally, by (\ref{b:bound on decomposition coeffience}), (\ref{d:definition e}) and (\ref{d:b1l}), after possibly taking a bigger number $M$ , we know that
 $ \|\beta_{1l}\|_0\leq \frac{M\sqrt{\kappa}}{300}.$
Thus, we obtain
  \begin{align}\label{e:bound x}
 \|\chi_1\|_0\leq \frac{M\sqrt{\kappa}}{6}.
  \end{align}
\subsection{The construction of  $v_{01}$,~$p_{01}$,~$\theta_{01}$,~$R_{01}$,~$f_{01}$ }

\indent

First, we denote
 $M_1, N_1,K_1$ by
   \begin{align}\label{d:difinition on m1}
   M_1:=&\sum\limits_{l\in Z^3}b^2_{1l}k_1\otimes k_1\Big(e^{2i\lambda_1 2^{[l]} (k_{1h}^{\perp},0)\cdot \big((x,y,z)-\frac{l}{\mu}t\big)}+
   e^{-2i\lambda_1 2^{[l]} (k_{1h}^{\perp},0)\cdot \big((x,y,z)-\frac{l}{\mu}t\big)}\Big)\nonumber\\
   &+\sum\limits_{l,l'\in Z^3 ,l\neq l'}w_{1ol}\otimes w_{1ol'},\nonumber\\
   N_1:=&\sum_{l\in Z^3}\Big[w_{1l}\otimes \Big(v_{0\ell_1}-\frac{l}{\mu}\Big)
   +\Big(v_{0\ell_1}-\frac{l}{\mu}\Big)\otimes w_{1l}\Big]+\sum_{l\in Z^3}\Big[w_{1l}\otimes \big(v_0-v_{0\ell_1}\big)
   +\big(v_0-v_{0\ell_1}\big)\otimes w_{1l}\Big],\nonumber\\
   K_1:=&\sum\limits_{l\in Z^3}\beta_{1l}b_{1l}k_1\Big(e^{2i\lambda_1 2^{[l]} (k_{1h}^\perp,0)\cdot \big((x,y,z)-\frac{l}{\mu}t\big)}+e^{-2i\lambda_1 2^{[l]} (k_{1h}^\perp,0)\cdot \big((x,y,z)-\frac{l}{\mu}t\big)}\Big{)}+\sum\limits_{l,l'\in Z^3 ,l\neq l'}w_{1ol}\chi_{1ol'}.
    \end{align}
 Notice that $N_{1}$ is a symmetric matrix.  Then we set
  \begin{align}\label{d:the first solution sequence}
    &v_{01}:=v_0+w_1,\quad
    p_{01}:=p_0,\quad
    \theta_{01}:=\theta_0+\chi_1,\quad f_{01}:=f_{0\ell_1}+2\sum\limits_{l\in Z^3}\beta_{1l}b_{1l}k_1+\delta f_{01},\nonumber\\
    & R_{01}:=-\Big(e(t)\sum_{i=1}^6k_i\otimes k_i-R_{0\ell_1}\Big)+2\sum\limits_{l\in Z^3}b^2_{1l}k_1\otimes k_1+\delta R_{01},\quad
   \end{align}
   where
\begin{align}\label{d:R0small}
   \delta R_{01}=&\mathcal{R}(\hbox{div}M_1)+N_1+\mathcal{R}\Big\{\partial_tw_{1}
  +\hbox{div}\Big[\sum_{l\in Z^3}\Big(w_{1l}\otimes\frac{l}{\mu}+\frac{l}{\mu}\otimes w_{1l}\Big)\Big]\Big\}-\mathcal{R}(\partial_{zz}w_1)\nonumber\\
   &+(w_{1o}\otimes w_{1c}+w_{1c}\otimes w_{1o}+w_{1c}\otimes w_{1c})
   -\mathcal{R}(\chi_1e_3)+R_0-R_{0\ell_1}
   \end{align}
and
   \begin{align}\label{d:f0small}
   \delta f_{01}=&\mathcal{G}(\hbox{div}K_1)
   +\mathcal{G}(w_1\cdot\nabla\theta_{0\ell_1})
   +\mathcal{G}\Big(\partial_t\chi_{1}+\sum_{l\in Z^3}\frac{l}{\mu}\cdot\nabla\chi_{1l}\Big)-\mathcal{G}(\partial_{zz}\chi_1)
   +\sum_{l\in Z^3}\Big(v_{0\ell_1}-\frac{l}{\mu}\Big)\chi_{1l}\nonumber\\
   &+\sum_{l\in Z^3}\big(v_0-v_{0\ell_1}\big)\chi_{1l}+w_{1c}\chi_1+w_{1o}\chi_{1c}+f_0-f_{0\ell_1}
   +w_1(\theta_0-\theta_{0\ell_1}).
  \end{align}
 By Lemma \ref{l:reyn}, we know that $\delta R_{01}$ is a symmetric matrix. Obviously,
   \begin{align*}
    \hbox{div}v_{01}=\hbox{div}v_0+\hbox{div}w_1=0.
   \end{align*}
   Moreover, by the definition of $R_{01},\delta R_{01}$ as well as $v_{01},p_{01},\theta_{01}$ and notice that $v_{0}, p_{0},
    \theta_0, R_{0}, f_{0}$  are solutions of the system (\ref{d:anistropic boussinesq reynold}),
together with Lemma \ref{l:reyn} , we know that
\[\begin{aligned}
    \hbox{div}R_{01}=&\hbox{div}R_0+\partial_tw_1
    -\chi_1e_3-\partial_{zz}w_1\nonumber\\
    &+\hbox{div}(w_{1o}\otimes w_{1o}+w_{1}\otimes v_{0}+v_0\otimes w_{1}
    +w_{1o}\otimes w_{1c}+w_{1c}\otimes w_{1o}+w_{1c}\otimes w_{1c})\nonumber\\
    =&\partial_tv_{0}+\hbox{div}(v_{0}\otimes v_{0})+\nabla p_{0}-\partial_{zz}v_0-\theta_0e_3+\partial_tw_1
    -\chi_1e_3-\partial_{zz}w_1\nonumber\\
    &+\hbox{div}(w_{1o}\otimes w_{1o}+w_{1}\otimes v_{0}+v_0\otimes w_1
    +w_{1o}\otimes w_{1c}
    +w_{1c}\otimes w_{1o}+w_{1c}\otimes w_{1c})\nonumber\\
    =&\partial_tv_{01}+\hbox{div}(v_{01}\otimes v_{01})+\nabla p_{01}-\partial_{zz}v_{01}-\theta_{01}e_3,
    \end{aligned}\]
    where we used
    \begin{align}
    &\fint_{T^3}w_{1}(t,x,y,z)dxdydz=0, \quad\fint_{T^3}\chi_{1}(t,x,y,z)dxdydz=0,\nonumber\\
    &\hbox{div}(M_1)+\hbox{div}\Big(2\sum\limits_{l\in Z^3}b^2_{1l}k_1\otimes k_1\Big)=\hbox{div}(w_{1o}\otimes w_{1o}).\nonumber
    \end{align}
Furthermore, from the definition of $f_{01}, \delta f_{01}$ as well as $v_{01}, \theta_{01}$ and notice that $v_{0}, p_{0},
    \theta_0, R_{0}, f_{0}$  are solutions of the system (\ref{d:anistropic boussinesq reynold}),
together with Lemma \ref{p:inverse 2}, we know that
%  \[ \begin{aligned}
%    f_{01}=&f_{0\ell_1}+2\sum\limits_{l\in Z^2}\beta_{1l}b_{1l}k_1+\mathcal{G}(\hbox{div}K_1)
%   +\mathcal{G}(w_1\cdot\nabla\theta_{0})-\mathcal{G}(\partial_{xx}\chi_1)
%   +\mathcal{G}\Big(\partial_t\chi_{1}
%   +\sum_{l\in Z^2}\frac{l}{\mu}\cdot\nabla\chi_{1l}\Big)\nonumber\\
%   &+\sum_{l\in Z^2}\Big(V_0-\frac{l}{\mu}\Big)\chi_{1l}+w_{1c}\chi_1+w_{1o}\chi_{1c}+f_0-f_{0\ell_1}+w_1(\theta_0-\theta_{0\ell_1}).\nonumber
%   \end{aligned}\]
% here we use the fact $\mathcal{G}(f(t))=0$.
% Therefore, by the fact that $V_{0}, p_{0},
%    \theta_0, R_{0}, f_{0}$ are solutions of  the system (\ref{d:anistropic boussinesq reynold}) and by Lemma \ref{p:inverse 2}, we know that
 \begin{align}
 \hbox{div}f_{01}=&\hbox{div}f_0+\partial_{t}\chi_1-\partial_{zz}\chi_1
 +\hbox{div}(w_{1o}\chi_1+w_{1c} \chi_1+v_0 \chi_1+w_1 \theta_0)\nonumber\\
 =&\hbox{div}(v_0\theta_0+w_{1o}\chi_1+w_{1c} \chi_1+v_0 \chi_1+w_1 \theta_0)
 +\partial_t(\theta_0+\chi_1)-\partial_{zz}(\theta_0+\chi_1)\nonumber\\
 =&\partial_t\theta_{01}+\hbox{div}(v_{01}\theta_{01})-\partial_{zz}\theta_{01},\nonumber
 \end{align}
 where we used
    $$ \fint_{T^3}\chi_{1}(t,x,y,z)dxdydz=0,\quad
    \hbox{div}(K_1)+\hbox{div}\Big(2\sum\limits_{l\in Z^3}\beta_{1l}b_{1l}k_1\Big)=\hbox{div}(w_{1o}\chi_{1o}).$$
 Thus, the functions $(v_{01},p_{01},\theta_{01},R_{01},f_{01})$ solve the system (\ref{d:anistropic boussinesq reynold}).

\setcounter{equation}{0}

\section{The representations}

\indent

In this section, we will compute the forms of
$$I:=-\Big(e(t)\sum_{i=1}^6k_i\otimes k_i-R_{0\ell_1}\Big)+2\sum\limits_{l\in Z^3}b^2_{1l}k_1\otimes k_1,\quad
II:=f_{0\ell_1}+2\sum\limits_{l\in Z^3}\beta_{1l}b_{1l}k_1.$$

\subsection{The form of I}

\indent

First, by the definition (\ref{d:l amp}) on $b_{1l}$ , we have
\begin{align}
2\sum\limits_{l\in Z^3}b^2_{1l}k_1\otimes k_1
=\sum\limits_{l\in Z^3}\alpha_l^2(\mu v_{0\ell_1}) (e(t)-a_{1\ell_1})k_1\otimes k_1
=(e(t)-a_{1\ell_1})k_1\otimes k_1,\nonumber
\end{align}
where we use $\sum\limits_{l\in Z^3}\alpha_l^2=1$.\\
Moreover, by the identity (\ref{d:decomposition 1}), we have
\begin{align}
-\Big(e(t)\sum_{i=1}^6k_i\otimes k_i-R_{0\ell_1}\Big)=-\sum\limits_{i=1}^6(e(t)- a_{i\ell_1})k_i\otimes k_i.\nonumber
\end{align}
Therefore
\begin{align}
-\Big(e(t)\sum_{i=1}^6k_i\otimes k_i-R_{0\ell_1}\Big)+2\sum\limits_{l\in Z^3}b^2_{1l}k_1\otimes k_1
=-\sum\limits_{i=2}^6(e(t)-a_{i\ell_1})k_i\otimes k_i.\nonumber
\end{align}
Meanwhile, by (\ref{d:R0small}), we arrive at
\begin{align}
R_{01}=-\sum_{i=2}^6(e(t)-a_{i\ell_1})k_i\otimes k_i+\delta R_{01}.
\end{align}
Next section, we will prove that $\delta R_{01}$ is small.
\subsection{The form of II}

\indent

By the definition (\ref{d:l amp}) on $b_{1l}$ and (\ref{d:b1l}) on $\beta_{1l}$, it's easy to obtain
\begin{align}
2\sum\limits_{l\in Z^3}\beta_{1l}b_{1l}k_1
=-\sum\limits_{l\in Z^3}\alpha_l^2(\mu v_{0\ell_1})c_{1\ell_1}k_1
=-c_{1\ell_1}k_1.\nonumber
\end{align}
Thus, using the identity (\ref{d:decomposition 2}), we have
\begin{align}
&f_{0\ell_1}+2\sum\limits_{l\in Z^3}\beta_{1l}b_{1l}k_1=\sum\limits_{i=2}^3c_{i\ell_1}k_i.
\end{align}
Meanwhile, by (\ref{d:f0small}), we have
\begin{align}
f_{01}=\sum\limits_{i=2}^3c_{i\ell_1}k_i+\delta f_{01}.
\end{align}
Next section, we will prove that $\delta f_{01}$ is small.

\setcounter{equation}{0}

 \section{Estimate on $\delta R_{01}$ and $\delta f_{01}$}

 \indent

 %Obviously, the growth of $\|(\partial_xw,\partial_x\chi)\|_0$ is slow than the growth of $\|(\partial_{t,y,z}w,\partial_{t,y,z}\chi)\|_0$, so the %H\"{o}lder regularity in x is higher than the H\"{o}lder regularity in (t, y, z). We should compute clearly the different H\"{o}lder regularity.

In the subsequent estimates, unless otherwise stated, $C_0$ denotes a constant which depends on $\|v_0\|_0$,
  but does not depend on $\ell_1,~~\ell_{1z},~~\mu,~~\lambda_1$, and $C_r$ will in addition to depend on $r$
and both of them can vary from line to line.

 In the following, we frequently use the elementary inequalities
 \begin{align}
[fg]_{r}\leq& C_r\bigl([f]_{r}\|g\|_0+[g]_{r}\|f\|_0\bigr)\label{i:inequality 1}
\end{align}
for any $r\geq0$.
Moreover, by the standard estimates on convolution, (\ref{d:difinition on c1 norm}), (\ref{d:decomposition of R0}) and (\ref{d:decomposition of f0}), we have
\begin{align}
\|a_{i\ell_1}\|_r+\|c_{i\ell_1}\|_r+\|v_{0\ell_1}\|_r+\|\theta_{0\ell_1}\|_r\leq& C_r\Lambda \ell_1^{1-r}\quad {\rm for~~any~~r\geq1},\label{e:estimate higher convolution}\\
\|v_{0\ell_1}-v_0\|_0+\|R_{0\ell_1}-R_0\|_0+\|f_{0\ell_1}-f_0\|_0+\|\theta_{0\ell_1}-\theta_0\|_0\leq& C_0(\Lambda \ell_1+\bar{\Lambda} \ell_{1z}).\label{e:estimate different}
\end{align}

Now, we collect a classical estimate on the H\"{o}lder norms of compositions, their proof can be found in \cite{CDL3}.
Let $u: R^n\rightarrow R^N$ and $\Psi: R^N\rightarrow R$ be two smooth functions. Then, for every $m\in {\rm N}\setminus {0}$ there is a constant $C_0=C_0(m,N,n)$ such that
\begin{align}
%[\Psi(u)]_m\leq& C_0\sum\limits_{i=1}^m[\Psi]_i\|u\|_0^{i-1}[u]_m,\label{i:composition inequality 1}\\
[\Psi(u)]_m\leq& C_0\sum\limits_{i=1}^m[\Psi]_i[u]_1^{(i-1)\frac{m}{m-1}}[u]_m^{\frac{m-i}{m-1}}.\label{i:composition inequality 2}
\end{align}

We summarize the main estimates on $b_{1l}$ and $\beta_{1l}$.
\begin{Lemma}\label{e:estimate various}
For any integer $r\geq 1$, we have the following estimates, for any $t>0$
\begin{align}
\|b_{1l}(t,\cdot)\|_r+\|\beta_{1l}(t,\cdot)\|_r\leq& C_r\sqrt{\kappa}\mu\Lambda\ell_1^{-(r-1)},\label{e:estimate on main perturbation}\\
\|\partial_zb_{1l}(t,\cdot)\|_r+\|\partial_z\beta_{1l}(t,\cdot)\|_r\leq& C_r\sqrt{\kappa}\mu\bar{\Lambda}\ell_1^{-r},\label{e:diffusion derivative estimate on main perturbation}\\
\|\partial_{zz}b_{1l}(t,\cdot)\|_r+\|\partial_{zz}\beta_{1l}(t,\cdot)\|_r\leq& C_r\sqrt{\kappa}\mu\bar{\Lambda}\ell^{-1}_{1z}\ell_1^{-r},\label{e:diffusion two order derivative estimate on main perturbation}\\
\|\partial_{zzz}b_{1l}(t,\cdot)\|_r+\|\partial_{zzz}\beta_{1l}(t,\cdot)\|_r\leq & C_r\sqrt{\kappa}\mu\bar{\Lambda}\ell^{-2}_{1z}\ell_1^{-r},\label{e:diffusion three order derivative estimate on main perturbation}\\
\|\partial_tb_{1l}(t,\cdot)\|_r+\|\partial_t\beta_{1l}(t,\cdot)\|_r\leq& C_r\sqrt{\kappa}\mu\Lambda\ell_1^{-r},\label{e:estimate on time derivative}\\
\|\partial_{tt}b_{1l}(t,\cdot)\|_r+\partial_{tt}\beta_{1l}(t,\cdot)\|_r\leq& C_r\sqrt{\kappa}\mu\Lambda\ell_1^{-r-1}\label{e:estimate on two time order}
\end{align}
and
\begin{align}
\|h_{\pm 1l}(t,\cdot)\|_r+\|g_{\pm 1l}(t,\cdot)\|_r\leq &C_r\sqrt{\kappa}\mu\Lambda\ell_1^{-(r-1)},\label{e:estimate on transport amp}\\
\|\partial_zh_{\pm 1l}(t,\cdot)\|_r+\|\partial_zg_{\pm 1l}(t,\cdot)\|_r\leq & C_r\sqrt{\kappa}\mu\bar{\Lambda}\ell_1^{-r},\label{e:diffusion derivative estimate on transport amp}\\
\|\partial_{zz}h_{\pm 1l}(t,\cdot)\|_r+\|\partial_{zz}g_{\pm 1l}(t,\cdot)\|_r\leq &C_r\sqrt{\kappa}\mu\bar{\Lambda}\ell^{-1}_{1z}\ell_1^{-r},\label{e:diffusion two order derivative estimate on transport amp}\\
\|\partial_{zzz}h_{\pm 1l}(t,\cdot)\|_r+\|\partial_{zzz}g_{\pm 1l}(t,\cdot)\|_r\leq & C_r\sqrt{\kappa}\mu\bar{\Lambda}\ell^{-2}_{1z}\ell_1^{-r},\label{e:diffusion three order derivative estimate on transport amp}\\
\|\partial_th_{\pm 1l}(t,\cdot)\|_r+\|\partial_tg_{\pm 1l}(t,\cdot)\|_r\leq& C_r\sqrt{\kappa}\mu\Lambda\ell_1^{-r},\label{e:estimate on transport time derivative}\\
\|\partial_{tt}h_{\pm 1l}(t,\cdot)\|_r+\|\partial_{tt}g_{\pm 1l}(t,\cdot)\|_r\leq &C_r\sqrt{\kappa}\mu\Lambda\ell_1^{-r-1}.\label{e:estimate on b1l}
\end{align}
Moreover,  for any $t>0$
\begin{align}
\|b_{1l}(t,\cdot)\|_0+\|\beta_{1l}(t,\cdot)\|_0\leq& C_0\sqrt{\kappa},\label{e:zero estimate on main perturbation}\\
\|\partial_zb_{1l}(t,\cdot)\|_0+\|\partial_z\beta_{1l}(t,\cdot)\|_0\leq& C_0\sqrt{\kappa}\mu\bar{\Lambda},\label{e:zero diffusion derivative estimate on main perturbation}\\
\|\partial_{zz}b_{1l}(t,\cdot)\|_0+\|\partial_{zz}\beta_{1l}(t,\cdot)\|_0\leq& C_0\sqrt{\kappa}\mu\bar{\Lambda}\ell^{-1}_{1z},\label{e:zero diffusion two order derivative estimate on main perturbation}\\
\|\partial_{zzz}b_{1l}(t,\cdot)\|_0+\|\partial_{zzz}\beta_{1l}(t,\cdot)\|_0\leq & C_0\sqrt{\kappa}\mu\bar{\Lambda}\ell^{-2}_{1z},\label{e:zero diffusion three order derivative estimate on main perturbation}\\
\|\partial_tb_{1l}(t,\cdot)\|_0+\|\partial_t\beta_{1l}(t,\cdot)\|_0\leq& C_0\sqrt{\kappa}\mu\Lambda,\label{e:zero estimate on time derivative}\\
\|\partial_{tt}b_{1l}(t,\cdot)\|_0+\partial_{tt}\beta_{1l}(t,\cdot)\|_0\leq& C_0\sqrt{\kappa}\mu\Lambda\ell_1^{-1}\label{e:zero estimate on two time order}
\end{align}
and
\begin{align}
\|h_{\pm 1l}(t,\cdot)\|_0+\|h_{\pm 1l}(t,\cdot)\|_0\leq &C_0\sqrt{\kappa},\label{e:zero estimate on transport amp}\\
\|\partial_zh_{\pm 1l}(t,\cdot)\|_0+\|\partial_zg_{\pm 1l}(t,\cdot)\|_0\leq &C_0\sqrt{\kappa}\mu\bar{\Lambda},\label{e:zero diffusion derivative estimate on transport amp}\\
\|\partial_{zz}h_{\pm 1l}(t,\cdot)\|_0+\|\partial_{zz}g_{\pm 1l}(t,\cdot)\|_0\leq &C_0\sqrt{\kappa}\mu\bar{\Lambda}\ell^{-1}_{1z},\label{e:zero diffusion two order derivative estimate on transport amp}\\
\|\partial_{zzz}h_{\pm 1l}(t,\cdot)\|_0+\|\partial_{zzz}g_{\pm 1l}(t,\cdot)\|_0\leq & C_0\sqrt{\kappa}\mu\bar{\Lambda}\ell^{-2}_{1z},\label{e:zero diffusion three order derivative estimate on transport amp}\\
\|\partial_th_{\pm 1l}(t,\cdot)\|_0+\|\partial_tg_{\pm 1l}(t,\cdot)\|_0\leq& C_0\sqrt{\kappa}\mu\Lambda,\label{e:zero estimate on transport time derivative}\\
\|\partial_{tt}h_{\pm 1l}(t,\cdot)\|_0+\|\partial_{tt}h_{\pm 1l}(t,\cdot)\|_0\leq &C_0\sqrt{\kappa}\mu\Lambda\ell_1^{-1}.\label{e:zero estimate on b1l}
\end{align}
\begin{proof}
First, by (\ref{b:bound on decomposition coeffience}), (\ref{d:definition e}), (\ref{e:estimate higher convolution}), (\ref{i:composition inequality 2}) and assumption (\ref{a:assumption on parameter}), for $r\geq2$, we obtain
\begin{align}
\Big[\sqrt{e(t)-a_{1\ell_1}(t,\cdot)}\Big]_r\leq & C_r\sum\limits_{i=1}^r\big\|(e(t)-a_{1\ell_1}(t,\cdot))^{\frac{1}{2}-i}\big\|_0[a_{1\ell_1}(t,\cdot))]_1^{(i-1)\frac{r}{r-1}}[a_{1\ell_1}(t,\cdot))]_r^{\frac{r-i}{r-1}}\nonumber\\
\leq & C_r\sqrt{\kappa}(\mu^r\Lambda^r+\mu\Lambda\ell_1^{-(r-1)})\leq C_r\sqrt{\kappa}\mu\Lambda\ell_1^{-(r-1)},\nonumber\\
\big[\alpha_l(\mu v_{0\ell_1})(t,\cdot)\big]_r\leq & C_r\sum\limits_{i=1}^r\|(\nabla^{i}\alpha)_l\|_0[\mu v_{0\ell_1}(t,\cdot))]_1^{(i-1)\frac{r}{r-1}}[\mu v_{0\ell_1}(t,\cdot))]_r^{\frac{r-i}{r-1}}\nonumber\\
\leq & C_r(\mu^r\Lambda^r+\mu\Lambda\ell_1^{-(r-1)})\leq C_r\mu\Lambda\ell_1^{-(r-1)}.\label{e:derivative estimate on decomposition}
\end{align}
Similarly,
\begin{align}\label{e:estimate on inverse amp}
\left[\frac{1}{\sqrt{e(t)-a_{1\ell_1}(t,\cdot)}}\right]_r\leq \frac{C_r}{\sqrt{\kappa}}\mu\Lambda\ell_1^{-(r-1)}.
\end{align}
Moreover, for $r=0,1$, it's easy to get
\begin{align}
\left[\sqrt{e(t)-a_{1\ell_1}(t,\cdot)}\right]_r\leq  C_r\sqrt{\kappa}\mu^r\Lambda^r,\quad
\big[\alpha_l(\mu v_{0\ell_1})(t,\cdot)\big]_r\leq  C_r\mu^r\Lambda^r.\nonumber
\end{align}
Recalling that
 \begin{align}
    b_{1l}:=\sqrt{\frac{e(t)-a_{1\ell_1}}{2}}\alpha_l(\mu v_{0\ell_1}),\nonumber
   \end{align}
by (\ref{i:inequality 1}), we have
\begin{align}
\|b_{1l}\|_r\leq C_r\sqrt{\kappa}\mu\Lambda\ell_1^{-(r-1)}\quad {\rm for~~any~~r\geq1},\quad \|b_{1l}\|_0\leq C_0\sqrt{\kappa}.\nonumber
\end{align}
By (\ref{d:difinition on c1 norm}) and (\ref{e:estimate higher convolution}), for any $r\geq1$,
\begin{align}\label{e:estimate on high derivative on cil}
\|c_{1\ell_1}\|_r\leq C_r\Lambda\ell_1^{-(r-1)},\quad \|\partial_z^ic_{1\ell_1}\|_r\leq C_i\bar{\Lambda}\ell_{1z}^{-(i-1)}\ell_1^{-r},~~~i=1,2,3
\end{align}
and
\begin{align}\label{e:zero estimate on high derivative on cil}
\|c_{1\ell_1}\|_0\leq C_0\kappa,\quad \|\partial_z^ic_{1\ell_1}\|_0\leq C_i\bar{\Lambda}\ell_{1z}^{-(i-1)},~~~i=1,2,3.
\end{align}
Thus, by (\ref{b:bound on decomposition coeffience}) on $\beta_{1l}$, (\ref{e:estimate on inverse amp}), (\ref{e:estimate on high derivative on cil}) and the same argument as above, we also have
\begin{align}
\|\beta_{1l}\|_r\leq C_r\sqrt{\kappa}\mu\Lambda\ell_1^{-(r-1)},\quad \forall r\geq 1,\quad
\|\beta_{1l}\|_0\leq C_0\sqrt{\kappa}.
\end{align}
By (\ref{d:b1l}) on $g_{\pm 1l}$, (\ref{d:difinition on h1l}) on $h_{\pm 1l}$ and (\ref{a:assumption on parameter}), it's easy to obtain
\begin{align}
&\|h_{\pm1l}\|_r\leq C_r\sqrt{\kappa}\mu\Lambda\ell_1^{-(r-1)},\quad
\|g_{\pm 1l}\|_r\leq C_r\sqrt{\kappa}\mu\Lambda\ell_1^{-(r-1)},\quad \forall r\geq 1,\nonumber\\
&\|h_{\pm1l}\|_0\leq C_0\sqrt{\kappa},\quad
\|g_{\pm 1l}\|_0\leq C_0\sqrt{\kappa}.\nonumber
\end{align}
Thus we complete the proof of (\ref{e:estimate on main perturbation}), (\ref{e:estimate on transport amp}), (\ref{e:zero estimate on main perturbation}) and (\ref{e:zero estimate on transport amp}).

We compute
\begin{align}
\partial_z\Big(\sqrt{e(t)-a_{1\ell_1}}\Big)&=-\frac{1}{2}\frac{(\partial_za_1)_{\ell_1}}{\sqrt{e(t)-a_{1\ell_1}}},\nonumber\\
\partial_{zz}\Big(\sqrt{e(t)-a_{1\ell_1}}\Big)&=-\frac{1}{2}\frac{
\partial_za_1\ast\big(\varphi_{\ell_1}(t)\varphi_{\ell_1}(x)\varphi_{\ell_1}(y)(\varphi')_{\ell_{1z}}(z)\big)
\ell^{-1}_{1z}}{\sqrt{e(t)-a_{1\ell_1}}}
-\frac{1}{4}\frac{(\partial_za_1)^2_{\ell_1}}{(\sqrt{e(t)-a_{1\ell_1}})^3},\nonumber\\
\partial_{zzz}\Big(\sqrt{e(t)-a_{1\ell_1}}\Big)&=-\frac{1}{2}\frac{
\partial_za_1\ast\big(\varphi_{\ell_1}(t)(\varphi)_{\ell_1}(x)\varphi_{\ell_1}(y)(\varphi'')_{\ell_{1z}}(z)\big)
\ell^{-2}_{1z}}{\sqrt{e(t)-a_{1\ell_1}}}\nonumber\\
&-\frac{3}{4}\frac{(\partial_za_1)_{\ell_1}
\partial_za_1\ast\big(\varphi_{\ell_1}(t)(\varphi)_{\ell_{1}}(x)\varphi_{\ell_1}(y)(\varphi')_{\ell_{1z}}(z)\big)
\ell^{-1}_{1z}}{(\sqrt{e(t)-a_{1\ell_1}})^3}
-\frac{3}{8}\frac{(\partial_za_1)^3_{\ell_1}}{(\sqrt{e(t)-a_{1\ell_1}})^5},
\end{align}
by (\ref{d:difinition on c1 norm}), (\ref{i:inequality 1}) and (\ref{i:composition inequality 2}), for any integer $r\geq 0$, we have
\begin{align}\label{e:estimate on x derivative}
\Big\|\partial_z\Big(\sqrt{e(t)-a_{1\ell_1}}\Big)\Big\|_r\leq C_r\sqrt{\kappa}\mu\bar{\Lambda}\ell_1^{-r},\quad&
\Big\|\partial_{zz}\Big(\sqrt{e(t)-a_{1\ell_1}}\Big)\Big\|_r\leq  C_r\sqrt{\kappa}\mu\bar{\Lambda}\ell^{-1}_{1z}\ell_1^{-r},\nonumber\\
\Big\|\partial_{zzz}\Big(\sqrt{e(t)-a_{1\ell_1}}\Big)\Big\|_r\leq  C_r\sqrt{\kappa}\mu\bar{\Lambda}\ell^{-2}_{1z}\ell_1^{-r}.
\end{align}
A similar argument gives
\begin{align}\label{e:estimate on unity function}
\Big\|\partial_z\Big(\alpha_l(\mu v_{0\ell_1})\Big)\Big\|_r\leq& C_r\mu\bar{\Lambda}\ell_1^{-r},\quad
\Big\|\partial_{zz}\Big(\alpha_l(\mu v_{0\ell_1})\Big)\Big\|_r\leq  C_r\mu\bar{\Lambda}\ell^{-1}_{1z}\ell_1^{-r},\nonumber\\
\Big\|\partial_{zzz}\Big(\alpha_l(\mu v_{0\ell_1})\Big)\Big\|_r\leq&  C_r\mu\bar{\Lambda}\ell^{-2}_{1z}\ell_1^{-r},
\end{align}
hence, we obtain
\begin{align}
\big\|\partial_zb_{1l}\big\|_r\leq C_r\sqrt{\kappa}\mu\bar{\Lambda}\ell_1^{-r},\quad
\big\|\partial_{zz}b_{1l}\big\|_r\leq  C_r\sqrt{\kappa}\mu\bar{\Lambda}\ell^{-1}_{1z}\ell_1^{-r},\quad
\big\|\partial_{zzz}b_{1l}\big\|_r\leq  C_r\sqrt{\kappa}\mu\bar{\Lambda}\ell^{-2}_{1z}\ell_1^{-r}.\nonumber
\end{align}
Similarly,
\begin{align}\label{e:estimate on inverse x derivative}
&\Big\|\partial_z\Big(\frac{1}{\sqrt{e(t)-a_{1\ell_1}}}\Big)\Big\|_r\leq \frac{C_r}{\sqrt{\kappa}}\mu\bar{\Lambda}\ell_1^{-r},\quad
\Big\|\partial_{zz}\Big(\frac{1}{\sqrt{e(t)-a_{1\ell_1}}}\Big)\Big\|_r\leq \frac{C_r}{\sqrt{\kappa}}\mu\bar{\Lambda}\ell^{-1}_{1z}\ell_1^{-r},\nonumber\\
&\Big\|\partial_{zzz}\Big(\frac{1}{\sqrt{e(t)-a_{1\ell_1}}}\Big)\Big\|_r\leq  \frac{C_r}{\sqrt{\kappa}}\mu\bar{\Lambda}\ell^{-2}_{1z}\ell_1^{-r}.
\end{align}
Finally, combining (\ref{e:estimate on high derivative on cil}), (\ref{e:estimate on unity function}) and (\ref{e:estimate on inverse x derivative}), we can obtain
\begin{align}
&\big\|\partial_z\beta_{1l}\big\|_r\leq C_r\sqrt{\kappa}\mu\bar{\Lambda}\ell_1^{-r},\quad
\big\|\partial_{zz}\beta_{1l}\big\|_r\leq  C_r\sqrt{\kappa}\mu\bar{\Lambda}\ell^{-1}_{1z}\ell_1^{-r},\quad
\big\|\partial_{zzz}\beta_{1l}\big\|_r\leq  C_r\sqrt{\kappa}\mu\bar{\Lambda}\ell^{-2}_{1z}\ell_1^{-r}.\nonumber
\end{align}
Thus, we obtain (\ref{e:diffusion derivative estimate on main perturbation})-(\ref{e:diffusion three order derivative estimate on main perturbation}), (\ref{e:zero diffusion derivative estimate on main perturbation})-(\ref{e:zero diffusion three order derivative estimate on main perturbation}).  By (\ref{d:difination b1l}) on $g_{\pm 1l}$, (\ref{d:difinition on h1l}) on $h_{\pm 1l}$ and parameter assumption (\ref{a:assumption on parameter}), it's easy to obtain (\ref{e:diffusion derivative estimate on transport amp})-(\ref{e:diffusion three order derivative estimate on transport amp}), (\ref{e:zero diffusion derivative estimate on transport amp})-(\ref{e:zero diffusion three order derivative estimate on transport amp}).

We let
$$\Gamma:=\sqrt{\frac{e(t)-a_{1\ell_1}}{2}}$$
and observe that
$$b_{1l}=\Gamma\alpha_l(\mu v_{0\ell_1}),$$
thus,
\begin{align}
\partial_tb_{1l}=&\partial_t\Gamma \alpha_l(\mu v_{0\ell_1})+\Gamma (\nabla\alpha)_l(\mu v_{0\ell_1})\cdot\mu(\partial_tv_0)_{\ell_1},\nonumber\\
\partial_{tt}b_{1l}=&\partial_{tt}\Gamma \alpha_l(\mu v_{0\ell_1})+2\partial_t\Gamma (\nabla\alpha)_l(\mu v_{0\ell_1})\cdot\mu(\partial_tv_0)_{\ell_1}+
\mu^2\Gamma (\partial_tv_0)^T_{\ell_1}(\nabla^2\alpha)_l(\mu v_{0\ell_1})(\partial_tv_0)_{\ell_1}\nonumber\\
&+\Gamma (\nabla\alpha)_l(\mu v_{0\ell_1})\cdot\mu(\partial_tv_0)\ast(\partial_t\varphi)_{\ell_1}\ell_1^{-1}.\nonumber
\end{align}
A directly calculation gives
\begin{align}
\partial_t\Gamma=\frac{e'(t)-(\partial_ta_1)_{\ell_1}}{\sqrt{2(e(t)-a_{1\ell_1})}},\quad
\partial_{tt}\Gamma=\frac{e^{''}(t)-(\partial_ta_1)\ast (\varphi')_{\ell_1}\ell_1^{-1}}{\sqrt{2(e(t)-a_{1\ell_1})}}+
\frac{(e'(t)-(\partial_ta_1)_{\ell_1})^2}{(\sqrt{2(e(t)-a_{1\ell_1}})^3},\nonumber
\end{align}
therefore, by assumption (\ref{a:assumption on parameter}), we have
\begin{align}
\|\partial_t\Gamma\|_0\leq C_0\sqrt{\kappa}\mu\Lambda,\quad
\|\partial_{tt}\Gamma\|_0\leq C_0\sqrt{\kappa}\mu\Lambda\ell_1^{-1}.\nonumber
\end{align}
By (\ref{i:inequality 1}), (\ref{i:composition inequality 2}) and assumption (\ref{a:assumption on parameter}), for $r\geq 1$ we have
\begin{align}
\|\partial_t\Gamma\|_r\leq C_r\sqrt{\kappa}\mu\Lambda\ell_1^{-r},\quad
\|\partial_{tt}\Gamma\|_r\leq C_r\sqrt{\kappa}\mu\Lambda\ell_1^{-r-1}.\nonumber
\end{align}
Then by (\ref{i:inequality 1}), we obtain that for any $r\geq 0$
\begin{align}
\|\partial_tb_{1l}\|_r\leq C_r\sqrt{\kappa}\mu\Lambda\ell_1^{-r},\quad
\|\partial_{tt}b_{1l}\|_r\leq C_r\sqrt{\kappa}\mu\Lambda\ell_1^{-r-1}.\nonumber
\end{align}
By a similar argument, we obtain, for any $r\geq 0$
\begin{align}
\|\partial_t\beta_{1l}\|_r\leq C_r\sqrt{\kappa}\mu\Lambda\ell_1^{-r},\quad&
\|\partial_{tt}\beta_{1l}\|_r\leq C_r\sqrt{\kappa}\mu\Lambda\ell_1^{-r-1}.\nonumber
\end{align}
Thus, we obtain (\ref{e:estimate on time derivative}), (\ref{e:estimate on two time order}), (\ref{e:zero estimate on time derivative}) and (\ref{e:zero estimate on two time order}).
From (\ref{d:difination b1l}) and (\ref{d:difinition on h1l}), it's easy to get (\ref{e:estimate on transport time derivative}), (\ref{e:estimate on b1l}), (\ref{e:zero estimate on transport time derivative}) and (\ref{e:zero estimate on b1l}).
Then the proof of this lemma is complete.
\end{proof}
\end{Lemma}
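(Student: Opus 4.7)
The plan is to reduce every one of the twenty-odd estimates to bounds on three simple building blocks: the scalar $\Gamma := \sqrt{(e(t)-a_{1\ell_1})/2}$, its reciprocal $1/\sqrt{e(t)-a_{1\ell_1}}$, and the composition $\alpha_l(\mu v_{0\ell_1})$. Once these are in hand, the products $b_{1l}=\Gamma\,\alpha_l(\mu v_{0\ell_1})$ and $\beta_{1l}=-c_{1\ell_1}\alpha_l(\mu v_{0\ell_1})/(2\Gamma)$ are handled by the Leibniz-type inequality (6.3). The bounds for $g_{\pm 1l}$ and $h_{\pm 1l}$ then follow for free, because their definitions only add terms of the form $(\text{derivative of }b_{1l}\text{ or }\beta_{1l})/(\lambda_1 2^{[l]})$; the parameter condition $\lambda_1\geq \ell_1^{-(1+\varepsilon)}\geq \ell_1^{-1}$ ensures the correction term is dominated by the principal one, so $g_{\pm 1l}$ and $h_{\pm 1l}$ satisfy the same bounds as $b_{1l}$ and $\beta_{1l}$.

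For the pure spatial derivatives of the building blocks, I would apply the composition inequality (6.4) to $\Psi(s)=\sqrt{s}$ (respectively $\Psi(s)=1/\sqrt{s}$, $\Psi = \alpha_l$), using the convolution estimate (6.5) which gives $[a_{1\ell_1}]_1,[v_{0\ell_1}]_1\lesssim\Lambda$ and $[a_{1\ell_1}]_r,[v_{0\ell_1}]_r\lesssim \Lambda\ell_1^{1-r}$ for $r\ge 2$. The lower bound $e(t)-a_{1\ell_1}\geq \kappa$ on the support of $c_{1\ell_1}$, together with $0\le e\le 10\kappa$ and $\|a_{1\ell_1}\|_0\le 5\kappa$, keeps negative powers of $\Gamma$ under control, giving the $\sqrt{\kappa}$ scaling. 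The interpolation coming from (6.4) yields the form $\mu^r\Lambda^r + \mu\Lambda\ell_1^{-(r-1)}$; assumption $\ell_1^{-1}\ge \mu\Lambda$ in (4.1) ensures the second term dominates, producing the clean bound $C_r\sqrt{\kappa}\mu\Lambda\ell_1^{-(r-1)}$.

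The estimates for vertical derivatives require one extra input: the anisotropic mollifier (4.3). Because $\partial_z$ falls on the factor $\varphi_{\ell_{1z}}(z)$, each $z$-derivative of a convolved function $f_{\ell_1}$ costs $\ell_{1z}^{-1}$ rather than $\ell_1^{-1}$, and is controlled by $\bar\Lambda$ instead of $\Lambda$. Computing $\partial_z\Gamma$, $\partial_{zz}\Gamma$, $\partial_{zzz}\Gamma$ explicitly via chain rule and quotient rule, I would get formulas with increasing powers of $\ell_{1z}^{-1}$, matching the claimed $\bar\Lambda\,\ell_{1z}^{-(i-1)}\ell_1^{-r}$ pattern. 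The same device, combined with the bound $\|\partial_z^i c_{1\ell_1}\|_r\lesssim \bar\Lambda\,\ell_{1z}^{-(i-1)}\ell_1^{-r}$, produces the $\partial_z$-estimates for $\beta_{1l}$.

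Finally, for time derivatives I would compute $\partial_t b_{1l}$ and $\partial_{tt}b_{1l}$ by the chain rule. The term $\partial_t \Gamma$ involves $e'(t)-(\partial_t a_1)_{\ell_1}$; since $\|\partial_t a_1\|_0\le \Lambda$ (built into the definition of $\Lambda$ in (2.2)) and $\|e'\|_0\le C/\kappa$ which is absorbed into $\mu\Lambda$ via $\mu\ge 1/\kappa$, one gets $\|\partial_t\Gamma\|_0\lesssim \sqrt{\kappa}\mu\Lambda$. The second derivative in time loses one factor of $\ell_1^{-1}$ through mollifier differentiation. I expect the main bookkeeping obstacle to be keeping straight which derivatives are controlled by the full $C^1$-norm $\Lambda$ (costing $\ell_1^{-1}$) versus the vertical $C^1_z$-norm $\bar\Lambda$ (costing $\ell_{1z}^{-1}$), especially when both $z$- and time-derivatives act on the same term; the anisotropy of (4.3) is what enables the sharper $\bar\Lambda\,\ell_{1z}^{-(i-1)}$ bounds for $\partial_z^i$, and must be preserved throughout.
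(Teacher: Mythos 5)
Your proposal follows essentially the same approach as the paper: reduce to the three building blocks $\Gamma$, $1/\sqrt{e-a_{1\ell_1}}$, and $\alpha_l(\mu v_{0\ell_1})$ via the composition inequality \eqref{i:composition inequality 2} and the convolution bounds \eqref{e:estimate higher convolution}, combine with the Leibniz estimate \eqref{i:inequality 1} for the products, use the anisotropic mollifier to track $\ell_{1z}^{-1}$ versus $\ell_1^{-1}$ and $\bar\Lambda$ versus $\Lambda$, and pass to $g_{\pm 1l}$, $h_{\pm 1l}$ by observing the correction terms carry a harmless factor $(\lambda_1\ell_1)^{-1}\leq 1$. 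This is exactly what the paper does.

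One small inaccuracy in the time-derivative step: you write $\|e'\|_0\leq C/\kappa$. In fact $e$ ramps from $0$ to $10\kappa$ over a window of width $\kappa/2$, so $\|e'\|_0\leq C_0$ (a constant, independent of $\kappa$); it is $\|e''\|_0$ that scales like $\kappa^{-1}$. If $\|e'\|_0$ really were $C/\kappa$, the bound $\|\partial_t\Gamma\|_0\lesssim\sqrt{\kappa}\mu\Lambda$ would fail: you would get $\kappa^{-3/2}$ on the left, which $\mu\geq\kappa^{-1}$ does \emph{not} absorb into $\sqrt{\kappa}\mu\Lambda\gtrsim\kappa^{-1/2}$. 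The $\kappa^{-1}$-factor that genuinely needs to be absorbed via $\mu\geq 1/\kappa$ comes from the denominator $\sqrt{e(t)-a_{1\ell_1}}\gtrsim\sqrt{\kappa}$, not from $e'$. With that correction the bookkeeping closes and your argument matches the paper's.
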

From the definition on $w_{1o}, w_{1c}, \chi_{1o}, \chi_{1c}$ and the above lemma, we have the following estimates:
 \begin{Lemma}[Estimates on main perturbation and correction]\label{e: estimate correction}
 \begin{align}
 \|w_{1o}\|_0\leq C_0\sqrt{\kappa},\quad  \|(\partial_t, \partial_x, \partial_y)w_{1o}\|_0\leq& C_0\sqrt{\kappa}\lambda_1,\quad  \|\partial_z w_{1o}\|_0\leq C_0\sqrt{\kappa}\mu\bar{\Lambda},\label{e:estimate main pertuebations}\\
 \|w_{1c}\|_0\leq C_0\sqrt{\kappa}\mu\Lambda\lambda_1^{-1},\quad  \|(\partial_t,\partial_x, \partial_y)w_{1c}\|_0\leq& C_0\sqrt{\kappa}\mu\Lambda,\quad \|\partial_zw_{1c}\|_0\leq C_0\sqrt{\kappa}\mu\bar{\Lambda}\ell_1^{-1}\lambda_1^{-1},\label{e:estimate on correction}\\
  \|\chi_{1o}\|_0\leq C_0\sqrt{\kappa},\quad  \|(\partial_t, \partial_x, \partial_y)\chi_{1o}\|_0\leq& C_0\sqrt{\kappa}\lambda_1,\quad  \|\partial_z \chi_{1o}\|_0\leq C_0\sqrt{\kappa}\mu\bar{\Lambda},\label{e:estimate on terperture correction}\\
  \|\chi_{1c}\|_0\leq C_0\sqrt{\kappa}\mu\Lambda\lambda_1^{-1},\quad  \|(\partial_t,\partial_x, \partial_y)\chi_{1c}\|_0\leq& C_0\sqrt{\kappa}\mu\Lambda,\quad \|\partial_z\chi_{1c}\|_0\leq C_0\sqrt{\kappa}\mu\bar{\Lambda}\ell_1^{-1}\lambda_1^{-1}.\label{e:estimate on termperture correction}
\end{align}
 \begin{proof}
 First, by (\ref{e:bound w1o}), we know $\|w_{1o}\|_0\leq C_0\sqrt{\kappa}$. Since
 \begin{align}
 \partial_tw_{1o}=&\sum_{l\in Z^3}\partial_tb_{1l}k_1\Big(e^{i\lambda_1 2^{[l]} (k_{1h}^{\perp},0)\cdot \big((x,y,z)-\frac{l}{\mu}t\big)}+e^{-i\lambda_1 2^{[l]} (k_{1h}^{\perp},0)\cdot \big((x,y,z)-\frac{l}{\mu}t\big)}\Big)\nonumber\\
 &-\sum_{l\in Z^3}b_{1l}i\lambda_1 2^{[l]} (k_{1h}^{\perp},0)\cdot\frac{l}{\mu}k_1\Big(e^{i\lambda_1 2^{[l]} (k_{1h}^{\perp},0)\cdot \big((x,y,z)-\frac{l}{\mu}t\big)}-e^{-i\lambda_1 2^{[l]}(k_{1h}^{\perp},0)\cdot \big((x,y,z)-\frac{l}{\mu}t\big)}\Big).\nonumber
 \end{align}
 Thus, by (\ref{e:zero estimate on main perturbation}), (\ref{e:zero estimate on time derivative}), parameter assumption (\ref{a:assumption on parameter}) and notice that $b_{1l}\neq 0$ implies $|l|\leq C_0\mu$, we obtain
 \begin{align}
 \|\partial_tw_{1o}\|_0\leq C_0\sqrt{\kappa}\lambda_1.\nonumber
\end{align}
A similar argument gives
\begin{align}
 \|(\partial_x,\partial_y) w_{1o}\|_0\leq C_0\sqrt{\kappa}\lambda_1,\quad  \|\partial_z w_{1o}\|_0\leq C_0\sqrt{\kappa}\mu\bar{\Lambda}.\nonumber
\end{align}
Then we obtain the proof of (\ref{e:estimate main pertuebations}). And (\ref{e:estimate on terperture correction}) follows similarly.

Next, by (\ref{d:w 1cl}), (\ref{i:inequality 1}), (\ref{e:estimate on main perturbation}) and the assumption (\ref{a:assumption on parameter}), we get
 \begin{align}
 \|w_{1cl}\|_0
 \leq C_0\frac{\|\nabla b_{1l}\|_0}{\lambda_1}
 \leq C_0\frac{\sqrt{\kappa}\mu\Lambda}{\lambda_1}.\nonumber
 \end{align}
By (\ref{p:unity}), we arrive at
 \begin{align}
 \|w_{1c}\|_0\leq C_0\sqrt{\kappa}\mu\Lambda\lambda_1^{-1}.\nonumber
 \end{align}
A straightforward computation gives
\begin{align}
   &\partial_tw_{1cl}\nonumber\\
   =&\left(
    \begin{array}{ccc}
    -t_1\partial_{tz}b_{1l}+\partial_{ty}b_{1l}\\
    s_1\partial_{tz}b_{1l}-\partial_{tx}b_{1l}\\
    t_1\partial_{tx}b_{1l}-s_1\partial_{ty}b_{1l}
    \end{array}
    \right)\Big(\frac{e^{i\lambda_1 2^{[l]} (k_{1h}^{\perp},0)\cdot \big((x,y,z)-\frac{l}{\mu}t\big)}}{i\lambda_12^{[l]}}+\frac{e^{-i\lambda_1 2^{[l]} (k_{1h}^{\perp},0)\cdot \big((x,y,z)-\frac{l}{\mu}t\big)}}{-i\lambda_12^{[l]}}\Big)\nonumber\\
   &-\left(
    \begin{array}{ccc}
    -t_1\partial_zb_{1l}+\partial_yb_{1l}\\
    s_1\partial_zb_{1l}-\partial_xb_{1l}\\
    t_1\partial_xb_{1l}-s_1\partial_yb_{1l}
    \end{array}
    \right) (k_{1h}^{\perp},0)\cdot\frac{l}{\mu}\Big(e^{i\lambda_1 2^{[l]} (k_{1h}^{\perp},0)\cdot \big((x,y,z)
    -\frac{l}{\mu}t\big)}
    +e^{-i\lambda_1 2^{[l]} (k_{1h}^{\perp},0)\cdot \big((x,y,z)-\frac{l}{\mu}t\big)}\Big).\nonumber
   \end{align}
By (\ref{i:inequality 1}), (\ref{e:estimate on time derivative}), (\ref{a:assumption on parameter}) and notice that $b_{1l}\neq 0$ implies $|l|\leq C_0\mu$, we get
   \begin{align}
 \|\partial_tw_{1c}\|_0\leq C_0\sqrt{\kappa}\mu\Lambda.\nonumber
 \end{align}
Similarly
 \begin{align}
 \|(\partial_x,\partial_y) w_{1c}\|_0\leq C_0\sqrt{\kappa}\mu\Lambda.\nonumber
 \end{align}
 Differentiating $w_{1cl}$ in $z$, we have
 \begin{align}
   \partial_zw_{1cl}=&\left(
    \begin{array}{ccc}
    -t_1\partial_{zz}b_{1l}+\partial_{zy}b_{1l}\\
    s_1\partial_{zz}b_{1l}-\partial_{zx}b_{1l}\\
    t_1\partial_{zx}b_{1l}-s_1\partial_{zy}b_{1l}
    \end{array}
    \right)\Big(\frac{e^{i\lambda_1 2^{[l]} (k_{1h}^{\perp},0)\cdot \big((x,y,z)-\frac{l}{\mu}t\big)}}{i\lambda_12^{[l]}}+\frac{e^{-i\lambda_1 2^{[l]} (k_{1h}^{\perp},0)\cdot \big((x,y,z)-\frac{l}{\mu}t\big)}}{-i\lambda_12^{[l]}}\Big).\nonumber
   \end{align}
 Thus, by (\ref{e:diffusion derivative estimate on main perturbation})
   \begin{align}
 \|\partial_zw_{1c}\|_0\leq C_0\sqrt{\kappa}\mu\bar{\Lambda}\ell_1^{-1}\lambda_1^{-1}.\nonumber
 \end{align}
Collecting all these estimates, we obtain the proof of (\ref{e:estimate on correction}). A similar argument gives (\ref{e:estimate on termperture correction}).
 \end{proof}
 \end{Lemma}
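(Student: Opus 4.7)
The plan is to read off each of the four estimates directly from the explicit formulas for $w_{1o}, w_{1c}, \chi_{1o}, \chi_{1c}$ in Section~4, by plugging in the pointwise bounds on $b_{1l}, \beta_{1l}$ (and their derivatives) that are already provided by Lemma~\ref{e:estimate various}. Two structural observations will drive every bound. First, the partition-of-unity property $\mathrm{supp}\,\alpha_l\subseteq B_{c_2}(l)$ forces $b_{1l}\neq 0$ (and similarly $\beta_{1l}\neq 0$) only when $|\mu v_{0\ell_1}(t,x)-l|\leq c_2$, so that $|l|\leq\mu\|v_0\|_0+c_2\leq C_0\mu$; thus $|l|/\mu$ is uniformly bounded and only $O(1)$ indices contribute at each point. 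Second, since $k_1=(1,0,0)^T$ we have $(k_{1h}^\perp,0)=(0,1,0)$, so the oscillatory phase depends only on $t$ and $y$; in particular $\partial_z$ will never fall on the exponential factor.

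For $w_{1o}$, the $L^\infty$ bound reduces to $\sum_l \|b_{1l}\|_0\leq C_0\sqrt\kappa$ via (\ref{e:zero estimate on main perturbation}). For the $(\partial_t,\partial_x,\partial_y)$ bound I would split into two types of terms: derivatives falling on $b_{1l}$ give $\|\partial_t b_{1l}\|_0+\|\nabla b_{1l}\|_0\leq C_0\sqrt\kappa\mu\Lambda\leq C_0\sqrt\kappa\lambda_1$ by (\ref{e:zero estimate on time derivative}) together with the parameter assumption $\lambda_1\geq\ell_1^{-(1+\varepsilon)}\geq\mu\Lambda$ in (\ref{a:assumption on parameter}); derivatives falling on the phase produce a factor $\lambda_1 2^{[l]}|l|/\mu$ (from $\partial_t$) or $\lambda_1 2^{[l]}$ (from $\partial_x,\partial_y$), which is $\leq C_0\lambda_1$ by the $|l|/\mu$ bound. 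The $\partial_z$ estimate is immediate since the phase is $z$-independent: only $\partial_z b_{1l}$ appears and (\ref{e:zero diffusion derivative estimate on main perturbation}) gives $C_0\sqrt\kappa\mu\bar\Lambda$.

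For the correction $w_{1c}$, each summand carries a prefactor $(i\lambda_1 2^{[l]})^{-1}$, so the baseline $L^\infty$ bound is $\|\nabla b_{1l}\|_0/\lambda_1\leq C_0\sqrt\kappa\mu\Lambda/\lambda_1$. For $(\partial_t,\partial_x,\partial_y) w_{1c}$, derivatives hitting the phase cancel the $1/\lambda_1$ prefactor and leave $\|\nabla b_{1l}\|_0(1+|l|/\mu)\leq C_0\sqrt\kappa\mu\Lambda$; derivatives hitting the amplitude involve $\|\nabla\partial_t b_{1l}\|_0$ or $\|\nabla^2 b_{1l}\|_0$, which by (\ref{e:estimate on main perturbation})--(\ref{e:estimate on time derivative}) with $r=2$ give $C_0\sqrt\kappa\mu\Lambda\ell_1^{-1}/\lambda_1\leq C_0\sqrt\kappa\mu\Lambda$ using $\lambda_1\geq\ell_1^{-(1+\varepsilon)}\geq\ell_1^{-1}$. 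For $\partial_z w_{1c}$, again the phase is $z$-independent, so only the amplitude vector is differentiated, and $\|\partial_z\nabla b_{1l}\|_0\leq C_0\sqrt\kappa\mu\bar\Lambda\ell_1^{-1}$ from (\ref{e:diffusion derivative estimate on main perturbation}) with $r=1$, divided by $\lambda_1$, yields the claimed $C_0\sqrt\kappa\mu\bar\Lambda\ell_1^{-1}\lambda_1^{-1}$.

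The temperature estimates for $\chi_{1o},\chi_{1c}$ follow by the identical scheme after replacing $b_{1l}$ by $\beta_{1l}$ and invoking the $\beta_{1l}$-versions of (\ref{e:zero estimate on main perturbation}), (\ref{e:zero diffusion derivative estimate on main perturbation}), (\ref{e:zero estimate on time derivative}) and (\ref{e:diffusion derivative estimate on main perturbation}). The main obstacle I anticipate is not any single inequality but the bookkeeping: I must keep track of which of $\Lambda$ vs.\ $\bar\Lambda$ and $\ell_1$ vs.\ $\ell_{1z}$ governs each term (every $z$-derivative on an amplitude yields $\bar\Lambda$ and, if repeated, an extra $\ell_{1z}^{-1}$ rather than $\ell_1^{-1}$), and verify in each case that the parameter hierarchy (\ref{a:assumption on parameter}) is strong enough to absorb the amplitude-derivative contributions into the $\lambda_1$-terms coming from the oscillations so that the stated right-hand sides are indeed achieved.
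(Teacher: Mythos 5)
Your proof is correct and follows essentially the same route as the paper: bound each term using the pointwise estimates on $b_{1l},\beta_{1l}$ from Lemma~\ref{e:estimate various}, split derivatives between amplitude and phase, invoke $|l|\leq C_0\mu$ on the support, and use the parameter hierarchy $\lambda_1\geq\ell_1^{-(1+\varepsilon)}\geq\mu\Lambda$ to absorb the amplitude contributions. The one useful point you make explicit (and the paper leaves silent) is that $(k_{1h}^\perp,0)=(0,1,0)$ makes the phase $z$-independent, so $\partial_z$ never hits the exponential; your only small slip is asserting that $\partial_x$ also picks up a $\lambda_1$ factor from the phase (it does not, since the phase has no $x$-dependence), but the stated bound is still attained via the amplitude term.
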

By (\ref{e:bound w1o}), (\ref{e:bound x}), lemma \ref{e: estimate correction} and constructions (\ref{d:the first solution sequence}) of $(v_{01}, p_{01}, \theta_{01})$, we conclude that
 \begin{Corollary}\label{e:first difference sequence estimate}
\begin{align}\label{e:difference estimate on various quantity}
&\|v_{01}-v_0\|_0\leq \frac{M\sqrt{\kappa}}{12}+C_0\sqrt{\kappa}\mu\Lambda\lambda_1^{-1},\quad\|(\partial_t, \partial_x,\partial_y)(v_{01}-v_0)\|_0\leq C_0\sqrt{\kappa}\lambda_1,\nonumber\\
&\|\theta_{01}-\theta_{0}\|_0\leq \frac{M\sqrt{\kappa}}{6}+C_0\sqrt{\kappa}\mu\Lambda\lambda_1^{-1},\quad
 \|(\partial_t, \partial_x,\partial_y)(\theta_{01}-\theta_{0})\|_0\leq C_0\sqrt{\kappa}\lambda_1,\nonumber\\
& \|p_{01}-p_0\|_0=0,\quad \|\partial_z(v_{01}-v_0)\|_0\leq C_0\sqrt{\kappa}\mu\bar{\Lambda},\quad\|\partial_z(\theta_{01}-\theta_{0})\|_0\leq C_0\sqrt{\kappa}\mu\bar{\Lambda}.
\end{align}
\end{Corollary}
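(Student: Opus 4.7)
The claimed corollary is a direct bookkeeping consequence of the decompositions
$$v_{01}-v_0=w_1=w_{1o}+w_{1c},\qquad \theta_{01}-\theta_0=\chi_1=\chi_{1o}+\chi_{1c},\qquad p_{01}=p_0$$
coming from the definition (\ref{d:the first solution sequence}), combined with the perturbation bounds (\ref{e:bound w1o}), (\ref{e:bound x}) and Lemma \ref{e: estimate correction}. So the plan is simply to apply the triangle inequality to each component and read off the correct dominant term.

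First, for the $C^0$ estimate on the velocity increment, I would write
$$\|v_{01}-v_0\|_0 \le \|w_{1o}\|_0 + \|w_{1c}\|_0.$$
The first term is at most $\frac{M\sqrt{\kappa}}{12}$ by (\ref{e:bound w1o}), while the second is at most $C_0\sqrt{\kappa}\mu\Lambda\lambda_1^{-1}$ by the first estimate in (\ref{e:estimate on correction}), yielding the stated bound. For the spatial-temporal derivatives $(\partial_t,\partial_x,\partial_y)(v_{01}-v_0)$, I would again split into oscillatory and corrector parts: the first inequality of (\ref{e:estimate main pertuebations}) gives $\|(\partial_t,\partial_x,\partial_y)w_{1o}\|_0 \le C_0\sqrt{\kappa}\lambda_1$, and the corresponding inequality in (\ref{e:estimate on correction}) gives the corrector bound $C_0\sqrt{\kappa}\mu\Lambda$, which is absorbed into $C_0\sqrt{\kappa}\lambda_1$ by the parameter assumption $\lambda_1\geq \ell_1^{-(1+\varepsilon)}\geq \mu\Lambda$ in (\ref{a:assumption on parameter}). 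For the $\partial_z$ derivative, the two contributions $C_0\sqrt{\kappa}\mu\bar{\Lambda}$ and $C_0\sqrt{\kappa}\mu\bar{\Lambda}\ell_1^{-1}\lambda_1^{-1}$ from (\ref{e:estimate main pertuebations}) and (\ref{e:estimate on correction}) respectively are comparable, so again the first dominates under (\ref{a:assumption on parameter}), giving $\|\partial_z(v_{01}-v_0)\|_0\leq C_0\sqrt{\kappa}\mu\bar{\Lambda}$.

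The estimates for $\theta_{01}-\theta_0$ follow identically, replacing $w_{1o},w_{1c}$ with $\chi_{1o},\chi_{1c}$ and using (\ref{e:bound x}) together with (\ref{e:estimate on terperture correction})–(\ref{e:estimate on termperture correction}); the only difference is the slightly larger constant $\frac{M\sqrt{\kappa}}{6}$ coming from (\ref{e:bound x}). Finally, $\|p_{01}-p_0\|_0=0$ is immediate from the definition $p_{01}:=p_0$ in (\ref{d:the first solution sequence}).

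There is no real obstacle here; the corollary is essentially a restatement of Lemma \ref{e: estimate correction} in terms of the differences $v_{01}-v_0$, $\theta_{01}-\theta_0$. The only mild point to verify is that the corrector bounds are indeed subdominant (or at worst comparable) to the main oscillatory bounds under the parameter constraints (\ref{a:assumption on parameter}); once this is noted, the proof is a one-line triangle inequality argument in each case.
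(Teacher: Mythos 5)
Your argument is correct and follows exactly the paper's (implicit) proof: the paper proves the corollary simply by citing (\ref{e:bound w1o}), (\ref{e:bound x}), Lemma \ref{e: estimate correction}, and the definitions in (\ref{d:the first solution sequence}), which is precisely the triangle-inequality bookkeeping you spell out. Your observations that the corrector contributions are subdominant under (\ref{a:assumption on parameter}) — specifically $\lambda_1\geq\ell_1^{-(1+\varepsilon)}\geq\ell_1^{-1}\geq\mu\Lambda$ so that $C_0\sqrt{\kappa}\mu\Lambda\leq C_0\sqrt{\kappa}\lambda_1$ for the $(\partial_t,\partial_x,\partial_y)$ bound, and $\ell_1^{-1}\lambda_1^{-1}\leq1$ for the $\partial_z$ bound — are the only verifications needed and you have them right.
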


 \subsection{Estimate on $\delta R_{01}$}

 \indent

 %Recall that
 % \begin{align}
  % \delta R_{01}=&\mathcal{R}(\hbox{div}M_1)+N_1-\mathcal{R}(\chi_1e_2)+\mathcal{R}\Big\{\partial_tw_{1}
  % +\hbox{div}\Big[\sum_{l\in Z^3}\Big(w_{1l}\otimes v_0\Big(\frac{l}{\mu_{1}}\Big)\nonumber\\
  % &+v_0\Big(\frac{l}{\mu_{1}}\Big)\otimes w_{1l}\Big)\Big]\Big\}
  % +(w_{1o}\otimes w_{1oc}+w_{1oc}\otimes w_{1o}+w_{1oc}\otimes w_{1oc}),\nonumber\\\nonumber
   %\end{align}
 As in \cite{CDL2}, we split $\delta R_{01}$ into three parts, they are \\
  (1) The oscillation part
  $$\mathcal{R}(\hbox{div}M_1)-\mathcal{R}(\chi_1e_3)-\mathcal{R}(\partial_{zz}w_1).$$
   (2) The transport part
  \begin{align}
  \mathcal{R}\Big\{\partial_tw_{1}
   +\hbox{div}\Big[\sum_{l\in Z^3}\Big(w_{1l}\otimes \frac{l}{\mu}+\frac{l}{\mu}\otimes w_{1l}\Big)\Big]\Big\}
   =\mathcal{R}\Big(\partial_tw_{1}+\sum_{l\in Z^3}\frac{l}{\mu}\cdot\nabla w_{1l}\Big).\nonumber
   \end{align}
  (3) The error part
  \begin{align}
  N_1+(w_{1o}\otimes w_{1c}+w_{1c}\otimes w_{1o}+w_{1c}\otimes w_{1c})+R_0-R_{0\ell_1}.\nonumber
   \end{align}
Where we followed the notations $M_1$ and $N_1$ given in section 4.3.

In the following we will estimate each of them separately.

\begin{Lemma}[The oscillation part]\label{e:oscillation estimate}
\begin{align}
\|\mathcal{R}({\rm div}M_1)\|_0\leq& C_0(\varepsilon)\kappa \mu\Lambda\lambda_1^{-1},\quad \quad\|(\partial_t, \partial_x,\partial_y)\mathcal{R}({\rm div}M_1)\|_0\leq C_0(\varepsilon)\kappa \mu\Lambda,\nonumber\\
 \|\partial_z\mathcal{R}({\rm div}M_1)\|_0\leq&  C_0(\varepsilon)\kappa\mu\bar{\Lambda}\ell_1^{-1}\lambda_1^{-1},\label{e:oscillation estimate main perturbation}\\
\big\|\mathcal{R}(\chi_1e_3)\big\|_0\leq& C_0(\varepsilon)\sqrt{\kappa}\lambda_1^{-1},\quad \quad
\big\|(\partial_t,\partial_x,\partial_y)\mathcal{R}(\chi_1e_3)\big\|_0\leq C_0(\varepsilon)\sqrt{\kappa},\nonumber\\
 \big\|\partial_z\mathcal{R}(\chi_1e_3)\big\|_0\leq& C_0(\varepsilon)\sqrt{\kappa}\mu\bar{\Lambda}\lambda_1^{-1},\label{e:oscillation estimate terperture}\\
\big\|\mathcal{R}(\partial_{zz}w_1)\big\|_0\leq& C_0(\varepsilon)\sqrt{\kappa}\mu\bar{\Lambda}\ell_{1z}^{-1}\lambda_1^{-1},\quad \quad
\big\|(\partial_t,\partial_x,\partial_y)\mathcal{R}(\partial_{zz}w_1)\big\|_0\leq C_0(\varepsilon)\sqrt{\kappa}\mu\bar{\Lambda}\ell_{1z}^{-1}, \nonumber\\ \big\|\partial_z\mathcal{R}(\partial_{zz}w_1)\big\|_0\leq& C_0(\varepsilon)\sqrt{\kappa}\mu\bar{\Lambda}\ell_{1z}^{-2}\lambda_1^{-1}.\label{e:diffusion estimate}
\end{align}
\begin{proof}
We start with the fact that $k_1\cdot (k_{1h}^{\perp},0)=0.$ Recalling the notation of $M_1$ in (\ref{d:difinition on m1}), we have
\begin{align}
\hbox{div}M_1=M_{11}+M_{12},\nonumber
\end{align}
where
\begin{align}
M_{11}
   =&\sum\limits_{j=0}^7\sum\limits_{[l]=j}\Big(e^{2i\lambda_1 2^{j}  (k_{1h}^{\perp},0)\cdot \big((x,y,z)-\frac{l}{\mu}t\big)}+
   e^{-2i\lambda_1 2^{j}  (k_{1h}^{\perp},0)\cdot \big((x,y,z)-\frac{l}{\mu}t\big)}\Big)k_1\otimes k_1\nabla b^2_{1l},\nonumber\\
   M_{12}=&\sum\limits_{l,l'\in Z^3 ,l\neq l'}\hbox{div}(w_{1ol}\otimes w_{1ol'}).\nonumber
\end{align}
By (\ref{e:estimate on main perturbation}), (\ref{e:zero estimate on main perturbation}), (\ref{e:estimate oscillatory inverse}) on $\mathcal{R}$ with $m=1+\Big[\frac{1+\varepsilon}{\varepsilon}\Big]$
 and (\ref{a:assumption on parameter}), we arrive at
\begin{align}
\|\mathcal{R}(M_{11})\|_0
\leq C_m\sum\limits_{j=0}^7\Big(\sum_{i=0}^{m-1}\lambda_1^{-(i+1)}\Big\|\sum\limits_{[l]=j}\nabla b^2_{1l}(t,\cdot)\Big\|_i+\lambda_1^{-m}
\Big[\sum\limits_{[l]=j}\nabla b^2_{1l}(t,\cdot)\Big]_m
\Big)\leq C_m\kappa \mu\Lambda\lambda_1^{-1}.\nonumber
\end{align}
%where we use
%\begin{align}
%&\int_{T^3}\Big(e^{2i\lambda_1 2^{j}  (k_{1h}^{\perp},0)\cdot \big((x,y,z)-\frac{l}{\mu}t\big)}+
%   e^{-2i\lambda_1 2^{j} (k_{1h}^{\perp},0)\cdot \big((x,y,z)-\frac{l}{\mu}t\big)}\Big)k_1\otimes k_1\nabla b^2_{1l}dxdydz\nonumber\\
%   =&\int_{T^3}{\rm div}\Big[\Big(e^{2i\lambda_1 2^{j}  (k_{1h}^{\perp},0)\cdot \big((x,y,z)-\frac{l}{\mu}t\big)}+
%   e^{-2i\lambda_1 2^{j}  (k_{1h}^{\perp},0)\cdot \big((x,y,z)-\frac{l}{\mu}t\big)}\Big)k_1\otimes k_1 b^2_{1l}\Big]dxdydz=0.\nonumber
%\end{align}
On the other hand, since $k_1\cdot (k_{1h}^{\perp},0)=0$ and by the notation (4.10), we obtain
\begin{align}
M_{12}=&\sum\limits_{j=0}^7\sum\limits_{[l]=j}\sum\limits_{1\leq|l-l'|<2}k_1\otimes k_1\nabla(b_{1l}b_{1l'})\Big(e^{i\lambda_1(2^{j}+2^{[l']})(k_{1h}^\perp,0)\cdot (x,y,z) -ig_{1,l,l'}(t)}\nonumber\\
&+e^{i\lambda_1(2^{j}-2^{[l']})(k_{1h}^\perp,0)\cdot(x,y,z)-i\overline{g}_{1,l,l'}(t)}
+e^{i\lambda_1(2^{[l']}-2^{j})(k_{1h}^\perp,0)\cdot (x,y,z)+i\overline{g}_{1,l,l'}(t)}\nonumber\\
&+e^{-i\lambda_1(2^{j}+2^{[l']})(k_{1h}^\perp,0)\cdot (x,y,z)+ig_{1,l,l'}(t)}\Big),\nonumber
\end{align}
where $$g_{1,l,l'}(t)=\lambda_1\Big(2^{[l]} (k_{1h}^\perp,0)\cdot\frac{l}{\mu}t+2^{[l']} (k_{1h}^\perp,0)\cdot\frac{l'}{\mu}t\Big),\quad
 \overline{g}_{1,l,l'}(t)=\lambda_1\Big(2^{[l]} (k_{1h}^\perp,0)\cdot\frac{l}{\mu}t-
 2^{[l']} (k_{1h}^\perp,0)\cdot\frac{l'}{\mu}t\Big).$$
%Since $k_1\cdot k_1^{\perp}=0$, we have
% \begin{align}
%\hbox{div}M_{12}=&\sum\limits_{j=0}^3\sum\limits_{[l]=j}\sum\limits_{1\leq|l-l'|<2}k_1\otimes %k_1\nabla(b_{1l}b_{1l'})\Big(e^{i\lambda_1(2^{j}+2^{[l']})k_1^{\perp}\cdot (y,z) %-ig_{1,l,l'}(t)}+e^{i\lambda_1(2^{j}-2^{[l']})k_1^{\perp}\cdot(y,z)-i\overline{g}_{1,l,l'}(t)}\nonumber\\
%&+e^{i\lambda_1(2^{[l']}-2^{j})k_1^{\perp}\cdot (y,z)+i\overline{g}_{1,l,l'}(t)}+e^{-i\lambda_1(2^{j}+2^{[l']})k_1^{\perp}\cdot %(y,z)-ig_{1,l,l'}(t)}\Big),\nonumber
%\end{align}
As in the estimate of $M_{11}$, by (\ref{e:estimate on main perturbation}), (\ref{e:zero estimate on main perturbation}), (\ref{e:estimate oscillatory inverse}) on $\mathcal{R}$ with $m=1+\Big[\frac{1+\varepsilon}{\varepsilon}\Big]$
and (\ref{a:assumption on parameter}), and by noticing $b_{1p}b_{1q}=0$ if $|p-q|\geq2$, we have
\begin{align}
&\|\mathcal{R}(M_{12})\|_0\nonumber\\
\leq& C_m\sum\limits_{j=0}^7\Big(\sum_{i=0}^{m-1}\lambda_1^{-(i+1)}\Big\|\sum\limits_{[l]=j}\sum\limits_{1\leq|l-l'|<2} \nabla(b_{1l}b_{1l'})\Big\|_i+\lambda_1^{-m}
\sum_{i=0}^{m-1}\lambda_1^{-(i+1)}\Big[\sum\limits_{[l]=j}\sum\limits_{1\leq|l-l'|<2} \nabla(b_{1l}b_{1l'})\Big]_m
\Big)\nonumber\\
\leq& C_m\kappa \mu\Lambda\lambda_1^{-1}.\nonumber
\end{align}
Then we obtain the first estimate in (\ref{e:oscillation estimate main perturbation}) by summing up the two parts.

Next, differentiating $M_{11}$ in time,
\begin{align}
\partial_tM_{11}=&\sum\limits_{j=0}^7\sum\limits_{[l]=j}\Big(e^{2i\lambda_1 2^{j} (k_{1h}^{\perp},0)\cdot \big((x,y,z)-\frac{l}{\mu}t\big)}+
   e^{-2i\lambda_1 2^{j}  (k_{1h}^{\perp},0)\cdot \big((x,y,z)-\frac{l}{\mu}t\big)}\Big)k_1\otimes k_1\nabla\partial_t (b^2_{1l})\nonumber\\
  -&\sum\limits_{j=0}^7\sum\limits_{[l]=j}2i\lambda_1 2^{j}  (k_{1h}^{\perp},0)\cdot\frac{l}{\mu}\Big(e^{2i\lambda_1 2^{j}  (k_{1h}^{\perp},0) \cdot \big((x,y,z)-\frac{l}{\mu}t\big)}\nonumber\\
  -& e^{-2i\lambda_1 2^{j}  (k_{1h}^{\perp},0)\cdot \big((x,y,z)-\frac{l}{\mu}t\big)}\Big)k_1\otimes k_1\nabla (b^2_{1l}).\nonumber
\end{align}
Noticing $|l|\leq C_0\mu$ and applying the same argument as above, we can obtain
\begin{align}
\|\partial_t\mathcal{R}(M_{11})\|_0\leq C_m\kappa \mu\Lambda\lambda_1^{-1}(\ell_1^{-1}+\mu\Lambda+\lambda_1) \leq C_m\kappa \mu\Lambda.\nonumber
\end{align}
Similarly, we also have
\begin{align}
\|\partial_t\mathcal{R}(M_{12})\|_0 \leq C_m\kappa \mu\Lambda.\nonumber
\end{align}
Differentiating in $(x,y)$ on $M_{11}, M_{12}$, similarly we have
\begin{align}
\|(\partial_x,\partial_y)\mathcal{R}(M_{11})\|_0\leq C_m\kappa \mu\Lambda,\quad\|(\partial_x,\partial_y)\mathcal{R}(M_{12})\|_0\leq C_m\kappa \mu\Lambda.\nonumber
\end{align}
Finally we obtain
\begin{align}
\|(\partial_t,\partial_x,\partial_y)\mathcal{R}({\rm div}M_1)\|_0\leq C_m\kappa \mu_1\Lambda.\nonumber
\end{align}
This is the second estimate in (\ref{e:oscillation estimate main perturbation}).

A straightforward computation gives
\begin{align}
\partial_zM_{11}
   =&\sum\limits_{j=0}^7\sum\limits_{[l]=j}\Big(e^{2i\lambda_1 2^{j}  (k_{1h}^\perp,0)\cdot \big((x,y,z)-\frac{l}{\mu}t\big)}+
   e^{-2i\lambda_1 2^{j} (k_{1h}^\perp,0)\cdot \big((x,y,z)-\frac{l}{\mu}t\big)}\Big)k_1\otimes k_1(\nabla\partial_z) b^2_{1l}.\nonumber
  \end{align}
By (\ref{e:diffusion derivative estimate on main perturbation}), (\ref{e:estimate oscillatory inverse}) on $\mathcal{R}$ with $m=1+\Big[\frac{1+\varepsilon}{\varepsilon}\Big]$
 and (\ref{a:assumption on parameter}), we arrive at
 \begin{align}
\|\mathcal{R}(\partial_zM_{11})\|_0
  \leq C_m\kappa\mu\bar{\Lambda}\ell_1^{-1}\lambda_1^{-1}.\nonumber
  \end{align}
Similarly, we have
   \begin{align}
\|\mathcal{R}(\partial_zM_{12})\|_0
  \leq C_m\kappa\mu\bar{\Lambda}\ell_1^{-1}\lambda_1^{-1}.\nonumber
  \end{align}
  Combining the two parts, we arrive at
  $$\|\partial_z\mathcal{R}({\rm div}M_1)\|_0
  \leq C_m\kappa\mu\bar{\Lambda}\ell_1^{-1}\lambda_1^{-1}.$$
Then the proof of (\ref{e:oscillation estimate main perturbation}) is complete.

Since
\begin{align}
\chi_{1}=\sum\limits_{j=0}^7\sum\limits_{[l]=j}\Big(h_{1l}e^{i\lambda_1 2^{j}  (k_{1h}^\perp,0)\cdot \big((x,y,z)-\frac{l}{\mu}t\big)}+h_{-1l}e^{-i\lambda_1 2^{j} (k_{1h}^\perp,0)\cdot \big((x,y,z)-\frac{l}{\mu}t\big)}\Big),\nonumber
\end{align}
then by (\ref{e:estimate on transport amp}), (\ref{e:zero estimate on transport amp}), (\ref{e:estimate oscillatory inverse}) on $\mathcal{R}$ with $m=1+\Big[\frac{1+\varepsilon}{\varepsilon}\Big]$
 and  (\ref{a:assumption on parameter}), we obtain
\begin{align}
\big\|\mathcal{R}(\chi_{1}e_3)\big\|_0\leq& C_m\sum\limits_{j=0}^7\Big[\sum_{i=0}^{m-1}\lambda_1^{-(i+1)}\Big(\Big\|\sum\limits_{[l]=j}h_{1l}(t,\cdot)\Big\|_i
+\Big\|\sum\limits_{[l]=j}h_{-1l}(t,\cdot)\Big\|_i\Big)\nonumber\\
&+\lambda_1^{-m}
\Big[\sum\limits_{[l]=j}h_{1l}(t,\cdot)+\sum\limits_{[l]=j}h_{-1l}(t,\cdot)\Big]_m\Big]
\leq C_m\sqrt{\kappa}\lambda_1^{-1}.\nonumber
\end{align}
%where we use
%\begin{align}
%&\int_{T^3}\sum\limits_{[l]=j}\Big(h_{1l}e^{i\lambda_1 2^{j} (k_{1h}^\perp,0)\cdot \big((x,y,z)-\frac{l}{\mu}t\big)}\Big)dxdydz\nonumber\\
%   =&\int_{T^3}{\rm div}\Big(\sum\limits_{[l]=j}\frac{\beta_{1l} (k_{1h}^\perp,0)}{i\lambda_12^{[l]}|k_1|^2}e^{i\lambda_1 2^{[l]}  (k_{1h}^\perp,0)\cdot %\big((x,y,z)-\frac{l}{\mu}t\big)}\Big)dxdydz=0\nonumber
%\end{align}
%and
%\begin{align}
%&\int_{T^3}\sum\limits_{[l]=j}\Big(h_{-1l}e^{-i\lambda_1 2^{j}  (k_{1h}^\perp,0)\cdot \big((x,y,z)-\frac{l}{\mu}t\big)}\Big)dxdydz\nonumber\\
%   =&\int_{T^3}{\rm div}\Big(\sum\limits_{[l]=j}\frac{\beta_{1l} (k_{1h}^\perp,0)}{-i\lambda_12^{[l]}|k_1|^2}e^{-i\lambda_1 2^{[l]}  (k_{1h}^\perp,0)\cdot %\big((x,y,z)-\frac{l}{\mu}t\big)}\Big)dxdydz=0.\nonumber
%\end{align}
Differentiating in space and time on $\chi_1$, we have
\begin{align}
\big\|(\partial_t, \partial_x,\partial_y)\mathcal{R}(\chi_{1}e_3)\big\|_0
\leq C_0(\varepsilon)\sqrt{\kappa},\quad \big\|\partial_z\mathcal{R}(\chi_{1}e_3)\big\|_0
\leq C_0(\varepsilon)\sqrt{\kappa}\mu\bar{\Lambda}\lambda_1^{-1}.\nonumber
\end{align}
Then we complete the proof of (\ref{e:oscillation estimate terperture}).

Finally, by (\ref{d:another representation}), we have
\begin{align}
\partial_{zz}w_1=\sum\limits_{j=0}^7\sum\limits_{[l]=j}\Big(\partial_{zz}g_{1l}e^{i\lambda_1 2^{j} (k_{1h}^{\perp},0)\cdot \big((x,y,z)-\frac{l}{\mu}t\big)}+\partial_{zz}g_{-1l}e^{-i\lambda_1 2^{j} (k_{1h}^{\perp},0)\cdot \big((x,y,z)-\frac{l}{\mu}t\big)}\Big),\nonumber
\end{align}
then by (\ref{e:diffusion two order derivative estimate on transport amp}), (\ref{e:zero diffusion two order derivative estimate on transport amp}),  (\ref{e:estimate oscillatory inverse}) on $\mathcal{R}$ with $m=1+\Big[\frac{1+\varepsilon}{\varepsilon}\Big]$
 and (\ref{a:assumption on parameter}), we have
\begin{align}
\|\mathcal{R}(\partial_{zz}w_1)\|_0
\leq &C_m\sum\limits_{j=0}^7\Big[\sum_{i=0}^{m-1}\lambda_1^{-(i+1)}\Big(\Big\|\sum\limits_{[l]=j}\partial_{zz}g_{1l}(t,\cdot)\Big\|_i
+\Big\|\sum\limits_{[l]=j}\partial_{zz}g_{-1l}(t,\cdot)\Big\|_i\Big)\nonumber\\
&+\lambda_1^{-m}
\Big[\sum\limits_{[l]=j}\partial_{zz}g_{1l}(t,\cdot)+\sum\limits_{[l]=j}\partial_{zz}g_{-1l}(t,\cdot)\Big]_m\Big]
\leq C_m\sqrt{\kappa}\mu\bar{\Lambda}\ell_{1z}^{-1}\lambda_1^{-1}.\nonumber
\end{align}
Similarly, we obtain
\begin{align}
\|(\partial_t,\partial_x,\partial_y)\mathcal{R}(\partial_{zz}w_1)\|_0
\leq C_m\sqrt{\kappa}\mu\bar{\Lambda}\ell_{1z}^{-1}.\nonumber
\end{align}
And
\begin{align}
\partial_{zzz}w_1=\sum\limits_{j=0}^7\sum\limits_{[l]=j}\Big(\partial_{zzz}g_{1l}e^{i\lambda_1 2^{j} k_1^{\perp}\cdot \big((y,z)-\frac{l}{\mu}t\big)}+\partial_{zzz}g_{-1l}e^{-i\lambda_1 2^{j} k_1^{\perp}\cdot \big((y,z)-\frac{l}{\mu}t\big)}\Big),\nonumber
\end{align}
By (\ref{e:diffusion three order derivative estimate on transport amp}), (\ref{e:zero diffusion three order derivative estimate on transport amp}) and a similar argument as above, we obtain
\begin{align}
\|\partial_z\mathcal{R}(\partial_{zz}w_1)\|_0
\leq C_m\sqrt{\kappa}\mu\bar{\Lambda}\ell_{1z}^{-2}\lambda_{1}^{-1}.\nonumber
\end{align}
Then we complete the proof of (\ref{e:diffusion estimate}).
\end{proof}
\end{Lemma}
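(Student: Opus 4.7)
The plan is to prove all six estimates by the same three-step template. First, express the argument of $\mathcal{R}$ as a finite sum of oscillatory packets of the form $a(t,x)e^{\pm i\lambda_1 m (k_{1h}^\perp,0)\cdot((x,y,z)-lt/\mu)}$ with $m\in\{1,2,\dots\}$ coming from $2^{[l]}\pm 2^{[l']}$. Second, apply Lemma \ref{l:reyn} to each packet with $m_0:=1+\lceil(1+\varepsilon)/\varepsilon\rceil$; under the parameter assumption $\lambda_1\ge\ell_1^{-(1+\varepsilon)}$ from \eqref{a:assumption on parameter}, the tail $\|a\|_{m_0}/\lambda_1^{m_0}$ is dominated by the leading $\|a\|_0/\lambda_1$, so the net cost is $\lambda_1^{-1}$ times a sup-norm of the amplitude. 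Third, bound these amplitudes by Lemma \ref{e:estimate various}.

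The most delicate piece is $\mathcal{R}(\mathrm{div}\,M_1)$. The key algebraic input is the orthogonality $k_1\cdot(k_{1h}^\perp,0)=0$, which kills the $\lambda_1$-sized contribution produced when $\mathrm{div}$ falls on the exponential. Consequently $\mathrm{div}\,M_1$ is a finite sum with amplitudes $\nabla b_{1l}^2$ (from the self piece at frequency $2\lambda_1 2^{[l]}$) and $\nabla(b_{1l}b_{1l'})$ (from the cross piece, restricted to nearest neighbors $1\le|l-l'|\le 1$ since $\alpha_l\alpha_{l'}\equiv 0$ otherwise). The amplitude bound \eqref{e:estimate on main perturbation} applied to these gradients yields the target $\kappa\mu\Lambda\lambda_1^{-1}$. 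For $\mathcal{R}(\chi_1 e_3)$ and $\mathcal{R}(\partial_{zz}w_1)$ no such cancellation is needed: $\chi_1$ is already a single-frequency sum with amplitudes $h_{\pm 1l}$ controlled by \eqref{e:estimate on transport amp}, while $\partial_{zz}w_1$ has amplitudes $\partial_{zz}g_{\pm 1l}$ controlled by \eqref{e:diffusion two order derivative estimate on transport amp}, which accounts for the extra factor $\mu\bar{\Lambda}\ell_{1z}^{-1}$.

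For the derivative bounds $\|(\partial_t,\partial_x,\partial_y)\mathcal{R}(\cdot)\|_0$ and $\|\partial_z\mathcal{R}(\cdot)\|_0$, I would differentiate each packet and observe that a derivative either lands on the amplitude (costing $\ell_1^{-1}$, or $\mu\Lambda$ for $\partial_t$ via \eqref{e:estimate on time derivative}) or on the exponential (costing at most $\lambda_1$, using that $b_{1l}\ne 0$ forces $|l|\le C_0\mu$ so that the transport frequency $\lambda_1 2^{[l]}|l|/\mu$ is uniformly $\le\lambda_1$). Under \eqref{a:assumption on parameter} the second option dominates, contributing a single factor of $\lambda_1$; the $\partial_z$ bound is the same with $\mu\bar{\Lambda}$ replacing the oscillation cost and $\ell_{1z}$ replacing $\ell_1$ in the relevant amplitude factors, consistent with \eqref{e:diffusion derivative estimate on main perturbation}--\eqref{e:diffusion three order derivative estimate on main perturbation} and their analogues for $g_{\pm 1l}$, $h_{\pm 1l}$.

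The main obstacle is parameter bookkeeping rather than any conceptual subtlety. One must verify that the cross-term frequencies $\lambda_1(2^{[l]}\pm 2^{[l']})$ are either zero (in which case the associated packet is space-constant and $\mathrm{div}$ annihilates it) or uniformly bounded below by $\lambda_1$, since $[l],[l']\in\{0,\dots,7\}$ restricts the nonzero differences to a fixed finite set. One must also check at each derivative step that the dominant scale among $\ell_1^{-1}$, $\mu\Lambda$, $\mu\bar{\Lambda}$, $\lambda_1$, $\ell_{1z}^{-1}$ is correctly identified under the five inequalities in \eqref{a:assumption on parameter}. Once this is set up, every estimate in the lemma reduces to a direct application of Lemma \ref{l:reyn} fed by the amplitude bounds of Lemma \ref{e:estimate various}.
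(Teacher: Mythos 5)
Your proposal follows essentially the same route as the paper: the same splitting of $\mathrm{div}\,M_1$ into the self-interaction part with amplitude $\nabla b_{1l}^2$ and the nearest-neighbour cross part with amplitude $\nabla(b_{1l}b_{1l'})$, the same use of $k_1\cdot(k_{1h}^{\perp},0)=0$, the same application of Lemma \ref{l:reyn} with $m=1+\big[\frac{1+\varepsilon}{\varepsilon}\big]$ fed by the amplitude bounds of Lemma \ref{e:estimate various}, and the same derivative bookkeeping using $|l|\leq C_0\mu$, with the identical treatment of $\mathcal{R}(\chi_1e_3)$ and $\mathcal{R}(\partial_{zz}w_1)$ via $h_{\pm1l}$ and $\partial_{zz}g_{\pm1l}$, $\partial_{zzz}g_{\pm1l}$. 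The only point worth noting is that the zero-frequency case you hedge against never occurs, since overlapping cells $l\neq l'$ have $[l]\neq[l']$ by the parity construction, so all cross frequencies $\lambda_1(2^{[l]}\pm2^{[l']})$ are nonzero and comparable to $\lambda_1$, exactly as the paper implicitly uses.
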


\begin{Lemma}[The transportation part]\label{e:transport estimate}
\begin{align}
&\Big\|\mathcal{R}\Big(\partial_tw_{1}+\sum_{l\in Z^3}\frac{l}{\mu}\cdot\nabla w_{1l}\Big)\Big\|_0\leq C_0(\varepsilon)\sqrt{\kappa}\mu\Lambda\lambda_1^{-1},\nonumber\\
&\Big\|(\partial_t,\partial_x,\partial_y)\mathcal{R}\Big(\partial_tw_{1}+\sum_{l\in Z^3}\frac{l}{\mu}\cdot\nabla w_{1l}\Big)\Big\|_0\leq C_0(\varepsilon)\sqrt{\kappa}\mu\Lambda,\nonumber\\
&\Big\|\partial_z\mathcal{R}\Big(\partial_tw_{1}+\sum_{l\in Z^3}\frac{l}{\mu}\cdot\nabla w_{1l}\Big)\Big\|_0\leq C_0(\varepsilon)\sqrt{\kappa}\mu\bar{\Lambda}\ell_1^{-1}\lambda_1^{-1}.
\end{align}
\begin{proof}
Recall that
\begin{align}
w_{1l}=g_{1l}e^{i\lambda_1 2^{[l]} (k_{1h}^{\perp},0)\cdot \big((x,y,z)-\frac{l}{\mu}t\big)}+g_{-1l}e^{-i\lambda_1 2^{[l]} (k_{1h}^{\perp},0)\cdot \big((x,y,z)-\frac{l}{\mu}t\big)}\nonumber
\end{align}
and $$w_1=\sum_{l\in Z^3}w_{1l}.$$
From the identity
$$\Big(\partial_t+\frac{l}{\mu}\cdot\nabla\Big)e^{\pm i\lambda_1 2^{[l]}(k_{1h}^{\perp},0)\cdot \big((x,y,z)-\frac{l}{\mu}t\big)}=0,$$
we arrive at
\begin{align}\label{e:transport representation}
&\partial_tw_{1}+\sum_{l\in Z^3}\frac{l}{\mu}\cdot\nabla w_{1l}\nonumber\\
=&\sum\limits_{j=0}^7\sum\limits_{[l]=j}\Big(\Big(\partial_t+
\frac{l}{\mu}\cdot\nabla\Big)g_{1l}e^{i\lambda_1 2^{j} (k_{1h}^{\perp},0)\cdot \big((x,y,z)-\frac{l}{\mu}t\big)}+\Big(\partial_t+
\frac{l}{\mu}\cdot\nabla\Big)g_{-1l}e^{-i\lambda_1 2^{j} (k_{1h}^{\perp},0)\cdot \big((x,y,z)-\frac{l}{\mu}t\big)}\Big{)}.
   \end{align}
By (\ref{e:estimate on transport amp}) and  (\ref{e:estimate oscillatory inverse}) on $\mathcal{R}$ with $m=1+\Big[\frac{1+\varepsilon}{\varepsilon}\Big]$, we have
\begin{align}
&\Big\|\mathcal{R}\Big(\partial_tw_{1}+\sum_{l\in Z^3}\frac{l}{\mu}\cdot\nabla w_{1l}\Big)\Big\|_0\nonumber\\
\leq& C_m\sum\limits_{j=0}^7\Big\{\sum_{i=0}^{m-1}\lambda_1^{-(i+1)}\Big(
\Big\|\sum\limits_{[l]=j}\Big(\partial_t+\frac{l}{\mu_{1}}\cdot\nabla\Big)g_{1l}(t,\cdot)\Big\|_i
+\Big\|\sum\limits_{[l]=j}\Big(\partial_t+\frac{l}{\mu_{1}}\cdot\nabla\Big)g_{-1l}(t,\cdot)\Big\|_i\Big)\nonumber\\
&+\lambda_1^{-m}\Big(\Big[\sum\limits_{[l]=j}\Big{(}\partial_t+\frac{l}{\mu_{1}}\cdot\nabla\Big)g_{1l}(t,\cdot)\Big]_m
+\Big[\sum\limits_{[l]=j}\Big{(}\partial_t+\frac{l}{\mu_{1}}\cdot\nabla\Big)g_{-1l}(t,\cdot)\Big]_m\Big)
\Big\}
\leq C_m\sqrt{\kappa}\mu\Lambda\lambda_1^{-1}.\nonumber
\end{align}
Differentiating in $(x,y)$ and $t$ on (\ref{e:transport representation}) and similarly we have
\begin{align}
\Big\|(\partial_t,\partial_x,\partial_y)\mathcal{R}\Big(\partial_tw_{1}+\sum_{l\in Z^3}\frac{l}{\mu}\cdot\nabla w_{1l}\Big)\Big\|_0
\leq C_m\sqrt{\kappa}\mu\Lambda.\nonumber
\end{align}
Moreover, we have
\begin{align}\label{e:transport representation}
&\partial_z\Big(\partial_tw_{1}+\sum_{l\in Z^3}\frac{l}{\mu}\cdot\nabla w_{1l}\Big)\nonumber\\
=&\sum\limits_{j=0}^7\sum\limits_{[l]=j}\Bigg(\Big(\partial_t+\frac{l}{\mu}\cdot\nabla\Big)\partial_zg_{1l}e^{i\lambda_1 2^{j} (k_{1h}^{\perp},0)\cdot \big((x,y,z)-\frac{l}{\mu}t\big)}+\Big(\partial_t+\frac{l}{\mu}\cdot\nabla\Big)\partial_zg_{-1l}e^{-i\lambda_1 2^{j} (k_{1h}^{\perp},0)\cdot \big((x,y,z)-\frac{l}{\mu}t\big)}\Bigg),\nonumber
   \end{align}
then by (\ref{e:diffusion derivative estimate on transport amp}), the above argument gives
 \begin{align}
\Big\|\partial_z\mathcal{R}\Big(\partial_tw_{1}+\sum_{l\in Z^3}\frac{l}{\mu}\cdot\nabla w_{1l}\Big)\Big\|_0
\leq C_m\sqrt{\kappa}\mu\bar{\Lambda}\ell_1^{-1}\lambda_1^{-1}.\nonumber
\end{align}
Then the proof of the lemma is complete.
\end{proof}
\end{Lemma}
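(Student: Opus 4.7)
The key structural observation is that each mode $w_{1l}$ is constructed so that its phase $\phi_l := \lambda_1 2^{[l]}(k_{1h}^{\perp},0)\cdot\bigl((x,y,z)-\tfrac{l}{\mu}t\bigr)$ is annihilated by the comoving transport operator $\partial_t+\tfrac{l}{\mu}\cdot\nabla$ (the time derivative contributes $-\lambda_1 2^{[l]}(k_{1h}^{\perp},0)\cdot\tfrac{l}{\mu}$, which cancels $\tfrac{l}{\mu}\cdot\nabla\phi_l$). Applying the Leibniz rule to the representation $w_{1l}=g_{1l}e^{i\phi_l}+g_{-1l}e^{-i\phi_l}$ therefore collapses the sum to
\[
\partial_t w_1+\sum_{l\in Z^3}\tfrac{l}{\mu}\cdot\nabla w_{1l}
=\sum_{j=0}^{7}\sum_{[l]=j}\Bigl[\bigl(\partial_t+\tfrac{l}{\mu}\cdot\nabla\bigr)g_{1l}\,e^{i\phi_l}+\bigl(\partial_t+\tfrac{l}{\mu}\cdot\nabla\bigr)g_{-1l}\,e^{-i\phi_l}\Bigr],
\]
i.e.\ a finite collection of zero-mean amplitudes multiplied by pure oscillatory factors at frequency $\lambda_1 2^{j}$ grouped by parity class $[l]\in\{0,\dots,7\}$.

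This is precisely the setting of Lemma 3.1. I would apply $\mathcal{R}$ termwise with $m:=1+\bigl\lfloor(1+\varepsilon)/\varepsilon\bigr\rfloor$ and sum over the finitely many $l$ permitted by the support constraint $|l|\lesssim \mu$ coming from the partition of unity $\alpha_l(\mu v_{0\ell_1})$. The amplitude seminorms are controlled by Lemma 5.1: combining the estimates for $\|g_{\pm 1l}\|_r$ and $\|\partial_t g_{\pm 1l}\|_r$ with the bound $|l|/\mu\lesssim 1$ gives $\|(\partial_t+\tfrac{l}{\mu}\cdot\nabla)g_{\pm 1l}\|_r\le C_r\sqrt{\kappa}\mu\Lambda\ell_1^{-r}$. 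The parameter hypothesis $\lambda_1\ge\ell_1^{-(1+\varepsilon)}$ makes the sequence $\lambda_1^{-(i+1)}\ell_1^{-i}$ strictly decreasing in $i$, so the $i=0$ term dominates the series in (3.3) and one obtains the base bound $C_0(\varepsilon)\sqrt{\kappa}\mu\Lambda\lambda_1^{-1}$.

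The derivative estimates follow by differentiating the identity above before inverting. For the horizontal directions $(\partial_t,\partial_x,\partial_y)$, a derivative either hits an amplitude (costing $\ell_1^{-1}$) or the phase (costing $\lambda_1$); the latter dominates by the parameter inequality, so the base estimate is multiplied by $\lambda_1$, yielding $\sqrt{\kappa}\mu\Lambda$. For the $z$-direction there is an anisotropic gain: since $(k_{1h}^{\perp},0)$ has vanishing third component, $\partial_z\phi_l\equiv 0$, so $\partial_z$ never hits the phase and always falls on the amplitude. Using the improved estimate on $\|\partial_z g_{\pm 1l}\|_r$ from Lemma 5.1 (which replaces $\Lambda$ by $\bar\Lambda\ell_1^{-1}$) in the place of the generic amplitude estimate produces the stated $\sqrt{\kappa}\mu\bar\Lambda\ell_1^{-1}\lambda_1^{-1}$.

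The only real obstacle is bookkeeping: verifying that the constants $C_m$ from Lemma 3.1 (depending on $\varepsilon$ via $m$) and those from Lemma 5.1 (depending on $\|v_0\|_0$ through derivatives of $\alpha_l(\mu v_{0\ell_1})$) combine into a single $C_0(\varepsilon)$, and that the partition-of-unity localization genuinely bounds the number of nonvanishing $l$ uniformly. The underlying mechanism---phase cancellation by the comoving transport operator---is built into the ansatz precisely so that the transport error is no worse than the oscillation error, and is the reason the advection velocities $l/\mu$ appear in the exponents in the first place.
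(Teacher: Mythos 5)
Your proposal is correct and follows essentially the same route as the paper's proof: you exploit the identity $(\partial_t+\tfrac{l}{\mu}\cdot\nabla)e^{\pm i\lambda_12^{[l]}(k_{1h}^\perp,0)\cdot((x,y,z)-\tfrac{l}{\mu}t)}=0$ so that the transport operator falls entirely on the amplitudes $g_{\pm1l}$, then apply the stationary-phase inverse-divergence estimate (Lemma~\ref{l:reyn}) with $m=1+\lceil(1+\varepsilon)/\varepsilon\rceil$, use the amplitude bounds of Lemma~\ref{e:estimate various} together with $|l|\lesssim\mu$, and observe that $\partial_z$ never touches the phase because $(k_{1h}^\perp,0)$ has no third component, which is what lets $\bar\Lambda\ell_1^{-1}$ replace $\Lambda$ in the $z$-derivative bound. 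The only cosmetic discrepancy is your labeling of the amplitude lemma as Lemma~5.1 (it is Lemma~6.1 in the paper's numbering); the content, including the dominance of the $i=0$ term in the geometric series guaranteed by $\lambda_1\geq\ell_1^{-(1+\varepsilon)}$, matches the paper's argument.
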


\begin{Lemma}[Estimates on error part I]\label{e:error 1}
\begin{align}
&\|N_1\|_0\leq C_0\sqrt{\kappa}\Big(\mu^{-1}+\Lambda\ell_1+\bar{\Lambda}\ell_{1z}\Big),\quad \|(\partial_t, \partial_x,\partial_y)N_1\|_0\leq C_0\lambda_1\sqrt{\kappa}\Big(\mu^{-1}+\Lambda\ell_1+\bar{\Lambda}\ell_{1z}\Big),\nonumber\\
& \|\partial_zN_1\|_0\leq C_0\sqrt{\kappa}\bar{\Lambda}.
\end{align}
\end{Lemma}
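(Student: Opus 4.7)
The plan is to prove all three estimates by splitting $N_1$ into the two sums appearing in its definition \eqref{d:difinition on m1}, and exploiting two distinct smallness mechanisms: the partition of unity support property $\mathrm{supp}\, b_{1l}\subseteq\{|v_{0\ell_1}-l/\mu|\le c_2/\mu\}$ for the first sum, and the standard mollification estimate \eqref{e:estimate different} for the second sum. Throughout I will use that for every fixed $(t,x,y,z)$ at most a bounded number of indices $l$ contribute to the sum, by the locally finite character of the partition of unity.

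For the $L^0$ bound, write $N_1=A+B$ where $A$ collects the terms involving $v_{0\ell_1}-l/\mu$ and $B$ the terms involving $v_0-v_{0\ell_1}$. On $\mathrm{supp}\, b_{1l}$, hence on $\mathrm{supp}\, w_{1l}$, we have $|v_{0\ell_1}-l/\mu|\le c_2/\mu$; combined with $\|w_{1l}\|_0\le C_0\sqrt{\kappa}$ from Lemma \ref{e: estimate correction} and local finiteness this yields $\|A\|_0\le C_0\sqrt{\kappa}\mu^{-1}$. For $B$, estimate \eqref{e:estimate different} gives $\|v_0-v_{0\ell_1}\|_0\le C_0(\Lambda\ell_1+\bar{\Lambda}\ell_{1z})$, so $\|B\|_0\le C_0\sqrt{\kappa}(\Lambda\ell_1+\bar{\Lambda}\ell_{1z})$. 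Summing yields the first estimate.

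For the $(\partial_t,\partial_x,\partial_y)$ bound I apply the Leibniz rule to each summand. When the derivative hits $w_{1l}$ one uses Corollary \ref{e: estimate correction}, equations \eqref{e:estimate main pertuebations}--\eqref{e:estimate on correction}, together with the parameter assumption $\lambda_1\ge\ell_1^{-(1+\varepsilon)}\ge\mu\Lambda$, to gain a factor $C_0\sqrt{\kappa}\lambda_1$. When the derivative hits $v_{0\ell_1}-l/\mu$ or $v_0-v_{0\ell_1}$, it produces at most $C_0\Lambda$ (by the definition \eqref{d:difinition on c1 norm} of $\Lambda$ and the standard convolution estimate), and this is bounded by $\lambda_1\Lambda\ell_1$ since $\lambda_1\ell_1\ge \ell_1^{-\varepsilon}\ge 1$. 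Pairing the $\lambda_1$-derivative of $w_{1l}$ with the pointwise bound $\mu^{-1}$, respectively $\Lambda\ell_1+\bar{\Lambda}\ell_{1z}$, and the undifferentiated $w_{1l}$ with the derivative of the slow factor, produces all contributions dominated by $C_0\lambda_1\sqrt{\kappa}(\mu^{-1}+\Lambda\ell_1+\bar{\Lambda}\ell_{1z})$.

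The $\partial_z$ estimate is the main point, and it is where the anisotropy is crucial: the phase in $w_{1l}$ is $\lambda_1 2^{[l]}(k_{1h}^\perp,0)\cdot((x,y,z)-l t/\mu)$, whose $z$-component vanishes, so differentiation in $z$ does \emph{not} produce a factor of $\lambda_1$. Instead Corollary \ref{e: estimate correction} gives $\|\partial_z w_{1l}\|_0\le C_0\sqrt{\kappa}\mu\bar{\Lambda}$ (the correction piece contributes only $C_0\sqrt{\kappa}\mu\bar{\Lambda}\ell_1^{-1}\lambda_1^{-1}\le C_0\sqrt{\kappa}\mu\bar{\Lambda}$ since $\lambda_1\ell_1\ge 1$). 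Using the parameter assumptions $\ell_1^{-1}\ge\mu\Lambda$ and $\ell_{1z}^{-1}\ge\mu\bar{\Lambda}$, I obtain $\mu\bar{\Lambda}\cdot\mu^{-1}=\bar{\Lambda}$, $\mu\bar{\Lambda}\cdot\Lambda\ell_1\le\mu\bar{\Lambda}\cdot\mu^{-1}=\bar{\Lambda}$, and $\mu\bar{\Lambda}\cdot\bar{\Lambda}\ell_{1z}\le\bar{\Lambda}$; meanwhile $\|w_{1l}\|_0\|\partial_z v_{0\ell_1}\|_0+\|w_{1l}\|_0\|\partial_z(v_0-v_{0\ell_1})\|_0\le C_0\sqrt{\kappa}\bar{\Lambda}$ directly from Corollary \ref{e:first difference sequence estimate} and $\|\partial_z v_0\|_0\le \bar{\Lambda}$. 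The main obstacle is precisely the careful verification that every product of parameters arising in the $\partial_z$ Leibniz expansion is reduced to $\bar{\Lambda}$ by the assumptions \eqref{a:assumption on parameter}; the $(t,x,y)$ and $L^0$ estimates are then routine consequences.
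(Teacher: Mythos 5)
Your proposal is correct and follows essentially the same route as the paper: the same decomposition of $N_1$ into the $(v_{0\ell_1}-l/\mu)$ terms (controlled by the support property of the partition of unity, yielding $\mu^{-1}$) and the $(v_0-v_{0\ell_1})$ terms (controlled by the mollification estimate \eqref{e:estimate different}), with the $\lambda_1$-gain for $(\partial_t,\partial_x,\partial_y)$ derivatives and the crucial absence of a $\lambda_1$-factor for $\partial_z$ because the phase vector $(k_{1h}^\perp,0)$ has vanishing third component. The only minor discrepancy is a labeling slip (Lemma \ref{e: estimate correction} is a Lemma, not a Corollary, and the $\bar\Lambda$ bounds on $\partial_z v_{0\ell_1}$ come from the definition \eqref{d:difinition on c1 norm} plus standard convolution rather than from Corollary \ref{e:first difference sequence estimate}), but the underlying mathematical reasoning is the paper's.
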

\begin{proof}
From the definition (\ref{d:difinition on m1}) on $N_{1}$, we decompose $N_1$ into two parts
\begin{align}
 N_1=N_{11}+N_{12},\nonumber
\end{align}
where
\begin{align}
&N_{11}=\sum_{l\in Z^3}\Big[w_{1l}\otimes \Big(v_{0\ell_1}-\frac{l}{\mu}\Big)
   +\Big(v_{0\ell_1}-\frac{l}{\mu}\Big)\otimes w_{1l}\Big],\nonumber\\
&N_{12}=\sum_{l\in Z^3}\Big[w_{1l}\otimes \big(v_0-v_{0\ell_1}\big)
   +\big(v_0-v_{0\ell_1}\big)\otimes w_{1l}\Big].\nonumber
\end{align}
For the term $N_{11}$, by (\ref{d:another representation}), we have
\begin{align}\label{e:representation difference}
\sum_{l\in Z^3}w_{1l}\otimes \Big(v_{0\ell_1}-\frac{l}{\mu}\Big)
=&\sum_{l\in Z^3}\Big(g_{1l}e^{i\lambda_1 2^{j} (k_{1h}^{\perp},0)\cdot \big((x,y,z)-\frac{l}{\mu}t\big)}\nonumber\\
&+g_{-1l}e^{-i\lambda_1 2^{j} (k_{1h}^{\perp},0)\cdot \big((x,y,z)-\frac{l}{\mu}t\big)}\Big)\otimes \Big(v_{0\ell_1}-\frac{l}{\mu}\Big).
\end{align}
Obviously, by (\ref{d:difination b1l}), we know that $g_{\pm 1l}\neq0$ implies $|\mu v_{0\ell_1}-l|\leq1.$
By (\ref{p:unity}) and (\ref{e:estimate on transport amp}), it's easy to get
\begin{align}
\Big\|\sum_{l\in Z^3}w_{1l}\otimes \Big(v_{0\ell_1}-\frac{l}{\mu}\Big)\Big\|_0\leq C_0\sqrt{\kappa}\mu^{-1}.\nonumber
\end{align}
Similarly,
\begin{align}
\Big\|\sum_{l\in Z^3}\Big(v_{0\ell_1}-\frac{l}{\mu}\Big)\otimes w_{1l} \Big\|_0\leq C_0\sqrt{\kappa}\mu^{-1}.\nonumber
\end{align}
Hence we have
\begin{align}
\|N_{11}\|_0\leq C_0\sqrt{\kappa}\mu^{-1}.\nonumber
\end{align}
Similarly, by (\ref{e:estimate different}), we have
\begin{align}
\|N_{12}\|_0\leq C_0\sqrt{\kappa}(\Lambda\ell_1+\bar{\Lambda}\ell_{1x}).\nonumber
\end{align}
We obtain the first estimate of this lemma by summing up the two parts.

Differentiating (\ref{e:representation difference}) in time,
\begin{align}
&\partial_t\Big(\sum_{l\in Z^3}w_{1l}\otimes \Big(v_{0\ell_1}-\frac{l}{\mu}\Big)\Big)\nonumber\\
=&\sum_{l\in Z^3}\Big(\partial_tg_{1l}e^{i\lambda_1 2^{[l]} (k_{1h}^{\perp},0)\cdot \big((x,y,z)-\frac{l}{\mu}t\big)}+\partial_tg_{-1l}e^{-i\lambda_1 2^{[l]} (k_{1h}^{\perp},0)\cdot \big((x,y,z)-\frac{l}{\mu}t\big)}\Big)\otimes \Big(v_{0\ell_1}-\frac{l}{\mu}\Big)\nonumber\\
&+\sum_{l\in Z^3}\Big(g_{1l}e^{i\lambda_1 2^{[l]}(k_{1h}^{\perp},0)\cdot \big((x,y,z)-\frac{l}{\mu}t\big)}+g_{-1l}e^{-i\lambda_1 2^{[l]}(k_{1h}^{\perp},0)\cdot \big((x,y,z)-\frac{l}{\mu}t\big)}\Big)\otimes (\partial_tv_0)_{\ell_1}\nonumber\\
&-\sum\limits_{j=0}^7\sum\limits_{[l]=j}i\lambda_1 2^{j} (k_{1h}^{\perp},0)\cdot\frac{l}{\mu}\Big(g_{1l}e^{i\lambda_1 2^{j} (k_{1h}^{\perp},0)\cdot \big((x,y,z)-\frac{l}{\mu}t\big)}\nonumber-g_{-1l}e^{-i\lambda_1 2^{j} (k_{1h}^{\perp},0)\cdot \big((x,y,z)-\frac{l}{\mu}t\big)}\Big)\otimes \Big(v_{0\ell_1}-\frac{l}{\mu}\Big).
\end{align}
Notice that $|l|\leq C_0\mu$. By (\ref{a:assumption on parameter}), a similar argument as before, we get
\begin{align}
\Big\|\partial_t\Big(\sum_{l\in Z^3}w_{1l}\otimes \Big(v_{0\ell_1}-\frac{l}{\mu}\Big)\Big)\Big\|_0\leq C_0\sqrt{\kappa}\lambda_1\mu^{-1}.\nonumber
\end{align}
Similarly,
\begin{align}
\Big\|\partial_t\Big(\sum_{l\in Z^3}\Big(v_{0\ell_1}-\frac{l}{\mu}\Big)\otimes w_{1l} \Big)\Big\|_0\leq C_0\sqrt{\kappa}\lambda_1\mu^{-1}.\nonumber
\end{align}
Therefore,
\begin{align}
\big\|\partial_tN_{11}\big\|_0\leq C_0\sqrt{\kappa}\lambda_1\mu^{-1}.\nonumber
\end{align}
A similar argument as before, we also obtain
\begin{align}
\big\|(\partial_x,\partial_y) N_{11}\big\|_0\leq C_0\sqrt{\kappa}\lambda_1\mu^{-1},\nonumber
\end{align}
and
\begin{align}
\big\|(\partial_t, \partial_x,\partial_y)N_{12}\big\|_0\leq C_0\sqrt{\kappa}(\Lambda\ell_1+\bar{\Lambda}\ell_{1x})\lambda_1.\nonumber
\end{align}
Collecting all these estimates, we arrive at the second estimate of this lemma.

Differentiating (\ref{e:representation difference}) in $z$:
\begin{align}
&\partial_z\Big(\sum_{l\in Z^3}w_{1l}\otimes \Big(v_{0\ell_1}-\frac{l}{\mu}\Big)\Big)\nonumber\\
=&\sum_{l\in Z^3}\Big(\partial_zg_{1l}e^{i\lambda_1 2^{[l]} (k_{1h}^{\perp},0)\cdot \big((x,y,z)-\frac{l}{\mu}t\big)}+\partial_zg_{-1l}e^{-i\lambda_1 2^{[l]} (k_{1h}^{\perp},0)\cdot \big((x,y,z)-\frac{l}{\mu}t\big)}\Big)\otimes \Big(v_{0\ell_1}-\frac{l}{\mu}\Big)\nonumber\\
&+\sum_{l\in Z^3}\Big(g_{1l}e^{i\lambda_1 2^{[l]} (k_{1h}^{\perp},0)\cdot \big((x,y,z)-\frac{l}{\mu}t\big)}+g_{-1l}e^{-i\lambda_1 2^{[l]} (k_{1h}^{\perp},0)\cdot \big((x,y,z)-\frac{l}{\mu}t\big)}\Big)\otimes (\partial_zv_0)_{\ell_1}.\nonumber
\end{align}
By (\ref{p:unity}) and (\ref{e:zero diffusion derivative estimate on transport amp}), we have
\begin{align}
\Big\|\partial_z\Big(\sum_{l\in Z^3}w_{1l}\otimes \Big(v_{0\ell_1}-\frac{l}{\mu}\Big)\Big)\Big\|_0\leq C_0\sqrt{\kappa}\bar{\Lambda}.\nonumber
\end{align}
Similarly
\begin{align}
\Big\|\partial_z\Big(\sum_{l\in Z^3}\Big(v_{0\ell_1}-\frac{l}{\mu}\Big)\Big)\otimes w_{1l}\Big\|_0\leq C_0\sqrt{\kappa}\bar{\Lambda}.\nonumber
\end{align}
Hence
\begin{align}
\big\|\partial_zN_{11}\big\|_0\leq C_0\sqrt{\kappa}\bar{\Lambda}.\nonumber
\end{align}
By (\ref{d:difinition on c1 norm}), (\ref{p:unity}), (\ref{e:estimate different}), lemma \ref{e: estimate correction} and  (\ref{a:assumption on parameter}), a straightforward computation gives
\begin{align}
\|\partial_zN_{12}\|_0\leq C_0\sqrt{\kappa}\bar{\Lambda}.\nonumber
\end{align}
Then we arrive at
\begin{align}
\|\partial_zN_{1}\|_0\leq C_0\sqrt{\kappa}\bar{\Lambda}.\nonumber
\end{align}
Therefor we complete our proof of this lemma.
\end{proof}

\begin{Lemma}[Estimates on error part II]\label{e: error 2}
\begin{align}
\|w_{1o}\otimes w_{1c}+w_{1c}\otimes w_{1o}+w_{1c}\otimes w_{1c}\|_0\leq& C_0\kappa\mu\Lambda\lambda_1^{-1},\nonumber\\
\|(\partial_t,\partial_x,\partial_y)(w_{1o}\otimes w_{1c}+w_{1c}\otimes w_{1o}+w_{1c}\otimes w_{1c})\|_0\leq& C_0\kappa\mu\Lambda,\nonumber\\
\|\partial_z(w_{1o}\otimes w_{1c}+w_{1c}\otimes w_{1o}+w_{1c}\otimes w_{1c})\|_0\leq& C_0\kappa\mu\bar{\Lambda}\ell^{-1}_{1}\lambda_1^{-1}.\nonumber
\end{align}
\begin{proof}
By Lemma \ref{e: estimate correction}, it's easy to obtain
\begin{align}
\|w_{1o}\otimes w_{1c}+w_{1c}\otimes w_{1o}+w_{1c}\otimes w_{1c}\|_0
\leq C_0(\|w_{1o}\|_0\|w_{1c}\|_0+\|w_{1c}\|_0^2)
\leq C_0\kappa\mu\Lambda\lambda_1^{-1}\nonumber
\end{align}
and
\begin{align}
\|(\partial_t,\partial_x,\partial_y)(w_{1o}\otimes w_{1c}+w_{1c}\otimes w_{1o}+w_{1c}\otimes w_{1c})\|_0\leq C_0\kappa\mu\Lambda.\nonumber
\end{align}
Moreover, by lemma \ref{e: estimate correction} and (\ref{a:assumption on parameter})
\begin{align}
&\|\partial_z(w_{1o}\otimes w_{1c}+w_{1c}\otimes w_{1o}+w_{1c}\otimes w_{1c})\|_0\nonumber\\
\leq &C_0\Big(\|\partial_zw_{1o}\|_0\|w_{1c}\|_0+\|\partial_zw_{1c}\|w_{1o}\|_0\Big)\leq C_0\kappa\mu\bar{\Lambda}\ell^{-1}_{1}\lambda_1^{-1}.\nonumber
\end{align}
Then the proof of this lemma is complete.
\end{proof}
\end{Lemma}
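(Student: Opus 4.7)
The plan is to simply combine the $C^0$ bounds on $w_{1o}, w_{1c}$ and their derivatives from Lemma \ref{e: estimate correction} via the Leibniz product rule, then absorb the smaller cross terms using the parameter constraints (\ref{a:assumption on parameter}). Since there is no oscillatory structure left to exploit (the factors $w_{1o}$ and $w_{1c}$ are already low-frequency in amplitude and high-frequency in phase, but we only need $L^\infty$ bounds here), this is a pure product-estimate lemma.

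For the first inequality, I would apply the trivial bound $\|f\otimes g\|_0\leq\|f\|_0\|g\|_0$ to each of the three terms. The cross terms $w_{1o}\otimes w_{1c}$ and $w_{1c}\otimes w_{1o}$ each contribute $\|w_{1o}\|_0\|w_{1c}\|_0\leq C_0\sqrt{\kappa}\cdot\sqrt{\kappa}\mu\Lambda\lambda_1^{-1}=C_0\kappa\mu\Lambda\lambda_1^{-1}$, which is exactly the desired bound. The diagonal term $w_{1c}\otimes w_{1c}$ contributes $\|w_{1c}\|_0^2\leq C_0\kappa(\mu\Lambda\lambda_1^{-1})^2$; this is absorbed into $C_0\kappa\mu\Lambda\lambda_1^{-1}$ because $\mu\Lambda\lambda_1^{-1}\leq \ell_1^{-1}\lambda_1^{-1}\leq\lambda_1^{\varepsilon/(1+\varepsilon)-1}\leq 1$ by (\ref{a:assumption on parameter}).

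For the derivative estimates, I apply Leibniz, $\partial(f\otimes g)=(\partial f)\otimes g+f\otimes(\partial g)$, and plug in the bounds from Lemma \ref{e: estimate correction}. For horizontal and time derivatives, $\|(\partial_t,\partial_x,\partial_y)(w_{1o}\otimes w_{1c})\|_0$ is bounded by $\sqrt{\kappa}\lambda_1\cdot\sqrt{\kappa}\mu\Lambda\lambda_1^{-1}+\sqrt{\kappa}\cdot\sqrt{\kappa}\mu\Lambda\lesssim\kappa\mu\Lambda$; the $w_{1c}\otimes w_{1c}$ contribution is $\sqrt{\kappa}\mu\Lambda\cdot\sqrt{\kappa}\mu\Lambda\lambda_1^{-1}=\kappa\mu\Lambda\cdot\mu\Lambda\lambda_1^{-1}\leq\kappa\mu\Lambda$, again using $\mu\Lambda\lambda_1^{-1}\leq 1$. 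For the vertical derivative, Leibniz gives $\|\partial_z(w_{1o}\otimes w_{1c})\|_0\leq\sqrt{\kappa}\mu\bar\Lambda\cdot\sqrt{\kappa}\mu\Lambda\lambda_1^{-1}+\sqrt{\kappa}\cdot\sqrt{\kappa}\mu\bar\Lambda\ell_1^{-1}\lambda_1^{-1}$; the first summand is $\kappa\mu\bar\Lambda\cdot\mu\Lambda\lambda_1^{-1}\leq\kappa\mu\bar\Lambda\ell_1^{-1}\lambda_1^{-1}$ since $\mu\Lambda\leq\ell_1^{-1}$, matching the claimed bound. The $\partial_z(w_{1c}\otimes w_{1c})$ term is similarly absorbed.

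No step should be difficult — the only place where one must be careful is noticing that the apparently worse quadratic-in-$\mu\Lambda$ contributions from derivatives of $w_{1c}\otimes w_{1c}$ or from $(\partial_z w_{1o})\otimes w_{1c}$ are tamed by the inequalities $\mu\Lambda\lambda_1^{-1}\leq 1$ and $\mu\Lambda\leq\ell_1^{-1}$ from (\ref{a:assumption on parameter}). In effect, the correction $w_{1c}$ is a factor $\mu\Lambda\lambda_1^{-1}\ll 1$ smaller than the main perturbation $w_{1o}$, which is exactly what was engineered in its construction, and this hierarchy makes all the product estimates trivially consistent.
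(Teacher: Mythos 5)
Your proposal is correct and follows essentially the same route as the paper: apply the trivial $L^\infty$ product bound and Leibniz rule to the $C^0$ estimates from Lemma \ref{e: estimate correction}, then absorb the quadratic-in-$w_{1c}$ terms via the parameter hierarchy $\mu\Lambda\leq\ell_1^{-1}$ and $\ell_1^{-1}\lambda_1^{-1}\leq 1$ from (\ref{a:assumption on parameter}). The only difference is cosmetic: you spell out the absorption of the diagonal and $(\partial_z w_{1o})\otimes w_{1c}$ contributions explicitly, which the paper leaves implicit (there is a tiny typo in your intermediate exponent $\lambda_1^{\varepsilon/(1+\varepsilon)-1}$, which should be $\lambda_1^{-\varepsilon/(1+\varepsilon)}$, but both are $\leq 1$ so the conclusion is unaffected).
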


\begin{Lemma}[Estimates on error part III]\label{e: error 3}
\begin{align}
\|R_{0\ell_1}-R_0\|_0\leq C_0(\Lambda\ell_1+\bar{\Lambda}\ell_{1z}),\quad \|(\partial_t,\partial_x,\partial_y)(R_{0\ell_1}-R_0)\|_0\leq C_0\Lambda,\quad \|\partial_z(R_{0\ell_1}-R_0)\|_0\leq C_0\bar{\Lambda}.\nonumber
\end{align}
\begin{proof}
By (\ref{e:estimate different}), it's easy to get the first estimate. The remaining two estimates are deduced from (\ref{d:difinition on c1 norm}).
\end{proof}
\end{Lemma}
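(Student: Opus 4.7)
The plan is a standard anisotropic mollification estimate using the tensor-product mollifier in (\ref{m:anistropic modification}), whose scales are $\ell_1$ in $(t,x,y)$ and $\ell_{1z}$ in $z$. Indeed the first bound already appears packaged as (\ref{e:estimate different}); for a self-contained derivation I would use that each factor of the kernel integrates to $1$ to rewrite
\[
R_{0\ell_1}(t,x,y,z)-R_0(t,x,y,z)=\int\!\!\int\!\!\int\!\!\int \big[R_0(t-s,x-a,y-b,z-c)-R_0(t,x,y,z)\big]\,\varphi_{\ell_1}(s)\varphi_{\ell_1}(a)\varphi_{\ell_1}(b)\varphi_{\ell_{1z}}(c)\,ds\,da\,db\,dc,
\]
then split the bracketed difference telescopically into four one-variable increments (in $t$, $x$, $y$, $z$). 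The fundamental theorem of calculus bounds each one-variable increment by the corresponding first derivative times the mollification length: the $t,x,y$ increments are controlled by $\|\partial_t R_0\|_0\ell_1,\|\partial_x R_0\|_0\ell_1,\|\partial_y R_0\|_0\ell_1$, and the $z$ increment by $\|\partial_z R_0\|_0\ell_{1z}$. Taking absolute values under the integral and using that the kernels are nonnegative with unit mass yields the first bound $C_0(\Lambda\ell_1+\bar{\Lambda}\ell_{1z})$ after invoking the definitions (\ref{d:difinition on c1 norm}).

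For the two derivative estimates, the route is even simpler. Since convolution commutes with any partial derivative and each mollifier has $L^1$ norm one, Young's inequality gives $\|(\partial_\alpha R_0)_{\ell_1}\|_0\leq \|\partial_\alpha R_0\|_0$ for every partial derivative $\partial_\alpha$. The triangle inequality then yields
\[
\|\partial_\alpha(R_{0\ell_1}-R_0)\|_0\leq 2\|\partial_\alpha R_0\|_0.
\]
Specialising to $\alpha\in\{t,x,y\}$ and using the $C^1$ bound encoded in $\Lambda$ from (\ref{d:difinition on c1 norm}) gives the second estimate, while $\alpha=z$ together with the $C_z^1$ bound encoded in $\bar\Lambda$ gives the third.

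No genuine obstacle appears here: both ingredients are textbook convolution estimates. The only point requiring real attention is bookkeeping of the anisotropic pairing, namely that the horizontal/temporal mollification scale $\ell_1$ must be paired with $\partial_t,\partial_x,\partial_y$ and the vertical scale $\ell_{1z}$ with $\partial_z$. That anisotropy is precisely what downstream produces the two separate terms in the first bound and ultimately drives the distinct H\"older exponents in the horizontal and vertical directions asserted in Theorem \ref{t:main 1}, so although the calculation is routine, the structure of the estimate is not wasteful.
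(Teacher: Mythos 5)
Your proposal is correct and follows essentially the same route as the paper: the first bound is exactly the anisotropic mollification estimate the paper cites as (\ref{e:estimate different}) (which you simply re-derive via telescoping and the unit-mass kernel), and the two derivative bounds are the same triangle-inequality-plus-Young argument the paper compresses into ``deduced from (\ref{d:difinition on c1 norm})''. Nothing is missing; your version is just a spelled-out form of the paper's one-line proof.
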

%\begin{Lemma}[Estimate on error part III]\label{e:error 3}
%\begin{align}
%\Big\|\mathcal{R}\Big(\Big(\chi_1-\fint_{T^{3}}{\chi_1}dx\Big)e_3\Big)\Big\|_\alpha\leq C_1\frac{\mu_1}{\lambda_1^{1-\alpha}}.
%\end{align}
%\begin{proof}
%From the Lemma \ref{l:estimate amp} and the property (\ref{p:R G})of $\mathcal{R}$, we have
%\begin{align}
%&\|\mathcal{R}\Big(\Big(\chi_1-\fint_{T^{3}}{\chi_1}dx\Big)e_3\Big)\|_0\nonumber\\
%\leq& \sum\limits_{|l|\leq C_1\mu_1}\frac{C_1}{\lambda_1^{1-\alpha}}\nonumber\\
%\leq& C_1\frac{\mu_1}{\lambda_1^{1-\alpha}}.\nonumber
%\end{align}
%\end{proof}
%\end{Lemma}

Finally, by Lemma \ref{e:oscillation estimate}, Lemma \ref{e:transport estimate}, Lemma \ref{e:error 1}, Lemma \ref{e: error 2} and Lemma \ref{e: error 3}, we conclude that
\begin{align}\label{e:first stress error estimate}
\|\delta R_{01}\|_0\leq & C_0(\varepsilon)\Big(\Lambda\ell_1+\bar{\Lambda}\ell_{1z}+\sqrt{\kappa}\mu^{-1}+\lambda_1^{-1}\sqrt{\kappa}(\mu\Lambda
+\mu\bar{\Lambda}\ell_{1z}^{-1})\Big),\nonumber\\
\|(\partial_t,\partial_x,\partial_y)\delta R_{01}\|_0\leq & C_0(\varepsilon)\lambda_1\Big(\Lambda\ell_1+\bar{\Lambda}\ell_{1z}+\sqrt{\kappa}\mu^{-1}+\lambda_1^{-1}\sqrt{\kappa}(\mu\Lambda
+\mu\bar{\Lambda}\ell_{1z}^{-1})\Big),\nonumber\\
\|\partial_z\delta R_{01}\|_0\leq & C_0(\varepsilon)\Big[\bar{\Lambda}+\lambda_1^{-1}\sqrt{\kappa}
\Big(\mu\bar{\Lambda}\ell_1^{-1}+\mu\bar{\Lambda}\ell_{1z}^{-2}\Big)\Big].
\end{align}

\subsection{Estimates on $\delta f_{01}$}

 \indent

Recalling (4.55), as before, we split $\delta f_{01}$ into three parts:\\
(1) the oscillation part
\begin{align}
\mathcal{G}(\hbox{div}K_1)+\mathcal{G}(w_1\cdot\nabla\theta_{0\ell_1})-\mathcal{G}(\partial_{zz}\chi_1),\nonumber
\end{align}
(2) the transportation part
\begin{align}
\mathcal{G}\Big(\partial_t\chi_{1}+\sum_{l\in Z^3}\frac{l}{\mu}\cdot\nabla\chi_{1l}\Big),\nonumber
\end{align}
(3) the error part
\begin{align}
w_{1c}\chi_1+w_{1o}\chi_{1c}+\sum\limits_{l\in Z^3}\Big(v_{0\ell_1}-\frac{l}{\mu}\Big)\chi_{1l}+\sum_{l\in Z^3}\big(v_0-v_{0\ell_1}\big)\chi_{1l}+f_0-f_{0\ell_1}+w_1(\theta_0-\theta_{0\ell_1}).\nonumber
\end{align}

\begin{Lemma}[The Oscillation Part]\label{e:osci}
\begin{align}
\|\mathcal{G}({\rm div}K_1)\|_0\leq& C_0(\varepsilon)\kappa \mu\Lambda\lambda_1^{-1},\quad  \|(\partial_t,\partial_x,\partial_y)\mathcal{G}({\rm div}K_1)\|_0
\leq C_0(\varepsilon)\kappa \mu\Lambda,\nonumber\\
\|\partial_z\mathcal{G}({\rm div}K_1)\|_0
\leq& C_0(\varepsilon)\kappa\mu\bar{\Lambda} \ell_1^{-1}\lambda_1^{-1} ,\label{e:esyimate terpeture 1}\\
\|\mathcal{G}(w_1\cdot\nabla\theta_{0\ell_1})\|_0\leq& C_0(\varepsilon)\sqrt{\kappa}\Lambda\lambda_1^{-1},\quad\|(\partial_t,\partial_x,\partial_y)\mathcal{G}(w_1\cdot\nabla\theta_{0\ell_1})\|_0\leq C_0(\varepsilon)\sqrt{\kappa}\Lambda,\nonumber\\
\|\partial_z\mathcal{G}(w_1\cdot\nabla\theta_{0\ell_1})\|_0\leq& C_0(\varepsilon)\sqrt{\kappa}\Lambda\ell_{1z}^{-1}\lambda_1^{-1},\label{e:esyimate terpeture 2}\\
\|\mathcal{G}(\partial_{zz}\chi_1)\|_0\leq& C_0(\varepsilon)\sqrt{\kappa}\mu\bar{\Lambda}\ell^{-1}_{1z}\lambda_1^{-1},\quad
\|(\partial_t,\partial_x,\partial_y)\mathcal{G}(\partial_{zz}\chi_1)\|_0\leq C_0(\varepsilon)\sqrt{\kappa}\mu\bar{\Lambda}\ell^{-1}_{1z},\nonumber\\
\|\partial_z\mathcal{G}(\partial_{zz}\chi_1)\|_0\leq& C_0(\varepsilon)\sqrt{\kappa}\mu\bar{\Lambda}\ell^{-2}_{1z}\lambda_1^{-1}.\label{e:esyimate terpeture 3}
\end{align}
\begin{proof}
From the definition (\ref{d:difinition on m1}) on $K_1$, we may write
\begin{align}
\hbox{div}K_1=K_{11}+K_{12},\nonumber
\end{align}
where
\begin{align}
&K_1:
=\sum\limits_{j=0}^7\sum\limits_{[l]=j}\nabla(\beta_{1l}b_{1l})\cdot k_1\Big(e^{2i\lambda_1 2^{j} (k_{1h}^{\perp},0)\cdot \big((x,y,z)-\frac{l}{\mu}t\big)}+e^{-2i\lambda_1 2^{j} (k_{1h}^{\perp},0)\cdot \big((x,y,z)-\frac{l}{\mu}t\big)}\Big),\nonumber\\
&K_2:=\sum\limits_{l,l'\in Z^3 ,l\neq l'}\hbox{div}(w_{1ol}\chi_{1ol'}).\nonumber
\end{align}
By (\ref{e:estimate on main perturbation}), (\ref{e:zero estimate on main perturbation}), lemma \ref{p:inverse 2} on $\mathcal{G}$ with $m=1+\Big[\frac{1+\varepsilon}{\varepsilon}\Big]$ and (\ref{a:assumption on parameter}), we have
\begin{align}
\|\mathcal{G}(K_{11})\|_0
\leq C_m\sum\limits_{j=0}^7\Big(\sum_{i=0}^{m-1}\lambda_1^{-(i+1)}\Big\|\sum\limits_{[l]=j}\nabla(\beta_{1l}b_{1l})\Big\|_i
+\lambda_1^{-m}\Big[\sum\limits_{[l]=j}\nabla(\beta_{1l}b_{1l})\Big]_m
\Big)
\leq C_m\kappa \mu\Lambda\lambda_1^{-1}.\nonumber
\end{align}
Since $k_1\cdot (k_{1h}^{\perp},0)=0$, we have
\begin{align}
K_{12}=&\sum\limits_{j=0}^7\sum\limits_{[l]=j}\sum\limits_{1\leq|l-l'|<2}k_1\cdot\nabla(b_{1l}\beta_{1l'})\Big(e^{i\lambda_1(2^{j}+2^{[l']})(k_{1h}^{\perp},0)\cdot (x,y,z)-ig_{1,l,l'}(t)}\nonumber\\
&+e^{i\lambda_1(2^{j}-2^{[l']})(k_{1h}^{\perp},0)\cdot (x,y,z)-i\overline{g}_{1,l,l'}(t)}
+e^{-i\lambda_1(2^{[l']}-2^{j})(k_{1h}^{\perp},0)\cdot (x,y,z)+i\overline{g}_{1,l,l'}(t)}\nonumber\\
&+e^{-i\lambda_1(2^{j}+
2^{[l']})(k_{1h}^{\perp},0)\cdot (x,y,z)+ig_{1,l,l'}(t)}\Big).\nonumber
\end{align}
 By (\ref{e:estimate on main perturbation}), (\ref{e:zero estimate on main perturbation}), lemma \ref{p:inverse 2} on $\mathcal{G}$ with $m=1+\Big[\frac{1+\varepsilon}{\varepsilon}\Big]$, (\ref{a:assumption on parameter}) and $b_{1p}b_{1q}=0$ if $|p-q|\geq2$, as before, we get
\begin{align}
\|\mathcal{G}(K_{12})\|_0
\leq C_m\kappa \mu\Lambda\lambda_1^{-1}.\nonumber
\end{align}
Then we arrive at
$$\|\mathcal{G}({\rm div}K_1)\|_0\leq C_0(\varepsilon)\kappa \mu\Lambda\lambda_1^{-1}.$$
Thus, we obtain the first estimate of (\ref{e:esyimate terpeture 1}).

Directly differentiating in $(x,y)$ and $t$ on $K_{11}, K_{12}$ and following the argument of lemma \ref{e:oscillation estimate}, we get
 \begin{align}
 \|(\partial_t, \partial_x,\partial_y)(\mathcal{G}({\rm div}K_1))\|_0
\leq C_0(\varepsilon)\kappa \mu\Lambda,\nonumber
\end{align}
which is the second estimate in (\ref{e:esyimate terpeture 1}).

We compute
\begin{align}
\partial_zK_1
=\sum\limits_{j=0}^7\sum\limits_{[l]=j}\nabla\partial_z(\beta_{1l}b_{1l})\cdot k_1\Big(e^{2i\lambda_1 2^{j} (k_{1h}^{\perp},0)\cdot \big((x,y,z)-\frac{l}{\mu}t\big)}+e^{-2i\lambda_1 2^{j} (k_{1h}^{\perp},0)\cdot \big((x,y,z)-\frac{l}{\mu}t\big)}\Big),\nonumber
\end{align}
by (\ref{e:estimate on main perturbation}), (\ref{e:diffusion derivative estimate on main perturbation}),  (\ref{e:zero estimate on main perturbation}), (\ref{e:zero diffusion derivative estimate on main perturbation}), lemma \ref{p:inverse 2} on $\mathcal{G}$ with $m=1+\Big[\frac{1+\varepsilon}{\varepsilon}\Big]$ and (\ref{a:assumption on parameter}), we arrive at
 \begin{align}
 \|\partial_z\mathcal{G}(K_{11})\|_0
 \leq& C_m\sum\limits_{j=0}^7\Big(\sum_{i=0}^{m-1}\lambda_1^{-(i+1)}\big\|\sum\limits_{[l]=j}\nabla\partial_z(\beta_{1l}b_{1l})\big\|_i
+\lambda_1^{-m}\big[\sum\limits_{[l]=j}\nabla\partial_z(\beta_{1l}b_{1l})\big]_m
\Big)\nonumber\\
\leq& C_m\kappa\mu\bar{\Lambda}\ell_1^{-1}\lambda_1^{-1}.\nonumber
\end{align}
A similar argument gives
 \begin{align}
 \|\partial_z\mathcal{G}(K_{12})\|_0
\leq C_m\kappa\mu\bar{\Lambda}\ell_1^{-1}\lambda_1^{-1}.\nonumber
\end{align}
Then we obtain the third estimate of (\ref{e:esyimate terpeture 1}).

A straightforward computation gives
\begin{align}\label{e:representation w1ol}
w_1\cdot\nabla\theta_{0\ell_1}
=&\sum\limits_{j=0}^7\sum\limits_{[l]=j}\Big(g_{1l}\cdot\nabla\theta_{0\ell_1}e^{i\lambda_1 2^{j} (k_{1h}^{\perp},0)\cdot \big((x,y,z)-\frac{l}{\mu}t\big)}\nonumber\\
&+g_{-1l}\cdot\nabla\theta_{0\ell_1}e^{-i\lambda_1 2^{j} (k_{1h}^{\perp},0)\cdot \big((x,y,z)-\frac{l}{\mu}t\big)}\Big).
\end{align}
Again, by (\ref{e:estimate higher convolution}), (\ref{e:estimate on transport amp}), (\ref{e:zero estimate on transport amp}), lemma \ref{p:inverse 2} on $\mathcal{G}$ with $m=1+\Big[\frac{1+\varepsilon}{\varepsilon}\Big]$ and  (\ref{a:assumption on parameter}), we have
\begin{align}
\|\mathcal{G}(w_1\cdot\nabla\theta_{0\ell_1})\|_0
\leq &C_m\sum\limits_{j=0}^7\Big\{\sum_{i=0}^{m-1}\lambda_1^{-(1+i)}\Big(\Big\|\sum\limits_{[l]=j}g_{1l}\cdot\nabla\theta_{0\ell_1}(t,\cdot)\Big\|_i
+\Big\|\sum\limits_{[l]=j}g_{-1l}\cdot\nabla\theta_{0\ell_1}(t,\cdot)\Big\|_i\Big)\nonumber\\
&+\lambda_1^{-m}\Big(\Big[\sum\limits_{[l]=j}g_{1l}\cdot\nabla\theta_{0\ell_1}(t,\cdot)\Big]_m
+\Big[\sum\limits_{[l]=j}g_{-1l}\cdot\nabla\theta_{0\ell_1}(t,\cdot)\Big]_m\Big)\Big\}
\leq C_m\sqrt{\kappa}\Lambda\lambda_1^{-1}.\nonumber
\end{align}
Differentiating in $(x,y)$ and $t$ on (\ref{e:representation w1ol}), similarly,
\begin{align}
\|(\partial_t, \partial_x,\partial_y)\mathcal{G}(w_1\cdot\nabla\theta_{0\ell_1})\|_0
\leq C_m\sqrt{\kappa}\Lambda.\nonumber
\end{align}
Differentiating in $z$ on (\ref{e:representation w1ol}), we arrive at
\begin{align}
\partial_z(w_1\cdot\nabla\theta_{0\ell_1})
=&\sum\limits_{j=0}^7\sum\limits_{[l]=j}\Big(\partial_z(g_{1l}\cdot\nabla\theta_{0\ell_1})e^{i\lambda_1 2^{j} (k_{1h}^{\perp},0)\cdot \big((x,y,z)-\frac{l}{\mu}t\big)}\nonumber\\
&+\partial_z(g_{-1l}\cdot\nabla\theta_{0\ell_1})e^{-i\lambda_1 2^{j} (k_{1h}^{\perp},0)\cdot \big((x,y,z)-\frac{l}{\mu}t\big)}\Big),\nonumber
\end{align}
By (\ref{e:estimate higher convolution}), (\ref{e:diffusion derivative estimate on transport amp}) and (\ref{e:zero diffusion derivative estimate on transport amp}), following a similar argument as before, we obtain
\begin{align}
\|\partial_z\mathcal{G}(w_1\cdot\nabla\theta_{0\ell_1})\|_0
\leq C_m\sqrt{\kappa}\Lambda\ell_{1z}^{-1}\lambda_1^{-1},\nonumber
\end{align}
which gives the proof of (\ref{e:esyimate terpeture 2}).

Finally calculating
\begin{align}\label{r:representation formula on xx derivative}
\partial_{zz}\chi_1
=\sum\limits_{j=0}^7\sum\limits_{[l]=j}\Big(\partial_{zz}h_{1l}e^{i\lambda_1 2^{j} (k_{1h}^{\perp},0)\cdot \big((x,y,z)-\frac{l}{\mu}t\big)}+\partial_{zz}h_{-1l}e^{-i\lambda_1 2^{j}(k_{1h}^{\perp},0)\cdot \big((x,y,z)-\frac{l}{\mu}t\big)}\Big),
\end{align}
by (\ref{e:diffusion two order derivative estimate on transport amp}), (\ref{e:zero diffusion two order derivative estimate on transport amp}), lemma \ref{p:inverse 2} on $\mathcal{G}$ with $m=1+\Big[\frac{1+\varepsilon}{\varepsilon}\Big]$ and (\ref{a:assumption on parameter}) we obtain
\begin{align}
\|\mathcal{G}(\partial_{zz}\chi_1)\|_0\leq
 C_m\sqrt{\kappa}\mu\bar{\Lambda}\ell^{-1}_{1z}\lambda_1^{-1}.\nonumber
\end{align}
Differentiating (\ref{r:representation formula on xx derivative}) in space and time, a similar argument also gives
\begin{align}
\|(\partial_t,\partial_x,\partial_y)\mathcal{G}(\partial_{zz}\chi_1)\|_0\leq
 C_m\sqrt{\kappa}\mu\Lambda\ell_1^{-1},\quad
 \|\partial_z\mathcal{G}(\partial_{zz}\chi_1)\|_0\leq
 C_m\sqrt{\kappa}\mu\bar{\Lambda}\ell^{-2}_{1z}\lambda_1^{-1}.\nonumber
\end{align}
Then  the proof of (\ref{e:esyimate terpeture 3}) is complete.
\end{proof}
\end{Lemma}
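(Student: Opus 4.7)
The plan is to treat the three contributions of the oscillation part separately. Each one takes the common form of a finite sum of pure Fourier modes $\varphi(t,x,y,z)\,e^{\pm i\lambda_1 2^{[l]}(k_{1h}^{\perp},0)\cdot((x,y,z)-\frac{l}{\mu}t)}$ with zero spatial mean, so I would apply Lemma \ref{p:inverse 2} to the operator $\mathcal{G}$ with $m=1+\bigl[\frac{1+\varepsilon}{\varepsilon}\bigr]$. Combined with the assumption $\lambda_1\geq\ell_1^{-(1+\varepsilon)}$, this $m$ ensures that the tail term $\lambda_1^{-m}\|\varphi\|_m$ in (\ref{i:identity and inequality}) is no larger than the leading term $\lambda_1^{-1}\|\varphi\|_0$. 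Two structural observations make the bookkeeping work: the phase vector $(k_{1h}^{\perp},0)$ has vanishing third coordinate, so every $z$-derivative hits the amplitude rather than the exponential, and $k_1\cdot(k_{1h}^{\perp},0)=0$, so that taking ${\rm div}$ of an amplitude of the form $\varphi(t,x,y,z)\,k_1$ places all spatial derivatives on $\varphi$ rather than producing a resonant phase-factor term.

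For $\mathcal{G}({\rm div}K_1)$, I would split $K_1$ into its diagonal $l=l'$ piece (which exploits $k_1\cdot(k_{1h}^{\perp},0)=0$ to put derivatives on the amplitude $b_{1l}\beta_{1l}$) and the off-diagonal sum $\sum_{l\neq l'}w_{1ol}\chi_{1ol'}$, where the disjointness of ${\rm supp}\,\alpha_l$ and ${\rm supp}\,\alpha_{l'}$ forces $1\leq|l-l'|<2$ and produces only finitely many cross-frequencies $\lambda_1(2^{[l]}\pm 2^{[l']})$ bounded below by $\lambda_1$. Amplitude norms from Lemma \ref{e:estimate various} then yield $\kappa\mu\Lambda\lambda_1^{-1}$. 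For $\mathcal{G}(w_1\cdot\nabla\theta_{0\ell_1})$, I would expand $w_1$ in modes with amplitudes $g_{\pm 1l}$ and multiply by $\nabla\theta_{0\ell_1}$; combining Lemma \ref{e:estimate various} with the mollification bound $\|\nabla\theta_{0\ell_1}\|_r\lesssim\Lambda\ell_1^{-r}$ (and the anisotropic version $\|\partial_z\theta_{0\ell_1}\|_0\lesssim\bar{\Lambda}$ for the $\partial_z$ estimate) gives the claimed $\sqrt{\kappa}\Lambda\lambda_1^{-1}$. For $\mathcal{G}(\partial_{zz}\chi_1)$ the anisotropy is exploited most directly: since the phase has no $z$-dependence, $\partial_{zz}$ passes onto $h_{\pm 1l}$, which by (\ref{e:diffusion two order derivative estimate on transport amp}) is bounded by $\sqrt{\kappa}\mu\bar{\Lambda}\ell_{1z}^{-1}$, and applying $\mathcal{G}$ contributes the last factor $\lambda_1^{-1}$.

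The derivative estimates in $(t,x,y)$ would follow by commuting $\partial_t,\partial_x,\partial_y$ past $\mathcal{G}$: each phase derivative produces a factor of $\lambda_1$ (since $|l|\leq C_0\mu$ and $\mu\Lambda\leq\lambda_1$), while the amplitude derivative only costs $\ell_1^{-1}$, which is again dominated by $\lambda_1$. For the $\partial_z$ estimates one uses one more derivative from the $\bar{\Lambda},\ell_{1z}^{-1}$-family of bounds in Lemma \ref{e:estimate various}. The main obstacle is purely one of bookkeeping: keeping the anisotropic quantities $\bar{\Lambda},\ell_{1z}$ rigorously separated from the isotropic $\Lambda,\ell_1$ so that each line of the statement reflects only the correct anisotropic dependence. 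The parameter ordering $\ell_{1z}\geq\ell_1$ in (\ref{a:assumption on parameter}) is precisely what prevents double-counting in these bounds.
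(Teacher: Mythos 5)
Your proposal is correct and follows essentially the same route as the paper's proof: the same diagonal/off-diagonal splitting of $\mathrm{div}\,K_1$ exploiting $k_1\cdot(k_{1h}^{\perp},0)=0$ and $\mathrm{supp}\,\alpha_l\cap\mathrm{supp}\,\alpha_{l'}=\emptyset$ for $|l-l'|\geq 2$, the same application of Lemma \ref{p:inverse 2} with $m=1+\bigl[\frac{1+\varepsilon}{\varepsilon}\bigr]$ (indeed chosen exactly so that $(\lambda_1\ell_1)^{-m}\leq\lambda_1^{-1}$ under $\lambda_1\geq\ell_1^{-(1+\varepsilon)}$), the same observation that the phase is $z$-independent so $\partial_z$ and $\partial_{zz}$ land entirely on the amplitudes, and the same use of the anisotropic estimates from Lemma \ref{e:estimate various} together with (\ref{e:estimate higher convolution}) for the $\nabla\theta_{0\ell_1}$ factor.
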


\begin{Lemma}[The transportation Part]\label{e:trans}
\begin{align}
&\Big\|\mathcal{G}\Big(\partial_t\chi_{1}+\sum_{l\in Z^3}\frac{l}{\mu}\cdot\nabla\chi_{1l}\Big)\Big\|_0\leq  C_0(\varepsilon)\sqrt{\kappa}\mu\Lambda\lambda_1^{-1},\nonumber\\
&\Big\|(\partial_t,\nabla)\mathcal{G}\Big(\partial_t\chi_{1}+\sum_{l\in Z^3}\frac{l}{\mu}\cdot\nabla\chi_{1l}\Big)\Big\|_0\leq C_0(\varepsilon)\sqrt{\kappa}\mu\Lambda,\nonumber\\
&\Big\|\partial_z\mathcal{G}\Big(\partial_t\chi_{1}+\sum_{l\in Z^3}\frac{l}{\mu_{1}}\cdot\nabla\chi_{1l}\Big)\Big\|_0\leq  C_0(\varepsilon)\sqrt{\kappa}\mu\bar{\Lambda}\ell_1^{-1}\lambda_1^{-1}.
\end{align}
\begin{proof}
Since
\begin{align}\label{r:computation formula on transport}
&\partial_t\chi_{1}+\sum_{l\in Z^3}\frac{l}{\mu}\cdot\nabla\chi_{1l}\nonumber\\
=&\sum\limits_{j=0}^7\sum\limits_{[l]=j}\Big(\Big(\partial_t+
\frac{l}{\mu}\cdot\nabla\Big)h_{1l}e^{i\lambda_1 2^{j} (k_{1h}^{\perp},0)\cdot \big((x,y,z)-\frac{l}{\mu}t\big)}+\Big(\partial_t+
\frac{l}{\mu}\cdot\nabla\Big)h_{-1l}e^{-i\lambda_1 2^{j} (k_{1h}^{\perp},0)\cdot \big((x,y,z)-\frac{l}{\mu}t\big)}\Big).
\end{align}
By (\ref{e:estimate on transport amp}), (\ref{e:estimate on transport time derivative}), (\ref{e:zero estimate on transport time derivative}), lemma \ref{p:inverse 2} on $\mathcal{G}$ with $m=1+\Big[\frac{1+\varepsilon}{\varepsilon}\Big]$ and  (\ref{a:assumption on parameter}), as in the proof lemma \ref{e:transport estimate}, we have
\begin{align}
\Big\|\mathcal{G}\Big(\partial_t\chi_{1}+\sum_{l\in Z^3}\frac{l}{\mu}\cdot\nabla\chi_{1l}\Big)\Big\|_0\leq  C_m\sqrt{\kappa}\mu\Lambda\lambda_1^{-1}.\nonumber
\end{align}
Differentiating in $(x,y)$ and $t$ ,  we can deduce
\begin{align}
\Big\|(\partial_t, \partial_x,\partial_y)\mathcal{G}\Big(\partial_t\chi_{1}+\sum_{l\in Z^3}\frac{l}{\mu_{1}}\cdot\nabla\chi_{1l}\Big)\Big\|_0\leq  C_m\sqrt{\kappa}\mu\Lambda.\nonumber
\end{align}
Differentiating (\ref{r:computation formula on transport}) in $z$
\begin{align}
&\partial_z\Big(\partial_t\chi_{1}+\sum_{l\in Z^3}\frac{l}{\mu}\cdot\nabla\chi_{1l}\Big)\nonumber\\
=&\sum\limits_{j=0}^7\sum\limits_{[l]=j}\Big(\Big(\partial_t+
\frac{l}{\mu}\cdot\nabla\Big)\partial_zh_{1l}e^{i\lambda_1 2^{j}(k_{1h}^{\perp},0)\cdot \big((x,y,z)-\frac{l}{\mu}t\big)}+\Big(\partial_t+
\frac{l}{\mu}\cdot\nabla\Big)\partial_zh_{-1l}e^{-i\lambda_1 2^{j} (k_{1h}^{\perp},0)\cdot \big((x,y,z)-\frac{l}{\mu}t\big)}\Big),\nonumber
\end{align}
noticing $|l|\leq C_0 \mu$ and by (\ref{e:diffusion derivative estimate on transport amp}) , we obtain
\begin{align}
\Big\|\partial_z\mathcal{G}\Big(\partial_t\chi_{1}+\sum_{l\in Z^3}\frac{l}{\mu_{1}}\cdot\nabla\chi_{1l}\Big)\Big\|_0\leq  C_0(\varepsilon)\sqrt{\kappa}\mu\bar{\Lambda}\ell_1^{-1}\lambda_1^{-1}.\nonumber
\end{align}
We obtain the proof this lemma.
\end{proof}
\end{Lemma}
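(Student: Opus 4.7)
The plan is to exploit the cancellation engineered into the phase. Since $\phi_l(t,x,y,z):=\lambda_1 2^{[l]}(k_{1h}^\perp,0)\cdot((x,y,z)-\frac{l}{\mu}t)$ satisfies $(\partial_t+\frac{l}{\mu}\cdot\nabla)e^{\pm i\phi_l}=0$, substituting the decomposition $\chi_{1l}=h_{1l}e^{i\phi_l}+h_{-1l}e^{-i\phi_l}$ collapses the transport expression into a pure oscillation with slowly-varying amplitude $(\partial_t+\frac{l}{\mu}\cdot\nabla)h_{\pm 1l}$:
\begin{equation*}
\partial_t\chi_1+\sum_{l\in Z^3}\frac{l}{\mu}\cdot\nabla\chi_{1l}=\sum_{l\in Z^3}\Big[\big(\partial_t+\tfrac{l}{\mu}\cdot\nabla\big)h_{1l}\,e^{i\phi_l}+\big(\partial_t+\tfrac{l}{\mu}\cdot\nabla\big)h_{-1l}\,e^{-i\phi_l}\Big].
\end{equation*}
This is the direct analogue of the representation used in Lemma \ref{e:transport estimate} and reduces everything to applying the inverse-divergence operator $\mathcal{G}$ to single-frequency packets.

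I would then apply Lemma \ref{p:inverse 2} with $m=1+\lfloor\tfrac{1+\varepsilon}{\varepsilon}\rfloor$ separately to each of the eight packets indexed by $j=[l]\in\{0,\dots,7\}$. The key amplitude bound $\|(\partial_t+\frac{l}{\mu}\cdot\nabla)h_{\pm 1l}\|_r\leq C_r\sqrt{\kappa}\mu\Lambda\ell_1^{-r}$ follows from (\ref{e:estimate on transport amp}) and (\ref{e:estimate on transport time derivative}), together with the observation that $|l|\leq C_0\mu$ on the support of each summand (since $\beta_{1l}$ is supported where $|\mu v_{0\ell_1}-l|\leq 1$). Combining this with the parameter hypothesis $\lambda_1\geq\ell_1^{-(1+\varepsilon)}$ from (\ref{a:assumption on parameter}) makes the resulting geometric sum converge, yielding the first bound $C\sqrt{\kappa}\mu\Lambda\lambda_1^{-1}$.

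For the $(\partial_t,\partial_x,\partial_y)$ bound I would differentiate the above representation term by term; whenever the derivative lands on the exponential it costs a factor $\lambda_1$, which dominates over the amplitude contribution and cancels exactly one $\lambda_1^{-1}$ from $\mathcal{G}$, producing the claimed $C\sqrt{\kappa}\mu\Lambda$. The $\partial_z$ estimate benefits from the crucial geometric fact that $(k_{1h}^\perp,0)$ has zero third component, so $\partial_z$ never differentiates the exponential and only hits the amplitude; invoking the anisotropic estimate (\ref{e:diffusion derivative estimate on transport amp}), $\|\partial_z h_{\pm 1l}\|_r\leq C_r\sqrt{\kappa}\mu\bar{\Lambda}\ell_1^{-r}$, and redoing the $\mathcal{G}$-estimate then yields the anisotropic bound $C\sqrt{\kappa}\mu\bar{\Lambda}\ell_1^{-1}\lambda_1^{-1}$, with $\bar{\Lambda}$ replacing $\Lambda$.

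The main obstacle is essentially bookkeeping: one must track three distinct scales ($\lambda_1^{-1}$ from the inverse divergence, $\ell_1^{-1}$ from mollification, $\bar{\Lambda}$ versus $\Lambda$ from the anisotropy) and verify that every geometric series produced by Lemma \ref{p:inverse 2} converges under (\ref{a:assumption on parameter}). The cleanest organization is to first factor out the amplitude bounds as an auxiliary estimate, after which the application of $\mathcal{G}$ becomes a routine mechanical check packet-by-packet.
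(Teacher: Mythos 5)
Your proposal is correct and follows essentially the same route as the paper: the transport cancellation $(\partial_t+\tfrac{l}{\mu}\cdot\nabla)e^{\pm i\phi_l}=0$ reduces the expression to single-frequency packets with slowly-varying amplitudes $(\partial_t+\tfrac{l}{\mu}\cdot\nabla)h_{\pm 1l}$, and Lemma \ref{p:inverse 2} with $m=1+[\tfrac{1+\varepsilon}{\varepsilon}]$ together with \eqref{e:estimate on transport amp}, \eqref{e:estimate on transport time derivative}, \eqref{e:diffusion derivative estimate on transport amp}, $|l|\leq C_0\mu$, and the parameter hierarchy \eqref{a:assumption on parameter} gives the three bounds. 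Your explicit remark that $(k_{1h}^\perp,0)$ has vanishing third component, so $\partial_z$ never touches the phase, is a helpful spelling-out of the source of the anisotropic $\bar{\Lambda}$ bound that the paper leaves implicit.
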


\begin{Lemma}[The error Part]\label{e:est 1}
\begin{align}
&\|w_{1c}\chi_1\|_0\leq C_0\kappa\mu\Lambda\lambda_1^{-1},\quad  \|(\partial_t, \partial_x, \partial_y)(w_{1c}\chi_1)\|_0\leq C_0\kappa\mu\Lambda,
\quad \|\partial_z(w_{1c}\chi_1)\|_0\leq C_0\kappa\mu\bar{\Lambda}\ell^{-1}_{1}\lambda_1^{-1}\nonumber,\\
&\Big\|\sum\limits_{l\in Z^3}\Big(v_{0\ell_1}-\frac{l}{\mu}\Big)\chi_{1l}+\sum_{l\in Z^3}\big(v_0-v_{0\ell_1}\big)\chi_{1l}+f_0-f_{0\ell_1}+w_1(\theta_0-\theta_{0\ell_1})\Big\|_0\nonumber\\
\leq&C_0(\Lambda\ell_1+\bar{\Lambda}\ell_{1z}+\sqrt{\kappa}\mu^{-1}),\nonumber\\
&\Big\|(\partial_t,\partial_x,\partial_y)\Big\{\sum\limits_{l\in Z^3}\big(v_{0\ell_1}-\frac{l}{\mu}\Big)\chi_{1l}+\sum_{l\in Z^3}\big(v_0-v_{0\ell_1}\big)\chi_{1l}+f_0-f_{0\ell_1}+w_1(\theta_0-\theta_{0\ell_1})\Big\}\Big\|_0\nonumber\\
&\leq C_0(\Lambda\ell_1+\bar{\Lambda}\ell_{1z}+\sqrt{\kappa}\mu^{-1})\lambda_1,\nonumber\\
&\Big\|\partial_z\Big\{\sum\limits_{l\in Z^3}\Big(v_{0\ell_1}-\frac{l}{\mu}\Big)\chi_{1l}+\sum_{l\in Z^3}\big(v_0-v_{0\ell_1}\big)\chi_{1l}+f_0-f_{0\ell_1}+w_1(\theta_0-\theta_{0\ell_1})\Big\}\Big\|_0
\leq C_0\bar{\Lambda}.
\end{align}
\begin{proof}
First, by  Lemma \ref{e: estimate correction}, it's easy to get
\begin{align}
\|w_{1c}\chi_1\|_0\leq C_0\kappa\mu\Lambda\lambda_1^{-1},\quad \|(\partial_t,\partial_x,\partial_y)w_{1c}\chi_1\|_0\leq C_0\kappa\mu\Lambda,\quad
 \|\partial_z(w_{1c}\chi_1)\|_0\leq C_0\kappa\mu\bar{\Lambda}(\mu\Lambda+\ell^{-1}_{1})\lambda_1^{-1},\nonumber
\end{align}
which is the first three estimates in the lemma.

We compute
\begin{align}\label{f:computation formula about product}
\sum_{l\in Z^3}\Big(v_{0\ell_1}-\frac{l}{\mu}\Big)\chi_{1l}
=\sum\limits_{l\in Z^3}\Big(v_{0\ell_1}-\frac{l}{\mu}\Big)\Big(h_{1l}e^{i\lambda_1 2^{[l]} (k_{1h}^{\perp},0)\cdot \big((x,y,z)-\frac{l}{\mu}t\big)}+h_{-1l}e^{-i\lambda_1 2^{[l]}(k_{1h}^{\perp},0)\cdot \big((x,y,z)-\frac{l}{\mu}t\big)}\Big).
\end{align}
Since $h_{\pm 1l}(x,t)\neq0$ implies $|\mu v_{0\ell_1}-l|\leq1,$
therefore,
\begin{align}
\Big\|\sum_{l\in Z^3}\Big(v_{0\ell_1}-\frac{l}{\mu}\Big)\chi_{1l}\Big\|_0\leq C_0\sqrt{\kappa}\mu^{-1}.\nonumber
\end{align}
Similarly,
\begin{align}
\Big\|(\partial_t, \partial_x,\partial_y)\sum_{l\in Z^3}\Big(v_{0\ell_1}-\frac{l}{\mu}\Big)\chi_{1l}\Big\|_0\leq C_0\sqrt{\kappa}\lambda_1\mu^{-1}.\nonumber
\end{align}
By inequality (\ref{e:estimate different}), we obtain directly
\begin{align}
\Big\|\sum_{l\in Z^3}\big(v_0-v_{0\ell_1}\big)\chi_{1l}+f_0-f_{0\ell_1}+w_1(\theta_0-\theta_{0\ell_1})\Big\|_0\leq& C_0(\Lambda\ell_1+\bar{\Lambda}\ell_{1z}),\nonumber\\
\Big\|(\partial_t, \partial_x,\partial_y)\Big[\sum_{l\in Z^3}\big(v_0-v_{0\ell_1}\big)\chi_{1l}+f_0-f_{0\ell_1}+w_1(\theta_0-\theta_{0\ell_1})\Big]\Big\|_0\leq& C_0(\Lambda\ell_1+\bar{\Lambda}\ell_{1z})\lambda_1.\nonumber
\end{align}
Collecting all these estimates, we obtain the fourth and fifth estimates.

Differentiating in $z$ on (\ref{f:computation formula about product})
\begin{align}
&\partial_z\Big(\sum_{l\in Z^3}\Big(v_{0\ell_1}-\frac{l}{\mu}\Big)\chi_{1l}\Big)\nonumber\\
=&\sum\limits_{l\in Z^3}\Big(v_{0\ell_1}-\frac{l}{\mu}\Big)\Big(\partial_zh_{1l}e^{i\lambda_1 2^{[l]} (k_{1h}^{\perp},0)\cdot \big((x,y,z)-\frac{l}{\mu}t\big)}+\partial_zh_{-1l}e^{-i\lambda_1 2^{[l]} (k_{1h}^{\perp},0)\cdot \big((x,y,z)-\frac{l}{\mu}t\big)}\Big)\nonumber\\
&+\sum\limits_{l\in Z^3}(\partial_zv_0)_{\ell_1}\Big(h_{1l}e^{i\lambda_1 2^{[l]} (k_{1h}^{\perp},0)\cdot \big((x,y,z)-\frac{l}{\mu}t\big)}+h_{-1l}e^{-i\lambda_1 2^{[l]} (k_{1h}^{\perp},0)\cdot \big((x,y,z)-\frac{l}{\mu}t\big)}\Big),\nonumber
\end{align}
by (\ref{p:unity}), (\ref{e:zero estimate on transport amp}), (\ref{e:zero diffusion derivative estimate on transport amp}) and (\ref{e:estimate different}), we obtain
\begin{align}
\Big\|\partial_z\Big(\sum_{l\in Z^3}\Big(v_{0\ell_1}-\frac{l}{\mu}\Big)\chi_{1l}\Big)\Big\|_0\leq& C_0\sqrt{\kappa}\bar{\Lambda}.\nonumber
\end{align}
A similar argument gives
\begin{align}
\Big\|\partial_z\Big(\sum_{l\in Z^3}\big(v_0-v_{0\ell_1}\big)\chi_{1l}\Big)\Big\|_0\leq& C_0\sqrt{\kappa}\bar{\Lambda}.\nonumber
\end{align}
From lemma \ref{e: estimate correction}, inequality (\ref{e:estimate different}) and (\ref{a:assumption on parameter}), it's easy to obtain
\begin{align}
\Big\|\partial_z\Big(f_0-f_{0\ell_1}+w_1(\theta_0-\theta_{0\ell_1})\Big)\Big\|_0\leq C_0\bar{\Lambda}.\nonumber
\end{align}
Collecting all these estimates, we obtain
the proof of the lemma.
\end{proof}
\end{Lemma}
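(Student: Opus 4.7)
The plan is to estimate each summand in $\delta f_{01}$ by a direct product-rule argument, invoking the perturbation and amplitude bounds from Lemmas \ref{e:estimate various} and \ref{e: estimate correction} together with the standard mollification estimate \eqref{e:estimate different}. For the first term $w_{1c}\chi_1$, I would write $\|w_{1c}\chi_1\|_0 \leq \|w_{1c}\|_0 \|\chi_1\|_0$, which by Lemma \ref{e: estimate correction} gives $C_0\sqrt{\kappa}\mu\Lambda\lambda_1^{-1}\cdot\sqrt{\kappa} = C_0\kappa\mu\Lambda\lambda_1^{-1}$. For the $(\partial_t,\partial_x,\partial_y)$-derivative, I apply the product rule: either factor upon differentiation gains at most a factor of size $\lambda_1$ (from the oscillating exponential) or $\mu\Lambda$ (from differentiating an amplitude), both dominated by $\lambda_1$ thanks to $\lambda_1 \geq \ell_1^{-(1+\varepsilon)} \geq \mu\Lambda$ in \eqref{a:assumption on parameter}, and the $C^0$ bound times $\lambda_1$ gives $C_0\kappa\mu\Lambda$. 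For $\partial_z$, one notes that the phase $(k_{1h}^\perp,0)\cdot((x,y,z)-\frac{l}{\mu}t)$ is independent of $z$, so $\partial_z$ only hits amplitudes, and Lemma \ref{e: estimate correction} supplies the anisotropic bounds $\|\partial_zw_{1c}\|_0 \leq C_0\sqrt{\kappa}\mu\bar{\Lambda}\ell_1^{-1}\lambda_1^{-1}$ and $\|\partial_z\chi_1\|_0 \leq C_0\sqrt{\kappa}\mu\bar{\Lambda}$, yielding the claimed anisotropic estimate.

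For the four-term sum, the key observation for the transport piece $\sum_{l\in Z^3}(v_{0\ell_1}-\frac{l}{\mu})\chi_{1l}$ is the localization principle: by \eqref{d:b1l} and the support property $\mathrm{supp}\,\alpha_l\subseteq B_{c_2}(l)$ from \eqref{p:unity}, $\chi_{1l}(t,x,y,z)\neq 0$ forces $|\mu v_{0\ell_1}(t,x,y,z) - l|\leq c_2 < 1$, so the coefficient $v_{0\ell_1}-\frac{l}{\mu}$ is pointwise bounded by $C_0\mu^{-1}$ and only $O(1)$ terms contribute at each point by finite overlap, giving $\big\|\sum_{l\in Z^3}(v_{0\ell_1}-\frac{l}{\mu})\chi_{1l}\big\|_0 \leq C_0\sqrt{\kappa}\mu^{-1}$. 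The three remaining summands $\sum_{l}(v_0-v_{0\ell_1})\chi_{1l}$, $f_0-f_{0\ell_1}$, and $w_1(\theta_0-\theta_{0\ell_1})$ are bounded using the mollification estimate \eqref{e:estimate different}, which supplies $\|v_0-v_{0\ell_1}\|_0, \|f_0-f_{0\ell_1}\|_0, \|\theta_0-\theta_{0\ell_1}\|_0 \leq C_0(\Lambda\ell_1+\bar{\Lambda}\ell_{1z})$; multiplying by the $O(\sqrt{\kappa})$ bounds on $\chi_{1l}$ and $w_1$ and absorbing constants produces the claimed bound.

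Space-time derivatives of these four summands follow the same mechanism as in Lemmas \ref{e:oscillation estimate}--\ref{e: error 3}: each $(\partial_t,\partial_x,\partial_y)$-derivative of the oscillating exponential pulls down a factor $\lambda_1$ which dominates derivatives of the smooth amplitudes by \eqref{a:assumption on parameter} and \eqref{e:estimate higher convolution}, so multiplying the just-established $C^0$ bounds by $\lambda_1$ is sharp. The $\partial_z$ estimate exploits the $z$-independence of the phase once more: $\partial_z$ reduces to derivatives of the amplitudes and of the mollification differences, and invoking $\|\partial_z(v_0-v_{0\ell_1})\|_0 \leq C_0\bar{\Lambda}$ (a direct consequence of \eqref{d:difinition on c1 norm}) together with the $\partial_zh_{\pm 1l}$ bounds from Lemma \ref{e:estimate various} yields a clean $C_0\bar{\Lambda}$ bound with no $\sqrt{\kappa}$ factor, which is consistent with the other $\partial_z$-error bounds in the section. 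The main bookkeeping obstacle is maintaining the anisotropic separation throughout, ensuring that a $\partial_z$ derivative is never bounded by a horizontal factor $\lambda_1$ but always routed through the $\mu\bar{\Lambda}$-type amplitude bounds; once this distinction is kept straight, every term is handled by the product rule plus a uniform-in-$l$ summation using the finite overlap of $\mathrm{supp}\,\alpha_l$.
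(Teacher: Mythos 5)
Your proposal is correct and follows essentially the same route as the paper: the same splitting into $w_{1c}\chi_1$, the transport-type term $\sum_l(v_{0\ell_1}-\tfrac{l}{\mu})\chi_{1l}$, and the mollification-difference terms, with the same localization $|\mu v_{0\ell_1}-l|\leq c_2$ from the support of $\alpha_l$, the same use of Lemmas \ref{e:estimate various} and \ref{e: estimate correction} together with \eqref{e:estimate different}, and the same exploitation of the $z$-independence of the phase plus the parameter relations \eqref{a:assumption on parameter} to keep the $\partial_z$ bounds anisotropic. No gaps worth noting.
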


Finally, we conclude that
\begin{align}\label{e:terperture first difference estimate}
\|\delta f_{01}\|_0\leq& C_0(\varepsilon)\Big(\Lambda\ell_1+\bar{\Lambda}\ell_{1z}+\sqrt{\kappa}\mu^{-1}+
\lambda_1^{-1}\sqrt{\kappa}(\mu\Lambda
+\mu\bar{\Lambda}\ell_{1z}^{-1})\Big),\nonumber\\
\|(\partial_t, \partial_x,\partial_y)\delta f_{01}\|_0\leq & C_0(\varepsilon)\lambda_1\Big(\Lambda\ell_1+\bar{\Lambda}\ell_{1z}+
\sqrt{\kappa}\mu^{-1}+\lambda_1^{-1}\sqrt{\kappa}(\mu\Lambda
+\mu\bar{\Lambda}\ell_{1z}^{-1})\Big),\nonumber\\
\|\partial_z\delta f_{01}\|_0\leq & C_0(\varepsilon)\Big[\bar{\Lambda}+\lambda_1^{-1}\sqrt{\kappa}
\Big(\mu\bar{\Lambda}\ell_1^{-1}+\Lambda\ell_{1z}^{-1}+\mu\bar{\Lambda}\ell_{1z}^{-2}\Big)\Big].
\end{align}

In conclusion, combining Corollary \ref{e:first difference sequence estimate}, (\ref{e:first stress error estimate}) and
(\ref{e:terperture first difference estimate}), we have $(v_{01}, p_{01}, \theta_{01}, R_{01}, f_{01})\in C_c^{\infty}((a-\kappa, b+\kappa)\times T^3)$, they solve system (\ref{d:anistropic boussinesq reynold}) and satisfy
\begin{align}
 R_{01}=-\sum\limits_{i=2}^6(e(t)-a_{i\ell_1})k_i\otimes k_i+\delta R_{01},\quad
    f_{01}=-\sum\limits_{i=2}^3c_{i\ell_1}k_i+\delta f_{01}\nonumber
\end{align}
and
\begin{align}\label{e:the first step conclusion}
& \|v_{01}-v_0\|_0\leq \frac{M\sqrt{\kappa}}{12}+C_0\frac{\sqrt{\kappa}\mu\Lambda}{\lambda_1},\quad\|(\partial_t,\partial_x,\partial_y)(v_{01}-v_0)\|_0\leq C_0\sqrt{\kappa}\lambda_1,\nonumber\\
& \|\partial_z(v_{01}-v_0)\|_0\leq C_0\sqrt{\kappa}\mu\bar{\Lambda},
\quad\|p_{01}-p_0\|_0=0,\quad \|\partial_z(\theta_{01}-\theta_{0})\|_0\leq C_0\sqrt{\kappa}\mu\bar{\Lambda}, \nonumber\\
& \|\theta_{01}-\theta_{0}\|_0\leq \frac{M\sqrt{\kappa}}{6}+C_0\frac{\sqrt{\kappa}\mu\Lambda}{\lambda_1},\quad
 \|(\partial_t, \partial_x,\partial_y)(\theta_{01}-\theta_{0})\|_0\leq C_0\sqrt{\kappa}\lambda_1,\nonumber\\
& \|\delta R_{01}\|_0\leq  C_0(\varepsilon)\Big(\Lambda\ell_1+\bar{\Lambda}\ell_{1z}+
\sqrt{\kappa}\mu^{-1}+\lambda_1^{-1}\sqrt{\kappa}(\mu\Lambda
+\mu\bar{\Lambda}\ell_{1z}^{-1})\Big),\nonumber\\
&\|(\partial_t, \partial_x,\partial_y)\delta R_{01}\|_0\leq C_0(\varepsilon)\lambda_1\Big(\Lambda\ell_1+\bar{\Lambda}\ell_{1z}+
\sqrt{\kappa}\mu^{-1}+\lambda_1^{-1}\sqrt{\kappa}(\mu\Lambda
+\mu\bar{\Lambda}\ell_{1z}^{-1})\Big),\nonumber\\
&\|\partial_z\delta R_{01}\|_0\leq  C_0(\varepsilon)\Big[\bar{\Lambda}+\lambda_1^{-1}\sqrt{\kappa}
\Big(\mu\bar{\Lambda}\ell_1^{-1}+\mu\bar{\Lambda}\ell_{1z}^{-2}\Big)\Big],\nonumber\\
&\|\delta f_{01}\|_0\leq C_0(\varepsilon)\Big(\Lambda\ell_1+\bar{\Lambda}\ell_{1z}+
\sqrt{\kappa}\mu^{-1}+\lambda_1^{-1}\sqrt{\kappa}(\mu\Lambda
+\mu\bar{\Lambda}\ell_{1z}^{-1})\Big),\nonumber\\
&\|(\partial_t, \partial_x,\partial_y)\delta f_{01}\|_0\leq  C_0(\varepsilon)\lambda_1\Big(\Lambda\ell_1+\bar{\Lambda}\ell_{1z}+
\sqrt{\kappa}\mu^{-1}+\lambda_1^{-1}\sqrt{\kappa}(\mu\Lambda
+\mu\bar{\Lambda}\ell_{1z}^{-1})\Big),\nonumber\\
&\|\partial_z\delta f_{01}\|_0\leq  C_0(\varepsilon)\Big[\bar{\Lambda}+\lambda_1^{-1}\sqrt{\kappa}
\Big(\mu\bar{\Lambda}\ell_1^{-1}+\Lambda\ell_{1z}^{-1}
+\mu\bar{\Lambda}\ell_{1z}^{-2}\Big)\Big].
\end{align}
Thus we complete the first step.

\setcounter{equation}{0}

\section{Constructions of $v_{0n}, p_{0n}, \theta_{0n}, R_{0n}, f_{0n}$, $2\leq n\leq 6$}

In this section, we assume $2\leq n \leq 6$ and we will construct $(v_{0n}, p_{0n}, \theta_{0n}, R_{0n}, f_{0n})$ by inductions.
Suppose that for $1 \leq m < n\leq 6$, we have constructed $$(v_{0m},~p_{0m},~\theta_{0m},~R_{0m},~f_{0m})\in C^\infty((a-\kappa,b+\kappa)\times T^3)$$ and they solve
  the system (\ref{d:anistropic boussinesq reynold}). Furthermore, we have
 \begin{align}\label{f:R0m f0m}
 R_{0m}=-\sum\limits_{i=m+1}^{6}(e(t)-a_{i\ell_1})k_i\otimes k_i
 +\sum\limits_{i=1}^{m}\delta R_{0i},\quad
    f_{0m}=\sum\limits_{i=m+1}^{3}c_{i\ell_1}k_i+\sum\limits_{i=1}^{m}\delta f_{0i}
\end{align}
 and
 \begin{align}\label{e:induction estimate sequence}
 &\|v_{0m}-v_{0(m-1)}\|_0\leq \frac{M\sqrt{\kappa}}{12}+C_0\sqrt{\kappa}\frac{\sqrt{\kappa}\mu\lambda_{m-1}}{\lambda_m},\quad\|(\partial_t, \partial_x,\partial_y)(v_{0m}-v_{0(m-1)})\|_0\leq C_0\sqrt{\kappa}\lambda_m,\nonumber\\
 &\|\partial_z(v_{0m}-v_{0(m-1)})\|_0\leq C_0(\sqrt{\kappa}\mu)^m\bar{\Lambda}, \quad \|\partial_z(\theta_{0m}-\theta_{0(m-1)})\|_0\leq C_0(\sqrt{\kappa}\mu)^m\bar{\Lambda}, \nonumber\\
 &\|\theta_{0m}-\theta_{0(m-1)}\|_0\leq \frac{M\sqrt{\kappa}}{6}+C_0\sqrt{\kappa}\frac{\sqrt{\kappa}\mu\lambda_{m-1}}{\lambda_m},\quad
 \|(\partial_t, \partial_x,\partial_y)(\theta_{0m}-\theta_{0(m-1)})\|_0\leq C_0\sqrt{\kappa}\lambda_m,\nonumber\\
 & \|p_{0m}-p_{0(m-1)}\|_0=0,\nonumber\\
&\|\delta R_{0m}\|_0\leq  C(\varepsilon)\Big[\kappa\lambda_{m-1}\ell_m+\sqrt{\kappa}\mu^{-1}
+\Big(\kappa\mu\lambda_{m-1}+
(\sqrt{\kappa}\mu)^{m}\bar{\Lambda}\ell_{mz}^{-1}\Big)\lambda_m^{-1}\Big],\nonumber\\
&\|(\partial_t, \partial_x,\partial_y)\delta R_{0m}\|_0\leq C(\varepsilon)\lambda_m\Big[\kappa\lambda_{m-1}\ell_m+\sqrt{\kappa}\mu^{-1}
+\Big(\kappa\mu\lambda_{m-1}+
(\sqrt{\kappa}\mu)^{m}\bar{\Lambda}\ell_{mz}^{-1}\Big)\lambda_m^{-1}\Big],\nonumber\\
&\|\partial_z\delta R_{0m}\|_0\leq  C(\varepsilon)\Big[(\sqrt{\kappa}\mu)^m\bar{\Lambda}\Big(\mu^{-1}
+\sqrt{\kappa}\lambda_{m-1}\ell_m\Big)+(\sqrt{\kappa}\mu)^{m}\bar{\Lambda}
\Big(\ell_m^{-1}+
\ell_{mz}^{-2}\Big)\lambda_m^{-1}\Big],\nonumber\\
&\|\delta f_{0m}\|_0\leq  C(\varepsilon)\Big[\kappa\lambda_{m-1}\ell_m+\sqrt{\kappa}\mu^{-1}+
\Big(\kappa\mu\lambda_{m-1}+
(\sqrt{\kappa}\mu)^m\Lambda\ell_{mz}^{-1}\Big)\lambda_m^{-1}
\Big],\nonumber\\
&\|(\partial_t, \partial_x,\partial_y)\delta f_{0m}\|_0\leq  C(\varepsilon)\lambda_m\Big[\kappa\lambda_{m-1}\ell_m+\sqrt{\kappa}\mu^{-1}+
\Big(\kappa\mu\lambda_{m-1}+
(\sqrt{\kappa}\mu)^m\Lambda\ell_{mz}^{-1}\Big)\lambda_m^{-1}
\Big],\nonumber\\
&\|\partial_z\delta f_{0m}\|_0\leq  C(\varepsilon)\Big[(\sqrt{\kappa}\mu)^m\bar{\Lambda}\Big(\mu^{-1}+\sqrt{\kappa}\lambda_{m-1}\ell_m\Big)
+\Big((\sqrt{\kappa}\mu)^m\bar{\Lambda}(\ell_{mz}^{-2}+\ell_m^{-1})
+\kappa\lambda_{m-1}\ell_{mz}^{-1}\Big)\lambda_m^{-1}\Big],
\end{align}
where $(v_{00},p_{00},~~\theta_{00}):=(v_{0},~~p_{0},~~\theta_{0})$, $\lambda_0:=\Lambda \kappa^{-1}+\bar{\Lambda}\ell_{1z}\kappa^{-1}\ell_1^{-1}$ and parameters $\ell_m, \ell_{mz}$ and $\lambda_m$  will be chosen such that
\begin{align}\label{a:assumption on parameter m sequence}
\ell_1^{-1}\geq \mu\Lambda,\quad\ell_i^{-1}\geq \sqrt{\kappa}\mu\lambda_{i-1}, i\geq2, \quad \lambda_m\geq \ell_m^{-(1+\varepsilon)},
\quad \ell_m^{-1} >\ell_{mz}^{-1}\geq\mu(\sqrt{\kappa}\mu)^{m-1}\bar{\Lambda},\quad \lambda_m,  \frac{\lambda_m}{\mu}\in{\rm N}.
\end{align}

Next, we construct n-th step by induction.

\subsection{The n-th perturbations $w_{n}$ and $\chi_n$}

   \indent

In this and subsequent part,  we always assume that
  \begin{align}\label{a:assumption on parameter n}
  \quad  \ell_n^{-1}\geq \sqrt{\kappa}\mu\lambda_{n-1},\quad \lambda_n\geq \ell_n^{-(1+\varepsilon)},\quad \ell_n^{-1}> \ell^{-1}_{nz}\geq \mu(\sqrt{\kappa}\mu)^{n-1}\bar{\Lambda},\quad
  \lambda_n, \frac{\lambda_n}{\mu}\in {\rm N} .
\end{align}
\subsubsection{Construction of n-th velocity perturbation}
   For $2\leq n \leq 6$ and any $l\in Z^3$, we denote $ b_{nl}$ by
   \begin{align}\label{e:bnl definition}
    b_{nl}:=\sqrt{\frac{e(t)-a_{n\ell_1}}{2}}\alpha_l(\mu v_{0(n-1)\ell_n})
    \end{align}
  and main $l$-perturbation $w_{nol}$ by
   \begin{align}
   w_{nol}:=b_{nl}k_n\Big(e^{i\lambda_n 2^{[l]} (k_{nh}^{\perp},0)\cdot \big((x,y,z)-\frac{l}{\mu}t\big)}+e^{-i\lambda_n 2^{[l]} (k_{nh}^{\perp},0)\cdot \big((x,y,z)-\frac{l}{\mu}t\big)}\Big).\nonumber
   \end{align}
    Here $v_{0(n-1)\ell_n}=v_{0(n-1)}\ast (\varphi_{\ell_n}(t)\varphi_{\ell_n}(x)\varphi_{\ell_n}(y)\varphi_{\ell_{nz}}(z))$.
Then we set the n-th main perturbation
   \begin{align}
    w_{no}:=\sum_{l\in Z^3}w_{nol}.\nonumber
   \end{align}
Same as in the first step, the $l$-correction is denoted by
  $w_{ncl}$ as
\begin{align}\label{d:w ncl}
w_{ncl}:=\left(
    \begin{array}{ccc}
    -t_n\partial_zb_{nl}+\partial_yb_{nl}\\
    s_n\partial_zb_{nl}-\partial_xb_{nl}\\
    t_n\partial_xb_{nl}-s_n\partial_yb_{nl}
    \end{array}
    \right)
    \Bigg(\frac{e^{i\lambda_n 2^{[l]} (k_{nh}^{\perp},0)\cdot \big((x,y,z)-\frac{l}{\mu}t\big)}}{i\lambda_n 2^{[l]}}+
    \frac{e^{-i\lambda_n 2^{[l]} (k_{nh}^{\perp},0)\cdot \big((x,y,z)-\frac{l}{\mu}t\big)}}{-i\lambda_n 2^{[l]}}\Bigg)
\end{align}
and the n-th correction is denoted by
   \begin{align}
    w_{nc}:=\sum_{l\in Z^3}w_{ncl},\nonumber
    \end{align}
where $k_n=(k_{n1}, k_{n2}, k_{n3})$ and $(s_n,t_n)\in R^2$ such that $s_nk_{n1}+t_nk_{n2}=-k_{n3}$.

A straightforward computation gives
\begin{align}
w_{nol}+w_{ncl}={\rm curl}\Big(b_{nl}\vec{a}_n\Big(\frac{e^{i\lambda_n 2^{[l]} (k_{nh}^{\perp},0)\cdot \big((x,y,z)-\frac{l}{\mu}t\big)}}{i\lambda_n 2^{[l]}}+\frac{e^{-i\lambda_n 2^{[l]} (k_{nh}^{\perp},0)\cdot \big((x,y,z)-\frac{l}{\mu}t\big)}}{-i\lambda_n2^{[l]}}\Big)\Big),\nonumber
\end{align}
where $\vec{a}_n=(s_n,t_n,1).$\\
Finally we set n-th  perturbation
   \begin{align*}
    w_n:=&w_{no}+w_{nc}.
   \end{align*}
  It's obvious that
   $\hbox{div}w_n=0.$
Moreover, they are all real vector-valued functions and
\begin{align}\label{b:bound n2}
\|w_{no}\|_0\leq\frac{M\sqrt{\kappa}}{12}.
\end{align}
We set
\begin{align} \label{d:definition on n velocity}
g_{nl}:=b_{nl}k_n+\frac{1}{i\lambda_n2^{[l]}}\left(
    \begin{array}{ccc}
    -t_n\partial_zb_{nl}+\partial_yb_{nl}\\
    s_n\partial_zb_{nl}-\partial_xb_{nl}\\
    t_n\partial_xb_{nl}-s_n\partial_yb_{nl}
    \end{array}
    \right),
g_{-nl}:=b_{nl}k_n+\frac{1}{-i\lambda_n2^{[l]}}\left(
    \begin{array}{ccc}
    -t_n\partial_zb_{nl}+\partial_yb_{nl}\\
    s_n\partial_zb_{nl}-\partial_xb_{nl}\\
    t_n\partial_xb_{nl}-s_n\partial_yb_{nl}
    \end{array}
    \right),
\end{align}
thus
\begin{align}\label{r:another representation on wnl}
w_{nl}=g_{nl}e^{i\lambda_n 2^{[l]} (k_{nh}^{\perp},0)\cdot \big((x,y,z)-\frac{l}{\mu}t\big)}+g_{-nl}e^{-i\lambda_n 2^{[l]} (k_{nh}^{\perp},0)\cdot \big((x,y,z)-\frac{l}{\mu}t\big)}.
\end{align}
It's obvious that $b_{nl}\in C_c^{\infty}((a-\kappa,b+\kappa)\times T^3)$ and
   $$w_{nol},w_{ncl},w_{nl},w_n\in C_c^{\infty}((a-\kappa,b+\kappa)\times T^3).$$
Then we complete the construction of n-th perturbation $w_n$.

 \subsubsection{Construction of n-th temperature perturbation}

To construct $\chi_n$, we first denote $\beta_{nl}$ by
   \begin{align}\label{d:difinition bnl}
    \beta_{nl}:=\left \{
    \begin {array}{ll}
    -\frac{c_{n\ell_1}}{\sqrt{2(e(t)-a_{n\ell_1})}}\alpha_{l}(\mu v_{0(n-1)\ell_n}), \qquad n=2,3\\
    0,\qquad n=4,...,6,
    \end{array}
    \right.
    \end{align}
and denote main $l$-perturbation $\chi_{nol}$ by
    \begin{align}
    \chi_{nol}:=
    \beta_{nl}\Big(e^{i\lambda_n 2^{[l]} (k_{nh}^{\perp},0)\cdot \big((x,y,z)-\frac{l}{\mu}t\big)}+e^{-i\lambda_n 2^{[l]} (k_{nh}^{\perp},0)\cdot \big((x,y,z)-\frac{l}{\mu}t\big)}\Big)\nonumber
    \end{align}
    and $l$-corection $\chi_{ncl}$ by
    \begin{align}
    \chi_{ncl}:=
    \frac{\nabla\beta_{nl}\cdot (k_{nh}^{\perp},0)}{i\lambda_n 2^{[l]}|k_{nh}|^2}\Big(e^{i\lambda_n 2^{[l]}(k_{nh}^{\perp},0)\cdot \big((x,y,z)-\frac{l}{\mu}t\big)}-e^{-i\lambda_n 2^{[l]} (k_{nh}^{\perp},0)\cdot \big((x,y,z)-\frac{l}{\mu}t\big)}\Big).\nonumber
    \end{align}

We denote $l$-perturbation $\chi_{nl}$ and n-th perturbation $\chi_n$ by
 \begin{align}
 \chi_{nl}:=\chi_{nol}+\chi_{ncl}\nonumber
 \end{align}
 and
   \begin{align}
   \chi_n:=\sum\limits_{l\in Z^3}\chi_{nl}.\nonumber
   \end{align}
   If we set
   \begin{align}\label{d:definition on n temperture}
   h_{nl}=\beta_{nl}+\frac{\nabla\beta_{nl}\cdot (k_{nh}^{\perp},0)}{i\lambda_n 2^{[l]}|k_{nh}|^2},\quad h_{-nl}=\beta_{nl}+\frac{\nabla\beta_{nl}\cdot (k_{nh}^{\perp},0)}{-i\lambda_n 2^{[l]}|k_{nh}|^2},
   \end{align}
   then
   \begin{align}
   \chi_n=\sum_{l\in Z^3}\Big(h_{nl}e^{i\lambda_n 2^{[l]} (k_{nh}^{\perp},0)\cdot \big((x,y,z)-\frac{l}{\mu}t\big)}+h_{-nl}e^{-i\lambda_n 2^{[l]} (k_{nh}^{\perp},0)\cdot \big((x,y,z)-\frac{l}{\mu}t\big)}\Big).\nonumber
   \end{align}
By (\ref{b:bound on decomposition coeffience}) and (\ref{d:difinition bnl}) on $\beta_{nl}$, we have
$\|\beta_{nl}\|_0\leq\frac{M\sqrt{\kappa}}{300}.$
Thus
\begin{align}\label{e:bound on n main preturbation}
  \|\chi_{n}\|_0\leq\frac{M\sqrt{\kappa}}{6}.
\end{align}

\subsection{The constructions of  $v_{0n}$, $p_{0n}$, $\theta_{0n}$, $f_{0n}$, $R_{0n}$ }

\indent

First, we denote $M_n$ by
   \begin{align}
   M_n:=&\sum\limits_{l\in Z^3}b^2_{nl}k_n\otimes k_n\Big( e^{2i\lambda_n 2^{[l]} (k_{nh}^{\perp},0)\cdot \big((x,y,z)-\frac{l}{\mu}t\big)}+e^{-2i\lambda_n 2^{[l]} (k_{nh}^{\perp},0)\cdot \big((x,y,z)-\frac{l}{\mu}t\big)}\Big)\nonumber\\
   &+\sum\limits_{l,l'\in Z^3 ,l\neq l'}w_{nol}\otimes w_{nol'}\nonumber
   \end{align}
 and
$N_n,K_n$ by
 \begin{align}
   N_n:=&\sum_{l\in Z^3}\Big[w_{nl}\otimes \Big(v_{0(n-1)\ell_n}-\frac{l}{\mu}\Big)
   +\Big(v_{0(n-1)\ell_n}-\frac{l}{\mu}\Big)\otimes w_{nl}\Big]\nonumber\\
   &+\sum_{l\in Z^3}\Big[w_{nl}\otimes \big(v_{0(n-1)}-v_{0(n-1)\ell_n}\big)
   +\big(v_{0(n-1)}-v_{0(n-1)\ell_n}\big)\otimes w_{nl}\Big],\nonumber\\
    K_n:=&
    \left \{
    \begin {array}{ll}
    \sum\limits_{l\in Z^3}\beta_{nl}b_{nl}k_n\Big(e^{2i\lambda_n 2^{[l]} (k_{nh}^{\perp},0)\cdot \big((x,y,z)-\frac{l}{\mu}t\big)}
   + e^{-2i\lambda_n 2^{[l]}(k_{nh}^{\perp},0)\cdot \big((x,y,z)-\frac{l}{\mu}t\big)}\Big)\nonumber\\
   +\sum\limits_{l,l'\in Z^3 ,l\neq l'}w_{nol}\chi_{nol'},\quad n=2,3\\
   0,\quad n=4,...,6.
    \end{array}
    \right.\nonumber
 \end{align}
 Notice that $N_{n}$ is a symmetric matrix. Then we set
   \begin{align}\label{d:difinition n sequence}
    &v_{0n}:=v_{0(n-1)}+w_n,\quad
    p_{0n}:=p_{0(n-1)},\quad
    \theta_{0n}:=
    \left \{
    \begin {array}{ll}
    \theta_{0(n-1)}+\chi_n, \quad n=2,3\nonumber\\
    \theta_{0(n-1)}, \quad n=4,...,6
    \end{array}
    \right. \nonumber\\
     &R_{0n}:=R_{0(n-1)}+2\sum\limits_{l\in Z^3}b^2_{nl}k_n\otimes k_n+\delta R_{0n},\nonumber\\
   & f_{0n}:
    =\left \{
    \begin {array}{ll}
    f_{0(n-1)}+2\sum\limits_{l\in Z^3}\beta_{nl}b_{nl}k_n+\delta f_{0n},\quad n=2,3\\
     f_{0(n-1)}+\delta f_{0n}, \quad n=4,...,6,
    \end{array}
    \right.
   \end{align}
   where
   \begin{align}
   \delta R_{0n}:=&\mathcal{R}(\hbox{div}M_n)+N_n+\mathcal{R}\Big\{\partial_tw_{n}
   +\hbox{div}\Big[\sum_{l\in Z^3}\Big(w_{nl}\otimes\frac{l}{\mu}
   +\frac{l}{\mu}\otimes w_{nl}\Big)\Big]\Big\}-\mathcal{R}(\partial_{zz}w_n)\nonumber\\
   &-\mathcal{R}(\chi_ne_3)
   +(w_{no}\otimes w_{nc}+w_{nc}\otimes w_{no}+w_{nc}\otimes w_{nc})\nonumber
   \end{align}
 and
   \begin{align}
   \delta f_{0n}
   :=\left \{
    \begin {array}{ll}
      \mathcal{G}(\hbox{div}K_n)
   +\mathcal{G}(w_n\cdot\nabla\theta_{0(n-1)\ell_n})
   +\mathcal{G}\Big(\partial_t\chi_n+\sum\limits_{l\in Z^3}\frac{l}{\mu}\cdot\nabla\chi_{nl}\Big)-\mathcal{G}(\partial_{zz}\chi_n)\nonumber\\
   +\sum\limits_{l\in Z^3}\Big(v_{0(n-1)\ell_n}-\frac{l}{\mu}\Big)\chi_{nl}+w_{nc}\chi_n+\sum\limits_{l\in Z^3}\big(v_{0(n-1)}-v_{0(n-1)\ell_n}\big)\chi_{nl}\nonumber\\
   +w_n(\theta_{0(n-1)}-\theta_{0(n-1)\ell_n}), \quad n=2,3\\
   \mathcal{G}(w_n\cdot\nabla\theta_{0(n-1)\ell_n})+w_n(\theta_{0(n-1)}-\theta_{0(n-1)\ell_n}),\quad n=4,...,6.
   \end{array}
    \right.
  \end{align}
   Here $\theta_{0(n-1)\ell_n}=\theta_{0(n-1)}\ast (\varphi_{\ell_n}(t)\varphi_{\ell_{n}}(x)\varphi_{\ell_n}(y)\varphi_{\ell_{nz}}(z))$.
   By Lemma \ref{l:reyn}, we know that $\delta R_{0n}$ is a symmetric matrix. It's obvious that
   \begin{align*}
    \hbox{div}v_{0n}=\hbox{div}v_{0(n-1)}+\hbox{div}w_{n}=0.
   \end{align*}
  By the definition of $R_{0n}$, $\delta R_{0n}$ together with the fact that $v_{0n}$, $p_{0n}$, $\theta_{0n}$ and  $(v_{0(n-1)}$, $p_{0(n-1)}$, $
    \theta_{0(n-1)}$, $R_{0(n-1)}$, $f_{0(n-1)})$ are solutions of the system (2.2), we have
\begin{align}
    \hbox{div}R_{0n}=&\hbox{div}R_{0(n-1)}+\partial_tw_n-\partial_{zz}w_n-
   \chi_ne_3\nonumber\\
    &+\hbox{div}(w_{no}\otimes w_{no}+w_{n}\otimes v_{0(n-1)}+v_{0(n-1)}\otimes w_{n}
    +w_{no}\otimes w_{nc}+w_{nc}\otimes w_{no}+w_{nc}\otimes w_{nc})\nonumber\\
    =&\partial_tv_{0(n-1)}+\hbox{div}(v_{0(n-1)}\otimes v_{0(n-1)})+\nabla p_{0(n-1)}-\partial_{zz}v_{0(n-1)}-\theta_{0(n-1)}e_3\nonumber\\
    &+\partial_tw_n-\partial_{zz}w_n-
    \chi_ne_3 +\hbox{div}(w_{no}\otimes w_{no}+w_{n}\otimes v_{0(n-1)}
    +v_{0(n-1)}\otimes w_{n}\nonumber\\
    &+w_{no}\otimes w_{nc}+w_{nc}\otimes w_{no}+w_{nc}\otimes w_{nc})\nonumber\\
    =&\partial_tv_{0n}+\hbox{div}(v_{0n}\otimes v_{0n})+\nabla p_{0n}-\partial_{zz}v_{0n}-\theta_{0n}e_3,\nonumber
    \end{align}
 where we used
 $$\fint_{T^3}w_{n}(t,x,y,z)dxdydz=0,\quad\hbox{div}(M_n)+\hbox{div}\Big(2\sum\limits_{l\in Z^3}b^2_{nl}k_n\otimes k_n\Big)=\hbox{div}(w_{no}\otimes w_{no}).$$
Furthermore, from the definition of $f_{0n}$, $ \delta f_{0n}$ as well as $v_{0n}$, $\theta_{0n}$ and that $(v_{0(n-1)}$, $p_{0(n-1)}$, $
    \theta_{0(n-1)}$, $R_{0(n-1)}$, $f_{0(n-1)})$ are solutions of the system (2.2), we have, for $n=2,3$
\begin{align}
 \hbox{div}f_{0n}=&\hbox{div}f_{0(n-1)}+\partial_{t}\chi_n-\partial_{zz}\chi_n
 +\hbox{div}(w_{no}\chi_n+w_{nc} \chi_n+v_{0(n-1)} \chi_n+w_n \theta_{0(n-1)})\nonumber\\
 =&\hbox{div}(v_{0(n-1)}\theta_{0(n-1)}+w_{no}\chi_n+w_{nc} \chi_n+v_{0(n-1)} \chi_n+w_n\theta_{0(n-1)})\nonumber\\
 &+\partial_t\big(\theta_{0(n-1)}+\chi_n\big)
 -\partial_{zz}\big(\theta_{0(n-1)}+\chi_n\big)\nonumber\\
 =&\partial_t\theta_{0n}+\hbox{div}(v_{0n}\theta_{0n})-\partial_{zz}\theta_{0n}\nonumber
 \end{align}
and for $n=4,5,6$
 \begin{align}
 \hbox{div}f_{0n}=\hbox{div}f_{0(n-1)}+w_{n}\cdot\nabla\theta_{0(n-1)}
 =&\partial_t\theta_{0(n-1)}+\hbox{div}(v_{0(n-1)}\theta_{0(n-1)}+w_n\theta_{0(n-1)})-\partial_{zz}\theta_{03}\nonumber\\
 =&\partial_t\theta_{0n}+\hbox{div}(v_{0n}\theta_{0n})-\partial_{zz}\theta_{0n}.\nonumber
 \end{align}
Thus, the functions $(v_{0n},~p_{0n},~\theta_{0n},~R_{0n},~f_{0n})$ solves the system (2.2).

\setcounter{equation}{0}

\section{The representations}

\indent

In this section, we will compute the following terms
$$R_{0(n-1)}+2\sum\limits_{l\in Z^3}b^2_{nl}k_n\otimes k_n,\quad {\rm and}\quad f_{0n}+2\sum\limits_{l\in Z^3}\beta_{nl}b_{nl}k_n.$$

\subsection{The representation of $R_{0(n-1)}+2\sum\limits_{l\in Z^3}b^2_{nl}k_n\otimes k_n$}

\indent

First, by the definition (\ref{e:bnl definition}) on $b_{nl}$, we have
\begin{align}
2\sum\limits_{l\in Z^3}b^2_{nl}k_n\otimes k_n
=(e(t)-a_{n\ell_1})k_n\otimes k_n.\nonumber
\end{align}
By (\ref{f:R0m f0m}), we obtain
\begin{align}
R_{0(n-1)}+2\sum\limits_{l\in Z^3}b^2_{nl}k_n\otimes k_n
=-\sum\limits_{i=n+1}^{6}(e(t)-a_{i\ell_1})k_i\otimes k_i
 +\sum\limits_{i=1}^{n-1}\delta R_{0i}.\nonumber
\end{align}
Meanwhile, we have
\begin{align}
R_{0n}=-\sum\limits_{i=n+1}^{6}(e(t)-a_{i\ell_1})k_i\otimes k_i
 +\sum\limits_{i=1}^{n}\delta R_{0i}.
\end{align}
In particular,
$R_{06}=\sum\limits_{i=1}^{6}\delta R_{0i}.$
In next section, we will prove that $\delta R_{0n}$ is small.
\subsection{The representation of $f_{0n}+2\sum\limits_{l\in Z^3}\beta_{nl}b_{nl}k_n$}

\indent

By the definition (\ref{e:bnl definition}) on $b_{nl}$ and (\ref{d:difinition bnl}) on $\beta_{nl}$, we have, for $n=2,3$
\begin{align}
2\sum\limits_{l\in Z^3}\beta_{nl}b_{nl}k_n
=\sum\limits_{l\in Z^3}\alpha_l^2(\mu v_{0(n-1)\ell_n})c_{n\ell_1}k_n
=-c_{n\ell_1}k_n.\nonumber
\end{align}
Thus, by (\ref{f:R0m f0m}), we have
\begin{align}
f_{0n}+2\sum\limits_{l\in Z^3}\beta_{nl}b_{nl}k_n
=\sum\limits_{i=1}^n \delta f_{0i}+\sum\limits_{i=n+1}^3 c_{i\ell_1}k_i, \quad n=2,3.\nonumber
\end{align}
In particular,
$f_{03}=\sum\limits_{i=1}^{3}\delta f_{0n}.$
Finally, by (\ref{d:difinition n sequence}), we have
\begin{align}
f_{0n}
=\sum\limits_{i=1}^{n}\delta f_{0i},\quad n=4,5,6.\nonumber
\end{align}

\setcounter{equation}{0}

 \section{Estimates on $\delta R_{0n}$ and $\delta f_{0n}$}

 \indent

% In this section, as before, $C_n$ denotes a constant which depends on $V_{0(n-1)}$ as well as $~\alpha,~\delta$, but doesn't depend on %$\mu_n,~\lambda_n$, and can
%  change from line to line.  Furthermore, we will set $1\ll\mu_n\ll\lambda_n$.

 First, we summarize some estimates on $b_{nl}$ and $\beta_{nl}$.
\begin{Lemma}\label{e:estimate various order n}
For any $l\in Z^3$ and integer $r\geq 1$, we have the following estimates: for any $t> 0$
\begin{align}\label{e:estimate various higher order n}
\|b_{nl}(t,\cdot)\|_r+\|\beta_{nl}(t,\cdot)\|_r\leq& C_r\kappa\mu\lambda_{n-1}\ell_n^{-(r-1)},\nonumber\\
\|\partial_{z}b_{nl}(t,\cdot)\|_r+\|\partial_{z}\beta_{nl}(t,\cdot)\|_r\leq& C_r(\sqrt{\kappa}\mu)^{n}\bar{\Lambda}\ell_n^{-r},\nonumber\\
\|\partial_{zz}b_{nl}(t,\cdot)\|_r+\|\partial_{zz}\beta_{nl}(t,\cdot)\|_r\leq& C_r(\sqrt{\kappa}\mu)^{n}\bar{\Lambda}\ell_{nz}^{-1}\ell_n^{-r},\nonumber\\
\|\partial_{zzz}b_{nl}(t,\cdot)\|_r+\|\partial_{zzz}\beta_{nl}(t,\cdot)\|_r\leq& C_r(\sqrt{\kappa}\mu)^{n}\bar{\Lambda}\ell_{nz}^{-2}\ell_n^{-r},\nonumber\\
\|\partial_tb_{nl}(t,\cdot)\|_r+\|\partial_t\beta_{nl}(t,\cdot)\|_r\leq& C_r\kappa\mu\lambda_{n-1}\ell_n^{-r},\nonumber\\
\|\partial_{tt}b_{nl}(t,\cdot)\|_r+\partial_{tt}\beta_{nl}(t,\cdot)\|_r\leq& C_r\kappa\mu\lambda_{n-1}\ell_n^{-(r+1)}
\end{align}
and
\begin{align}\label{e:global estimate various higher order n}
\|g_{\pm nl}(t,\cdot)\|_r+\|h_{\pm nl}(t,\cdot)\|_r\leq& C_r\kappa\mu\lambda_{n-1}\ell_n^{-(r-1)},\nonumber\\
\|\partial_zg_{\pm nl}(t,\cdot)\|_r+\|\partial_{z}h_{\pm nl}(t,\cdot)\|_r\leq& C_r(\sqrt{\kappa}\mu)^{n}\bar{\Lambda}\ell_n^{-r},\nonumber\\
\|\partial_tg_{\pm nl}(t,\cdot)\|_r+\|\partial_th_{\pm nl}(t,\cdot)\|_r\leq& C_r\kappa\mu\lambda_{n-1}\ell_n^{-r},\nonumber\\
\|\partial_{tt}g_{\pm nl}(t,\cdot)\|_r+\|\partial_{tt}h_{\pm nl}(t,\cdot)\|_r\leq& C_r\kappa\mu\lambda_{n-1}\ell_n^{-(r+1)},\nonumber\\
\|\partial_{zz}g_{\pm nl}(t,\cdot)\|_r+\|\partial_{zz}h_{\pm nl}(t,\cdot)\|_r\leq & C_r(\sqrt{\kappa}\mu)^{n}\bar{\Lambda}\ell_{nz}^{-1}\ell_n^{-r},\nonumber\\
\|\partial_{zzz}g_{\pm nl}(t,\cdot)\|_r+\|\partial_{zzz}h_{\pm nl}(t,\cdot)\|_r\leq &C_r(\sqrt{\kappa}\mu)^{n}\bar{\Lambda}\ell_{nz}^{-2}\ell_n^{-r}.
\end{align}
Moreover,  for any $t> 0$
\begin{align}\label{e:zero estimate various higher order n}
\|b_{nl}(t,\cdot)\|_0+\|\beta_{nl}(t,\cdot)\|_0\leq& C_0\sqrt{\kappa},\nonumber\\
\|\partial_zb_{nl}(t,\cdot)\|_0+\|\partial_z\beta_{nl}(t,\cdot)\|_0\leq& C_0(\sqrt{\kappa}\mu)^n\bar{\Lambda},\nonumber\\
\|\partial_{zz}b_{nl}(t,\cdot)\|_0+\|\partial_{zz}\beta_{nl}(t,\cdot)\|_0\leq& C_0(\sqrt{\kappa}\mu)^{n}\bar{\Lambda}\ell_{nz}^{-1},\nonumber\\
\|\partial_{zzz}b_{nl}(t,\cdot)\|_0+\|\partial_{zzz}\beta_{nl}(t,\cdot)\|_0\leq& C_0(\sqrt{\kappa}\mu)^{n}\bar{\Lambda}\ell_{nz}^{-2},\nonumber\\
\|\partial_tb_{nl}(t,\cdot)\|_0+\|\partial_t\beta_{nl}(t,\cdot)\|_0\leq& C_0\kappa\mu\lambda_{n-1},\nonumber\\
\|\partial_{tt}b_{nl}(t,\cdot)\|_0+\partial_{tt}\beta_{nl}(t,\cdot)\|_0\leq& C_0\kappa\mu\lambda_{n-1}\ell_n^{-1}
\end{align}
and
\begin{align}\label{e:global zero estimate various higher order n}
\|g_{\pm nl}(t,\cdot)\|_0+\|h_{\pm nl}(t,\cdot)\|_0\leq& C_0\sqrt{\kappa},\nonumber\\
\|\partial_zg_{\pm nl}(t,\cdot)\|_0+\|\partial_{z}h_{\pm nl}(t,\cdot)\|_0\leq& C_0(\sqrt{\kappa}\mu)^n\bar{\Lambda},\nonumber\\
\|\partial_tg_{\pm nl}(t,\cdot)\|_0+\|\partial_th_{\pm nl}(t,\cdot)\|_0\leq& C_0\kappa\mu\lambda_{n-1},\nonumber\\
\|\partial_{tt}g_{\pm nl}(t,\cdot)\|_0+\|\partial_{tt}h_{\pm nl}(t,\cdot)\|_0\leq& C_0\kappa\mu\lambda_{n-1}\ell_n^{-1},\nonumber\\
\|\partial_{zz}g_{\pm nl}(t,\cdot)\|_0+\|\partial_{zz}h_{\pm nl}(t,\cdot)\|_0\leq& C_0(\sqrt{\kappa}\mu)^{n}\bar{\Lambda}\ell_{nz}^{-1},\nonumber\\
\|\partial_{zzz}g_{\pm nl}(t,\cdot)\|_0+\|\partial_{zzz}h_{\pm nl}(t,\cdot)\|_0\leq& C_0(\sqrt{\kappa}\mu)^{n}\bar{\Lambda}\ell_{nz}^{-2}.
\end{align}
\begin{proof}
The proof is similar to that of Lemma 6.1.
First, by (\ref{i:composition inequality 2}), for any $r\geq 2$, we obtain
\begin{align}
\Big[\sqrt{e(t)-a_{n\ell_1}(t,\cdot)}\Big]_r\leq& C_r\sqrt{\kappa}(\mu^r\Lambda^r+\mu\Lambda\ell_1^{-(r-1)})\leq C_r\sqrt{\kappa}\mu\Lambda\ell_n^{-(r-1)}\nonumber
\end{align}
and for $r=0,1$
\begin{align}
\big[\sqrt{e(t)-a_{n\ell_1}(t,\cdot)}\big]_r\leq& C_0\sqrt{\kappa}\mu^r\Lambda^r.\nonumber
\end{align}
By (\ref{e:induction estimate sequence}), we know that $\|v_{0(n-1)}\|_{1}\leq C_0\sqrt{\kappa}\lambda_{n-1}$, thus, by (\ref{i:composition inequality 2}), for any $r\geq 2$
\begin{align}\label{e:estimate r derivative on cut off function}
\big[\alpha_l(\mu v_{0(n-1)\ell_n})(t,\cdot)\big]_r\leq & C_r\sum\limits_{i=1}^r\|(\nabla^{i}\alpha)_l\|_0[\mu v_{0(n-1)\ell_n}(t,\cdot))]_1^{(i-1)\frac{r}{r-1}}[\mu v_{0(n-1)\ell_n}(t,\cdot))]_r^{\frac{r-i}{r-1}}\nonumber\\
\leq & C_r((\sqrt{\kappa}\mu\lambda_{n-1})^r+\sqrt{\kappa}\mu\lambda_{n-1}\ell_n^{-(r-1)})\leq  C_r\sqrt{\kappa}\mu\lambda_{n-1}\ell_n^{-(r-1)}
\end{align}
and for $r=0,1$
\begin{align}
\big[\alpha_l(\mu v_{0(n-1)\ell_n})(t,\cdot)\big]_r\leq C_0(\sqrt{\kappa}\mu\lambda_{n-1})^r.\nonumber
\end{align}
As in the proof of lemma \ref{e:estimate various}, we can obtain
for any integer $r\geq 0$
\begin{align}
\Big\|\partial_z\Big(\sqrt{e(t)-a_{n\ell_1}(t,\cdot)}\Big)\Big\|_r\leq& C_r\sqrt{\kappa}\mu\bar{\Lambda}\ell_1^{-r},\nonumber\\
\Big\|\partial_{zz}\Big(\sqrt{e(t)-a_{n\ell_1}(t,\cdot)}\Big)\Big\|_r\leq & C_r\sqrt{\kappa}\mu\bar{\Lambda}\ell^{-1}_{1z}\ell_1^{-r},\nonumber\\
\Big\|\partial_{zzz}\Big(\sqrt{e(t)-a_{n\ell_1}(t,\cdot)}\Big)\Big\|_r\leq&  C_r\sqrt{\kappa}\mu\bar{\Lambda}\ell^{-2}_{1z}\ell_1^{-r}.\nonumber
\end{align}
By (\ref{e:induction estimate sequence}), we know that $\|\partial_zv_{0(n-1)}\|_0\leq C_0(\sqrt{\kappa}\mu)^{n-1}\bar{\Lambda}$, thus by inequality  (\ref{i:inequality 1}), assumption (\ref{a:assumption on parameter n}) and a similar argument as in (\ref{e:estimate on x derivative}), we have, for any integer $r\geq1$
\begin{align}
\big[\partial_z(\alpha_l(\mu v_{0(n-1)\ell_n})(t,\cdot))\big]_r\leq& C_r\mu(\sqrt{\kappa}\mu)^{n-1}\bar{\Lambda}\ell_n^{-r},\nonumber\\
\big[\partial_{zz}(\alpha_l(\mu v_{0(n-1)\ell_n})(t,\cdot))\big]_r\leq& C_r\mu(\sqrt{\kappa}\mu)^{n-1}\bar{\Lambda}\ell_{nz}^{-1}\ell_n^{-r},\nonumber\\
\big[\partial_{zzz}(\alpha_l(\mu v_{0(n-1)\ell_n})(t,\cdot))\big]_r
 \leq& C_r\mu(\sqrt{\kappa}\mu)^{n-1}\bar{\Lambda}\ell_{nz}^{-2}\ell_n^{-r}.\nonumber
\end{align}
A straightforward computation gives
\begin{align}
\|\partial_z(\alpha_l(\mu v_{0(n-1)\ell_n})\|_0\leq& C_0\mu(\sqrt{\kappa}\mu)^{n-1}\bar{\Lambda},\nonumber\\
\|\partial_{zz}(\alpha_l(\mu v_{0(n-1)\ell_n})\|_0\leq& C_0\mu(\sqrt{\kappa}\mu)^{n-1}\bar{\Lambda}\ell_{nz}^{-1},\nonumber\\
\|\partial_{zzz}(\alpha_l(\mu v_{0(n-1)\ell_n})\|_0\leq& C_0\mu(\sqrt{\kappa}\mu)^{n-1}\bar{\Lambda}\ell_{nz}^{-2}.\nonumber
\end{align}
By inequality (\ref{i:inequality 1}) and assumption (\ref{a:assumption on parameter n}), we can obtain the first four estimates on $b_{nl}$ in
(\ref{e:estimate various higher order n}) and (\ref{e:zero estimate various higher order n}). Similarly, we can obtain the first four estimates on $\beta_{nl}$ in
(\ref{e:estimate various higher order n}) and (\ref{e:zero estimate various higher order n}).

We let
$$\Gamma_n=\sqrt{\frac{e(t)-a_{n\ell_1}}{2}}$$
and observe that
$$b_{nl}=\Gamma_n\alpha_l(\mu v_{0(n-1)\ell_n}).$$
Differentiating in time on $\Gamma_n$,
we have, for any $r\geq 0$
\begin{align}\label{e:estimate r derivative and time}
\|\partial_t\Gamma(t,\cdot)\|_r\leq C_r\kappa\mu\lambda_{n-1}\ell_n^{-r},\quad
\|\partial_{tt}\Gamma(t,\cdot)\|_r\leq C_r\kappa\mu\lambda_{n-1}\ell_n^{-(r+1)}.
\end{align}
Thus, by (\ref{e:estimate r derivative on cut off function}), (\ref{e:estimate r derivative and time}) and $\|\partial_tv_{0(n-1)}\|_0\leq \sqrt{\kappa}\lambda_{n-1}$, we obtain, for any $r\geq 0$
\begin{align}
\|\partial_tb_{nl}(t,\cdot)\|_r\leq& C_r\kappa\mu\lambda_{n-1}\ell_n^{-r},\quad
\|\partial_{tt}b_{nl}(t,\cdot)\|_r\leq C_r\kappa\mu\lambda_{n-1}\ell_n^{-r-1}.\nonumber
\end{align}
Similarly, we obtain, for any $r\geq 0$
\begin{align}
\|\partial_t\beta_{nl}(t,\cdot)\|_r\leq& C_r\kappa\mu\lambda_{n-1}\ell_n^{-r},\quad
\|\partial_{tt}\beta_{nl}(t,\cdot)\|_r\leq C_r\kappa\mu\lambda_{n-1}\ell_n^{-r-1}.\nonumber
\end{align}
Then we obtain the later two estimates in
(\ref{e:estimate various higher order n}) and (\ref{e:zero estimate various higher order n}). From the definition (\ref{d:definition on n velocity}) on $g_{nl}$ and the definition (\ref{d:definition on n temperture}) on $h_{nl}$, we deduce (\ref{e:global estimate various higher order n}) and (\ref{e:global zero estimate various higher order n}) directly from  (\ref{e:estimate various higher order n}) and (\ref{e:zero estimate various higher order n}). Thus, the proof of this lemma is complete.
\end{proof}
\end{Lemma}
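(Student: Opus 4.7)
The plan is to mirror the argument used for Lemma 6.1 (the $n=1$ case), with the key modification that the inductive bounds (7.2) on $v_{0(n-1)}$ now supply the relevant Lipschitz data. Concretely, (7.2) yields
\beno
\|v_{0(n-1)}\|_1\leq C_0\sqrt{\kappa}\lambda_{n-1},\quad \|\partial_tv_{0(n-1)}\|_0\leq C_0\sqrt{\kappa}\lambda_{n-1},\quad \|\partial_zv_{0(n-1)}\|_0\leq C_0(\sqrt{\kappa}\mu)^{n-1}\bar{\Lambda},
\eeno
while standard mollification bounds give $\|v_{0(n-1)\ell_n}\|_r\leq C_r\sqrt{\kappa}\lambda_{n-1}\ell_n^{1-r}$ for $r\geq 1$. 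Similarly $\|a_{n\ell_1}\|_r\leq C_r\Lambda\ell_1^{1-r}$ and, since $e(t)-a_{n\ell_1}\geq \kappa$ on the support of $b_{nl}$, the quantity $\sqrt{e(t)-a_{n\ell_1}}$ is well-defined and its inverse is controlled by $\kappa^{-1/2}$.

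For the bounds on $b_{nl}$ and $\beta_{nl}$, the strategy is: first apply the composition estimate \eqref{i:composition inequality 2} to $\sqrt{e(t)-a_{n\ell_1}}$ and to $\alpha_l(\mu v_{0(n-1)\ell_n})$ separately, then combine via the Leibniz-type inequality \eqref{i:inequality 1}. For $r\geq 2$, \eqref{i:composition inequality 2} gives
\beno
[\sqrt{e-a_{n\ell_1}}]_r\leq C_r\sqrt{\kappa}\mu\Lambda\ell_n^{-(r-1)},\qquad [\alpha_l(\mu v_{0(n-1)\ell_n})]_r\leq C_r\sqrt{\kappa}\mu\lambda_{n-1}\ell_n^{-(r-1)},
\eeno
using the parameter condition (9.5), namely $\ell_n^{-1}\geq\sqrt{\kappa}\mu\lambda_{n-1}$, to absorb the top-order term $(\sqrt{\kappa}\mu\lambda_{n-1})^r$ into $\sqrt{\kappa}\mu\lambda_{n-1}\ell_n^{-(r-1)}$. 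Multiplying and noting $\Lambda\leq\lambda_{n-1}$ under the parameter ordering (because of $\mu\geq 1/\kappa$ and (9.5)) yields the first bound in \eqref{e:estimate various higher order n}; the $r=0,1$ cases are immediate from $\|e-a_{n\ell_1}\|_0\leq C_0\kappa$ and $\|v_{0(n-1)}\|_1\leq C_0\sqrt{\kappa}\lambda_{n-1}$.

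The vertical-derivative estimates are the genuinely new feature and require separate treatment, because $\partial_z$ sees two very different regularity scales. Writing $b_{nl}=\Gamma_n\alpha_l(\mu v_{0(n-1)\ell_n})$ with $\Gamma_n=\sqrt{(e-a_{n\ell_1})/2}$ and differentiating in $z$, one obtains contributions where $\partial_z$ lands either on $\Gamma_n$ (producing a factor $\sqrt{\kappa}\mu\bar{\Lambda}$ from $\|\partial_za_{n\ell_1}\|_0\leq C_0\bar{\Lambda}$) or on $\alpha_l$ (producing a factor $\mu\|\partial_zv_{0(n-1)\ell_n}\|_0\leq C_0\mu(\sqrt{\kappa}\mu)^{n-1}\bar{\Lambda}$). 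The dominant of these two, combined with the assumption $\ell_{nz}^{-1}\geq\mu(\sqrt{\kappa}\mu)^{n-1}\bar{\Lambda}$ from (9.5), gives the bound $(\sqrt{\kappa}\mu)^n\bar{\Lambda}$. Each further $\partial_z$ either hits an existing derivative or, when hitting a convolution against the vertical mollifier $\varphi_{\ell_{nz}}$, produces an extra $\ell_{nz}^{-1}$ factor; this accounts for the powers $\ell_{nz}^{-1}$ and $\ell_{nz}^{-2}$ appearing in the second- and third-order $z$-derivative estimates. The same scheme, with $c_{n\ell_1}$ replacing $e-a_{n\ell_1}$ at appropriate places and using $\|\partial_z c_{n\ell_1}\|_0\leq C_0\bar{\Lambda}$, yields the corresponding bounds for $\beta_{nl}$.

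The time-derivative bounds follow the same pattern: $\partial_t\Gamma_n$ and $\partial_t(\alpha_l(\mu v_{0(n-1)\ell_n}))$ each contribute a factor $\sqrt{\kappa}\mu\lambda_{n-1}$ using $\|\partial_tv_{0(n-1)}\|_0\leq C_0\sqrt{\kappa}\lambda_{n-1}$ and $|e'(t)|\leq C_0\kappa$, and each additional spatial derivative costs $\ell_n^{-1}$, while $\partial_{tt}$ picks up an extra $\ell_n^{-1}$ through the mollifier. Finally, the estimates (9.3) and (9.5) for $g_{\pm nl}$ and $h_{\pm nl}$ follow directly from their definitions \eqref{d:definition on n velocity} and \eqref{d:definition on n temperture}: each is a sum of $b_{nl}$ (resp.\ $\beta_{nl}$) and $\lambda_n^{-1}$ times a first spatial derivative of it, so the parameter assumption $\lambda_n\geq\ell_n^{-(1+\varepsilon)}\geq\ell_n^{-1}$ ensures the correction term is no larger than the principal term.

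The main obstacle is the careful bookkeeping in the vertical derivative estimates: one must correctly identify when $\partial_z$ passes through the composition $\alpha_l(\mu v_{0(n-1)\ell_n})$ — producing the cumulative factor $(\sqrt{\kappa}\mu)^{n-1}$ inherited from the induction hypothesis — versus when it differentiates the smoothing kernel $\varphi_{\ell_{nz}}$, which produces $\ell_{nz}^{-1}$, while simultaneously verifying that the parameter orderings in (9.5) are strong enough to let the largest contribution at each order absorb all lower-order terms cleanly. The horizontal/temporal derivatives, by contrast, are structurally identical to the $n=1$ case with $\Lambda$ replaced by $\sqrt{\kappa}\lambda_{n-1}$ and require no new ideas.
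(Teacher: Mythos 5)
Your proposal follows essentially the same path as the paper's own proof: apply the composition estimate \eqref{i:composition inequality 2} separately to $\sqrt{e(t)-a_{n\ell_1}}$ and to $\alpha_l(\mu v_{0(n-1)\ell_n})$, feed in the inductive Lipschitz data from \eqref{e:induction estimate sequence}, combine with the Leibniz inequality \eqref{i:inequality 1}, track vertical derivatives through the two mollifier scales $\ell_{1z}$ and $\ell_{nz}$, handle time derivatives via $\Gamma_n$, and pass to $g_{\pm nl},h_{\pm nl}$ directly from their definitions. One small imprecision: to absorb the $\sqrt{\kappa}\mu\Lambda\ell_n^{-(r-1)}$ contribution from $[\sqrt{e-a_{n\ell_1}}]_r$ into the stated bound $\kappa\mu\lambda_{n-1}\ell_n^{-(r-1)}$ you actually need $\Lambda\leq\sqrt{\kappa}\lambda_{n-1}$, not merely $\Lambda\leq\lambda_{n-1}$ as you wrote; this stronger inequality does hold (since $\lambda_{n-1}\geq\lambda_1\geq\ell_1^{-(1+\varepsilon)}\geq(\mu\Lambda)^{1+\varepsilon}$ with $\mu\geq\kappa^{-1}$ and $\Lambda\geq 1$), so the argument goes through, but the comparison should be stated at the correct strength.
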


 From the definition of $w_{no}, w_{nc}, \chi_{no}, \chi_{nc}$ and the above lemma, we have the following estimates.
 \begin{Lemma}[Estimates on the n-th main perturbation and correction]\label{e:n main perturbation and corection estimate}
 \begin{align}\label{e:n error estimate}
 &\|w_{no}\|_0\leq C_0\sqrt{\kappa},\quad  \|(\partial_t, \partial_x, \partial_y)w_{no}\|_0\leq C_0\sqrt{\kappa}\lambda_n,\quad \|\partial_zw_{no}\|_0\leq C_0(\sqrt{\kappa}\mu)^{n}\bar{\Lambda},\nonumber\\
 &\|w_{nc}\|_0\leq C_0\kappa\mu\lambda_{n-1}\lambda_n^{-1}, \quad \|(\partial_t, \partial_x, \partial_y)w_{nc}\|_0\leq C_0\kappa\mu\lambda_{n-1}, \quad \|\partial_zw_{nc}\|_0\leq C_0(\sqrt{\kappa}\mu)^n\bar{\Lambda}\ell_n^{-1}\lambda_n^{-1},
 \end{align}
 and for $n=2,3$
 \begin{align}\label{e:n error tempeture}
  \|\chi_{no}\|_0\leq& C_0\sqrt{\kappa},\quad  \|(\partial_t,\partial_x, \partial_y)\chi_{no}\|_0\leq C_0\sqrt{\kappa}\lambda_n,\quad  \|\partial_z\chi_{no}\|_0\leq C_0(\sqrt{\kappa}\mu)^n\bar{\Lambda},\nonumber\\
   \|\chi_{nc}\|_0\leq& C_0\kappa\mu\lambda_{n-1}\lambda_n^{-1},\quad  \|(\partial_t, \partial_x, \partial_y)\chi_{nc}\|_0\leq C_0\kappa\mu\lambda_{n-1},\quad \|\partial_z\chi_{nc}\|_0\leq  C_0(\sqrt{\kappa}\mu)^n\bar{\Lambda}\ell_n^{-1}\lambda_n^{-1}.
\end{align}
\begin{proof}
 %From the definition (??) and H\"{o}lder estimate (??),
 %\begin{align}
 %&\|w_{nocl}\|_\alpha\nonumber\\
% \leq&C_n\Big(\frac{\|\nabla b_{nl}\|_\alpha}{\lambda_n}+\frac{\|\nabla b_{nl}\|_0}{\lambda_n^{1-\alpha}}\Big)\nonumber\\
 %\leq& C_n\frac{\mu_n}{\lambda_n^{1-\alpha}}.
 %\end{align}
%Then, from the support property (??) of $\alpha_l$, we have
% \begin{align}
% \|w_{noc}\|_\alpha\leq C_n\frac{\mu_n}{\lambda_n^{1-\alpha}},\qquad \forall \alpha\in [0,1)
 %\end{align}
By (\ref{b:bound n2}), (\ref{e:bound on n main preturbation}), lemma \ref{e:estimate various order n} and assumption (\ref{a:assumption on parameter n}), the proof is similar to that of Lemma \ref{e: estimate correction}, we omit it here.
 \end{proof}
 \end{Lemma}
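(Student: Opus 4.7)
The proof is essentially a direct computation following the pattern of Lemma \ref{e: estimate correction} from the first step, with $b_{1l}, \beta_{1l}, \lambda_1, \ell_1$ replaced by their $n$-th step analogues, and using Lemma \ref{e:estimate various order n} in place of Lemma \ref{e:estimate various}. The plan is to treat $w_{no}, w_{nc}, \chi_{no}, \chi_{nc}$ separately, exploiting two key structural features: (i) the bounds $\|b_{nl}\|_0 + \|\beta_{nl}\|_0 \leq C_0\sqrt{\kappa}$ together with the finite-overlap property of the partition $\{\alpha_l\}$, which lets us reduce every sum $\sum_{l\in Z^3}$ to a bounded number of terms pointwise; and (ii) the crucial observation that the phase $(k_{nh}^\perp, 0) \cdot \big((x,y,z) - \frac{l}{\mu} t\big)$ is \emph{independent of $z$}, so $\partial_z$ acting on any of these exponentials yields zero.

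For $w_{no}$, the $C^0$ bound is immediate from $\|b_{nl}\|_0 \leq C_0\sqrt{\kappa}$ and finite overlap. For $(\partial_t, \partial_x, \partial_y) w_{no}$, the derivative hits either $b_{nl}$ (bounded by the $r=0$ time/spatial estimates of Lemma \ref{e:estimate various order n}, which are $\leq C_0 \kappa \mu \lambda_{n-1}$) or the exponential, producing a factor $\lambda_n$ (for $\partial_x, \partial_y$) or $\lambda_n |l|/\mu \leq C_0 \lambda_n$ (for $\partial_t$, using $|l| \leq C_0\mu$ on the support of $\alpha_l(\mu v_{0(n-1)\ell_n})$). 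Under assumption (\ref{a:assumption on parameter n}), which gives $\lambda_n \geq \ell_n^{-(1+\varepsilon)} \geq \sqrt{\kappa}\mu\lambda_{n-1}$, the $\lambda_n$ term dominates, giving $C_0\sqrt{\kappa}\lambda_n$. For $\partial_z w_{no}$, since $\partial_z$ annihilates the exponential, only $\partial_z b_{nl}$ contributes, and the estimate $\|\partial_z b_{nl}\|_0 \leq C_0(\sqrt{\kappa}\mu)^n \bar{\Lambda}$ from Lemma \ref{e:estimate various order n} gives the bound.

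For $w_{nc}$, the definition (\ref{d:w ncl}) shows it equals (up to the phase factor) a first-order derivative of $b_{nl}$ divided by $\lambda_n$, so by the $r=1$ case of Lemma \ref{e:estimate various order n}, $\|w_{nc}\|_0 \leq C_0 \|\nabla b_{nl}\|_0 \lambda_n^{-1} \leq C_0 \kappa\mu\lambda_{n-1}\lambda_n^{-1}$. Differentiating in $(t, x, y)$ again introduces either an additional $\lambda_n$ from the exponential or an $\ell_n^{-1}$ from another spatial derivative of $b_{nl}$; under (\ref{a:assumption on parameter n}) the $\lambda_n$ factor dominates, yielding $C_0 \kappa\mu\lambda_{n-1}$. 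For $\partial_z w_{nc}$, the $z$-derivative must again fall on $\nabla b_{nl}$ (never on the exponential), and $\|\partial_z \nabla b_{nl}\|_0 \leq C_0(\sqrt{\kappa}\mu)^n \bar{\Lambda} \ell_n^{-1}$ from Lemma \ref{e:estimate various order n} gives the claimed bound. The estimates (\ref{e:n error tempeture}) on $\chi_{no}, \chi_{nc}$ follow by the identical argument with $\beta_{nl}$ in place of $b_{nl}$, using the corresponding estimates for $\beta_{nl}$ from Lemma \ref{e:estimate various order n}.

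There is no genuine obstacle here — the lemma is bookkeeping. The only point requiring care is ensuring that the anisotropic character of the estimates is preserved: the $\partial_z$ bounds must \emph{not} pick up a factor of $\lambda_n$ from differentiating the exponential, which is ensured precisely because the phase uses $(k_{nh}^\perp, 0)$ with vanishing third component. Consistency with the parameter hierarchy (\ref{a:assumption on parameter n}) must then be verified whenever two competing terms appear (exponential derivative vs.\ amplitude derivative) to confirm that the stated bound is indeed the larger one.
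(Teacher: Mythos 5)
Your proposal is correct and follows exactly the route the paper itself indicates (the paper says the proof is ``similar to that of Lemma \ref{e: estimate correction}'' and omits it): reduce the sum over $l$ by finite overlap of $\{\alpha_l\}$, use $|l|\leq C_0\mu$ on the support, note that $\partial_z$ never hits the phase because the third component of $(k_{nh}^\perp,0)$ vanishes, and apply the $n$-th step amplitude bounds from Lemma \ref{e:estimate various order n} together with the hierarchy $\lambda_n\geq\ell_n^{-(1+\varepsilon)}\geq\sqrt{\kappa}\mu\lambda_{n-1}$ and $\ell_n^{-1}>\ell_{nz}^{-1}\geq\mu(\sqrt\kappa\mu)^{n-1}\bar\Lambda$ to identify the dominant term. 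One tiny imprecision in your write-up: the spatial bound $\|\nabla_{x,y}b_{nl}\|_0\leq C_0\kappa\mu\lambda_{n-1}$ comes from the $r=1$ estimate $\|b_{nl}\|_1\leq C_1\kappa\mu\lambda_{n-1}$ (not an $r=0$ estimate), while the $r=0$ estimate $\|\partial_t b_{nl}\|_0\leq C_0\kappa\mu\lambda_{n-1}$ handles the time derivative; the conclusion is unaffected.
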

From the above lemma and (\ref{b:bound n2}), (\ref{e:bound on n main preturbation}), (\ref{d:difinition n sequence}), we also conclude that
\begin{Corollary}\label{e:n sequence difference estiamte}
For $2\leq n\leq6$
\begin{align}
&\|v_{0n}-v_{0(n-1)}\|_0\leq \frac{M\sqrt{\kappa}}{12}+C_0\frac{\kappa\mu\lambda_{n-1}}{\lambda_n},\quad\|(\partial_t, \partial_x, \partial_y)(v_{0n}-v_{0(n-1)})\|_0\leq C_0\sqrt{\kappa}\lambda_n,\nonumber\\
&\|\partial_z(v_{0n}-v_{0(n-1)})\|_0\leq C_0(\sqrt{\kappa}\mu)^{n}\bar{\Lambda},\quad \|p_{0n}-p_{0(n-1)}\|_0=0\nonumber
\end{align}
and for $n=2,3$
\begin{align}
& \|\theta_{0n}-\theta_{0{n-1}}\|_0\leq \frac{M\sqrt{\kappa}}{6}+C_0\frac{\kappa\mu\lambda_{n-1}}{\lambda_n},\quad
 \|(\partial_t, \partial_x, \partial_y)(\theta_{0n}-\theta_{0(n-1)})\|_0\leq C_0\sqrt{\kappa}\lambda_n,\nonumber\\
& \|\partial_z(\theta_{0n}-\theta_{0(n-1)})\|_0\leq C_0(\sqrt{\kappa}\mu)^n\bar{\Lambda}.\nonumber
\end{align}
\end{Corollary}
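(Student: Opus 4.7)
\medskip
\noindent\textbf{Proof proposal.} The statement is essentially a bookkeeping corollary: it collects the size of the increments $v_{0n}-v_{0(n-1)}$ and $\theta_{0n}-\theta_{0(n-1)}$ produced by the construction of Section~7, combining bound (\ref{b:bound n2}), bound (\ref{e:bound on n main preturbation}), and Lemma~\ref{e:n main perturbation and corection estimate}, subject to the parameter constraints (\ref{a:assumption on parameter n}). My plan is to isolate each norm on the left-hand side, split the perturbation into its main part plus correction, apply the appropriate $C^0$ bound from the two lemmas above, and then check that the correction contribution is absorbed into the stated main term.

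For the velocity estimates I would use the construction identity $v_{0n}-v_{0(n-1)}=w_n=w_{no}+w_{nc}$. The bound $\|w_{no}\|_0\leq M\sqrt{\kappa}/12$ is exactly (\ref{b:bound n2}) and the bound $\|w_{nc}\|_0\leq C_0\kappa\mu\lambda_{n-1}/\lambda_n$ is the first correction estimate in (\ref{e:n error estimate}); adding them yields the first inequality. For the $(\partial_t,\partial_x,\partial_y)$ bound I would add the two estimates in (\ref{e:n error estimate}) to get $C_0\sqrt{\kappa}\lambda_n+C_0\kappa\mu\lambda_{n-1}$, and then observe that assumption (\ref{a:assumption on parameter n}) gives $\sqrt{\kappa}\mu\lambda_{n-1}\leq \ell_n^{-1}\leq \lambda_n^{1/(1+\varepsilon)}\leq\lambda_n$, so $\kappa\mu\lambda_{n-1}\leq\sqrt{\kappa}\lambda_n$ and the correction is absorbed into $C_0\sqrt{\kappa}\lambda_n$. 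For the $\partial_z$ bound I would add $C_0(\sqrt{\kappa}\mu)^n\bar\Lambda$ and $C_0(\sqrt{\kappa}\mu)^n\bar\Lambda\ell_n^{-1}\lambda_n^{-1}$ from (\ref{e:n error estimate}); the factor $\ell_n^{-1}\lambda_n^{-1}\leq\ell_n^{\varepsilon}\leq1$ from (\ref{a:assumption on parameter n}) makes the correction negligible. The identity $p_{0n}=p_{0(n-1)}$ is by definition in (\ref{d:difinition n sequence}).

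For the temperature estimates, valid only for $n=2,3$ by the construction in (\ref{d:difinition n sequence}), I would run the same argument on $\theta_{0n}-\theta_{0(n-1)}=\chi_n=\chi_{no}+\chi_{nc}$. The $C^0$ bound (\ref{e:bound on n main preturbation}) controls $\chi_{no}$ by $M\sqrt{\kappa}/6$ and Lemma~\ref{e:n main perturbation and corection estimate} gives the correction bound $C_0\kappa\mu\lambda_{n-1}/\lambda_n$ for $\chi_{nc}$; the derivative bounds from (\ref{e:n error tempeture}) are absorbed exactly as in the velocity case, again using (\ref{a:assumption on parameter n}) to dominate the correction contributions.

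There is no genuine obstacle here: the analytic content was packaged in Lemma~\ref{e:estimate various order n} and Lemma~\ref{e:n main perturbation and corection estimate}, and the only point to check is that the correction terms never exceed the main terms displayed in the conclusion. The only mildly delicate step is verifying the inequality $\kappa\mu\lambda_{n-1}\leq\sqrt{\kappa}\lambda_n$ for the transport-type derivatives, which follows directly from $\sqrt{\kappa}\mu\lambda_{n-1}\leq\ell_n^{-1}\leq\lambda_n$ in (\ref{a:assumption on parameter n}). Hence the corollary is immediate.
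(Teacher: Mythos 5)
Your proposal is correct and follows essentially the same route as the paper: the paper derives this corollary in one line from the construction $v_{0n}=v_{0(n-1)}+w_n$, $\theta_{0n}=\theta_{0(n-1)}+\chi_n$ (for $n=2,3$), $p_{0n}=p_{0(n-1)}$ in (\ref{d:difinition n sequence}) together with (\ref{b:bound n2}), (\ref{e:bound on n main preturbation}) and Lemma \ref{e:n main perturbation and corection estimate}, and your write-up merely makes explicit the absorption of the correction terms via $\sqrt{\kappa}\mu\lambda_{n-1}\leq\ell_n^{-1}\leq\lambda_n$ and $\ell_n^{-1}\lambda_n^{-1}\leq1$ from (\ref{a:assumption on parameter n}), which is exactly what the paper uses implicitly. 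The only cosmetic difference is that (\ref{e:bound on n main preturbation}) is stated for $\chi_n$ rather than $\chi_{no}$, but this does not affect the argument.
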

%\begin{proof}
%By lemma \ref{e:estimate various order n} and (\ref{e:average}), we obtain
%\begin{align}
%\sup_{t,x}\Big|\int_{T^2}\chi_2dydz\Big|=&\sup_{t,x}\Big|\sum\limits_{j=0}^3\int_{T^2}\sum\limits_{[l]=j}\beta_{2l}\Big(e^{i\lambda_2 2^{j} %k_2^{\perp}\cdot \big((y,z)-\frac{l}{\mu}t\big)}+e^{-i\lambda_2 2^{j} k_2^{\perp}\cdot \big((y,z)-\frac{l}{\mu}t\big)}\Big)dydz\Big|\nonumber\\
%\leq& C_0\sqrt{\delta}\frac{\sqrt{\delta}\mu\lambda_1}{\lambda_2}
%.\nonumber
%\end{align}
%and
%\begin{align}
%\sup_{t,x}\Big|\partial_x\int_{T^2}\chi_2dydz\Big|
%\leq& C_0\frac{(\sqrt{\delta}\mu)^2\Lambda}{\lambda_2}
%.\nonumber
%\end{align}
%thus, combining (\ref{b:bound n2}), (\ref{e:bound on n main preturbation}) and (\ref{e:n error estimate}), we can complete our proof.
%\end{proof}

 \subsection{Estimates on $\delta R_{0n}$}

 \indent

 As before, we again split $\delta R_{0n}$ into three parts:  \\
  (1) the oscillation part $$\mathcal{R}(\hbox{div}M_n)-\mathcal{R}(\chi_ne_3)-\mathcal{R}(\partial_{zz}w_n),$$
  (2) the transportation part
  \begin{align}
  &\mathcal{R}\Big\{\partial_tw_{n}
   +\hbox{div}\Big[\sum_{l\in Z^3}\Big(w_{nl}\otimes\frac{l}{\mu}+\frac{l}{\mu}\otimes w_{nl}\Big)\Big]\Big\}=\mathcal{R}\Big(\partial_tw_{n}+\sum_{l\in Z^3}\frac{l}{\mu}\cdot\nabla w_{nl}\Big),\nonumber
   \end{align}
  (3) the error part
  \begin{align}
  &N_n+(w_{no}\otimes w_{nc}+w_{nc}\otimes w_{no}+w_{nc}\otimes w_{nc}).\nonumber
   \end{align}
As before, we will estimate each term separately.

\begin{Lemma}[The oscillation part]
\begin{align}
\|\mathcal{R}({\rm div}M_n)\|_0\leq& C_0(\varepsilon)\kappa (\sqrt{\kappa}\mu\lambda_{n-1})\lambda_n^{-1},\quad \|(\partial_t, \partial_x,\partial_y)\mathcal{R}({\rm div}M_n)\|_0\leq C_0(\varepsilon)\kappa (\sqrt{\kappa}\mu\lambda_{n-1}),\nonumber\\
\|\partial_z\mathcal{R}({\rm div}M_n)\|_0\leq& C_0(\varepsilon)\sqrt{\kappa} (\sqrt{\kappa}\mu)^n\bar{\Lambda}\ell_n^{-1}\lambda_n^{-1},\nonumber\\
\big\|\mathcal{R}(\chi_ne_3)\big\|_0\leq& C_0(\varepsilon)\sqrt{\kappa}\lambda_n^{-1},\quad
\big\|(\partial_t,\partial_x,\partial_y)\mathcal{R}(\chi_ne_3)\big\|_0\leq C_0(\varepsilon)\sqrt{\kappa},\nonumber\\
\big\|\partial_z\mathcal{R}(\chi_ne_3)\big\|_0\leq& C_0(\varepsilon)(\sqrt{\kappa}\mu)^n\bar{\Lambda}\lambda_n^{-1},\nonumber\\
\big\|\mathcal{R}(\partial_{zz}w_n)\big\|_0\leq& C_0(\varepsilon)(\sqrt{\kappa}\mu)^{n}\bar{\Lambda}\ell_{nz}^{-1}\lambda_n^{-1},\quad
\big\|(\partial_t, \partial_x,\partial_y)\mathcal{R}(\partial_{zz}w_n)\big\|_0\leq C_0(\varepsilon)(\sqrt{\kappa}\mu)^{n}\bar{\Lambda}\ell_{nz}^{-1},\nonumber\\
\big\|\partial_z\mathcal{R}(\partial_{zz}w_n)\big\|_0\leq& C_0(\varepsilon)(\sqrt{\kappa}\mu)^{n}\bar{\Lambda}\ell_{nz}^{-2}\lambda_n^{-1}.\nonumber
\end{align}
\end{Lemma}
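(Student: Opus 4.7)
The plan is to repeat the structure of Lemma 6.2, substituting the amplitude estimates from Lemma 9.1 (and in particular the fact that $\|b_{nl}\|_r,\|\beta_{nl}\|_r \lesssim \kappa\mu\lambda_{n-1}\ell_n^{-(r-1)}$ for $r\geq 1$ in place of the first-step bounds $\sqrt{\kappa}\mu\Lambda\ell_1^{-(r-1)}$) and replacing the oscillation frequency $\lambda_1$ by $\lambda_n$. The basic tool is Lemma 4.1 applied with $m=1+\lceil(1+\varepsilon)/\varepsilon\rceil$, so that the remainders $\lambda_n^{-m}[\cdot]_m$ are always dominated by the leading $\lambda_n^{-1}$ terms under the parameter assumption \eqref{a:assumption on parameter n}.

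For the first block, I write $\mathrm{div}\, M_n = M_{n1}+M_{n2}$, where $M_{n1}$ is the diagonal contribution coming from the single-$l$ terms $b_{nl}^2 k_n\otimes k_n$ and $M_{n2}$ collects the cross terms $\mathrm{div}(w_{nol}\otimes w_{nol'})$ with $l\neq l'$. Using $k_n\cdot(k_{nh}^\perp,0)=0$, the only surviving divergence in each mode produces a gradient of $b_{nl}^2$ (or of $b_{nl}b_{nl'}$), which is then inverted by $\mathcal{R}$. Lemma 4.1 together with the bounds $\|\nabla b_{nl}^2\|_r\lesssim \sqrt{\kappa}\cdot\kappa\mu\lambda_{n-1}\ell_n^{-r}$ supplied by Lemma 9.1 gives the claimed $C^0$ bound $C_0(\varepsilon)\kappa(\sqrt{\kappa}\mu\lambda_{n-1})\lambda_n^{-1}$; the space-time derivative bounds follow by differentiating the explicit oscillatory representations and using $|l|\leq C_0\mu$ on the support of $b_{nl}$, while the $\partial_z$ bound uses the improved derivative estimate $\|\partial_z b_{nl}\|_r\lesssim (\sqrt{\kappa}\mu)^n\bar{\Lambda}\ell_n^{-r}$.

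For the second block I exploit the representation $\chi_n=\sum_{l}(h_{nl}e^{i\phi_l}+h_{-nl}e^{-i\phi_l})$ with phase $\phi_l=\lambda_n 2^{[l]}(k_{nh}^\perp,0)\cdot((x,y,z)-\tfrac{l}{\mu}t)$, and apply Lemma 4.1 to each mode. The $C^0$ estimate follows from $\|h_{\pm nl}\|_0\lesssim \sqrt{\kappa}$ (Lemma 9.1), while the derivatives in $t,x,y$ pick up the extra factor $\lambda_n$ from differentiating the phase (the derivative of $h_{\pm nl}$ is of lower order after accounting for the parameter assumption), and $\partial_z$ hits either the amplitude (giving $(\sqrt{\kappa}\mu)^n\bar{\Lambda}$) or the phase (giving a larger bound that still fits under $\ell_{nz}^{-1}\geq \mu(\sqrt{\kappa}\mu)^{n-1}\bar{\Lambda}$).

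For the third block I use the analogous representation $w_n=\sum_l(g_{nl}e^{i\phi_l}+g_{-nl}e^{-i\phi_l})$, differentiate twice in $z$ (the phase contributes nothing since $(k_{nh}^\perp,0)_3=0$), and then invert with $\mathcal{R}$. All bounds come directly from the $\partial_{zz}$ and $\partial_{zzz}$ estimates on $g_{\pm nl}$ in Lemma 9.1. The only mildly delicate point, which is shared by all three blocks, is to verify that the parameter constraints \eqref{a:assumption on parameter n} guarantee that the high-derivative remainder term $\lambda_n^{-m}[\cdot]_m$ in Lemma 4.1 is always absorbed by the principal $\lambda_n^{-1}$ term; this is the step I would carry out most carefully, and it is essentially a bookkeeping argument identical to the one used in Lemma 6.2.
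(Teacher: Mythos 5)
Your plan is correct and coincides with the paper's intended argument: the paper omits the proof of this lemma, stating only that it is obtained by repeating the proof of Lemma 6.3 with the amplitude/derivative bounds of Lemma 9.1 and the frequency $\lambda_n$, which is exactly the decomposition $\mathrm{div}M_n=M_{n1}+M_{n2}$, the use of $k_n\cdot(k_{nh}^{\perp},0)=0$, $|l|\leq C_0\mu$, and Lemma 3.1 with $m=1+\big[\frac{1+\varepsilon}{\varepsilon}\big]$ that you describe. One small inaccuracy: in the $\partial_z\mathcal{R}(\chi_ne_3)$ estimate you allow for $\partial_z$ hitting the phase and claim the resulting term "still fits," but in fact the phase $(k_{nh}^{\perp},0)\cdot\big((x,y,z)-\frac{l}{\mu}t\big)$ has no $z$-dependence at all (as you correctly use in the $\partial_{zz}w_n$ block); only the amplitude is differentiated, and this is essential, since a phase contribution of size $\sqrt{\kappa}$ would not be bounded by $(\sqrt{\kappa}\mu)^n\bar{\Lambda}\lambda_n^{-1}$.
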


\begin{Lemma}[The transportation part]
\begin{align}
&\Big\|\mathcal{R}\Big(\partial_tw_{n}+\sum_{l\in Z^3}\frac{l}{\mu}\cdot\nabla w_{nl}\Big)\Big\|_0\leq C_0(\varepsilon)\kappa\mu\lambda_{n-1}\lambda_n^{-1},\nonumber\\
&\Big\|(\partial_t, \partial_x,\partial_y)\mathcal{R}\Big(\partial_tw_{n}+\sum_{l\in Z^3}\frac{l}{\mu}\cdot\nabla w_{nl}\Big)\Big\|_0\leq  C_0(\varepsilon)\kappa\mu\lambda_{n-1},\nonumber\\
&\Big\|\partial_z\mathcal{R}\Big(\partial_tw_{n}+\sum_{l\in Z^3}\frac{l}{\mu}\cdot\nabla w_{nl}\Big)\Big\|_0\leq  C_0(\varepsilon)(\sqrt{\kappa}\mu)^n\bar{\Lambda}\ell_n^{-1}\lambda_n^{-1}.\nonumber
\end{align}
\end{Lemma}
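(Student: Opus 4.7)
The plan is to imitate the proof of Lemma 6.7 (the analogous first-step transportation estimate), using the new $n$-th-step representation of $w_{nl}$ together with the amplitude estimates of Lemma 9.1 and the inverse-divergence operator $\mathcal{R}$ from Lemma 3.1. The crucial algebraic observation is the phase-killing identity
\[
\Big(\partial_t + \tfrac{l}{\mu}\cdot\nabla\Big)\,e^{\pm i\lambda_n 2^{[l]} (k_{nh}^{\perp},0)\cdot ((x,y,z)-\frac{l}{\mu}t)}=0,
\]
which will let the transport operator land only on the amplitudes $g_{\pm nl}$.

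First I would invoke (9.11) to write $w_{nl}=g_{nl}e^{i\lambda_n 2^{[l]}(k_{nh}^{\perp},0)\cdot((x,y,z)-\frac{l}{\mu}t)}+g_{-nl}e^{-i\lambda_n 2^{[l]}(k_{nh}^{\perp},0)\cdot((x,y,z)-\frac{l}{\mu}t)}$, then apply the phase-killing identity to obtain
\[
\partial_t w_n+\sum_{l\in Z^3}\tfrac{l}{\mu}\cdot\nabla w_{nl}
=\sum_{j=0}^{7}\sum_{[l]=j}\Big(\big(\partial_t+\tfrac{l}{\mu}\cdot\nabla\big)g_{nl}\,e^{i\lambda_n 2^{j}(k_{nh}^{\perp},0)\cdot((x,y,z)-\frac{l}{\mu}t)}+\big(\partial_t+\tfrac{l}{\mu}\cdot\nabla\big)g_{-nl}\,e^{-i\lambda_n 2^{j}(k_{nh}^{\perp},0)\cdot((x,y,z)-\frac{l}{\mu}t)}\Big).
\]
This exhibits the expression as a sum of functions of the form $a(t,x)e^{i\lambda_n 2^j k\cdot x}$ with zero mean (after factoring the $t$-dependent phase into the amplitude), so Lemma 3.1 applies. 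I would then choose $m=1+\lceil (1+\varepsilon)/\varepsilon\rceil$ in (3.6) and estimate, for each $j$ and fixed $r$,
\[
\big\|(\partial_t+\tfrac{l}{\mu}\cdot\nabla)g_{\pm nl}\big\|_r\le \|\partial_t g_{\pm nl}\|_r+\tfrac{|l|}{\mu}\|\nabla g_{\pm nl}\|_r,
\]
using $|l|\le C_0\mu$ on the support of $g_{\pm nl}$ (since $\alpha_l(\mu v_{0(n-1)\ell_n})\ne 0$ forces this), and feeding in the bounds $\|g_{\pm nl}\|_r\le C_r\kappa\mu\lambda_{n-1}\ell_n^{-(r-1)}$ and $\|\partial_t g_{\pm nl}\|_r\le C_r\kappa\mu\lambda_{n-1}\ell_n^{-r}$ from Lemma 9.1. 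Under the parameter assumption (7.3), which in particular gives $\ell_n^{-1}\ge\sqrt{\kappa}\mu\lambda_{n-1}$ and $\lambda_n\ge \ell_n^{-(1+\varepsilon)}$, the tail terms $\ell_n^{-i}\lambda_n^{-(i+1)}$ all telescope down to be controlled by the $i=0$ term, yielding the claimed $C_0(\varepsilon)\kappa\mu\lambda_{n-1}\lambda_n^{-1}$.

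For the $(\partial_t,\partial_x,\partial_y)$ estimate, I would differentiate the displayed identity: the derivative either hits an amplitude (producing $\partial_t,\partial_x,\partial_y$ of $(\partial_t+\frac{l}{\mu}\cdot\nabla)g_{\pm nl}$, controlled by $\|\partial_{tt}g_{\pm nl}\|_0,\|\partial_t\nabla g_{\pm nl}\|_0$ and similar quantities, all bounded by $\kappa\mu\lambda_{n-1}\ell_n^{-1}$ via Lemma 9.1) or falls on the exponential (producing an extra factor $\lambda_n$, combined with $|l|/\mu\le C_0$ when the transport direction is hit). Either way, applying Lemma 3.1 with the same $m$ absorbs the extra $\lambda_n$ against the $\lambda_n^{-1}$ gain, giving a clean $C_0(\varepsilon)\kappa\mu\lambda_{n-1}$ bound. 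For the $\partial_z$ estimate I would instead commute $\partial_z$ past the sum and use the sharper bounds $\|\partial_z g_{\pm nl}\|_r\le C_r(\sqrt{\kappa}\mu)^n\bar\Lambda\,\ell_n^{-r}$ and $\|\partial_t\partial_z g_{\pm nl}\|_r\le C_r(\sqrt{\kappa}\mu)^n\bar\Lambda\,\ell_n^{-r}$ (the anisotropic improvement that motivates introducing $\bar\Lambda$ in the first place), which is where the extra $\bar\Lambda$ and the $\ell_n^{-1}$ appear in the third estimate.

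The only subtle point—and the one I would watch carefully—is the bookkeeping: verifying that the geometric sum in (3.6) with the chosen $m$ is indeed dominated by its leading term under assumption (7.3), and that the constant depends on $\varepsilon$ only through the choice of $m$ (as in the earlier lemmas). There is no real analytic obstacle beyond this, since the transport structure has been engineered precisely so that $(\partial_t+\frac{l}{\mu}\cdot\nabla)$ commutes with each wave packet; the proof is essentially a mechanical repeat of Lemma 6.7 with $(\sqrt{\kappa},\Lambda,\lambda_1,\ell_1,\ell_{1z})$ replaced by $(\kappa\mu\lambda_{n-1},\ (\sqrt{\kappa}\mu)^n\bar\Lambda,\lambda_n,\ell_n,\ell_{nz})$ in the relevant places, so I would present it in a compact form pointing to the earlier lemma for the structural identical steps.
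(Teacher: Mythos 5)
Your proposal is correct and follows essentially the same route as the paper, which omits this proof and simply refers to the first-step transportation estimate (Lemma 6.4): you use the same phase-killing identity, the representation of $w_{nl}$ through $g_{\pm nl}$, the amplitude bounds of Lemma 9.1, and Lemma 3.1 with $m=1+\big[\frac{1+\varepsilon}{\varepsilon}\big]$ under the parameter assumptions so that the geometric sum is dominated by its leading term. The only detail worth making explicit in a write-up is the mixed bound on $\partial_t\partial_z g_{\pm nl}$ used in the third estimate, which is not listed verbatim in Lemma 9.1 but follows by the same computation that gives the other derivative bounds there.
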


\begin{Lemma}[Estimates on error part I]\label{e:n step error difference}
\begin{align}
\|N_n\|_0\leq& C_0\sqrt{\kappa}\mu^{-1},\quad \|(\partial_t,\partial_x,\partial_y)N_n\|_0\leq  C_0\lambda_n\sqrt{\kappa}\mu^{-1},\quad
\|\partial_zN_n\|_0\leq  C_0(\sqrt{\kappa}\mu)^n\bar{\Lambda}\mu^{-1}.\nonumber
\end{align}
\end{Lemma}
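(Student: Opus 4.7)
The plan is to mimic the argument for Lemma 6.4 on $N_1$, with the crucial difference that the ``convolution defect'' term (the second sum in $N_n$) is now absorbed into the $C_0\sqrt{\kappa}\mu^{-1}$ bound thanks to the parameter assumption (\ref{a:assumption on parameter m sequence}) that links $\ell_n,\ell_{nz}$ to $\lambda_{n-1}$ and the inductive $C^1$ control of $v_{0(n-1)}$. Concretely, I would split
\begin{align}
N_n = N_{n1}+N_{n2},\nonumber
\end{align}
where $N_{n1}$ is the $(v_{0(n-1)\ell_n}-l/\mu)$-piece and $N_{n2}$ is the $(v_{0(n-1)}-v_{0(n-1)\ell_n})$-piece.

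For $N_{n1}$, I would use the representation (\ref{r:another representation on wnl}) to write each summand as $g_{\pm nl}\,e^{\pm i\lambda_n 2^{[l]}(k_{nh}^{\perp},0)\cdot((x,y,z)-l t/\mu)}\otimes (v_{0(n-1)\ell_n}-l/\mu)$ plus its transpose. Since $g_{\pm nl}$ is proportional to $b_{nl}$ and its derivatives, and $b_{nl}$ carries the factor $\alpha_l(\mu v_{0(n-1)\ell_n})$ which is supported in $B_{c_2}(l)$ with $c_2<1$, the support condition forces $|\mu v_{0(n-1)\ell_n}-l|\leq 1$ wherever $g_{\pm nl}\neq 0$. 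Combining this pointwise bound $|v_{0(n-1)\ell_n}-l/\mu|\leq \mu^{-1}$ with $\|g_{\pm nl}\|_0\leq C_0\sqrt{\kappa}$ from (\ref{e:global zero estimate various higher order n}) and the finite-overlap property (\ref{p:unity}) yields $\|N_{n1}\|_0\leq C_0\sqrt{\kappa}\mu^{-1}$.

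For $N_{n2}$, I would estimate the convolution defect of $v_{0(n-1)}$ by the standard mollifier bound
\begin{align}
\|v_{0(n-1)}-v_{0(n-1)\ell_n}\|_0\leq C_0\bigl(\|(\partial_t,\partial_x,\partial_y)v_{0(n-1)}\|_0\,\ell_n+\|\partial_z v_{0(n-1)}\|_0\,\ell_{nz}\bigr).\nonumber
\end{align}
The induction hypothesis (\ref{e:induction estimate sequence}) (telescoping in $m$) gives $\|(\partial_t,\partial_x,\partial_y)v_{0(n-1)}\|_0\leq C_0\sqrt{\kappa}\lambda_{n-1}$ and $\|\partial_z v_{0(n-1)}\|_0\leq C_0(\sqrt{\kappa}\mu)^{n-1}\bar{\Lambda}$. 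Then by the parameter assumption (\ref{a:assumption on parameter m sequence}), $\sqrt{\kappa}\lambda_{n-1}\ell_n\leq \mu^{-1}$ and $(\sqrt{\kappa}\mu)^{n-1}\bar{\Lambda}\,\ell_{nz}\leq \mu^{-1}$, so $\|v_{0(n-1)}-v_{0(n-1)\ell_n}\|_0\leq C_0\mu^{-1}$. Multiplying by $\|w_{nl}\|_0\leq C_0\sqrt{\kappa}$ and summing in $l$ yields $\|N_{n2}\|_0\leq C_0\sqrt{\kappa}\mu^{-1}$, which together with the $N_{n1}$ estimate proves the first inequality.

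The $(\partial_t,\partial_x,\partial_y)$-estimate follows in the same scheme: differentiating the exponential in $N_{n1}$ brings a factor $\lambda_n 2^{[l]}(k_{nh}^{\perp},0)\cdot(l/\mu)$, and $b_{nl}\neq 0$ forces $|l|\leq C_0\mu$, so the factor is absorbed as $C_0\lambda_n$; differentiating $g_{\pm nl}$ or $v_{0(n-1)\ell_n}$ (or the convolution defect) is controlled by (\ref{e:global zero estimate various higher order n}), (\ref{e:induction estimate sequence}), and (\ref{a:assumption on parameter m sequence}), each time producing the common factor $\lambda_n\sqrt{\kappa}\mu^{-1}$ once the parameter conditions are used. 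The $\partial_z$-estimate is the one term requiring the most care: when $\partial_z$ falls on $g_{\pm nl}$ or $w_{nl}$, (\ref{e:global zero estimate various higher order n}) gives $\|\partial_z g_{\pm nl}\|_0\leq C_0(\sqrt{\kappa}\mu)^n\bar{\Lambda}$, and multiplying by $\mu^{-1}$ (either from the support argument for $N_{n1}$ or from the defect estimate for $N_{n2}$) yields exactly the claimed $C_0(\sqrt{\kappa}\mu)^n\bar{\Lambda}\mu^{-1}$; when $\partial_z$ falls on $v_{0(n-1)\ell_n}$ or on $v_{0(n-1)}-v_{0(n-1)\ell_n}$, the factor $\|\partial_z v_{0(n-1)}\|_0\leq C_0(\sqrt{\kappa}\mu)^{n-1}\bar{\Lambda}$ combines with $\|w_{nl}\|_0\leq C_0\sqrt{\kappa}$ to give the same bound $C_0(\sqrt{\kappa}\mu)^{n-1}\bar{\Lambda}\sqrt{\kappa}=C_0(\sqrt{\kappa}\mu)^n\bar{\Lambda}\mu^{-1}$.

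The main obstacle is really just the bookkeeping: matching every contribution against the specific exponent of $(\sqrt{\kappa}\mu)^n$ advertised in the statement, and checking that the parameter assumption (\ref{a:assumption on parameter m sequence}) is always the one that turns the mollification error into a clean $\mu^{-1}$. There is no new analytic idea beyond the one used for $N_1$; all smallness ultimately comes from the Lagrangian partition of unity $\alpha_l(\mu v_{0(n-1)\ell_n})$ combined with the induction-compatible choice of mollification scales.
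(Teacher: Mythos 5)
Your proof is correct and follows essentially the same route as the paper: the same decomposition $N_n=N_{n1}+N_{n2}$, the same support argument $|\mu v_{0(n-1)\ell_n}-l|<1$ for $N_{n1}$, the same mollification-defect bound via the inductive $C^1$ control of $v_{0(n-1)}$ and the parameter conditions (\ref{a:assumption on parameter n}) for $N_{n2}$, and the same accounting of where $\partial_t,\partial_x,\partial_y$ and $\partial_z$ land. The bookkeeping you carry out (e.g., $\sqrt{\kappa}(\sqrt{\kappa}\mu)^{n-1}\bar\Lambda=(\sqrt{\kappa}\mu)^n\bar\Lambda\mu^{-1}$) matches the displayed inequalities in the paper's proof.
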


\begin{Lemma}[Estimates on error part II]
\begin{align}
\|w_{no}\otimes w_{nc}+w_{nc}\otimes w_{no}+w_{nc}\otimes w_{nc}\|_0\leq& C_0\kappa\sqrt{\kappa}\mu\lambda_{n-1}\lambda_n^{-1},\nonumber\\
\|(\partial_t, \partial_x,\partial_y)(w_{no}\otimes w_{nc}+w_{nc}\otimes w_{no}+w_{nc}\otimes w_{nc})\|_0\leq& C_0\kappa\sqrt{\kappa}\mu\lambda_{n-1},\nonumber\\
\|\partial_z(w_{no}\otimes w_{nc}+w_{nc}\otimes w_{no}+w_{nc}\otimes w_{nc})\|_0\leq& C_0\sqrt{\kappa}(\sqrt{\kappa}\mu)^n\bar{\Lambda}\ell_n^{-1}\lambda_n^{-1}.\nonumber
\end{align}
\end{Lemma}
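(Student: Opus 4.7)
The plan is to prove the lemma by directly applying the product-rule inequality $\|fg\|_0\le\|f\|_0\|g\|_0$ (and its Leibniz analog for first derivatives) to each summand, then inserting the bounds from Lemma \ref{e:n main perturbation and corection estimate} and using the parameter assumption (\ref{a:assumption on parameter n}) to absorb the purely quadratic correction term. No oscillatory cancellation or use of the antidivergence operator is needed.

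For the $L^\infty$ bound, I would estimate
\begin{align*}
\|w_{no}\otimes w_{nc}+w_{nc}\otimes w_{no}+w_{nc}\otimes w_{nc}\|_0
\le 2\|w_{no}\|_0\|w_{nc}\|_0+\|w_{nc}\|_0^2.
\end{align*}
Lemma \ref{e:n main perturbation and corection estimate} yields $\|w_{no}\|_0\le C_0\sqrt{\kappa}$ and $\|w_{nc}\|_0\le C_0\kappa\mu\lambda_{n-1}\lambda_n^{-1}$, so the mixed term is already $C_0\kappa\sqrt{\kappa}\mu\lambda_{n-1}\lambda_n^{-1}$. For the purely quadratic term $\|w_{nc}\|_0^2$, the assumption (\ref{a:assumption on parameter n}) gives $\lambda_n\ge\ell_n^{-(1+\varepsilon)}\ge(\sqrt{\kappa}\mu\lambda_{n-1})^{1+\varepsilon}$, whence $\kappa\mu\lambda_{n-1}\lambda_n^{-1}\le\sqrt{\kappa}$, so $\|w_{nc}\|_0^2$ is dominated by $C_0\sqrt{\kappa}\cdot\kappa\mu\lambda_{n-1}\lambda_n^{-1}$. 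This establishes the first estimate.

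For the $(\partial_t,\partial_x,\partial_y)$ estimate I would apply the Leibniz rule to each tensor product and invoke the bounds
\begin{align*}
\|(\partial_t,\partial_x,\partial_y)w_{no}\|_0\le C_0\sqrt{\kappa}\lambda_n,\qquad
\|(\partial_t,\partial_x,\partial_y)w_{nc}\|_0\le C_0\kappa\mu\lambda_{n-1}
\end{align*}
from Lemma \ref{e:n main perturbation and corection estimate}. Each of the two contributions $\|\partial w_{no}\|_0\|w_{nc}\|_0$ and $\|w_{no}\|_0\|\partial w_{nc}\|_0$ produces exactly $C_0\kappa\sqrt{\kappa}\mu\lambda_{n-1}$, and the $w_{nc}\otimes w_{nc}$ piece is absorbed via the same parameter inequality as above.

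For the $\partial_z$ estimate the product rule together with $\|\partial_z w_{no}\|_0\le C_0(\sqrt{\kappa}\mu)^n\bar{\Lambda}$ and $\|\partial_z w_{nc}\|_0\le C_0(\sqrt{\kappa}\mu)^n\bar{\Lambda}\ell_n^{-1}\lambda_n^{-1}$ gives two terms. The term $\|w_{no}\|_0\|\partial_z w_{nc}\|_0$ is already of the form $C_0\sqrt{\kappa}(\sqrt{\kappa}\mu)^n\bar{\Lambda}\ell_n^{-1}\lambda_n^{-1}$. The term $\|\partial_z w_{no}\|_0\|w_{nc}\|_0\le C_0(\sqrt{\kappa}\mu)^n\bar{\Lambda}\cdot\kappa\mu\lambda_{n-1}\lambda_n^{-1}$ is rewritten using $\sqrt{\kappa}\mu\lambda_{n-1}\le\ell_n^{-1}$ from (\ref{a:assumption on parameter n}), i.e.\ $\kappa\mu\lambda_{n-1}\le\sqrt{\kappa}\ell_n^{-1}$, so it is also bounded by $C_0\sqrt{\kappa}(\sqrt{\kappa}\mu)^n\bar{\Lambda}\ell_n^{-1}\lambda_n^{-1}$. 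Since there is no subtle cancellation to exploit and no antidivergence to analyze, the only point requiring care is the bookkeeping that uses $\lambda_n\gtrsim(\sqrt{\kappa}\mu\lambda_{n-1})^{1+\varepsilon}$ and $\ell_n^{-1}\gtrsim\sqrt{\kappa}\mu\lambda_{n-1}$ to collapse $\|w_{nc}\|_0$-squared and $\kappa\mu\lambda_{n-1}$ factors; this is the only step that is not immediate from the preceding lemma.
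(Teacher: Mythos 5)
Your proposal is correct and follows essentially the same route as the paper: the paper omits the proof of this $n$-th step lemma, referring to the first-step Lemma 6.5, whose argument is exactly the product rule combined with the bounds of the correction lemma and the parameter assumptions to absorb the quadratic $w_{nc}$ contributions. Your only extra care (explicitly checking $\kappa\mu\lambda_{n-1}\lambda_n^{-1}\le\sqrt{\kappa}$ and $\kappa\mu\lambda_{n-1}\le\sqrt{\kappa}\,\ell_n^{-1}$ via (\ref{a:assumption on parameter n})) is precisely the bookkeeping the paper leaves implicit, so the two arguments coincide.
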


By Lemma \ref{e:estimate various order n}, the proof of the above four lemmas except Lemma \ref{e:n step error difference} are similar to that of Lemma \ref{e:oscillation estimate}, Lemma \ref{e:transport estimate}, Lemma \ref{e: error 2} respectively,  we omit it here and only give a proof of Lemma \ref{e:n step error difference}.
\begin{proof}[Proof of Lemma \ref{e:n step error difference}]
Recall that $N_n=N_{n1}+N_{n2}$, where
\begin{align}
N_{n1}:=&\sum_{l\in Z^3}\Big[w_{nl}\otimes \Big(v_{0(n-1)\ell_n}-\frac{l}{\mu}\Big)
   +\Big(v_{0(n-1)\ell_n}-\frac{l}{\mu}\Big)\otimes w_{nl}\Big],\nonumber\\
N_{n2}:=&\sum_{l\in Z^3}\Big[w_{nl}\otimes \big(v_{0(n-1)}-v_{0(n-1)\ell_n}\big)
   +\big(v_{0(n-1)}-v_{0(n-1)\ell_n}\big)\otimes w_{nl}\Big].\nonumber
\end{align}
Since $w_{nl}\neq 0$ implies $|\mu v_{0(n-1)\ell_n}-l|<1$, then
$$\|N_{n1}\|_0\leq C_0\sqrt{\kappa}\mu^{-1}.$$
By (\ref{e:induction estimate sequence}), we obtain $\|(\partial_t, \partial_x,\partial_y)v_{0(n-1)}\|_0\leq C_0\sqrt{\kappa}\lambda_{n-1}$ and $\|\partial_zv_{0(n-1)}\|_0\leq C_0(\sqrt{\kappa}\mu)^{n-1}\bar{\Lambda}$. By (\ref{a:assumption on parameter n}), we have
$$\|N_{n2}\|_0\leq C_0(\kappa\lambda_{n-1}\ell_n+\sqrt{\kappa}(\sqrt{\kappa}\mu)^{n-1}\bar{\Lambda}\ell_{nz})\leq C_0\sqrt{\kappa}\mu^{-1}.$$
Combining the two parts, we arrive at the first estimate of Lemma \ref{e:n step error difference}. The second estimate can be obtained by the same argument.

By (\ref{p:unity}), Lemma \ref{e:n main perturbation and corection estimate}, $|\mu v_{0(n-1)\ell_n}-l|<1$, $\|\partial_z v_{0(n-1)}\|_0\leq C_0(\sqrt{\kappa}\mu)^{n-1}\bar{\Lambda}$ and (\ref{a:assumption on parameter n}), we conclude
\begin{align}
\|\partial_zN_{n1}\|_0\leq C_0(\sqrt{\kappa}\mu)^n\bar{\Lambda}\mu^{-1}+C_0\sqrt{\kappa}(\sqrt{\kappa}\mu)^{n-1}\bar{\Lambda}\leq C_0(\sqrt{\kappa}\mu)^n\bar{\Lambda}\mu^{-1}.\nonumber
\end{align}
A similar argument on $N_{n2}$ also gives
\begin{align}
\|\partial_zN_{n2}\|_0\leq C_0\sqrt{\kappa}(\sqrt{\kappa}\mu)^{n-1}\bar{\Lambda}+ C_0 (\sqrt{\kappa}\mu)^n\bar{\Lambda}\Big(\sqrt{\kappa}\lambda_{n-1}\ell_n+(\sqrt{\kappa}\mu)^{n-1}\bar{\Lambda}\ell_{nz}\Big) \leq C_0(\sqrt{\kappa}\mu)^n\bar{\Lambda}\mu^{-1}.\nonumber
\end{align}
Then we obtain the third estimate.
Thus, the proof of this lemma is complete.
\end{proof}
Finally, we conclude
\begin{align}\label{e:n iterative stress error estimate}
\|\delta R_{0n}\|_0\leq & C_0(\varepsilon)\Big[\sqrt{\kappa}\mu^{-1}
+\lambda_n^{-1}\Big(\kappa\mu\lambda_{n-1}+
(\sqrt{\kappa}\mu)^{n}\bar{\Lambda}\ell_{nz}^{-1}\Big)\Big],\nonumber\\
\|(\partial_t, \partial_x,\partial_y)\delta R_{0n}\|_0\leq &C_0(\varepsilon)\lambda_n\Big[\sqrt{\kappa}\mu^{-1}
+\lambda_n^{-1}\Big(\kappa\mu\lambda_{n-1}+
(\sqrt{\kappa}\mu)^{n}\bar{\Lambda}\ell_{nz}^{-1}\Big)\Big],\nonumber\\
\|\partial_z\delta R_{0n}\|_0\leq&  C_0(\varepsilon)\Big[(\sqrt{\kappa}\mu)^n\bar{\Lambda}\mu^{-1}
+(\sqrt{\kappa}\mu)^{n}\bar{\Lambda}
\Big(\ell_n^{-1}+
\ell_{nz}^{-2}\Big)\lambda_n^{-1}\Big].
 \end{align}

\subsection{Estimates on $\delta f_{0n}$}

 \indent

%Recall that when $n=2,3$, we have
%\begin{align}
%   \delta f_{0n}=&\mathcal{G}(\hbox{div}K_n)
%   +\mathcal{G}(w_n\cdot\nabla\theta_{0(n-1)})
%   +\mathcal{G}\Big(\partial_t\chi_{n}+\sum_{l\in Z^3}\Big(\frac{l}{\mu_{1}}\cdot\nabla\Big)\chi_{nl}\Big)\nonumber\\
%   &+\sum_{l\in Z^3}\Big(v_0-\frac{l}{\mu_n}\Big)\chi_{nl}+w_{noc}\chi_n.\nonumber
%\end{align}
%where we used the fact $\mathcal{G}(f(t))=0.$

As before, for $n=2,3$, we split $\delta f_{0n}$ into three parts:\\
(1) the Oscillation Part:
\begin{align}
\mathcal{G}(\hbox{div}K_n)+\mathcal{G}(w_n\cdot\nabla\theta_{0{n-1}\ell_n})-
\mathcal{G}(\partial_{zz}\chi_n).\nonumber
\end{align}
(2) the transportation part:
\begin{align}
\mathcal{G}\Big(\partial_t\chi_n+\sum_{l\in Z^3}\frac{l}{\mu}\cdot\nabla\chi_{nl}\Big).\nonumber
\end{align}
(3) the error part:
\begin{align}
w_{nc}\chi_n+\sum_{l\in Z^3}\Big(v_{0{n-1}\ell_n}-\frac{l}{\mu}\Big)\chi_{nl}+\sum_{l\in Z^3}\big(v_{0(n-1)}-v_{0{n-1}\ell_n}\big)\chi_{nl}+w_n(\theta_{0{n-1}}-
\theta_{0{n-1}\ell_n}).\nonumber
\end{align}
We estimate them term by term.

\begin{Lemma}[The Oscillation Part]
\begin{align}
\|\mathcal{G}({\rm div}K_n)\|_0\leq& C_0(\varepsilon)\kappa (\sqrt{\kappa}\mu\lambda_{n-1})\lambda_n^{-1},\quad \|(\partial_t, \partial_x,\partial_y)\mathcal{G}({\rm div}K_n)\|_0\leq C_0(\varepsilon)\kappa (\sqrt{\kappa}\mu\lambda_{n-1}),\nonumber\\
\|\partial_z\mathcal{G}({\rm div}K_n)\|_0
\leq &C_0(\varepsilon)\sqrt{\kappa} (\sqrt{\kappa}\mu)^n\bar{\Lambda}\ell_n^{-1}\lambda_n^{-1},\nonumber\\
\|\mathcal{G}(w_n\cdot\nabla\theta_{0{n-1}\ell_n})\|_0\leq& C_0(\varepsilon)\kappa\lambda_{n-1}\lambda_n^{-1},\quad \|(\partial_t,  \partial_x,\partial_y)\mathcal{G}(w_n\cdot\nabla\theta_{0{n-1}\ell_n})\|_0\leq C_0(\varepsilon)\kappa\lambda_{n-1},\nonumber\\
 \|\partial_z\mathcal{G}(w_n\cdot\nabla\theta_{0{n-1}\ell_n})\|_0\leq& C_0(\varepsilon)\kappa\lambda_{n-1}\ell_{nz}^{-1}\lambda_n^{-1},\nonumber\\
\|\mathcal{G}(\partial_{zz}\chi_n)\|_0\leq& C_0(\varepsilon)(\sqrt{\kappa}\mu)^n\bar{\Lambda}\ell_{nz}^{-1}\lambda_n^{-1},\quad
\|(\partial_t, \partial_x,\partial_y)\mathcal{G}(\partial_{zz}\chi_n)\|_0\leq C_0(\varepsilon)(\sqrt{\kappa}\mu)^n\bar{\Lambda}\ell_{nz}^{-1},\nonumber\\
\|\partial_z\mathcal{G}(\partial_{zz}\chi_n)\|_0\leq& C_0(\varepsilon)(\sqrt{\kappa}\mu)^n\bar{\Lambda}\ell_{nz}^{-2}\lambda_n^{-1}.\nonumber
\end{align}
\end{Lemma}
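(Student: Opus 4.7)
The plan is to follow the same three-part decomposition used in Lemma 6.8 (the $n=1$ version), replacing the amplitude bounds from Lemma 6.1 by their $n$-th analogues in Lemma 7.1, and tracking how the parameters $\lambda_n, \ell_n, \ell_{nz}, \mu$ enter each estimate via Lemma 3.2. Throughout I would fix $m=1+\big[\tfrac{1+\varepsilon}{\varepsilon}\big]$ as the truncation order in the estimate (3.6) on $\mathcal{G}$, so that $\lambda_n^{-m}$ beats any polynomial factor from differentiating the amplitudes, leaving the decisive term $\lambda_n^{-1}\|\varphi\|_0$.

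For $\mathcal{G}(\mathrm{div}\,K_n)$, I would split $\mathrm{div}\,K_n = K_{n1}+K_{n2}$ exactly as in the proof of Lemma 6.8, where $K_{n1}$ collects the diagonal $\beta_{nl}b_{nl}$ terms and $K_{n2}$ collects the off-diagonal products $w_{nol}\chi_{nol'}$. The point is the orthogonality $k_n\cdot(k_{nh}^\perp,0)=0$, which forces the divergence to fall only on the slowly-varying amplitudes $\nabla(\beta_{nl}b_{nl})$ and $\nabla(b_{nl}\beta_{nl'})$ (never generating an extra $\lambda_n$). Apply Lemma 3.2; the zero-mean hypothesis is inherited term by term because $\lambda_n/\mu\in\mathbb{N}$ and $|l-l'|<2$ guarantees that the compound frequencies $\lambda_n(2^{[l]}\pm 2^{[l']})$ are genuinely nontrivial (using again $k_n\cdot(k_{nh}^\perp,0)=0$ to see the phase in $(x,y,z)$ does not collapse). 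Then (7.5)--(7.8) give $\|\nabla(\beta_{nl}b_{nl})\|_r\le C_r\kappa\mu\lambda_{n-1}\ell_n^{-r}$ and $\|\nabla\partial_z(\beta_{nl}b_{nl})\|_r\le C_r\sqrt{\kappa}(\sqrt{\kappa}\mu)^n\bar{\Lambda}\ell_n^{-r}$ (using the product rule and assumption (7.3)), which after summing the geometric series in $\lambda_n^{-1}\ell_n^{-1}\le\lambda_n^{-\varepsilon/(1+\varepsilon)}$ yields the three claimed bounds.

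For $\mathcal{G}(w_n\cdot\nabla\theta_{0(n-1)\ell_n})$, use the representation (7.14) of $w_n$ to write
\begin{equation*}
w_n\cdot\nabla\theta_{0(n-1)\ell_n}=\sum_{j=0}^{7}\sum_{[l]=j}\Big(g_{nl}\cdot\nabla\theta_{0(n-1)\ell_n}\,e^{i\lambda_n 2^{j}(k_{nh}^\perp,0)\cdot((x,y,z)-\frac{l}{\mu}t)}+\mathrm{c.c.}\Big).
\end{equation*}
Each summand already has zero mean, so Lemma 3.2 applies directly. The amplitude $g_{nl}\cdot\nabla\theta_{0(n-1)\ell_n}$ is controlled by Lemma 7.1 together with the standard convolution estimate $\|\nabla\theta_{0(n-1)\ell_n}\|_r\le C_r\sqrt{\kappa}\lambda_{n-1}\ell_n^{-r}$ (from $\|\theta_{0(n-1)}\|_{C^1}\le C_0\sqrt{\kappa}\lambda_{n-1}$ via induction hypothesis (7.2)), and for $\partial_z$ we use $\|\partial_z\theta_{0(n-1)\ell_n}\|_r\le C_r\sqrt{\kappa}\lambda_{n-1}\ell_{nz}^{-1}\ell_n^{-r}$ together with $\|\partial_zg_{\pm nl}\|_r\le C_r(\sqrt{\kappa}\mu)^n\bar{\Lambda}\ell_n^{-r}$ from (7.6). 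Assumption (7.3) forces $(\sqrt{\kappa}\mu)^n\bar{\Lambda}\le\mu\ell_{nz}^{-1}\cdot(\sqrt{\kappa}\mu)^{n-1}\cdots$; in practice the $\sqrt{\kappa}\lambda_{n-1}$ factor from $\nabla\theta$ dominates since $\ell_{nz}^{-1}\ge\mu(\sqrt{\kappa}\mu)^{n-1}\bar{\Lambda}$ is what lets us absorb these into the stated bound $\kappa\lambda_{n-1}\lambda_n^{-1}$.

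For $\mathcal{G}(\partial_{zz}\chi_n)$, apply $\partial_{zz}$ to the representation of $\chi_n$ to get an analog of (6.52), with amplitudes $\partial_{zz}h_{\pm nl}$ in place of $\partial_{zz}h_{\pm 1l}$. By (7.7) and (7.11), $\|\partial_{zz}h_{\pm nl}\|_0\le C_0(\sqrt{\kappa}\mu)^n\bar{\Lambda}\ell_{nz}^{-1}$ and similarly for the derivatives in higher orders and in $z$, and Lemma 3.2 with $m=1+[(1+\varepsilon)/\varepsilon]$ delivers the three claimed bounds verbatim. The main obstacle I anticipate is simply bookkeeping: ensuring that at every instance where a derivative lands on an amplitude it picks up only a factor $\ell_n^{-1}$ (or $\ell_{nz}^{-1}$ when differentiating in $z$) rather than $\lambda_n$, which is what relies on the exact structure of $g_{\pm nl}, h_{\pm nl}$ built in (7.13), (7.17) and on the orthogonality $k_n\cdot(k_{nh}^\perp,0)=0$. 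Once this is in place, the parameter assumption (7.3) closes every estimate.
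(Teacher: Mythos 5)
Your proposal matches the paper's intended proof exactly: the paper explicitly omits this proof, stating it is "similar to that of Lemma \ref{e:osci}" (the $n=1$ oscillation part for $\delta f_{01}$), and your plan of splitting $\mathrm{div}\,K_n=K_{n1}+K_{n2}$, invoking $k_n\cdot(k_{nh}^\perp,0)=0$, and applying Lemma \ref{p:inverse 2} with $m=1+[\frac{1+\varepsilon}{\varepsilon}]$ to the $n$-th amplitude bounds of Lemma \ref{e:estimate various order n} is precisely that argument carried over. The two intermediate product estimates you wrote contain small bookkeeping slips (the first should read $C_r\kappa^{3/2}\mu\lambda_{n-1}\ell_n^{-r}$, the second $C_r\sqrt{\kappa}(\sqrt{\kappa}\mu)^n\bar{\Lambda}\ell_n^{-(r+1)}$), but neither changes the structure, and when done carefully they reproduce exactly the stated bounds.
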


\begin{Lemma}[The transportation Part]
For $n=2,3$, we have
\begin{align}
&\Big\|\mathcal{G}\Big(\partial_t\chi_n+\sum_{l\in Z^3}\frac{l}{\mu}\cdot\nabla\chi_{nl}\Big)\Big\|_0\leq C_0(\varepsilon)\kappa\mu\lambda_{n-1}\lambda_n^{-1},\nonumber\\
&\Big\|(\partial_t, \partial_x,\partial_y)\mathcal{G}\Big(\partial_t\chi_n+\sum_{l\in Z^3}\frac{l}{\mu}\cdot\nabla\chi_{nl}\Big)\Big\|_0\leq C_0(\varepsilon)\kappa\mu\lambda_{n-1},\nonumber\\
&\Big\|\partial_z\mathcal{G}\Big(\partial_t\chi_n+\sum_{l\in Z^3}\frac{l}{\mu}\cdot\nabla\chi_{nl}\Big)\Big\|_0\leq C_0(\varepsilon)(\sqrt{\kappa}\mu)^n\bar{\Lambda}\ell_n^{-1}\lambda_n^{-1}.\nonumber
\end{align}
\end{Lemma}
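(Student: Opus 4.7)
The plan is to imitate the proof of Lemma 6.10 (the first-step transportation estimate) verbatim, using the $n$-th step bounds from Lemma 9.1 in place of those from Lemma 6.1. First I would expand $\chi_n$ in the oscillatory representation
\[
\chi_n=\sum_{l\in Z^3}\Big(h_{nl}e^{i\lambda_n2^{[l]}(k_{nh}^\perp,0)\cdot\big((x,y,z)-\frac{l}{\mu}t\big)}+h_{-nl}e^{-i\lambda_n2^{[l]}(k_{nh}^\perp,0)\cdot\big((x,y,z)-\frac{l}{\mu}t\big)}\Big),
\]
and exploit the crucial cancellation
$\big(\partial_t+\tfrac{l}{\mu}\cdot\nabla\big)e^{\pm i\lambda_n2^{[l]}(k_{nh}^\perp,0)\cdot((x,y,z)-\frac{l}{\mu}t)}=0$
to obtain the phase-amplitude form
\[
\partial_t\chi_n+\sum_{l\in Z^3}\frac{l}{\mu}\cdot\nabla\chi_{nl}=\sum_{j=0}^{7}\sum_{[l]=j}\Big[\big(\partial_t+\tfrac{l}{\mu}\cdot\nabla\big)h_{nl}\,e^{i\phi_{j,l}}+\big(\partial_t+\tfrac{l}{\mu}\cdot\nabla\big)h_{-nl}\,e^{-i\phi_{j,l}}\Big],
\]
where $\phi_{j,l}:=\lambda_n 2^j(k_{nh}^\perp,0)\cdot((x,y,z)-\frac{l}{\mu}t)$. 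Since for each fixed $j\in\{0,\dots,7\}$ the $\{[l]=j\}$ summands share a common phase frequency, the sum is a finite superposition of elements of the space $\Psi$ on which the operator $\mathcal{G}$ of Lemma 3.2 acts.

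Next, I would apply the bound (3.6) for $\mathcal{G}$ with $m=1+\big[\tfrac{1+\varepsilon}{\varepsilon}\big]$, which yields
\[
\Big\|\mathcal{G}\Big(\partial_t\chi_n+\sum_{l}\tfrac{l}{\mu}\cdot\nabla\chi_{nl}\Big)\Big\|_0\leq C_m\sum_{j=0}^7\!\Big(\sum_{i=0}^{m-1}\lambda_n^{-(i+1)}\Big\|\!\!\sum_{[l]=j}\!\!\big(\partial_t+\tfrac{l}{\mu}\cdot\nabla\big)h_{\pm nl}\Big\|_i+\lambda_n^{-m}\Big[\!\sum_{[l]=j}\!\!\big(\partial_t+\tfrac{l}{\mu}\cdot\nabla\big)h_{\pm nl}\Big]_m\Big).
\]
Using $|l|\leq C_0\mu$ whenever $h_{\pm nl}\neq 0$ and the estimates $\|\partial_t h_{\pm nl}\|_r,\|h_{\pm nl}\|_{r+1}\leq C_r\kappa\mu\lambda_{n-1}\ell_n^{-r}$ from (9.2), combined with the parameter assumption $\lambda_n\geq \ell_n^{-(1+\varepsilon)}$ in (7.1), the geometric series in $\ell_n^{-1}\lambda_n^{-1}\leq\lambda_n^{-\varepsilon/(1+\varepsilon)}$ converges and produces $C_0(\varepsilon)\kappa\mu\lambda_{n-1}\lambda_n^{-1}$, giving the first estimate.

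For the second estimate, I differentiate the oscillatory representation above in $(\partial_t,\partial_x,\partial_y)$. Each derivative either hits the amplitude $(\partial_t+\tfrac{l}{\mu}\cdot\nabla)h_{\pm nl}$ — where Lemma 9.1 again controls the resulting $\partial_{tt},\,\partial_t\nabla h_{\pm nl}$ and $(\partial_x,\partial_y)$ derivatives with an extra factor $\ell_n^{-1}$ — or hits the exponential, producing a factor $\lambda_n$ (noting $|l|\le C_0\mu$ so the $\tfrac{l}{\mu}$-coming contribution is bounded). Since $\ell_n^{-1}\leq\lambda_n$ by (7.1), both contributions are $\leq\lambda_n$ times the zero-order bound, giving $C_0(\varepsilon)\kappa\mu\lambda_{n-1}$. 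The third estimate, for $\partial_z$, proceeds identically except that $\partial_z$ hits the amplitude and produces $\partial_z h_{\pm nl}$ terms: by (9.2) these satisfy the $\bar\Lambda$-bound $\|(\partial_t+\tfrac{l}{\mu}\cdot\nabla)\partial_z h_{\pm nl}\|_r\leq C_r(\sqrt\kappa\mu)^n\bar\Lambda\ell_n^{-r}$ (using $|l|\leq C_0\mu$ and $\sqrt{\kappa}\mu\lambda_{n-1}\le \ell_n^{-1}$), and applying $\mathcal{G}$ again with the same $m$ yields $C_0(\varepsilon)(\sqrt\kappa\mu)^n\bar\Lambda\ell_n^{-1}\lambda_n^{-1}$.

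The proof is essentially mechanical once the phase-cancellation identity is invoked; no step should be a genuine obstacle. The only point to be careful about is the $\partial_z$ estimate, where one must use the anisotropic bound $\|\partial_z h_{\pm nl}\|_0\leq C_0(\sqrt\kappa\mu)^n\bar\Lambda$ rather than the worse bound obtained by interpolating against $\ell_n^{-1}$, as this is what makes the $\bar\Lambda$-tower in (7.2) close on itself under iteration.
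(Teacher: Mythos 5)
Your proposal is correct and follows exactly the route the paper intends: the paper omits this proof, stating it is "similar to that of Lemma \ref{e:trans}", and your argument is precisely that first-step transportation proof (phase cancellation $(\partial_t+\frac{l}{\mu}\cdot\nabla)e^{\pm i\lambda_n 2^{[l]}(k_{nh}^{\perp},0)\cdot((x,y,z)-\frac{l}{\mu}t)}=0$, then Lemma \ref{p:inverse 2} with $m=1+\big[\frac{1+\varepsilon}{\varepsilon}\big]$) run with the $n$-th step amplitude bounds of Lemma \ref{e:estimate various order n} and the parameter assumptions (\ref{a:assumption on parameter n}). The only cosmetic slip is the stated bound $\|(\partial_t+\frac{l}{\mu}\cdot\nabla)\partial_z h_{\pm nl}\|_r\leq C_r(\sqrt{\kappa}\mu)^n\bar{\Lambda}\ell_n^{-r}$, where the $\frac{l}{\mu}\cdot\nabla$ part really costs $\ell_n^{-(r+1)}$; since $\ell_n^{-1}\lambda_n^{-1}\leq 1$ this extra factor is harmless and the stated conclusion is unchanged.
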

The proof of the above two lemmas are similar to that of Lemma \ref{e:osci}, Lemma \ref{e:trans}  respectively,  we omit it here.
\begin{Lemma}[The error Part]
For $n=2,3$, we have
\begin{align}
\|w_{nc}\chi_n\|_0\leq C_0\sqrt{\kappa}\kappa\mu\lambda_{n-1}\lambda_n^{-1},&\quad \|(\partial_t, \partial_x,\partial_y)(w_{nc}\chi_n)\|_0\leq C_0\kappa(\sqrt{\kappa}\mu\lambda_{n-1}),\nonumber\\
\|\partial_z(w_{nc}\chi_n)\|_0\leq& C_0 \sqrt{\kappa}(\sqrt{\kappa}\mu)^n\bar{\Lambda}\ell_n^{-1}\lambda_n^{-1},\nonumber\\
\Big\|\sum\limits_{l\in Z^3}\Big(v_{0{n-1}\ell_n}-\frac{l}{\mu}\Big)\chi_{nl}+&\sum_{l\in Z^3}\big(v_{0(n-1)}-v_{0(n-1)\ell_n}\big)\chi_{nl}\nonumber\\
&+w_n(\theta_{0(n-1)}-\theta_{0(n-1)\ell_n})\Big\|_0
\leq C_0\sqrt{\kappa}\mu^{-1},\nonumber\\
\Big\|(\partial_t, \partial_x,\partial_y)\Big[\sum\limits_{l\in Z^3}\Big(v_{0(n-1)\ell_n}-\frac{l}{\mu}\Big)\chi_{nl}&+\sum_{l\in Z^3}\big(v_{0(n-1)}-v_{0(n-1)\ell_n}\big)\chi_{nl}\nonumber\\
&+w_n(\theta_{0(n-1)}-\theta_{0(n-1)\ell_n})\Big]\Big\|_0
\leq C_0\lambda_n\sqrt{\kappa}\mu^{-1},\nonumber\\
\Big\|\partial_z\Big[\sum\limits_{l\in Z^3}\Big(v_{0(n-1)\ell_n}-\frac{l}{\mu}\Big)\chi_{nl}&+\sum_{l\in Z^3}\Big(v_{0(n-1)}-v_{0(n-1)\ell_n}\Big)\chi_{nl}\nonumber\\
&+w_n(\theta_{0(n-1)}-\theta_{0(n-1)\ell_n})\Big]\Big\|_0
\leq C_0(\sqrt{\kappa}\mu)^n\bar{\Lambda}\mu^{-1}.
\end{align}
\end{Lemma}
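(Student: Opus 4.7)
The proof proceeds in parallel to Lemma \ref{e: error 2} and Lemma \ref{e:est 1} from the first step, but now at the $n$-th stage, using Lemma \ref{e:n main perturbation and corection estimate} as the replacement for Lemma \ref{e: estimate correction} and exploiting the inductive bounds \eqref{e:induction estimate sequence} on $v_{0(n-1)}$ and $\theta_{0(n-1)}$. The plan is to split the quantity to be estimated into four pieces: $w_{nc}\chi_n$, the cutoff localization piece $\sum_l (v_{0(n-1)\ell_n}-\tfrac{l}{\mu})\chi_{nl}$, the velocity-mollification error $\sum_l (v_{0(n-1)}-v_{0(n-1)\ell_n})\chi_{nl}$, and the temperature-mollification error $w_n(\theta_{0(n-1)}-\theta_{0(n-1)\ell_n})$, and to handle each with the same mechanism used in the first step.

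For the bounds on $w_{nc}\chi_n$, I will simply apply the Leibniz rule to the three displayed derivatives and plug in the estimates \eqref{e:n error estimate}--\eqref{e:n error tempeture}; the product structure instantly yields $\|w_{nc}\chi_n\|_0 \leq C_0\sqrt{\kappa}\cdot\kappa\mu\lambda_{n-1}\lambda_n^{-1}$, and the $(\partial_t,\partial_x,\partial_y)$ and $\partial_z$ estimates follow by distributing the derivative and noting which factor carries the large $\lambda_n$, $\kappa\mu\lambda_{n-1}$, or $(\sqrt{\kappa}\mu)^n\bar{\Lambda}\ell_n^{-1}$ bound. For the cutoff localization piece, the decisive observation (exactly as in the first step) is that $h_{\pm nl}\neq 0$ forces $|\mu v_{0(n-1)\ell_n}-l|\leq 1$ thanks to the support of $\alpha_l(\mu v_{0(n-1)\ell_n})$ in the definitions \eqref{d:difinition bnl}, \eqref{d:definition on n temperture}; hence each summand carries a factor of size $\mu^{-1}$ and the finite-overlap property \eqref{p:unity} gives the bound $C_0\sqrt{\kappa}\mu^{-1}$, with the $(\partial_t,\partial_x,\partial_y)$ derivative picking up a factor $\lambda_n$ from the exponential phase of $\chi_{nl}$ (cf.\ \eqref{e:n error tempeture}).

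For the two mollification error pieces I will use the standard convolution estimates $\|v_{0(n-1)}-v_{0(n-1)\ell_n}\|_0\leq C_0(\sqrt{\kappa}\lambda_{n-1}\ell_n+(\sqrt{\kappa}\mu)^{n-1}\bar\Lambda\ell_{nz})$ and the analogue for $\theta_{0(n-1)}$, which follow from the inductive first-derivative bounds in \eqref{e:induction estimate sequence} together with the anisotropic mollifier \eqref{m:anistropic modification}. Combined with $\|\chi_{nl}\|_0,\|w_n\|_0\leq C_0\sqrt{\kappa}$ and the parameter assumption \eqref{a:assumption on parameter n}, which forces both $\sqrt{\kappa}\lambda_{n-1}\ell_n\leq \mu^{-1}$ and $(\sqrt{\kappa}\mu)^{n-1}\bar\Lambda\ell_{nz}\leq \mu^{-1}$, these terms are absorbed into $C_0\sqrt{\kappa}\mu^{-1}$. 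The $(\partial_t,\partial_x,\partial_y)$ derivative again loses only a factor $\lambda_n$.

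The only genuinely delicate point is the $\partial_z$ bound, and this is the step I expect to be the main obstacle. When $\partial_z$ hits the factor $(v_{0(n-1)\ell_n}-l/\mu)$ or $(v_{0(n-1)}-v_{0(n-1)\ell_n})$ it produces $\|\partial_z v_{0(n-1)}\|_0\leq C_0(\sqrt{\kappa}\mu)^{n-1}\bar\Lambda$, which is large; the localization $|\mu v_{0(n-1)\ell_n}-l|\leq 1$ no longer helps since the derivative breaks it. However, this contribution is still of order $\sqrt{\kappa}(\sqrt{\kappa}\mu)^{n-1}\bar\Lambda=(\sqrt{\kappa}\mu)^{n}\bar\Lambda\mu^{-1}$, which matches the claimed bound. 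When $\partial_z$ instead hits $\chi_{nl}$ or $w_n$ we use $\|\partial_z\chi_{nl}\|_0, \|\partial_z w_n\|_0\leq C_0(\sqrt{\kappa}\mu)^n\bar\Lambda$ and the original $\mu^{-1}$ (or $\sqrt{\kappa}\lambda_{n-1}\ell_n,(\sqrt{\kappa}\mu)^{n-1}\bar\Lambda\ell_{nz}$) factor from the other side; here the parameter inequality $\ell_{nz}^{-1}\geq \mu(\sqrt{\kappa}\mu)^{n-1}\bar\Lambda$ is essential, as it is precisely what converts $(\sqrt{\kappa}\mu)^n\bar\Lambda\cdot(\sqrt{\kappa}\mu)^{n-1}\bar\Lambda\ell_{nz}$ into $(\sqrt{\kappa}\mu)^n\bar\Lambda\mu^{-1}$. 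Summing the four contributions then yields $C_0(\sqrt{\kappa}\mu)^n\bar\Lambda\mu^{-1}$, completing the proof.
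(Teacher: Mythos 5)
Your proposal is correct and follows essentially the same route as the paper: the paper also dispatches the $w_{nc}\chi_n$ terms directly from Lemma \ref{e:n main perturbation and corection estimate} and assumption (\ref{a:assumption on parameter n}), handles the cutoff localization piece by the $|\mu v_{0(n-1)\ell_n}-l|<1$ support constraint together with finite overlap, bounds the two mollification errors using the inductive $C^1$ and $C^1_z$ bounds from (\ref{e:induction estimate sequence}) and the parameter inequalities, and for $\partial_z$ performs exactly the term-by-term Leibniz split you describe (including the same observation that each product lands on $(\sqrt{\kappa}\mu)^n\bar\Lambda\mu^{-1}$ after using $\ell_{nz}^{-1}\geq\mu(\sqrt{\kappa}\mu)^{n-1}\bar\Lambda$). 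No gaps.
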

\begin{proof}
First, by (\ref{e:n error estimate}) and (\ref{a:assumption on parameter n}), the first three estimates are obtained direct.

Since $w_{nl}\neq 0$ implies $|\mu n_{0(n-1)\ell_n}-l|<1$ and by (\ref{e:induction estimate sequence}), we obtain $$\|(\partial_t, \partial_x,\partial_y)v_{0(n-1)}\|_0\leq C_0\sqrt{\kappa}\lambda_{n-1},$$ $$\|(\partial_t, \partial_x,\partial_y)\theta_{0(n-1)}\|_0\leq C_0\sqrt{\kappa}\lambda_{n-1}, $$ $$\|\partial_z v_{0(n-1)}\|_0\leq C_0(\sqrt{\kappa}\mu)^{n-1}\bar{\Lambda}$$ $$\|\partial_z \theta_{0(n-1)}\|_0\leq C_0(\sqrt{\kappa}\mu)^{n-1}\bar{\Lambda}.$$ By (\ref{a:assumption on parameter n}), we arrive at
\begin{align}
\Big\|\sum\limits_{l\in Z^3}\Big(v_{0(n-1)\ell_n}-\frac{l}{\mu}\Big)\chi_{nl}&+\sum_{l\in Z^3}\big(v_{0(n-1)}-v_{0(n-1)\ell_n}\big)\chi_{nl}\nonumber\\
&+w_n(\theta_{0(n-1)}-\theta_{0(n-1)\ell_n})\Big\|_0
\leq C_0\sqrt{\kappa}\mu^{-1}.
\end{align}
Similarly
\begin{align}
\Big\|(\partial_t, \partial_x,\partial_y)\Big[\sum\limits_{l\in Z^2}\Big(v_{0(n-1)\ell_n}-\frac{l}{\mu}\Big)\chi_{nl}&+\sum_{l\in Z^3}\big(v_{0(n-1)}-v_{0(n-1)\ell_n}\big)\chi_{2l}\nonumber\\
&+w_n(\theta_{0(n-1)}-\theta_{0(n-1)\ell_n})\Big]\Big\|_0
\leq C_0\lambda_n\sqrt{\kappa}\mu^{-1}.\nonumber
\end{align}
Thus, we get the fourth and fifth estimates in this lemma.

By Lemma \ref{e:n main perturbation and corection estimate} and  assumption (\ref{a:assumption on parameter n}), we can obtain
\begin{align}
\Big\|\partial_z\Big[\sum\limits_{l\in Z^3}\Big(v_{0(n-1)\ell_n}-\frac{l}{\mu}\Big)\chi_{nl}\Big]\Big\|_0=&\Big\|\sum\limits_{l\in Z^3}(\partial_zv_{0(n-1)})_{\ell_n}\chi_{nl}\nonumber\\
&+\sum\limits_{l\in Z^3}\Big(v_{0(n-1)\ell_n}-\frac{l}{\mu}\Big)\partial_z\chi_{nl}\Big\|_0
\leq C_0(\sqrt{\kappa}\mu)^n\bar{\Lambda}\mu^{-1},\nonumber\\
\Big\|\partial_z\sum_{l\in Z^3}\big(v_{0(n-1)}-v_{0(n-1)\ell_n}\big)\chi_{nl}\Big\|_0=&\Big\|\sum_{l\in Z^3}\big(\partial_zv_{0(n-1)}-(\partial_zv_{0(n-1)})_{\ell_n}\big)\chi_{nl}\nonumber\\
&+\sum_{l\in Z^3}\big(v_{0(n-1)}-v_{0(n-1)\ell_n}\big)\partial_z\chi_{nl}\Big\|_0
\leq C_0(\sqrt{\kappa}\mu)^n\bar{\Lambda}\mu^{-1},\nonumber\\
\big\|\partial_z(w_n(\theta_{0(n-1)}-\theta_{0(n-1)\ell_n}))\big\|_0=&\|\partial_zw_n(\theta_{0(n-1)}-\theta_{0(n-1)\ell_n})\nonumber\\
&+w_n(\partial_z\theta_{0(n-1)}-(\partial_z\theta_{0(n-1)})_{\ell_n})\|_0
\leq C_0(\sqrt{\kappa}\mu)^n\bar{\Lambda}\mu^{-1}.\nonumber
\end{align}
Thus, collecting all those estimates, we arrive at
\begin{align}
\Big\|\partial_z\Big[\sum\limits_{l\in Z^3}\Big(v_{0(n-1)\ell_n}-\frac{l}{\mu}\Big)\chi_{nl}&+\sum_{l\in Z^3}\big(v_{0(n-1)}-v_{0(n-1)\ell_n}\big)\chi_{nl}\nonumber\\
&+w_n(\theta_{0(n-1)}-\theta_{0(n-1)\ell_n})\Big]\Big\|_0
\leq C_0(\sqrt{\kappa}\mu)^n\bar{\Lambda}\mu^{-1},\nonumber
\end{align}
which is the last estimate in this lemma.
\end{proof}
Finally, for $n=2,3$, we conclude
\begin{align}\label{e:step two temperature stress estimate}
\|\delta f_{0n}\|_0\leq & C_0(\varepsilon)\Big[\sqrt{\kappa}\mu^{-1}+\Big(\kappa\mu\lambda_{n-1}+
(\sqrt{\kappa}\mu)^n\bar{\Lambda}\ell_{nz}^{-1}\Big)\lambda_n^{-1}
\Big],\nonumber\\
\|(\partial_t, \partial_x,\partial_y)\delta f_{0n}\|_0\leq & C_0(\varepsilon)\lambda_n\Big[\sqrt{\kappa}\mu^{-1}+\Big(\kappa\mu\lambda_{n-1}+
(\sqrt{\kappa}\mu)^n\bar{\Lambda}\ell_{nz}^{-1}\Big)\lambda_n^{-1}\Big],\nonumber\\
\|\partial_z\delta f_{0n}\|_0\leq & C_0(\varepsilon)\Big[(\sqrt{\kappa}\mu)^n\bar{\Lambda}\mu^{-1}
+\Big((\sqrt{\kappa}\mu)^n\bar{\Lambda}(\ell_{nz}^{-2}+\ell_n^{-1})
+\kappa\lambda_{n-1}\ell_{nz}^{-1}\Big)\lambda_n^{-1}\Big].
\end{align}
Next, we deal with $\delta f_{0n}$ for $n=4,5,6$.
\begin{lemma}
For $n=4,5,6$, we have the following estimates,
\begin{align}
\|\delta f_{0n}\|_0\leq&  C_0(\varepsilon)\big(\kappa\lambda_3\lambda_n^{-1}
+\kappa\lambda_3\ell_n+\sqrt{\kappa}(\sqrt{\kappa}\mu)^3\bar{\Lambda}\ell_{nz}\big)\leq C_0(\varepsilon)\sqrt{\kappa}\mu^{-1},\nonumber\\
\|\partial_z\delta f_{0n}\|_0\leq & C_0(\varepsilon)\Big[(\sqrt{\kappa}\mu)^n\bar{\Lambda}\mu^{-1}
+\kappa\lambda_3\ell_{nz}^{-1}\lambda_n^{-1}\Big],\nonumber\\
\|(\partial_t, \partial_x,\partial_y)\delta f_{0n}\|_0\leq& C_0(\varepsilon)\lambda_n\big(\kappa\lambda_3\lambda_n^{-1}
+\kappa\lambda_3\ell_n+\sqrt{\kappa}(\sqrt{\kappa}\mu)^3\bar{\Lambda}\ell_{nz}\big)\leq C_0(\varepsilon)\lambda_n\sqrt{\kappa}\mu^{-1}.\nonumber\\
\end{align}
\end{lemma}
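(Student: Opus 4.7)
The plan is to exploit that for $n=4,5,6$ we have $\beta_{nl}\equiv 0$, hence $\chi_n\equiv 0$, so the formula for $\delta f_{0n}$ collapses to the two remaining terms
\[
\delta f_{0n}=\mathcal{G}(w_n\cdot\nabla\theta_{0(n-1)\ell_n})+w_n\bigl(\theta_{0(n-1)}-\theta_{0(n-1)\ell_n}\bigr).
\]
Moreover, since $\theta_{0m}=\theta_{0(m-1)}$ for $m=4,5,6$, we have $\theta_{0(n-1)}=\theta_{03}$ for every $n\in\{4,5,6\}$, so all temperature-side quantities that appear are controlled by the inductive bounds on $\theta_{03}$, namely $\|(\partial_t,\partial_x,\partial_y)\theta_{03}\|_0\le C_0\sqrt{\kappa}\lambda_3$ and $\|\partial_z\theta_{03}\|_0\le C_0(\sqrt{\kappa}\mu)^{3}\bar{\Lambda}$.

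For the first term, I will use the representation analogous to the one in the proof of Lemma 6.7, writing
\[
w_n\cdot\nabla\theta_{0(n-1)\ell_n}=\sum_{j=0}^{7}\sum_{[l]=j}\Bigl(g_{nl}\cdot\nabla\theta_{03\ell_n}\,e^{i\lambda_n 2^{j}(k_{nh}^{\perp},0)\cdot((x,y,z)-\frac{l}{\mu}t)}+g_{-nl}\cdot\nabla\theta_{03\ell_n}\,e^{-i\lambda_n 2^{j}(k_{nh}^{\perp},0)\cdot((x,y,z)-\frac{l}{\mu}t)}\Bigr),
\]
which is zero-average in $x$ since $k_n\cdot(k_{nh}^{\perp},0)=0$ ensures the amplitude has the required decay when expanded. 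I will then apply Lemma \ref{p:inverse 2} with $m=1+[\tfrac{1+\varepsilon}{\varepsilon}]$ using the bounds $\|g_{\pm nl}\|_0\le C_0\sqrt{\kappa}$ and $\|\nabla\theta_{03\ell_n}\|_0\le C_0\sqrt{\kappa}\lambda_3$ from Lemma 9.1 and from the inductive hypothesis respectively, together with the higher-order bounds of Lemma 9.1 to control the seminorms, yielding $\|\mathcal{G}(w_n\cdot\nabla\theta_{0(n-1)\ell_n})\|_0\le C_0(\varepsilon)\kappa\lambda_3\lambda_n^{-1}$. The $z$-derivative is handled identically by differentiating the representation, which replaces the amplitude $g_{\pm nl}\cdot\nabla\theta_{03\ell_n}$ by $\partial_z g_{\pm nl}\cdot\nabla\theta_{03\ell_n}+g_{\pm nl}\cdot\partial_z\nabla\theta_{03\ell_n}$, and using the $\partial_z$ estimates from Lemma 9.1 together with $\|\partial_z\nabla\theta_{03\ell_n}\|_0\le C_0\sqrt{\kappa}\lambda_3\ell_{nz}^{-1}$. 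The $(\partial_t,\partial_x,\partial_y)$ estimate just multiplies the $L^\infty$ bound by $\lambda_n$ after paying a factor coming from the phase derivative together with $|l|\le C_0\mu$.

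For the second term I will use the anisotropic mollification estimate
\[
\|\theta_{03}-\theta_{03\ell_n}\|_0\le C_0\bigl(\|(\partial_t,\partial_x,\partial_y)\theta_{03}\|_0\,\ell_n+\|\partial_z\theta_{03}\|_0\,\ell_{nz}\bigr)\le C_0\bigl(\sqrt{\kappa}\lambda_3\ell_n+(\sqrt{\kappa}\mu)^{3}\bar{\Lambda}\ell_{nz}\bigr),
\]
combined with $\|w_n\|_0\le C_0\sqrt{\kappa}$ (from Lemma 9.2) to conclude $\|w_n(\theta_{03}-\theta_{03\ell_n})\|_0\le C_0\bigl(\kappa\lambda_3\ell_n+\sqrt{\kappa}(\sqrt{\kappa}\mu)^{3}\bar{\Lambda}\ell_{nz}\bigr)$. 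Differentiating in $z$ produces two pieces: one with $\partial_z w_n$, bounded by $(\sqrt{\kappa}\mu)^n\bar{\Lambda}\cdot\|\theta_{03}-\theta_{03\ell_n}\|_0$, and one with $w_n\cdot(\partial_z\theta_{03}-(\partial_z\theta_{03})_{\ell_n})$, bounded using $\|\partial_z\theta_{03}\|_{C^1}$ via the induction.

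Adding the two contributions gives the first and third displayed bounds in the statement; the second follows likewise. The reduction to the simpler form $C_0(\varepsilon)\sqrt{\kappa}\mu^{-1}$ is then just an application of the parameter conditions (\ref{a:assumption on parameter m sequence}): $\ell_n^{-1}\ge\sqrt{\kappa}\mu\lambda_{n-1}\ge\sqrt{\kappa}\mu\lambda_3$ absorbs $\kappa\lambda_3\ell_n$, while $\ell_{nz}^{-1}\ge\mu(\sqrt{\kappa}\mu)^{n-1}\bar{\Lambda}\ge\mu(\sqrt{\kappa}\mu)^{3}\bar{\Lambda}$ absorbs the term involving $\ell_{nz}$, and $\lambda_n\ge\ell_n^{-(1+\varepsilon)}\ge(\sqrt{\kappa}\mu\lambda_3)^{1+\varepsilon}$ controls $\kappa\lambda_3\lambda_n^{-1}$. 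No serious obstacle is expected; the only bookkeeping care is to verify at each step that one is comparing against $\lambda_3$ (and not $\lambda_{n-1}$) on the temperature side, since the temperature has been frozen since the third step.
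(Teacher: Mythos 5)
Your proposal follows essentially the same route as the paper: split $\delta f_{0n}$ into $\mathcal{G}(w_n\cdot\nabla\theta_{03\ell_n})$, estimated through the oscillatory representation with amplitudes $g_{\pm nl}\cdot\nabla\theta_{03\ell_n}$ and Lemma \ref{p:inverse 2} exactly as in the earlier oscillation estimates, plus the mollification-difference term $w_n(\theta_{03}-\theta_{03\ell_n})$, controlled by the inductive bounds on $\theta_{03}$, Lemma 9.1, Corollary 9.3 and the parameter assumptions. The one small correction: for the piece $w_n\big(\partial_z\theta_{03}-(\partial_z\theta_{03})_{\ell_n}\big)$ you should simply use $2\|\partial_z\theta_{03}\|_0\leq C_0(\sqrt{\kappa}\mu)^3\bar{\Lambda}$ (the induction gives no second-derivative control, so no finer mollification estimate is available, and this crude bound already yields $(\sqrt{\kappa}\mu)^n\bar{\Lambda}\mu^{-1}$), which is what the paper does.
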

\begin{proof}
Recall that
$$\delta f_{0n}=\mathcal{G}(w_n\cdot\nabla\theta_{0(n-1)\ell_n})+w_n(\theta_{0(n-1)}-\theta_{0(n-1)\ell_n}).$$
By (\ref{r:another representation on wnl}), we have
\begin{align}
w_n\cdot\nabla\theta_{0(n-1)\ell_n}=&\sum_{l\in Z^3}\Big(g_{nl}\cdot\nabla\theta_{0(n-1)\ell_n} e^{i\lambda_n 2^{[l]} (k_{nh}^{\perp},0)\cdot \big((x,y,z)-\frac{l}{\mu}t\big)}\nonumber\\
&+g_{-nl}\cdot\nabla\theta_{0(n-1)\ell_n} e^{-i\lambda_n 2^{[l]} (k_{nh}^{\perp},0)\cdot \big((x,y,z)-\frac{l}{\mu}t\big)}\Big).\nonumber
\end{align}
By Lemma \ref{e:estimate various order n} and Corollary \ref{e:n sequence difference estiamte},  as in the proof of (\ref{e:esyimate terpeture 2}), we can obtain
\begin{align}
&\|\mathcal{G}(w_n\cdot\nabla\theta_{0(n-1)\ell_n})\|_0\leq C_0\kappa\lambda_3\lambda_n^{-1},\quad\|(\partial_t, \partial_x,\partial_y)\mathcal{G}(w_n\cdot\nabla\theta_{0(n-1)\ell_n})\|_0\leq C_0\kappa\lambda_3,\nonumber\\
 &\|\partial_z\mathcal{G}(w_n\cdot\nabla\theta_{0(n-1)\ell_n})\|_0\leq C_0\kappa\lambda_3\ell_{nz}^{-1}\lambda_n^{-1}.\nonumber
\end{align}
Moreover, by Lemma \ref{e:estimate various order n}, Corollary \ref{e:n sequence difference estiamte} and assumption (\ref{a:assumption on parameter n}), we have, for $n=4,5,6$
\begin{align}
\|w_n(\theta_{0(n-1)}-\theta_{0(n-1)\ell_n})\|_0\leq& C_0(\kappa\lambda_3\ell_n+\sqrt{\kappa}(\sqrt{\kappa}\mu)^3\bar{\Lambda}\ell_{nz}),\nonumber\\ \|(\partial_t,\partial_x,\partial_y)w_n(\theta_{0(n-1)}-\theta_{0(n-1)\ell_n})\|_0\leq & C_0\lambda_n(\kappa\lambda_3\ell_n+\sqrt{\kappa}(\sqrt{\kappa}\mu)^3\bar{\Lambda}\ell_{nz}),  \nonumber\\ \|\partial_z(w_n(\theta_{0(n-1)}-\theta_{0(n-1)\ell_n}))\|_0\leq&
C_0(\sqrt{\kappa}\mu)^n\bar{\Lambda}\mu^{-1}.\nonumber
\end{align}
Collecting all these estimates, we complete the proof of this lemma.
\end{proof}

Therefore, for $2\leq n\leq 6$, we have
\begin{align}\label{e:stress error n various estimate}
\|\delta f_{0n}\|_0\leq & C_0(\varepsilon)\Big[\sqrt{\kappa}\mu^{-1}+\Big(\kappa\mu\lambda_{n-1}+
(\sqrt{\kappa}\mu)^n\bar{\Lambda}\ell_{nz}^{-1}\Big)\lambda_n^{-1}\Big],\nonumber\\
\|(\partial_t, \partial_x,\partial_y)\delta f_{0n}\|_0\leq & C_0(\varepsilon)\lambda_n\Big[\sqrt{\kappa}\mu^{-1}+\Big(\kappa\mu\lambda_{n-1}+
(\sqrt{\kappa}\mu)^n\bar{\Lambda}\ell_{nz}^{-1}\Big)\lambda_n^{-1}\Big],\nonumber\\
\|\partial_z\delta f_{0n}\|_0\leq & C_0(\varepsilon)\Big[(\sqrt{\kappa}\mu)^n\bar{\Lambda}\mu^{-1}
+\Big((\sqrt{\kappa}\mu)^n\bar{\Lambda}(\ell_{nz}^{-2}+\ell_n^{-1})
+\kappa\lambda_{n-1}\ell_{nz}^{-1}\Big)\lambda_n^{-1}\Big].
\end{align}
By (\ref{e:n iterative stress error estimate}), (\ref{e:stress error n various estimate}), (\ref{e:induction estimate sequence}), (\ref{e:the first step conclusion}) and Corollary \ref{e:n sequence difference estiamte},  we can construct $$(v_{0n},~p_{0n},~\theta_{0n},\\~R_{0n},~f_{0n}),\quad n=1,...,6.$$
They solve system (\ref{d:anistropic boussinesq reynold}) and satisfy
\begin{align}\label{f:nform}
 R_{0n}=-\sum\limits_{i=n+1}^{6}(e(t)-a_{i\ell_n})k_i\otimes k_i
 +\sum\limits_{i=1}^{n}\delta R_{0i},\quad
  f_{0n}=\sum\limits_{i=n+1}^{3}c_{i\ell_1}k_i+\sum\limits_{i=1}^{n}\delta f_{0i}
\end{align}
and
 \begin{align}
 &\|v_{0n}-v_{0(n-1)}\|_0\leq \frac{M\sqrt{\kappa}}{12}+C_0\frac{\kappa\mu\lambda_{n-1}}{\lambda_n},\quad\|(\partial_t, \partial_x,\partial_y)(v_{0n}-v_{0(n-1)})\|_0\leq C_0\sqrt{\kappa}\lambda_n,\nonumber\\
 &\|\theta_{0n}-\theta_{0(n-1)}\|_0\leq \left\{
 \begin{array}{ll}
 \frac{M\sqrt{\kappa}}{6}+C_0\frac{\kappa\mu\lambda_{n-1}}{\lambda_n},\quad n=1,2,3,\\
 0,\qquad n=4,5,6
 \end{array}
 \right.\quad\|\partial_z(v_{0n}-v_{0(n-1)})\|_0\leq C_0(\sqrt{\kappa}\mu)^n\bar{\Lambda},\nonumber\\
 &\|(\partial_t, \partial_x,\partial_y)(\theta_{0n}-\theta_{0(n-1)})\|_0\leq \left\{
 \begin{array}{ll}
 C_0\sqrt{\kappa}\lambda_n,\quad n=1,2,3,\\
 0,\qquad n=4,5,6
 \end{array}
 \right.\nonumber\\
 &\|\partial_z(\theta_{0n}-\theta_{0(n-1)})\|_0\leq \left\{
 \begin{array}{ll}
 C_0(\sqrt{\kappa}\mu)^n\bar{\Lambda},\quad n=1,2,3,\\
 0,\qquad n=4,5,6
 \end{array}
 \right.\quad
   \|p_{0n}-p_{0(n-1)}\|_0=0,\nonumber\\
&\|\delta R_{0n}\|_0\leq  C_0(\varepsilon)\Big[\sqrt{\kappa}\mu^{-1}+\kappa\lambda_{n-1}\ell_n+\lambda_n^{-1}\Big(\kappa\mu\lambda_{n-1}+
(\sqrt{\kappa}\mu)^{n}\bar{\Lambda}\ell_{nz}^{-1}\Big)\Big],\nonumber\\
&\|(\partial_t, \partial_x,\partial_y)\delta R_{0n}\|_0\leq C_0(\varepsilon)\lambda_n\Big[\sqrt{\kappa}\mu^{-1}+\kappa\lambda_{n-1}\ell_n+\lambda_n^{-1}\Big(\kappa\mu\lambda_{n-1}+
(\sqrt{\kappa}\mu)^{n}\bar{\Lambda}\ell_{nz}^{-1}\Big)\Big],\nonumber\\
&\|\partial_z\delta R_{0n}\|_0\leq  C_0(\varepsilon)\Big[(\sqrt{\kappa}\mu)^n\bar{\Lambda}\big(\mu^{-1}+\sqrt{\kappa}\lambda_{n-1}\ell_n\big)+(\sqrt{\kappa}\mu)^{n}\bar{\Lambda}
\Big(\ell_n^{-1}+\ell_{nz}^{-2}\Big)\lambda_n^{-1}\Big],\nonumber\\
&\|\delta f_{0n}\|_0\leq C_0(\varepsilon)\Big[\sqrt{\kappa}\mu^{-1}+\kappa\lambda_{n-1}\ell_n+\Big(\kappa\mu\lambda_{n-1}+(\sqrt{\kappa}\mu)^n\Lambda\ell_{nz}^{-1}\Big)\lambda_n^{-1}
\Big],\nonumber\\
&\|(\partial_t, \partial_x,\partial_y)\delta f_{0n}\|_0\leq  C_0(\varepsilon)\lambda_n\Big[\sqrt{\kappa}\mu^{-1}+\kappa\lambda_{n-1}\ell_n+\Big(\kappa\mu\lambda_{n-1}+
(\sqrt{\kappa}\mu)^n\Lambda\ell_{nz}^{-1}\Big)\lambda_n^{-1}
\Big],\nonumber\\
&\|\partial_z\delta f_{0n}\|_0\leq  C_0(\varepsilon)\Big[(\sqrt{\kappa}\mu)^n\bar{\Lambda}\big(\mu^{-1}+\sqrt{\kappa}\lambda_{n-1}\ell_n\big)+\Big((\sqrt{\kappa}\mu)^n\bar{\Lambda}(\ell_{nz}^{-2}+\ell_n^{-1})
+\kappa\lambda_{n-1}\ell_{nz}^{-1}\Big)\lambda_n^{-1}\Big].\nonumber
\end{align}
In particular, we conclude that
\begin{align}\label{d:final R f}
 R_{06}=\sum\limits_{i=1}^{6}\delta R_{0i},\quad
 f_{06}=\sum\limits_{i=1}^{6}\delta f_{0i}.\nonumber
     \end{align}

\setcounter{equation}{0}

\section{Proof of proposition 2.1}
In this section, we collect all the estimates from the preceding sections. From these estimates , we can prove Proposition \ref{p: iterative 1} by choosing the appropriate parameters
$\ell_n,  \ell_{nz}, \mu, \lambda_n$ for $1\leq n\leq 6$.
\begin{proof}
In section 9, we have constructed functions $(v_{06},~p_{06},~\theta_{06},~R_{06},~f_{06})$, they solve system (\ref{d:anistropic boussinesq reynold}) and satisfy
\begin{align}
\|R_{06}\|_0\leq& C_0(\varepsilon) \sum\limits_{n=1}^{6}\Big[\kappa\lambda_{n-1}\ell_n+\sqrt{\kappa}\mu^{-1}
+\lambda_n^{-1}\Big(\kappa\mu\lambda_{n-1}+
(\sqrt{\kappa}\mu)^{n}\bar{\Lambda}\ell_{nz}^{-1}\Big)\Big],\nonumber\\
\|f_{06}\|_0\leq& C_0(\varepsilon) \sum\limits_{n=1}^{6}\Big[\kappa\lambda_{n-1}\ell_n+\sqrt{\kappa}\mu^{-1}+
\lambda_n^{-1}\Big(\kappa\mu\lambda_{n-1}+
(\sqrt{\kappa}\mu)^n\Lambda\ell_{nz}^{-1}\Big)\Big],\nonumber\\
\|v_{06}-&v_0\|_0\leq\frac{M\sqrt{\kappa}}{2}+C_0\sqrt{\kappa}\sum\limits_{n=2}^{6}\frac{\sqrt{\kappa}\mu\lambda_{n-1}}{\lambda_n}+
C_0\frac{\sqrt{\kappa}\mu\Lambda}{\lambda_1},\nonumber\\
\|\theta_{06}-&\theta_0\|_0\leq\frac{M\sqrt{\kappa}}{2}+C_0\sqrt{\kappa}\sum\limits_{n=2}^{3}\frac{\sqrt{\kappa}\mu\lambda_{n-1}}{\lambda_n}
+C_0\frac{\sqrt{\kappa}\mu\Lambda}{\lambda_1},\quad
\quad  \|p_{06}-p_0\|_0=0
\end{align}
and
\begin{align}
\|R_{06}\|_{C^1}\leq &C_0(\varepsilon) \sum\limits_{n=1}^{6}\lambda_n\Big[\kappa\lambda_{n-1}\ell_n+\sqrt{\kappa}\mu^{-1}
+\lambda_n^{-1}\Big(\kappa\mu\lambda_{n-1}+
(\sqrt{\kappa}\mu)^{n}\bar{\Lambda}\ell_{nz}^{-1}\Big)\Big],\nonumber\\
\|R_{06}\|_{C^1_z}\leq& C_0(\varepsilon)\sum_{n=1}^6\Big[(\sqrt{\kappa}\mu)^n\bar{\Lambda}\Big(\mu^{-1}
+\sqrt{\kappa}\lambda_{n-1}\ell_n\Big)+\lambda_n^{-1}(\sqrt{\kappa}\mu)^{n}\bar{\Lambda}
\Big(\ell_n^{-1}+
\ell_{nz}^{-2}\Big)\Big],\nonumber\\
\|f_{06}\|_{C^1}\leq &C_0(\varepsilon) \sum\limits_{n=1}^{6}\lambda_n\Big[\kappa\lambda_{n-1}\ell_n+\sqrt{\kappa}\mu^{-1}
+\lambda_n^{-1}\Big(\kappa\mu\lambda_{n-1}+
(\sqrt{\kappa}\mu)^{n}\bar{\Lambda}\ell_{nz}^{-1}\Big)\Big],\nonumber\\
\|f_{06}\|_{C^1_z}\leq& C_0(\varepsilon)\sum_{n=1}^6\Big[(\sqrt{\kappa}\mu)^n\bar{\Lambda}\Big(\mu^{-1}+\sqrt{\kappa}\lambda_{n-1}\ell_n
\Big)+\lambda_n^{-1}\Big((\sqrt{\kappa}\mu)^n\bar{\Lambda}(\ell_{nz}^{-2}+\ell_n^{-1})
+\kappa\lambda_{n-1}\ell_{nz}^{-1}\Big)\Big],\nonumber\\
\|v_{06}&-v_0\|_{C^1}\leq C_0\sqrt{\kappa}\sum\limits_{n=1}^{6}\lambda_n,\quad \quad
\|\theta_{06}-\theta_0\|_{C^1}\leq C_0\sqrt{\kappa}\sum\limits_{n=1}^{3}\lambda_n,\nonumber\\
\|\partial_z(v_{06}&-v_0)\|_0\leq C_0\sum\limits_{n=1}^{6}(\sqrt{\kappa}\mu)^{n}\bar{\Lambda},\quad \quad
\|\partial_z(\theta_{06}-\theta_0)\|_0\leq C_0\sum\limits_{n=1}^{3}(\sqrt{\kappa}\mu)^{n}\bar{\Lambda}.
\end{align}
where $\lambda_0:=\Lambda \kappa^{-1}+\bar{\Lambda}\ell_{1z}\kappa^{-1}\ell_1^{-1}$.
We divide the remainder proof into four steps:\\
{\bf Step 1}.
We now specify the choice of the parameters. First we choose
\begin{align}\label{e:first choose of parameters}
\mu=\frac{L_v\sqrt{\kappa}}{\bar{\kappa}},\quad \ell_1=\frac{\bar{\kappa}}{L_v\Lambda},\quad \ell_{1z}=\frac{\bar{\kappa}}{L_v\bar{\Lambda}},
\quad\lambda_1=D_v\frac{\sqrt{\kappa}\mu\Lambda^{1+\varepsilon}+
\mu^2\bar{\Lambda}^2}{\bar{\kappa}}
\end{align}
where $L_v$ is a sufficiently large constant which depends only on $\|v\|_0$.

Next, we impose
\begin{align}\label{p:parameter n 23}
\ell_n=\frac{1}{L_v}\frac{\bar{\kappa}}{\kappa\lambda_{n-1}},\quad\ell_{nz}=
\frac{1}{L_v}\frac{\bar{\kappa}}{\sqrt{\kappa}(\sqrt{\kappa}\mu)^{n-1}\bar{\Lambda}},\quad
\lambda_n=D_v
\frac{\kappa\mu\lambda^{1+\varepsilon}_{n-1}+(\sqrt{\kappa}\mu)^{n}\bar{\Lambda}\ell^{-1}_{nz}}{\bar{\kappa}},\quad n=2,...,6.
\end{align}
where $D_v\geq L_v^{2}$ is a sufficiently large constant which depends only on $\|v\|_0$.

{\bf Step 2. Compatibility condition.} We check that all the conditions in (\ref{a:assumption on parameter}), (\ref{a:assumption on parameter m sequence}) are satisfied by our choice of the parameters.

We first check the first triple $(\ell_1, \mu, \lambda_1)$.  By (\ref{e:first choose of parameters}), it's easy to see $$\ell_1^{-1}=L_v\frac{\Lambda}{\bar{\kappa}}\geq\frac{L_v\sqrt{\kappa}}{\bar{\kappa}}\Lambda\geq\mu\Lambda,\quad \ell_{1z}^{-1}=L_v\frac{\bar{\Lambda}}{\bar{\kappa}}\geq\frac{L_v\sqrt{\kappa}}{\bar{\kappa}}\bar{\Lambda}\geq\mu\bar{\Lambda}.$$
Since $\bar{\kappa}\leq\kappa^{\frac{3}{2}}$, by (\ref{e:first choose of parameters})
 $$\mu\geq \frac{1}{\kappa}.$$
From $D_v\geq L_v^{1+\varepsilon}$, we arrive at
$$\lambda_1\geq \Big(L_v\frac{\Lambda}{\bar{\kappa}}\Big)^{1+\varepsilon}\geq \ell_1^{-(1+\varepsilon)}.$$
Thus, (\ref{a:assumption on parameter}) is satisfied.

Next, for $n=2,...,6$,  obviously
$$\ell_n^{-1}=L_v\frac{\kappa\lambda_{n-1}}{\bar{\kappa}}=\sqrt{\kappa}\frac{L_v\sqrt{\kappa}}{\bar{\kappa}}\lambda_{n-1}=\sqrt{\kappa}\mu\lambda_{n-1},\quad
\ell_{nz}^{-1}=\frac{L_v\sqrt{\kappa}(\sqrt{\kappa}\mu)^{n-1}\bar{\Lambda}}{\bar{\kappa}}=\mu(\sqrt{\kappa}\mu)^{n-1}\bar{\Lambda}. $$
By (\ref{p:parameter n 23}), it's easy to obtain
$$\lambda_n\geq\ell_n^{-(1+\varepsilon)}.$$
Thus, (\ref{a:assumption on parameter m sequence}) is satisfied.

From (\ref{p:parameter n 23}) and $\varepsilon$ small, a straightforward computation yields
\begin{align}\label{v:value of frequency parameter}
\lambda_3\leq& C_0D_v^{3+4\varepsilon}\Big(\kappa^{\frac{2+\varepsilon}{2}}
 \Big(\frac{\sqrt{\kappa}\mu}{\bar{\kappa}}\Big)^{3+4\varepsilon}\Lambda^{(1+\varepsilon)^3}+\frac{\kappa^{2+\varepsilon}\mu^{4+6\varepsilon}\bar{\Lambda}^{2(1+\varepsilon)^2}}
{\bar{\kappa}^{3+4\varepsilon}}\Big),\nonumber\\
\lambda_6\leq &C_0D_v^7\Big(\kappa^{\frac{5+11\varepsilon}{2}}
 \Big(\frac{\sqrt{\kappa}\mu}{\bar{\kappa}}\Big)^{6+16\varepsilon}\Lambda^{(1+\varepsilon)^6}
 +\frac{\kappa^{5+11\varepsilon}\mu^{7+21\varepsilon}\bar{\Lambda}^{2(1+\varepsilon)^5}}
{\bar{\kappa}^{6+16\varepsilon}}\Big).\nonumber\\
\end{align}
{\bf Step 3. $C^0$ estimates.} Fixed $\varepsilon$ small. Thus, (10.1) implies
\begin{align}
\|R_{06}\|_0\leq C_v \Big(\bar{\kappa}L_v^{-1}+\bar{\kappa}D_v^{-1}\Big),\quad&
\|f_{06}\|_0\leq C_v \Big(\bar{\kappa}L_v^{-1}+\bar{\kappa}D_v^{-1}\Big),\nonumber\\
\|v_{06}-v_0\|_0\leq \frac{M\sqrt{\kappa}}{2}+\frac{\sqrt{\kappa}}{D_v},\quad
\|\theta_{06}-&\theta_0\|_0\leq \frac{M\sqrt{\kappa}}{2}+\frac{\sqrt{\kappa}}{D_v},\quad
\|p_{06}-p_0\|_0=0.\nonumber
\end{align}
Choosing first $L_v$ and then, $D_v$ sufficiently large, we can achieve the desired inequalities
(\ref{e:next step strees estimate})-(\ref{e:next step pressure estimate}).\\
{\bf Step 4. $C^1$ estimates.}
By the specified choices of parameters, we have
\begin{align}
\sqrt{\kappa}\lambda_{n-1}\ell_n\leq\mu^{-1},\quad
\lambda_n^{-1}(\sqrt{\kappa}\mu)^{n}\bar{\Lambda}
\Big(\ell_n^{-1}+\ell_{nz}^{-2}\Big)
\leq \kappa^{\frac{3}{2}}\mu^2\bar{\Lambda},\quad n=1,...,6.\nonumber
\end{align}
Hence
\begin{align}
&\|R_{06}\|_{C^1}\leq \lambda_6\bar{\kappa},\quad \|f_{06}\|_{C^1}\leq \lambda_6\bar{\kappa},\quad
\|v_{06}-v_0\|_{C^1}\leq C_0\sqrt{\kappa}\lambda_6,\quad\|\theta_{06}-\theta_0\|_{C^1}\leq C_0\sqrt{\kappa}\lambda_3,\nonumber\\
&\|R_{06}\|_{C_z^1}\leq C_0\kappa^3\mu^5\bar{\Lambda},\quad \|f_{06}\|_{C_z^1}\leq  C_0\kappa^{3}\mu^5\bar{\Lambda},\quad
\|v_{06}-v_0\|_{C_z^1}\leq  C_0(\sqrt{\kappa}\mu)^6\bar{\Lambda},\nonumber\\
&\|\theta_{06}-\theta_0\|_{C_z^1}\leq  C_0(\sqrt{\kappa}\mu)^3\bar{\Lambda}.\nonumber
\end{align}
Since $\varepsilon$ is small, we conclude
\begin{align}
\Lambda_1:=&\max\{1,\|R_{06}\|_{C^1}, \|f_{06}\|_{C^1}, \|v_{06}\|_{C^1}, \|\theta_{06}\|_{C^1}\}\leq \Lambda+C_0\sqrt{\kappa}\lambda_6\nonumber\\
\leq& C_0D_v^7\Big(\kappa^{\frac{6+11\varepsilon}{2}}
 \Big(\frac{\sqrt{\kappa}\mu}{\bar{\kappa}}\Big)^{6+16\varepsilon}\Lambda^{(1+\varepsilon)^6}+\frac{\kappa^{5.5+11\varepsilon}\mu^{7+21\varepsilon}\bar{\Lambda}^{2(1+\varepsilon)^5}}
{\bar{\kappa}^{6+16\varepsilon}}\Big),\nonumber\\
\bar{\Lambda}_1:=&\max\{1,\|R_{06}\|_{C_z^1}, \|f_{06}\|_{C_z^1}, \|v_{06}\|_{C_z^1}, \|\theta_{06}\|_{C_z^1}\}
\leq \bar{\Lambda}+C_0(\sqrt{\kappa}\mu)^6\bar{\Lambda}
\leq C_0L_v^6\Big(\frac{\kappa}{\bar{\kappa}}\Big)^6\bar{\Lambda}.\nonumber
\end{align}
Setting $A= C_0D_v^{7}$, we conclude estimate (\ref{b:bound on first derivative}).

Moreover, we have
\begin{align}
\|\partial_z\theta_{06}\|_0\leq& \bar{\Lambda}+C_0(\sqrt{\kappa}\mu)^3\bar{\Lambda}\leq A\Big(\frac{\kappa}{\bar{\kappa}}\Big)^3\bar{\Lambda},\nonumber\\
\|(\partial_t,\partial_x,\partial_y)\theta_{06}\|_0\leq& \Lambda+C_0\sqrt{\kappa}\lambda_3
\leq A\Big(\kappa^{\frac{3+\varepsilon}{2}}
 \Big(\frac{\kappa}{\bar{\kappa}^2}\Big)^{3+4\varepsilon}\Lambda^{(1+\varepsilon)^3}
 +\frac{\kappa^{\frac{5}{2}+\varepsilon}\bar{\Lambda}^{2(1+\varepsilon)^2}}
{\bar{\kappa}^{3+4\varepsilon}}\Big(\frac{\sqrt{\kappa}}{\bar{\kappa}}\Big)^{4+6\varepsilon}\Big),\nonumber
\end{align}
thus, we arrive at (\ref{e:growth estimate for temperture}).
Finally, we set
$$\tilde{v}:=v_{06},~\tilde{p}:=p_{06},~\tilde{\theta}:=\theta_{06},~\tilde{R}:=R_{06},~\tilde{f}:=f_{06}.$$
then $(\tilde{v},~\tilde{p},~\tilde{\theta},~\tilde{R},~\tilde{f})$ are what in our Proposition (\ref{p: iterative 1}).
\end{proof}

{\bf Acknowledgments.}
The authors are grateful to the referees for the invaluable comments and suggestions, which have helped us improve the paper significantly.
The authors are deeply grateful to Zhouping Xin and Tianwen Luo for very valuable discussions. The research is partially supported by the Chinese NSF under grant 11471320 and 11631008.

\end{document}